\newtheorem{theorem}{Theorem}[section]
\newtheorem{corollary}[theorem]{Corollary}
\newtheorem{lemma}[theorem]{Lemma}
\newtheorem{proposition}[theorem]{Proposition}
\theoremstyle{definition}
\newtheorem{definition}[theorem]{Definition}
\newtheorem{example}[theorem]{Example}
\newtheorem*{definition*}{Definition}
\newtheorem*{lemma*}{Lemma}
\newtheorem*{proposition*}{Proposition}
\newtheorem*{theorem*}{Theorem}
\newtheorem*{corollary*}{Corollary}
\theoremstyle{remark}
\newtheorem*{remark}{Remark}
\theoremstyle{remark}
\theoremstyle{definition}
\newtheorem{question}[theorem]{Question}
\begin{document}

\begin{titlepage}
\begin{center}

\vspace{1cm}
\Large{Francesco Fournier Facio}
\vfill
\huge{Infinite sums of Brooks quasimorphisms \\ and cup products in bounded cohomology} \\
\vspace{1cm}
\large{Thesis presented for the degree \\ Master of Science ETH in Mathematics} \\
\vspace{0.5cm}
\large{Supervised by Prof. Alessandra Iozzi \\ Department of Mathematics ETH Zurich}
\vfill
\today

\end{center}
\end{titlepage}

\pagebreak

\vspace*{2cm}

\begin{abstract}
We study a class of quasimorphisms of the free group that can be expressed as infinite sums of Brooks quasimorphisms with some nice properties. We then review Heuer's framework of decompositions developed in \cite{Heuer}, and put these quasimorphisms in that context. This allows us to prove triviality of a number of cup products in the bounded cohomology of the free group with trivial real coefficients. Finally, we introduce free products of decompositions and relate them to Rolli's quasimorphisms of free products. A technical result concerning these, which remains an open question, implies a strong triviality result for the cup product.
\end{abstract}

\vspace{2cm}

\renewcommand{\abstractname}{Acknowledgements}

\begin{abstract}
I will start by thanking my supervisor, Alessandra Iozzi, for having pushed me to learn about bounded cohomology early on, and for being so kind, generous and encouraging throughout the development of this project. Thank you to her and to Marc Burger for their hospitality at MSRI, where we could work on this and other problems.

I was very fortunate to have the opportunity to talk to many people during this semester: it is thanks to these conversations that many ideas in the paper came up. First with Roberto Frigerio, that suggested I work on this problem, and whose book \cite{Frigerio} was my introduction to bounded cohomology. Then with Nicolaus Heuer, whose article \cite{Heuer} is the basis for everything in this thesis (and the source of the pictures). Discussions with other people will hopefully lead to future work on related topics: in this context I would like to thank Alessandro Sisto, Merlin Incerti-Medici and Antonius Hase.

A special thank you goes to Tobias Hartnick for inviting me to Karlsruhe to talk about this problem and allowing me to give my first ever seminar talk: I am very grateful for his interest and his enthusiasm.

Last but not least, I need to thank Konstantin Golubev, for being a true mentor throughout my master's degree: supervising my semester paper a year ago, suggesting I go to my first workshop, learning about bounded cohomology with me, and especially listening to the many, many ideas that did not work this semester.
\end{abstract}

\vfill

\pagebreak

\tableofcontents
\restoregeometry

\pagebreak

\section{Introduction}

All relevant definitions will be given rigorously in the following sections.

\subsection{Motivation and context}

The theory of bounded cohomology was initiated by Gromov in his seminal paper \cite{grom}, in which he used it as a tool to control the minimal volume of a manifold. According to him, this theory had already been introduced by Trauber in an unpublished paper, in which he had shown that the bounded cohomology of amenable groups vanishes. It is now a very rich field of study, with applications not only to Riemannian geometry but also to rigidity theory, geometric group theory and dynamics. We refer the reader to \cite{Frigerio} for a book on the theory of bounded cohomology of discrete groups and its applications. A more general theory of continuous bounded cohomology for locally compact groups was then introduced by Burger and Monod in their paper \cite{BM}, which allowed them to settle Zimmer's program in the case of actions on the circle. We will however not work in such a general context in this thesis; the interested reader is referred to the paper and the book \cite{Monbook}. Since it is the focus of this thesis, when mentioning the bounded cohomology $H^\bullet_b(G)$ without further comments, we will mean bounded cohomology of the discrete group $G$ with trivial real coefficients. This is a real vector space equipped with a canonical quotient seminorm, called the \textit{Gromov seminorm}. \\

For any group $G$ one has $H^0_b(G) \cong \mathbb{R}$ and $H^1_b(G) = 0$. As we already mentioned, it has been known since the beginning of this theory that if $G$ is a discrete amenable group, then its bounded cohomology vanishes in all other degrees. We say that amenable groups have \textit{bounded cohomological dimension} equal to 0. In the paper \cite{Loeh}, the author shows that this holds also for the class of mitotic groups, which contains all algebraically closed groups, and the group $Homeo_K(\mathbb{R}^n)$ of self-homeomorphisms of $\mathbb{R}^n$ with compact support. Not all amenable groups are mitotic, so this is not a generalization of the previous case. Apart from these examples, there is \textit{no} group for which $H^\bullet_b(G)$ is known in all degrees. \\

It is natural to look at a non-abelian free group of finite rank $F$ as a first non-trivial example, since it is the prototipical example of a discrete non-amenable group. This group has moreover the advantage that the ordinary group cohomology vanishes in all degrees at least two, since it admits the one-dimensional cell complex $\vee_{i = 1}^{rank(F)} S^1$ as a classifying space. This allows to rephrase all bounded cohomology in terms of \textit{quasicocycles}. In particular, in degree two, the bounded cohomology is determined by the space of \textit{quasimorphisms}: maps $\varphi : F \to \mathbb{R}$ such that the quantity $|\varphi(g) + \varphi(h) - \varphi(gh)|$ is uniformly bounded. More explicitly, the map associating to a quasimorphism $\varphi$ the bounded cohomology class $[\delta^1 \varphi]$ where $\delta^1 \varphi(g, h) = \varphi(g) + \varphi(h) - \varphi(gh)$, is surjective. The kernel consists precisely of the space of \textit{trivial} quasimorphisms, i.e., those which can be expressed as the sum of a homomorphism and a bounded function. Two quasimorphisms that differ by a trivial quasimorphisms are called \textit{equivalent}. A special role is played by \textit{homogeneous} quasimorphisms: those which further satisfy $\varphi(g^n) = n \varphi(g)$ for all $g \in F$ and all $n \in \mathbb{Z}$. It turns out that any quasimorphism $\varphi$ is at bounded distance (so equivalent) to a unique homogeneous quasimorphisms, called its \textit{homogenization} and denoted by $\overline{\varphi}$. \\

In his paper \cite{Brooks}, Brooks introduced the first examples of quasimorphisms of the free group, which in modern terminology are called \textit{big Brooks quasimorphisms} (or \textit{big counting quasimorphisms}), which allowed to show that $H^2_b(F)$ is infinite-dimensional. Given a word $w$, the big Brooks quasimorphism, denoted by $H_w$, counts for a given $g \in F$ positively the occurrences of $w$ as a subword of $g$, and negatively those of $w^{-1}$. Changing slightly the definition one obtains the \textit{small Brooks quasimorphism} $h_w$, which counts disjoint occurrences. The advantage of $H_w$ is that its combinatorics is simpler, that of $h_w$ is that it behaves better algebraically. This motivated Faiziev to look at \textit{non-self-overlapping words} in \cite{Faziev}: a word $w$ is non-self-overlapping if any two occurrences of $w$ as a subword of any other word $g$ are disjoint. For such words $H_w = h_w$, and so one gets the best of both worlds. One can then consider the homogenization $\overline{h_w}$ of $h_w$. Notice that by definition, for any $g \in F$, there are only finitely many $w$ for which $H_w, h_w$ do not evaluate to 0 at $g$, since their length cannot exceed that of $g$. It follows easily from the construction of the homogenization that the same holds for $\overline{h_w}$. Therefore it makes sense to consider infinite sums of Brooks quasimorphisms, which are well-defined functions on the free group.

Building on the work of Faiziev, Grigorchuk proved the following result \cite{Grigor}:

\begin{theorem}[Grigorchuk]
\label{intro_gdt}

There exists a subset $\mathcal{F}$ of $F$, consisting of non-self-overlapping words, such that any quasimorphism is equivalent to a unique infinite sum of $\{ \overline{h_w} \}_{w \in \mathcal{F}}$.
\end{theorem}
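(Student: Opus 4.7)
The plan is to reduce to the homogeneous case: since every quasimorphism has a unique homogeneous equivalent and each $\overline{h_w}$ is itself homogeneous, it suffices to prove the statement for a homogeneous quasimorphism $\overline{\varphi}$. For $\mathcal{F}$, I would take a system of representatives of cyclically reduced non-self-overlapping words in $F$ modulo cyclic permutation and the involution $w \mapsto w^{-1}$: these are the obvious relations among the $\overline{h_w}$, coming respectively from conjugation-invariance of the homogenization and from $\overline{h_{w^{-1}}} = -\overline{h_w}$.

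For existence I would construct the coefficients $a_w$ inductively on $|w|$. The key computation is the evaluation of $\overline{h_u}(w)$ for $w, u \in \mathcal{F}$: when $u = w$, non-self-overlap forces $h_u(w^k) = k$ and hence $\overline{h_u}(w) = 1$; when $|u| > |w|$, any occurrence of $u$ inside some $w^k$ would, via the $|w|$-shift inside $w^k$, produce an overlapping copy of $u$, contradicting $u \in \mathcal{F}$, so $\overline{h_u}(w) = 0$; and for $|u| \leq |w|$ with $u \neq w$ the value is a nonnegative integer counting cyclic occurrences of $u$ in $w$. This puts the system $\overline{\varphi}(w) = \sum_{u \in \mathcal{F},\, |u| \leq |w|} a_u\, \overline{h_u}(w)$ in unit lower-triangular form once $\mathcal{F}$ is ordered by length, so $a_w$ is uniquely determined by induction on $|w|$.

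The resulting formal sum $\Phi := \sum_{w \in \mathcal{F}} a_w\, \overline{h_w}$ is well-defined pointwise on $F$ since only finitely many non-self-overlapping subwords fit inside any given $g \in F$. By construction, $\overline{\varphi} - \Phi$ is a homogeneous quasimorphism vanishing on every $w \in \mathcal{F}$, and hence by conjugation-invariance and homogeneity on every power of every cyclically reduced non-self-overlapping conjugacy class representative. Since every conjugacy class of $F$ admits such a representative, this forces $\overline{\varphi} - \Phi$ to vanish on all of $F$, or at worst to be a homomorphism, which the single-letter $\overline{h_x}$'s in $\mathcal{F}$ already account for; thus $\overline{\varphi}$ and $\Phi$ are equivalent.

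Uniqueness follows from the same triangular system: if $\sum a_w\, \overline{h_w}$ is a trivial quasimorphism, then being itself homogeneous it must be a homomorphism, and evaluating on the $w \in \mathcal{F}$ in order of increasing length, the unit-diagonal triangularity forces each $a_w$ to vanish. The main obstacle I foresee is the combinatorial lemma that makes the triangularity rigorous: a careful count of occurrences of $u$ inside $w^k$ with control over boundary-overlap errors, leveraging non-self-overlap to identify $h_u$ with $H_u$ on all relevant inputs and thereby extract clean integer values in the limit defining the homogenization. Once this combinatorial input is in place, the rest of the argument is the formal linear algebra of column-finite infinite matrices over $\mathbb{R}$.
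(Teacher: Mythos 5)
Your overall strategy is the right one and is essentially Grigorchuk's original argument, which the paper does not reprove but mirrors in its proof of Theorem \ref{gdt2} (the $\{H_w\}$-version): set up a system indexed by length that is unit lower-triangular because $\overline{h_u}(w) = 0$ when $|u| > |w|$ and $\overline{h_w}(w) = 1$, solve inductively, and then use conjugation-invariance and $n$-homogeneity to promote agreement on $\mathcal{F}$ to agreement on all of $F$. So the method matches the paper's.

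There is one genuine gap, and it concerns your choice of $\mathcal{F}$. You take representatives of \emph{all} cyclically reduced non-self-overlapping words modulo cyclic permutation and inversion, which includes the single letters. But the paper's notion of fundamental set (Definition \ref{def_fund}) deliberately restricts to simple conjugacy classes of length at least $2$. This matters for uniqueness: for a generator $s$, the function $\overline{h_s} = h_s$ is a genuine homomorphism, hence a trivial quasimorphism. So the sum with $a_s = 1$ and all other coefficients zero is a trivial quasimorphism whose coefficients are not all zero, and your uniqueness argument (``the unit-diagonal triangularity forces each $a_w$ to vanish'') breaks precisely at the length-$1$ step: the homomorphism being subtracted is free to take any value on the generators, so the $a_s$ are not pinned down. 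Dropping length-$1$ words fixes everything: the spanning argument loses nothing, since equivalence already absorbs homomorphisms, and uniqueness then follows from your triangular system on words of length $\geq 2$.

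A smaller point: your reason that $\overline{h_u}(w) = 0$ for $|u| > |w|$ with $u$ non-self-overlapping is stated too strongly. An occurrence of $u$ in $w^k$ near the end may have no room to be shifted by $|w|$ in either direction, so for small $k$ there can in fact be (at most one) occurrence. The correct statement is that for all $k \geq 2 + |u|/|w|$ the obstruction window is empty, so $h_u(w^k) = 0$ for large $k$ and hence the limit $\overline{h_u}(w)$ vanishes; alternatively, one can invoke the paper's combinatorial description of $\overline{h_u}$ as a signed count of occurrences of $u$ in the cyclic word $\overline{w}$, which is $0$ for length reasons when $|u| > |w|$. Either patch works; the conclusion you need is correct. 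Finally, you refer to $\overline{\varphi} - \Phi$ as a ``homogeneous quasimorphism'' before knowing $\Phi$ is a quasimorphism; the argument is fine, but it is cleaner to say that $\overline{\varphi} - \Phi$ is a conjugacy-invariant, $n$-homogeneous \emph{function} vanishing on $\mathcal{F}$, which forces it to vanish identically.
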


The way this theorem is usually stated is that the span of the $\overline{h_w}$ is dense in $H^2_b(F)$ with respect to the \textit{pointwise topology}: a suitable topology of pointwise convergence. This is less good than it sounds, since this topology is too coarse for most practical applications; but it is still useful in that it allows to express any alternating quasimorphism as an infinite sum of Brooks quasimorphisms (a quasimorphism $\varphi \in Q(G)$ is \textit{alternating} if $\varphi(g^{-1}) = - \varphi(g)$ for any $g \in G$). Indeed, the proof of Theorem \ref{intro_gdt} can be adapted to show:

\begin{theorem}[Theorem \ref{gdt3}]
Let $F^+$ be a subset of $F$ which contains precisely one of $g, g^{-1}$ for any $1 \neq g \in F$. Then any alternating quasimorphism may be uniquely written as an infinite sum of $\{ H_w \}_{w \in F^+}$, or as an infinite sum of $\{ h_w \}_{w \in F^+}$.
\end{theorem}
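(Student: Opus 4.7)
The plan is to mimic the inductive strategy behind Theorem \ref{intro_gdt}, but to work directly with the non-homogenized Brooks quasimorphisms, exploiting the fact that both sides of the desired identity are alternating.

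First I would observe that each $H_w$ is itself alternating: an occurrence of $w$ in the reduced word $g^{-1}$ corresponds to an occurrence of $w^{-1}$ in $g$, so $H_w(g^{-1}) = -H_w(g)$, and similarly for $h_w$. A pointwise sum $\sum_{w \in F^+} a_w H_w$ is then well-defined as a function (only finitely many $w$ satisfy $H_w(g) \neq 0$ for fixed $g$) and automatically alternating. Since $\varphi$ is alternating too, it is enough to match $\varphi$ on $F^+ \cup \{e\}$; both sides vanish at $e$, so the task reduces to solving the system $\varphi(v) = \sum_{w \in F^+} a_w H_w(v)$ for $v \in F^+$.

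The key computation is that this system is triangular with respect to word length. Indeed, $H_w(v) = 0$ whenever $|w| > |v|$, and if $|w| = |v|$ and $w$ or $w^{-1}$ occurs as a subword of $v$, then $w \in \{v, v^{-1}\}$; since $F^+$ excludes $v^{-1}$, this forces $H_w(v) = \delta_{w,v}$ at equal length. Thus I can define
\[
a_v := \varphi(v) - \sum_{\substack{w \in F^+ \\ |w| < |v|}} a_w H_w(v)
\]
inductively on $|v|$, with only finitely many nonzero terms in the sum. By construction $\sum_w a_w H_w$ agrees with $\varphi$ on $F^+$ and hence on all of $F$, and uniqueness follows from the same triangular argument applied to any two hypothetical representations. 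The $\{h_w\}$ case goes through identically, since $h_w(v) = \delta_{w,v}$ still holds at equal length.

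The only subtlety worth flagging is that nothing here requires $w$ to be non-self-overlapping: the restriction to a special $\mathcal{F}$ in Theorem \ref{intro_gdt} is needed to control the homogenizations $\overline{h_w}$ and obtain convergence in the pointwise topology on $H^2_b(F)$, whereas for the present statement we only need pointwise finiteness of each sum and the triangular structure above, both of which hold for arbitrary $F^+$. I therefore do not expect a real obstacle; the only thing to be careful about is to verify the triangular base case for the $h_w$ version and to confirm that the induction uniformly produces the \emph{same} $a_v$ regardless of the enumeration of words of a given length, which is immediate since words of equal length do not interact in the defining equation.
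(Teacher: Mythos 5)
Your proof is correct and takes essentially the same route as the paper: the paper's proof of Theorem \ref{gdt2} also proceeds by induction on word length, defining the coefficient at stage $N+1$ as the difference $f(w) - f_N(w)$, which is precisely your triangular recursion $a_v := \varphi(v) - \sum_{|w| < |v|} a_w H_w(v)$, and uniqueness is proved in the paper by the same ``look at a minimal-length word with nonzero coefficient'' argument you invoke. The paper then derives Theorem \ref{gdt3} by observing, as you do, that the only property used is $\varphi_w(w_0) = \pm\mathbbm{1}\{w = w_0^{\pm 1}\}$ for $|w| \geq |w_0|$, which holds for both $H_w$ and $h_w$ without any non-self-overlapping assumption.
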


Crucially, not \textit{all} infinite sums of Brooks quasimorphisms are quasimorphisms: some of these are real-valued functions $f$ such that the quantity $|f(g) + f(h) - f(gh)|$ is unbounded. In the paper \cite{Grigor}, after proving Theorem \ref{intro_gdt}, Grigorchuk discusses the question of which of these sums are quasimorphisms, showing that $\ell^1$ sums are, but that it is not a necessary condition. Indeed, for the family $I = \{a^n(ab)^nb : n \geq 1\}$, any infinite sum of $\{ \overline{h_w} \}_{w \in I}$ with bounded coefficients is a quasimorphism.

A much more general criterion than just $\ell^1$ sums is given by Calegari in the book \cite{Calegari}. A family of words $C \subset F$ is \textit{compatible} if there exist words $u, v$ such that the product $uv$ is reduced and any $w \in C$ can be written as $u'v'$, where $u' \neq 1$ is a suffix of $u$ and $v' \neq 1$ is a prefix of $v$. Then:

\begin{proposition}[Calegari]
\label{intro_Cal}

Let $(\alpha_w)_{w \in F}$ be coefficients such that there exists a finite $T \geq 0$ such that for any compatible family $C$:
$$\sum\limits_{w \in C} |\alpha_w| \leq T.$$
Then
$$\sum\limits_{w \in F} \alpha_w h_w$$
is a quasimorphism.
\end{proposition}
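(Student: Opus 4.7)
The goal is to bound the defect $\delta f(g,h) := f(g) + f(h) - f(gh)$ uniformly in $g, h \in F$, where $f = \sum_{w} \alpha_w h_w$. Since $h_w(g) = 0$ whenever $|w| > |g|$, the sum defining $f$ is locally finite, and one may freely commute $\delta$ with $\sum_w$; the task reduces to uniformly bounding $\sum_w \alpha_w\, \delta h_w(g,h)$. I will produce, for each pair $(g,h)$, three compatible families $C_1, C_2, C_3$ whose union contains every $w$ with $\delta h_w(g, h) \neq 0$, so that the compatibility hypothesis gives
\[
|\delta f(g, h)| \;\leq\; M\sum_{i=1}^{3}\sum_{w \in C_i} |\alpha_w| \;\leq\; 3MT
\]
for an absolute constant $M$, a bound independent of $(g,h)$.

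\textbf{Constructing the families.} Take the reduced factorizations $g = pc$ and $h = c^{-1}s$, where $c$ is the maximal piece cancelled in the product, so that $gh = ps$ is also reduced. A disjoint occurrence of $w$ or $w^{-1}$ inside any of $g$, $h$, $gh$ is either \emph{interior} to one of the four segments $p, c, c^{-1}, s$, or it \emph{straddles} one of the three junctions $p\,|\,c$ in $g$, $c^{-1}\,|\,s$ in $h$, or $p\,|\,s$ in $gh$. Interior occurrences in $p$ correspond bijectively between $g$ and $gh$, and similarly for $s$ between $h$ and $gh$; interior occurrences of $w$ in $c$ are in bijection, via reversal, with interior occurrences of $w^{-1}$ in $c^{-1}$, and the sign convention in the definition of $h_w$ makes their contributions to $h_w(g)$ and $h_w(h)$ cancel. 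Thus, up to a uniformly bounded boundary discrepancy which will be absorbed in $M$, only straddling occurrences contribute to $\delta h_w(g,h)$. A $w$ straddling $p\,|\,c$ decomposes as $w = u'v'$ with $u'\neq 1$ a suffix of $p$ and $v'\neq 1$ a prefix of $c$, and since $pc = g$ is reduced these $w$'s form a compatible family $C_1$ with $(u,v) = (p,c)$. Identically, the straddling families at $c^{-1}\,|\,s$ and $p\,|\,s$ are compatible with $(u,v) = (c^{-1}, s)$ and $(u,v) = (p, s)$ respectively, giving $C_2$ and $C_3$; in each case $uv$ is reduced by construction.

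\textbf{Main obstacle.} The delicate point is the claim that interior occurrences contribute zero to $\delta h_w(g,h)$ up to a bounded boundary error: because $h_w$ uses \emph{maximal} disjoint packings rather than total counts, a maximal packing of $gh$ need not restrict to maximal packings of $p$ and $s$ separately, and one must show that any such mismatch near a junction involves only a constantly bounded number of copies. This is essentially a local version of the classical estimate $|\delta h_w(g,h)| \le 3$ for a single small Brooks quasimorphism, and the work is to make this local estimate vanish whenever $w$ sits entirely inside one of the four interior segments, leaving only the straddling $w$'s for a nontrivial contribution. Once this localization is in place, the bound above closes the argument.
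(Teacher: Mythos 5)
Your overall plan---localize $\delta^1 h_w(g,h)$ to the junctures of the tripod $p, c, c^{-1}, s$ and sum over compatible families---is the right strategy and essentially the paper's. However, your acknowledged \textbf{main obstacle} is a real gap, and the phrase ``up to a uniformly bounded boundary discrepancy absorbed in $M$'' conceals a danger. There are infinitely many $w$ meeting no juncture of a given tripod; if each such $w$ contributed a nonzero bounded amount to $\delta^1 h_w(g,h)$, the sum $\sum_w \alpha_w\,\delta^1 h_w(g,h)$ would have an uncontrollable tail, because the compatibility hypothesis only controls the $\alpha_w$ concentrated near a single juncture, not an aggregate of small contributions spread over all of $F$. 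What you need, and what is true though you do not prove it, is \emph{exact} vanishing: $\delta^1 h_w(g,h)=0$ whenever no occurrence of $w$ or $w^{-1}$ straddles any of $p|c$, $c^{-1}|s$, $p|s$. The proof is short: if no occurrence of $w^{\pm1}$ straddles $p|c$, every disjoint packing of $g = pc$ splits into a packing of $p$ and one of $c$, maximality is preserved in both directions, so $c_w(g)=c_w(p)+c_w(c)$ and hence $h_w(g)=h_w(p)+h_w(c)$; likewise $h_w(h)=h_w(c^{-1})+h_w(s)$ and $h_w(gh)=h_w(p)+h_w(s)$. Since $h_w$ is alternating, $h_w(c^{-1})=-h_w(c)$, and the three identities give $\delta^1 h_w(g,h)=0$. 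You should also notice that the surviving $w$'s lie in $j(p|c)\cup j(c^{-1}|s)\cup j(p|s)$ \emph{together with their inverses} (a point you omit: $w^{-1}$ may straddle even when $w$ does not), so you in fact have six compatible families, and $|\delta^1 h_w| \le D(h_w) \le 3$ yields $|\delta^1 f(g,h)| \le 18T$.

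The paper organizes the same content more economically: since every $h_w$ is alternating, so is $f=\sum_w\alpha_w h_w$, whence Lemma~\ref{rd} gives $D(f)\le 3\tilde{D}(f)$. This abstractly disposes of the tripod and reduces everything to the case $c = 1$, where only the single juncture $g|h$ remains, the exact vanishing is precisely the content of Proposition~\ref{bigsmall}, and $\delta^1 f(g,h)$ is supported on $j(g|h)$ and its inverse. Your tripod argument essentially re-derives Lemma~\ref{rd} by hand in this special case without executing it; invoking the lemma directly would shorten and close your proof.
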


To our knowledge, so far there has been no example of a quasimorphism of the free group that is not equivalent to one satisfying the condition above. \\

Another example of a class of quasimorphisms of the free group was given by Rolli in \cite{Rolli}, and the construction generalizes to quasimorphisms of free products. These will be relevant in the last section of this thesis. \\

The \textit{second} bounded cohomology of the free group is especially important. Indeed, by a theorem of Huber \cite[Theorem 2.14]{huber}, given two groups $G, H$ and a surjective homomorphism $f : G \to H$, the induced map in bounded cohomology $H^2_b(f) : H^2_b(H) \to H^2_b(G)$ is an isometric embedding. It follows that the second bounded cohomology of a finitely generated group is a subspace of $H^2_b(F)$, for a free group of large enough rank.

Moving to higher degrees, also here the free group plays a special role: by the work of Frigerio, Pozzetti and Sisto \cite{FPS}, which builds on that of Hull and Osin \cite{HO}, it is possible to promote a non-trivial class in $H^n_b(F)$ to a non-trivial class in the bounded cohomology of an \textit{acylindrically hyperbolic group}, a very rich class of groups introduced by Osin \cite{ahg}, which contains non-elementary hyperbolic and relatively hyperbolic groups, the mapping class group of the $p$-punctured closed orientable surface of genus $g$ (provided that $3g + p \geq 6$), $Out(F)$, groups acting geometrically on a proper $CAT(0)$ space with a rank one isometry, and fundamental groups of several graphs of groups. It is now known that $H^3_b(F)$ is uncountably dimensional \cite{h3f}. Surprisingly, from here on nothing is known. In particular, it is not known whether $H^4_b(F)$ is zero or not. \\

Just as ordinary group cohomology, bounded cohomology is equipped with the \textit{cup product}: a graded-commutative map $\smile : H^n_b(G) \times H^m_b(G) \to H^{n+m}_b(G)$. In particular, we can look at the cup product $\smile : H^2_b(F) \times H^2_b(F) \to H^4_b(F)$. Since we have many examples of non-trivial classes in $H^2_b(F)$, one could hope to find the first examples of non-trivial classes in $H^4_b(F)$ this way. In the past two years, two different articles \cite{Monod, Heuer} showed that this approach fails, at least for Brooks and Rolli quasimorphisms:

\begin{theorem}[Bucher, Monod]
\label{intro_BM}
Let $\varphi, \psi$ be big Brooks quasimorphisms of the non-abelian free group of finite rank $F$. Then $[\delta^1 \varphi] \smile [\delta^1 \psi] = 0 \in H^4_b(F)$.
\end{theorem}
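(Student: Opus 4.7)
The plan is to construct an explicit bounded 3-cochain $\eta \in C^3_b(F;\mathbb{R})$ with $\delta^3 \eta = \delta^1\varphi \smile \delta^1\psi$. In the full (unbounded) cochain complex, the 4-cocycle $\delta^1\varphi \smile \delta^1\psi$ is tautologically a coboundary: by the Leibniz rule for the cup product and the fact that $\delta^1\psi$ is a cocycle, one has $\delta^1\varphi \smile \delta^1\psi = \delta^3(\varphi \smile \delta^1\psi)$. The sole obstruction to triviality in $H^4_b(F)$ is that this natural primitive is unbounded, since $\varphi$ is. The strategy is then to modify it by an unbounded 2-coboundary $\delta^2\mu$ so that the sum becomes bounded.

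A symmetric attempt with $\mu(g,h) = \varphi(g)\psi(h)$ yields $\varphi \smile \delta^1\psi + \delta^2\mu = \delta^1\varphi \smile \psi$, which merely shifts the unboundedness from $\varphi$ to $\psi$. The proof must therefore exploit the specific combinatorial structure of Brooks quasimorphisms. The key property of $\varphi = H_u$ is that it is a counting function: $H_u(g)$ records the signed occurrences of $u$ in the reduced word for $g$, and its defect $\delta^1 H_u(g,h)$ is localized at the cancellation junction of $g \cdot h$, with magnitude controlled by $|u|$. Consequently, the 4-cochain $\delta^1\varphi(g_1,g_2)\,\delta^1\psi(g_3,g_4)$ is determined by what happens in two bounded windows inside $g_1 g_2$ and $g_3 g_4$, of sizes controlled by $|u|$ and $|v|$.

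The construction of $\mu$ is then guided by this locality: I would define $\mu(g,h)$ as a weighted sum indexed by pairs consisting of a short suffix of $g$ and a short prefix of $h$ whose concatenated length is bounded in terms of $|u|$, with weights encoding the counting structure of $H_u$. Expanded on a triple $(g_1,g_2,g_3)$, the sum $\varphi \smile \delta^1\psi + \delta^2\mu$ involves the reductions occurring at the junctions of $g_1g_2$ and $g_2g_3$; the combinatorial content of the proof is that all contributions coming from positions outside the junction windows cancel in pairs, leaving an expression whose values are uniformly bounded by a constant depending only on $|u|$ and $|v|$. The resulting bounded cochain $\eta := \varphi \smile \delta^1\psi + \delta^2\mu$ is then the required bounded primitive.

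The main obstacle is precisely this long-range cancellation: one must check, across the four terms of $\delta^2\mu(g_1,g_2,g_3)$, that the counting contributions away from the junctions pair up with opposite signs and vanish. This pairing relies essentially on the uniqueness of reduced form in $F$, which is why the argument is particular to Brooks quasimorphisms and does not transfer directly to arbitrary quasimorphisms. A more conceptual alternative, fitting the decomposition framework of \cite{Heuer} developed later in this thesis, would be to realize $H_u$ as the sum of the pieces of a natural decomposition and verify vanishing of the associated cup products at the level of decompositions; but the underlying combinatorial input about cancellation at junctions would be of the same nature.
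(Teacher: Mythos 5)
The paper does not prove this theorem; it is cited from Bucher--Monod \cite{Monod}, and the thesis explicitly records that their argument is \emph{not} combinatorial: it ``follows from the triviality of a cup product in the continuous bounded cohomology $H^\bullet_{cb}(Aut(T), E)$ of the automorphism group of a locally finite tree with some coefficient module $E$, and employs the techniques of continuous bounded cohomology of locally compact groups with Banach coefficients.'' Your proposal aims for something genuinely different --- an explicit bounded primitive built by hand --- which would be a nice contribution if carried out, but as written there is a real gap.

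Your formal manipulations are correct: $\delta^3(\varphi \smile \delta^1\psi) = \delta^1\varphi \smile \delta^1\psi$ by Lemma \ref{cup_formula}, and adding $\delta^2(\varphi\smile\psi)$ only relocates the unboundedness. But the substance of the argument --- the definition of the correcting cochain $\mu$ and the verification that $\varphi\smile\delta^1\psi + \delta^2\mu$ is bounded --- is never given. You describe $\mu$ only as ``a weighted sum indexed by pairs consisting of a short suffix of $g$ and a short prefix of $h$ \dots with weights encoding the counting structure of $H_u$,'' and you describe the cancellation only as something ``one must check.'' That checking is the entire proof. Without an explicit $\mu$ and a verified cancellation, what remains is the standard observation that $\delta^1 H_u(g,h)$ is localized near the center of the tripod (Proposition \ref{bigsmall}); this is the same locality that underlies $\Delta$-continuity of $H_u$, and it is by itself not enough.

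The alternative you suggest in the last paragraph --- running this through the decomposition framework --- is known to fail for precisely this case. The thesis makes this explicit at the end of Subsection \ref{subsec_so}: ``the vanishing of the cup product of two general big Brooks quasimorphisms does not follow from any of our results, and is still exclusive to Theorem \ref{intro_BM}.'' The obstruction is concrete: Heuer's Theorem \ref{Heuer_B} requires one factor to be $\Delta$-\emph{decomposable}, and for a self-overlapping word $u$ there is no decomposition making $H_u$ decomposable (the decomposition $\Delta_w$ of Example \ref{Dec_ex} only makes sense for non-self-overlapping $w$, and $h_u$ then fails $\Delta$-continuity, cf.\ Example \ref{small_Brooks}). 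Both $H_u$ and $H_v$ are $\Delta$-continuous for every $\Delta$, but continuity-against-continuity yields nothing. So the decomposition route is a dead end here, and the direct combinatorial construction you sketch is left entirely unproved; the only known proof goes through continuous bounded cohomology of $Aut(T)$.
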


\begin{theorem}[Heuer]
\label{intro_Heuer}

Let $\varphi, \psi$ be quasimorphisms of the non-abelian free group of finite rank $F$, that are either Brooks quasimorphisms on a non-self-overlapping word or Rolli quasimorphisms. Then $[\delta^1 \varphi] \smile [\delta^1 \psi] = 0 \in H^4_b(F)$.
\end{theorem}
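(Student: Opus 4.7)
The plan is to exhibit, for any pair $\varphi, \psi$ of the allowed type, an explicit bounded $3$-cochain $\kappa \in C^3_b(F)$ whose coboundary equals the cup product cocycle $\delta^1\varphi \smile \delta^1\psi$, thereby killing its class in $H^4_b(F)$. The strategy rests on a common feature of small Brooks quasimorphisms on non-self-overlapping words and of Rolli quasimorphisms: both admit a canonical decomposition as locally finite sums indexed by combinatorial occurrences in the reduced or normal-form spelling of an element, and for both the defect $\delta^1\varphi(g,h) = \varphi(g)+\varphi(h)-\varphi(gh)$ is \emph{localized near the cancellation region} of $g \cdot h$ in the Cayley tree of $F$.

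First I would make this localization precise. For $\varphi = h_w$ with $w$ non-self-overlapping, I would write $h_w(g)$ as a sum of indicator functions over the positions at which $w$ occurs in the reduced spelling of $g$. Non-self-overlappingness forces $H_w = h_w$ and guarantees that the occurrences of $w$ in $gh$ are exactly those of $g$, those of $h$, and at most a uniformly bounded number of new ones straddling the cancellation vertex, so $\delta^1\varphi(g,h)$ is supported at that vertex and takes values in a bounded set of integers. The analogous statement for Rolli quasimorphisms follows from uniqueness of the normal form over a free product: only the syllables adjacent to the cancellation contribute to $\delta^1\varphi(g,h)$.

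With this in hand, I would construct $\kappa$ by the natural pairing across the four arguments. Given a $4$-tuple $(g_1, g_2, g_3, g_4)$, the vertices $1,\, g_1,\, g_1 g_2,\, g_1 g_2 g_3,\, g_1 g_2 g_3 g_4$ form a configuration in the Cayley tree. The defect $\delta^1\varphi(g_1, g_2)$ is localized near $g_1$ and $\delta^1\psi(g_3, g_4)$ near $g_1 g_2 g_3$. Following Heuer's framework of decompositions, I would define $\kappa(g_1, g_2, g_3)$ as a signed, weighted sum over the decomposition pieces of $\varphi$ and $\psi$ that straddle the tree geodesic between these two defect loci, chosen so that the simplicial coboundary telescopes to the product of the two localized defects. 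Verifying the identity $\delta^3 \kappa = \delta^1\varphi \smile \delta^1\psi$ is then a bookkeeping calculation using the standard telescoping of tree geodesics under the cobar differential.

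The main obstacle I expect is boundedness. A priori $\kappa$ is a sum over potentially many decomposition indices, and one must show that for each $4$-tuple only a uniformly bounded number of indices contribute, with bounded total mass. This is precisely where the non-self-overlapping hypothesis in the Brooks case and the uniqueness of the reduced form over a free product in the Rolli case are essential: they rule out uncontrolled interference between occurrences and ensure that only pieces truly crossing the finite cancellation region matter. Once the uniform bound is established, $\kappa$ lies in $C^3_b(F)$ and the theorem follows. The mixed case where $\varphi$ is Brooks and $\psi$ is Rolli is handled identically, since both decomposition frameworks are instances of Heuer's common notion and the pairing formula defining $\kappa$ makes sense for either side independently.
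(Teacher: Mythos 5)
Your localization analysis of $\delta^1 h_w$ and of the Rolli defect is correct and matches what the paper establishes (Proposition \ref{bigsmall}, Proposition \ref{D-dec-prop}), but the heart of your argument --- the actual construction of the bounded primitive $\kappa$ --- is missing. You never write down a formula for $\kappa$, and the assertion that it can be ``chosen so that the simplicial coboundary telescopes to the product of the two localized defects,'' with the verification being ``a bookkeeping calculation,'' is precisely the nontrivial content of Heuer's Theorem (Theorem \ref{Heuer_B} in this paper), which occupies most of \cite{Heuer} and which the paper deliberately does not reprove but cites. The difficulty is not local: $(\delta^1\varphi\smile\delta^1\psi)(g_1,g_2,g_3,g_4)$ is a product of two quantities concentrated at two medians of the tree that can be arbitrarily far apart, and no single $3$-simplex appearing in $\delta^3\kappa(g_1,g_2,g_3,g_4)$ sees both regions at once; arranging five values of $\kappa$ to combine into that product is where all the work lies (in Heuer's proof this requires a chain of intermediate cochains and estimates --- see the references to Propositions 4.5, 4.8--4.10, Claim 4.11 and Lemma 4.13 of \cite{Heuer} in Subsection \ref{bound} --- not a one-line telescoping).

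Your discussion of boundedness also misidentifies the mechanism. A primitive of the kind you sketch involves contributions from pieces along a geodesic segment whose length grows with the group elements, so it is not true that ``only a uniformly bounded number of indices contribute''; boundedness comes from the fact that the relevant differences of cocycle values decay summably as one moves along that geodesic, which is exactly the $\ell^1$ condition in the definition of $\Delta$-continuity (Definition \ref{D-cont}) --- mere locality of the defect near the cancellation region does not suffice. Note also the asymmetry the argument must respect: one quasimorphism is taken $\Delta$-decomposable and the other $\Delta$-continuous for the \emph{same} $\Delta$. The paper's proof of the stated theorem consists precisely of verifying these hypotheses (Brooks quasimorphisms on non-self-overlapping words are $\Delta$-continuous for every decomposition $\Delta$, and decomposable quasimorphisms are continuous for their own decomposition, Proposition \ref{D-Dec-D-cont}) and then invoking Heuer's Theorem \ref{Heuer_B}. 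As written, your proposal reduces the statement to that same theorem without proving it, so there is a genuine gap.
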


The two approaches are very different. The first theorem follows from the triviality of a cup product in the continuous bounded cohomology $H^\bullet_{cb}(Aut(T), E)$ of the automorphism group of a locally finite tree with some coefficient module $E$, and employs the techniques of continuous bounded cohomology of locally compact groups with Banach coefficients. 

The proof of the second theorem is more combinatorial. In the paper \cite{Heuer}, the author introduces the notion of \textit{decomposition} of the free group. These are, roughly speaking, ways of writing down elements of the free group in a non-standard alphabet, in such a way that the concatenation of the decompositions of two elements $g, h$ does not differ too much from the decomposition of the element $gh$ around the center of the tripod with endpoints $1, g, gh$. A decomposition $\Delta$ induces quasimorphisms that are called \textit{$\Delta$-decomposable}, and generalize the examples of Brooks quasimorphisms on non-self-overlapping words and Rolli quasimorphisms. There is another class of quasimorphisms that behave well with respect to $\Delta$, which are called \textit{$\Delta$-continuous}. The theorem then follows from a more general result proving vanishing of the cup product of a $\Delta$-decomposable quasimorphism with a $\Delta$-continuous one. \\

Given such a result, and Grigorchuk's density theorem, one would hope that the vanishing of the whole cup product is a direct corollary. However, the pointwise topology for which Grigorchuk's density theorem holds is a priori too coarse for the cup product to be continuous (although of course it is impossible to prove this without proving that $H^4_b(F) \neq 0$). The right topology to obtain continuity of the cup product is the topology induced by the Gromov seminorm on $H^n_b(F)$, $n = 2, 4$, which is equivalent to the \textit{defect topology} for $n = 2$. But even then, for this topology it is unknown if $H^4_b(F)$ is Hausdorff or not, so a simple application of density can only give cup products that lie in the closure of $0 \in H^4_b(F)$.

\subsection{Present work}

The ultimate goal would be to extend triviality of the cup product to the whole space $H^2_b(F)$. A promising approach is to try and prove triviality of the cup product for some infinite sums of Brooks quasimorphisms, since we know that these represent all bounded cohomology classes. However, since no criterion describing \textit{all} sums that give rise to quasimorphisms is available, it seems reasonable to restrict to studying a subspace for which we do have a criterion. So given coefficients $(\alpha_w)_{w \in F^+}$ that are non-zero \textit{only} on non-self-overlapping words, let us denote $\varphi_\alpha := \sum \alpha_w h_w$. We denote by $Cal$ the space of \textit{Calegari quasimorphisms}, i.e., of those $\varphi_\alpha$ that satisfy Calegari's condition in Proposition \ref{intro_Cal}. Notice that the condition implies that $\alpha$ is bounded. The restriction to non-self-overlapping words is motivated by the fact that Heuer's techniques (and Heuer's Theorem \ref{intro_Heuer}) do not apply to self-overlapping ones.

We will look at many different subspaces of $Cal$, among which are the finite sums of Brooks quasimorphisms $\Sigma_{Br}$ and the $\ell^1$ sums of Brooks quasimorphisms $\ell^1_{Br}$. The main novelty of this work is to put into Heuer's framework sums which were previously considered anomalies: for instance those supported on the family $\{a^n (ab)^n b : n \geq 1\}$ from Grigorchuk's paper. We say that a family $I$ is \textit{independent} if no two words in it overlap, and \textit{symmetric} if $I = I^{-1}$. Grigorchuk's example generalizes to sums supported on symmetric independent families; the space of finite sums of such quasimorphisms is denoted by $\Sigma_{Ind}$, and that of $\ell^1$ sums by $\ell^1_{Ind}$. There are other relevant subspaces whose definitions are more technical and will be spared in this introduction. The lattice of these subspaces is summarized in the following diagram,  where arrows indicate inclusion:

\begin{center}
	\begin{tikzcd}
		& & Cal \\
		& \ell^1_{Ind} \arrow[ru] & & \kappa(c_0) \arrow[lu] \\
		\Sigma_{Ind} \arrow[ru] & & \ell^1_{Br} \arrow[lu] \arrow[ru] & & \kappa(\ell^1) \arrow[lu] \\
		& & & w\ell^1_{Br} \arrow[lu] \arrow[ru] \\
		& & \Sigma_{Br} \arrow[ru] \arrow[lluu]
	\end{tikzcd}
\end{center}

We can now state part of our main theorem:

\begin{theorem}[Theorem \ref{th_main}]
Let $\varphi_\alpha, \varphi_\beta \in Cal$, and let $\phi$ be a Rolli quasimorphism.

\begin{enumerate}
\item If $\alpha$ and $\beta$ are supported on the same symmetric independent family, then $[\delta^1 \varphi_\alpha] \smile [\delta^1 \varphi_\beta] = 0$.

\item If $\varphi_\alpha \in \ell^1_{Ind}$ and $\varphi_\beta \in \kappa(\ell^1)$, then $[\delta^1 \varphi_\alpha] \smile [\delta^1 \varphi_\beta] = 0$.

\item If $\varphi_\beta \in \kappa(\ell^1)$, then $[\delta^1 \phi] \smile [\delta^1 \varphi_\beta] = 0$.
\end{enumerate}
\end{theorem}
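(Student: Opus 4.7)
The plan is to reduce each part to Heuer's cup-product vanishing theorem, which asserts that if a quasimorphism $\varphi$ is $\Delta$-decomposable and another quasimorphism $\psi$ is $\Delta$-continuous for a common decomposition $\Delta$ of $F$, then $[\delta^1\varphi]\smile[\delta^1\psi]=0$ in $H^4_b(F)$. For each of the three cases I will therefore exhibit a natural decomposition and verify that one factor sits on the decomposable side and the other on the continuous side of this dichotomy.

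For part (1), the symmetric independent family $I$ supporting both $\alpha$ and $\beta$ provides a canonical parsing of any reduced word $g\in F$ into blocks that are either maximal occurrences of elements of $I$ or residual single letters; independence forces the $I$-blocks to be pairwise non-overlapping, and symmetry makes the parsing invariant under inversion. I will promote this parsing to a decomposition $\Delta_I$ and check the concatenation axiom near the tripod center, where any interference between the parsings of $g$, $h$, and $gh$ is localized to a bounded neighborhood of that center. Both $\varphi_\alpha$ and $\varphi_\beta$ will then be $\Delta_I$-decomposable essentially by construction, since each evaluates, up to bounded error, to the sum of its coefficients over the $I$-blocks of the decomposition, and Heuer's theorem delivers the vanishing. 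Part (2) reuses the same decomposition $\Delta_I$ attached to the independent family supporting $\alpha$, giving $\Delta_I$-decomposability of $\varphi_\alpha$; the additional work is to show that every $\varphi_\beta\in\kappa(\ell^1)$ is $\Delta_I$-continuous. Part (3) is the analogous statement in which $\phi$ is decomposable with respect to the letter decomposition $\Delta_L$ of $F$ (the prototypical case from Heuer's paper), with $\varphi_\beta\in\kappa(\ell^1)$ again playing the role of the continuous factor.

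The main obstacle I anticipate is precisely this $\Delta$-continuity verification for $\kappa(\ell^1)$ in parts (2) and (3). Continuity measures how much $\varphi_\beta$ can ``see'' across a seam introduced by the parsing, and only the Brooks summands of $\beta$ whose defining word straddles such a seam contribute to the relevant defect; the $\ell^1$ hypothesis should then bound this contribution uniformly in the seam by a tail of a summable series, which is exactly the kind of estimate that a mere sup bound (as in $Cal$) cannot provide. This explains both why $\varphi_\beta$ must lie in $\kappa(\ell^1)$ rather than the larger space $Cal$, and where the gap lies between this theorem and the stronger vanishing on all of $Cal\times Cal$ envisioned at the end of the introduction.
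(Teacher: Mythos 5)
Your overall framework is right: in each case one finds a decomposition $\Delta$, checks that one factor is $\Delta$-decomposable and the other $\Delta$-continuous, and invokes Heuer's theorem. Parts (1) and (3) are handled correctly along these lines — part (1) via $\Delta_I$ with both quasimorphisms decomposable (hence one is automatically continuous), and part (3) via $\Delta_{Rolli}$ together with the $\Delta$-continuity of $\kappa(\ell^1)$. Those two match the paper's argument.

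However, there is a genuine gap in part (2). An element $\varphi_\alpha\in\ell^1_{Ind}$ is, by definition, a coefficient map of the form $\alpha=\sum_{i\ge 1}\alpha_i$ with each $\alpha_i$ supported on its own symmetric independent family $I_i$, subject only to $\sum_i\|\alpha_i\|_\infty<\infty$. The union $\bigcup_i I_i$ is in general \emph{not} independent, so there is no single decomposition $\Delta_I$ with respect to which $\varphi_\alpha$ is decomposable, and your phrase ``the independent family supporting $\alpha$'' does not refer to anything. One cannot apply Heuer's theorem once; one must apply it infinitely many times, once per family $I_i$, obtaining $[\delta^1\varphi_{\alpha_i}]\smile[\delta^1\varphi_\beta]=0$ with primitive $\pi_i$ for each $i$. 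The nontrivial step you are missing is then to sum these primitives: one needs a quantitative bound $\|\pi_i\|_\infty=\mathcal{O}(\|\alpha_i\|_\infty\cdot S\kappa_\beta)$, \emph{uniform} in $i$, so that $\pi:=\sum_i\pi_i$ converges to a bounded primitive of $\delta^1\varphi_\alpha\smile\delta^1\varphi_\beta$. This requires two ingredients absent from your proposal: (i) an explicit norm estimate for the primitive produced by Heuer's construction (the paper's Lemma \ref{lbound}, which is not in Heuer's original article and is proved there precisely for this purpose), and (ii) the fact that $\varphi_\beta\in\kappa(\ell^1)$ is $\Delta$-continuous with a single uniform constant $S_{\varphi_\beta,\Delta}\le 36\,S\kappa_\beta$ for \emph{all} decompositions $\Delta$ at once, so that the bound is independent of $i$. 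Without the summation-of-primitives argument, the pure ``find one good $\Delta$'' strategy fails for $\ell^1_{Ind}$.
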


As we have mentioned before, the natural topology on $H^4_b(F)$ (the one induced by the Gromov seminorm) is a priori not Hausdorff, so any density argument can only give cup products which have vanishing Gromov seminorm. Results of this kind are still of interest (analogously to Gromov's celebrated vanishing theorem \cite{grom}), and so we include them in our main theorem:

\begin{theorem}[Theorem \ref{th_main}]
\label{intro_main}
With the same notation,
\begin{enumerate}
\setcounter{enumi}{3}

\item If $\varphi_\alpha \in \ell^1_{Ind}$ and $\varphi_\beta \in \kappa(c_0)$, then $[\delta^1 \varphi_\alpha] \smile [\delta^1 \varphi_\beta]$ has vanishing Gromov seminorm.

\item If $\varphi_\beta \in \kappa(c_0)$, then $[\delta^1 \phi] \smile [\delta^1 \varphi_\beta]$ has vanishing Gromov seminorm.
\end{enumerate}
\end{theorem}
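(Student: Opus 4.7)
The plan is to deduce items (4) and (5) from the already-stated items (2) and (3) by a density-plus-continuity argument, exploiting precisely the fact (noted in the introduction) that the cup product $\smile : H^2_b(F) \times H^2_b(F) \to H^4_b(F)$ is continuous with respect to the topology induced by the Gromov seminorm, and that this topology on $H^2_b(F)$ is equivalent to the defect topology on quasimorphisms. The natural approximation to use is truncation: for $\beta = (\beta_n) \in c_0$, set $\beta^{(N)}$ to be the sequence agreeing with $\beta$ on the first $N$ coordinates and vanishing thereafter. Each $\beta^{(N)}$ is finitely supported, hence lies in $\ell^1$, so $\varphi_{\beta^{(N)}} \in \kappa(\ell^1)$.

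The key technical step I would establish first is that $\varphi_{\beta^{(N)}} \to \varphi_\beta$ in defect seminorm as $N \to \infty$, i.e.\ that the map $\kappa$ is continuous from $c_0$ (with its sup norm) into the space of quasimorphisms (with the defect seminorm). Since $\kappa$ should be built linearly as $\beta \mapsto \sum_n \beta_n \psi_n$ for a fixed collection $\{\psi_n\}$ of Brooks-type pieces with suitably disjoint combinatorial support, the defect of the tail $\sum_{n > N} \beta_n \psi_n$ ought to be controlled by a constant times $\sup_{n > N} |\beta_n|$, with the constant depending only on the combinatorics of the supporting family and not on the individual $\beta_n$. This is exactly what makes $\kappa(c_0) \subset Cal$ in the first place, so the estimate should already be available (or a minor variant of the estimate justifying membership in $Cal$).

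Granted this defect approximation, both items follow uniformly. For (4), item (2) gives $[\delta^1 \varphi_\alpha] \smile [\delta^1 \varphi_{\beta^{(N)}}] = 0$ for every $N$ (since $\beta^{(N)}$ has finite support and so $\varphi_{\beta^{(N)}} \in \kappa(\ell^1)$, while $\varphi_\alpha \in \ell^1_{Ind}$ by hypothesis). Since $[\delta^1 \varphi_{\beta^{(N)}}] \to [\delta^1 \varphi_\beta]$ in the Gromov seminorm (by the defect convergence above) and the cup product is continuous in this topology, the limit $[\delta^1 \varphi_\alpha] \smile [\delta^1 \varphi_\beta]$ lies in the closure of $\{0\} \subset H^4_b(F)$, which by definition means it has vanishing Gromov seminorm. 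For (5), one runs the identical argument using item (3) in place of item (2), with the Rolli quasimorphism $\phi$ in place of $\varphi_\alpha$.

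The main obstacle, and really the only content-bearing step, is the $\sup$-to-defect estimate that underlies the approximation $\varphi_{\beta^{(N)}} \to \varphi_\beta$; everything else is formal once one has (2), (3), and continuity of the cup product. A second, more philosophical caveat is that because $H^4_b(F)$ is not known to be Hausdorff in this topology, the argument cannot be upgraded to genuine vanishing of the cup product without an independent input on the separation properties of $H^4_b(F)$ — which is precisely why the conclusion is stated only up to vanishing seminorm.
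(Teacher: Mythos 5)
Your overall strategy is exactly the one the paper uses: approximate $\varphi_\beta$ by finitely supported truncations $\varphi_{\beta^{(N)}}$ (which lie in $\Sigma_{Br} \subset \kappa(\ell^1)$), apply items (2) and (3) to the approximants, and pass to the limit using continuity of the cup product in the Gromov/defect topology, exploiting that vanishing Gromov seminorm is the same as lying in the closure of $0$. The paper packages this as Lemma \ref{kc0} (defect-density of $\Sigma_{Br}$ in $\kappa(c_0)$) plus Corollary \ref{cup_cont}.

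However, the specific estimate you propose for the density step is incorrect, and it stems from a misreading of the definition of $\kappa(c_0)$. This is not the image of a map $\kappa : c_0 \to Q(F)$; it is the set of $\varphi_\beta$ for which the sequence $\kappa_\beta(n) := \sup_C \sum_{w \in C,\, |w|>n} |\beta_w|$ (supremum over compatible families $C$) tends to $0$. You claim the tail defect should be bounded by a constant times $\sup_{|w|>N} |\beta_w|$, the constant depending only on the combinatorics of the supporting family. This fails in general: a single compatible family can contain arbitrarily many words of length $>N$, so the compatible-family sums are not controlled by the supremum of individual coefficients. (Your estimate would hold if the support of $\beta$ were an \emph{independent} family, since then each compatible family meets it in at most one point; but $\kappa(c_0)$ is a strictly larger class, and those two cases are treated separately in the main theorem.) The correct bound is the one implicit in Proposition \ref{cal}: truncating by word length, $D(\varphi_\beta - \varphi_{\beta^{(N)}}) = D(\varphi_{\beta^N}) \le 3\,\kappa_{\beta^N}(1) = 3\,\kappa_\beta(N)$, and the convergence $\kappa_\beta(N) \to 0$ is not derived from smallness of coefficients --- it \emph{is} the hypothesis $\varphi_\beta \in \kappa(c_0)$. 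With this substitution your argument is exactly the paper's; everything after the density step is correct, including your observation that the conclusion cannot be upgraded to genuine vanishing without knowing whether $H^4_b(F)$ is Hausdorff.
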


The main theorem will be proven in Subsection \ref{proof_main}, along with an estimate for the norm of the primitive of the vanishing cup products. The same techniques allow us to deal with big Brooks quasimorphisms on self-overlapping words as well:

\begin{theorem}[Corollary \ref{cupbig2}]
\label{intro_cupbig}
Let $\varphi_\alpha \in \ell^1_{Ind}$, let $\phi$ be a Rolli quasimorphism and let $w$ be any word. Then $[\delta^1 \varphi_\alpha] \smile [\delta^1 H_w] = [\delta^1 \phi] \smile [\delta^1 H_w] = 0$.
\end{theorem}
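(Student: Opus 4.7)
The corollary extends parts (2) and (3) of Theorem \ref{intro_main} to allow the right-hand factor to be the big Brooks quasimorphism $H_w$ on an arbitrary, possibly self-overlapping, word $w$. The obstruction is that when $w$ self-overlaps, $H_w$ is not itself a small Brooks quasimorphism and does not fit directly into Heuer's $\Delta$-decomposable framework, which is the engine behind Theorem \ref{intro_main}; in particular $H_w$ is not of the form $\varphi_\gamma$ for coefficients $\gamma$ supported on non-self-overlapping words.

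My plan is to replace $[\delta^1 H_w]$ by a cohomologous class already lying in $\kappa(\ell^1) \subseteq Cal$, and then invoke the main theorem directly. If $w$ is non-self-overlapping then $H_w = h_w = \varphi_\gamma$ with $\gamma = \mathbbm{1}_{\{w\}} \in \Sigma_{Br}$, and there is nothing to do. Otherwise, let $p < |w|$ be the minimal period of $w$ and write $w = yz$ with $|y| = |w| - p > 0$, so that $y$ is simultaneously a proper prefix and suffix of $w$. Then maximal runs of overlapping occurrences of $w$ in a reduced word $g$ are precisely maximal subwords of the form $yz^k$ for some $k \geq 1$ (counted with appropriate sign for $w^{-1}$). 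Counting cluster by cluster, I would establish an equivalence of quasimorphisms of the form
\[
H_w \;\sim\; \sum_{k \geq 1} c_k \, h_{yz^k},
\]
for combinatorial coefficients $c_k$. The key point is that once $k$ is large enough, $yz^k$ is forced to be non-self-overlapping, since all self-overlaps of $w$ are dictated by iterates of its minimal period $p$, and these are exhausted after finitely many copies of $z$. Hence the tail of the sum is supported on non-self-overlapping words, and one only needs to check that the $c_k$ are $\ell^1$-summable to conclude that $H_w$ represents, up to a bounded error, an element of $\ell^1_{Br} \subseteq \kappa(\ell^1)$.

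Once this reduction is in place, bilinearity of the cup product lets us split $[\delta^1 \varphi_\alpha] \smile [\delta^1 H_w]$ and $[\delta^1 \phi] \smile [\delta^1 H_w]$ into cup products covered by Theorem \ref{intro_main} parts (2) and (3), yielding the stated vanishing rather than mere Gromov-seminorm triviality. The main obstacle will be the combinatorial step above: one must verify that the clusters of overlapping occurrences really do give rise to the claimed identity (handling carefully the boundary of each cluster, which contributes to the bounded error), and then check that the resulting coefficients $c_k$ are genuinely $\ell^1$-summable. If the naive coefficients turn out only to be bounded, one would have to fall back on a telescoping rewriting of the sum to produce an $\ell^1$ tail, or else build the primitive of the cup cocycle directly out of the overlap geometry of $w$ with itself.
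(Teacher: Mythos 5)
Your reduction hinges on the claim that, writing $w = yz$ with $y$ the longest proper border (so $|z| = p$, the minimal period), the words $yz^k$ are eventually non-self-overlapping. This is false. Take $w = aba$: the minimal period is $p = 2$, so $y = a$, $z = ba$, and $yz^k = a(ba)^k$, which has $a$ as both a proper prefix and a proper suffix for every $k \geq 1$, hence is self-overlapping for all $k$. Self-overlaps of $w$ are not ``exhausted after finitely many copies of $z$'': any border of $yz^k$ is a border of $yz^{k+1}$, so once a short border exists it persists. Moreover, even setting this aside, the proposed identity $H_w \sim \sum_k c_k h_{yz^k}$ would need careful justification (occurrences of $yz^k$ do not simply enumerate maximal clusters of size $k$, so the coefficients are not obviously of any nice form, let alone $\ell^1$), and a word can have multiple incommensurable borders, so the cluster structure is not governed by a single period $p$.

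The paper takes a more direct and fundamentally different route: it never tries to push $H_w$ into $Cal$. Instead, it observes (Proposition~\ref{bigsmall}) that $\delta^1 H_w(g,h)$ depends only on the $|w|$-neighbourhood of the centre of the tripod with endpoints $1,g,gh$ --- a property that fails for $h_w$ when $w$ is self-overlapping, which is precisely why $H_w$ and not $h_w$ appears here. This local dependence makes $H_w$, and more generally any $\sum \beta_w H_w$ with $\sum |w| |\beta_w| D(H_w) < \infty$, a $\Delta$-continuous quasimorphism for \emph{every} decomposition $\Delta$ (Proposition~\ref{Ccont2}). Since $\varphi_\alpha \in \ell^1_{Ind}$ is an $\ell^1$ sum of decomposable quasimorphisms and $\phi$ is $\Delta_{Rolli}$-decomposable, Heuer's Theorem~\ref{Heuer_B} applies to each piece and the primitives sum boundedly (Theorem~\ref{cupbig}), giving the vanishing directly. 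If you want to salvage your plan, the cleaner move is not to hunt for a non-self-overlapping rewriting of $H_w$, but to prove $\Delta$-continuity of $H_w$ outright, which is what the paper does.
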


This theorem follows from a more general vanishing result treating some $\ell^1$ sums of big Brooks quasimorphisms (Theorem \ref{cupbig}). Also here we will give estimates for the norm of the primitive. \\

These vanishing results motivate the question: how big is the space $Cal$ with respect to the whole of $H^2_b(F)$? To this end we look at another space, denoted $\mathcal{N}_{Br}$, consisting of all $\varphi_\alpha$, where $\alpha$ is supported on non-self-overlapping words, that are quasimorphisms. We will present a different characterization than Calegari's for $\varphi_\alpha \in Cal$, as well as one for $\varphi_\alpha \in \mathcal{N}_{Br}$, which allow to compare these two spaces. The most interesting and relevant question is the following:

\begin{question}
Does $\mathcal{N}_{Br}$ represent the whole $H^2_b(F)$? That is: is every quasimorphism equivalent to some $\varphi_\alpha \in \mathcal{N}_{Br}$?
\end{question}

We are not able to answer this question, but make some progress towards it. Indeed, denote $\tilde{\varphi}_\alpha := \sum \alpha_w \overline{h_w}$. We know that every bounded cohomology class is represented by some quasimorphism $\tilde{\varphi}_\alpha$, by Grigorchuk's density theorem. Homogenization is a linear operation, but it is not clear that it works for infinite sums as well, so starting from the quasimorphism $\tilde{\varphi}_\alpha$ we cannot deduce directly that it is equivalent to $\varphi_\alpha$, so in particular that $\varphi_\alpha$ is also a quasimorphism. Still, we are able to prove the converse of this statement:

\begin{proposition}[Proposition \ref{twhom}]
\label{intro_twhom}

Let $\varphi_\alpha$, $\tilde{\varphi}_\alpha$ be as above, where $\alpha$ is supported on non-self-overlapping words. Suppose that $\varphi_\alpha \in \mathcal{N}_{Br}$, i.e., that it is a quasimorphism. Then $\tilde{\varphi}_\alpha$ is also a quasimorphism, and $\overline{\varphi}_\alpha = \tilde{\varphi}_\alpha$.
\end{proposition}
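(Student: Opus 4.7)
The plan is to show that $\tilde{\varphi}_\alpha$ coincides with the homogenization $\overline{\varphi}_\alpha$ of $\varphi_\alpha$; since $\overline{\varphi}_\alpha$ exists by assumption and is a homogeneous quasimorphism at bounded distance from $\varphi_\alpha$, both conclusions of the proposition follow at once.

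First I would reduce to the case where $g$ is cyclically reduced. Both functions are invariant under conjugation: $\overline{\varphi}_\alpha$ because every homogeneous quasimorphism of a group is conjugation-invariant, and $\tilde{\varphi}_\alpha = \sum_w \alpha_w \overline{h_w}$ because each summand is. So it suffices to verify $\overline{\varphi}_\alpha(g) = \tilde{\varphi}_\alpha(g)$ when $g$ is cyclically reduced, of length $L$ say.

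The heart of the argument is a periodicity lemma. For such $g$, every power $g^n$ is already reduced, of length $nL$, and has period $L$ as a word. Consequently any subword $w$ of any $g^n$ with $|w| > L$ itself has period $L$; by taking $N$ large enough so that $g^N$ contains both the original copy of $w$ and the copy shifted by $L$, we exhibit two overlapping occurrences of $w$ in $g^N$, so $w$ is self-overlapping. Because $\alpha$ is supported on non-self-overlapping words, $\alpha_w = 0$ for every such $w$, and likewise $\overline{h_w}(g) = \lim_{n} h_w(g^n)/n = 0$. Therefore
\begin{equation*}
\varphi_\alpha(g^n) \;=\; \sum_{|w|\leq L} \alpha_w\, h_w(g^n) \qquad\text{and}\qquad \tilde{\varphi}_\alpha(g) \;=\; \sum_{|w|\leq L} \alpha_w\, \overline{h_w}(g),
\end{equation*}
both indexed by the same finite set, which does \emph{not} depend on $n$. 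Dividing the left identity by $n$, letting $n\to\infty$, and interchanging the limit with the now-finite sum yields $\overline{\varphi}_\alpha(g) = \tilde{\varphi}_\alpha(g)$, as required.

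The main point that needs care is the periodicity lemma: one must verify that a self-overlap of $w$ is genuinely realised in some word, which the power $g^N$ itself provides as soon as $N$ is chosen large enough to contain the shifted occurrence. Once this is in hand the equality extends from cyclically reduced elements to all of $F$ by conjugation invariance, giving $\tilde{\varphi}_\alpha = \overline{\varphi}_\alpha$ and thus both assertions of the proposition.
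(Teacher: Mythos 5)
Your proof is correct, but it takes a genuinely different route from the one in the paper. The paper never computes the homogenization limit: it proves directly that $\|\varphi_\alpha - \tilde{\varphi}_\alpha\|_\infty < \infty$ by reducing to cyclically reduced $g$, invoking the combinatorial description of $\overline{h_w}$ as a signed count of occurrences in the cyclic word $\overline{g}$ to obtain $h_w(g) - \overline{h_w}(g) = \delta^1 h_w(g,g)$ for $|w| \leq |g|$, and then checking that $h_w(g^2) = 0$ for non-self-overlapping $w$ with $|g| < |w| \leq 2|g|$; this yields $|\varphi_\alpha(g) - \tilde{\varphi}_\alpha(g)| = \mathcal{O}(D(\varphi_\alpha))$, so $\tilde{\varphi}_\alpha$ is a homogeneous quasimorphism at bounded distance from $\varphi_\alpha$ and equals $\overline{\varphi}_\alpha$ by uniqueness of the homogenization. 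You instead evaluate $\overline{\varphi}_\alpha(g) = \lim_n \varphi_\alpha(g^n)/n$ on cyclically reduced $g$, using periodicity of $g^n$ to show that every subword of length greater than $|g|$ is self-overlapping, so that for all $n$ the sum truncates to the fixed finite set of $w$ with $|w| \leq |g|$, and then exchange the limit with this finite sum. The two arguments share the reduction to cyclically reduced elements and the key combinatorial fact that long subwords of powers self-overlap (the paper only needs it for $g^2$), but yours bypasses both the cyclic-word formula for $\overline{h_w}$ and the coboundary identity, proving the equality $\tilde{\varphi}_\alpha = \overline{\varphi}_\alpha$ directly; what the paper's version buys is the explicit bound $\|\varphi_\alpha - \tilde{\varphi}_\alpha\|_\infty \leq 3D(\varphi_\alpha)$ produced along the way, which your argument only recovers afterwards from the general estimate $\|\varphi_\alpha - \overline{\varphi}_\alpha\|_\infty \leq D(\varphi_\alpha)$. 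One small point to spell out when writing this up: in truncating $\varphi_\alpha(g^n)$ you must exclude occurrences of $w^{-1}$ as well as of $w$; this is immediate because self-overlapping is preserved under inversion and $\mathcal{N}$ is symmetric, but it deserves a sentence.
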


\begin{question}
Is the converse of Proposition \ref{intro_twhom} true?
\end{question}

In the last section, we will put Rolli's quasimorphisms of free products \cite{Rolli} in Heuer's framework, and see how the relevant constructions are compatible. In an attempt to use these results to prove the vanishing of more cup products, we look at the following map
$$\iota : F \to F*F : g \mapsto \iota_1(g) \iota_2(g),$$
where $\iota_i$ is the inclusion in the $i$-th copy of $F$. This is not a homomorphism, however it is a \textit{quasihomomorphism} in the sense of Hartnick and Schweitzer \cite{qout}, meaning that for any $\varphi \in Q(F*F)$ we have $\varphi \circ \iota \in Q(F)$. This gives the first example of a quasihomomorphism between free groups that is not quasi-Ulam, according to the definitions in \cite{qout}. We ask the following question about the behaviour of the map $\iota$ with respect to cup products:

\begin{question}
Suppose that $\varphi, \psi \in Q(F*F)$ satisfy $[\delta^1 \varphi] \smile [\delta^1 \psi] = 0 \in H^4_b(F*F)$. Is it true that $[\delta^1 (\varphi \circ \iota)] \smile [\delta^1 (\psi \circ \iota)] = 0 \in H^4_b(F)$?
\end{question}

We are unable to prove this seemingly intuitive statement, which is true for Rolli and big Brooks quasimorphisms. But its implications are very consequential:

\begin{theorem}[Theorem \ref{cupall}]
\label{intro_cupall}

Suppose that the previous question is answered affirmatively. Then if $\varphi$ is in $\Sigma_{Ind}$ or is a Rolli quasimorphism, and $\psi \in Q(F)$ is \underline{any} other quasimorphism, we have $[\delta^1 \varphi] \smile [\delta^1 \psi] = 0 \in H^4_b(F)$.
\end{theorem}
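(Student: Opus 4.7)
The plan is a reduction: upgrade the pair $(\varphi, \psi)$ on $F$ to a pair of quasimorphisms on $F*F$ whose cup product already vanishes in $H^4_b(F*F)$, and then invoke the hypothesis of the preceding question to transport that vanishing down to $H^4_b(F)$. Let $\rho_1, \rho_2 : F*F \to F$ denote the canonical retractions collapsing the opposite free factor. A direct computation shows $\rho_i \circ \iota = \mathrm{id}_F$ for $i = 1,2$ (e.g. $\rho_1(\iota_1(g)\iota_2(g)) = g\cdot 1 = g$), so the pullbacks
\[
\tilde{\varphi} := \varphi \circ \rho_1, \qquad \tilde{\psi} := \psi \circ \rho_2
\]
are quasimorphisms of $F*F$ (being pullbacks along homomorphisms) satisfying $\tilde{\varphi} \circ \iota = \varphi$ and $\tilde{\psi} \circ \iota = \psi$. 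Thus the hypothesis of the question, once applied to the pair $(\tilde{\varphi}, \tilde{\psi})$, would immediately yield $[\delta^1 \varphi] \smile [\delta^1 \psi] = 0$ in $H^4_b(F)$, provided the cup product vanishes in $H^4_b(F*F)$.

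The heart of the argument is therefore to establish $[\delta^1 \tilde{\varphi}] \smile [\delta^1 \tilde{\psi}] = 0 \in H^4_b(F*F)$. The natural tool is the free product decomposition $\Delta_*$ of $F*F$ developed in the last section. I would show that $\tilde{\varphi}$ is $\Delta_*$-decomposable, by lifting the decomposability of $\varphi$ (coming from the Rolli construction or from the symmetric independent family supporting it) through $\rho_1$ onto the ``first factor'' letters of $\Delta_*$; and that $\tilde{\psi}$ is $\Delta_*$-continuous, since $\tilde{\psi}$ depends only on the second-factor content of its input while $\Delta_*$ records its combinatorial complexity on the interfaces between the two factors. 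Granting these two facts, the generalization of Theorem \ref{intro_Heuer} to $\Delta_*$-decomposable vs.\ $\Delta_*$-continuous quasimorphisms (the main output of the last section) delivers the desired vanishing on $F*F$, and the reduction above concludes the proof.

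The main obstacle I anticipate is the $\Delta_*$-continuity of $\tilde{\psi}$ for a completely \emph{arbitrary} quasimorphism $\psi \in Q(F)$: no structural hypothesis on $\psi$ is available beyond its defining defect bound. The hope is that $\Delta_*$ is coarse enough, in its ``first-factor direction,'' that any pullback through $\rho_2$ is automatically $\Delta_*$-continuous — precisely because $\rho_2$ is blind to the first-factor content where the relevant combinatorics of $\Delta_*$ is concentrated. If this verification goes through, the $\Delta_*$-decomposability of $\tilde{\varphi}$ should be a formal consequence of the compatibility between the free-group decomposition underlying Rolli or $\Sigma_{Ind}$ quasimorphisms and the nested free-product decomposition $\Delta_*$, which is exactly the kind of compatibility statement the last section is organized to supply.
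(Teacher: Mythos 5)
Your reduction strategy is correct, and the two facts you want --- a lift of $\varphi$ that is decomposable and a lift of $\psi$ that is continuous for a common decomposition of $F*F$, both satisfying $(\cdot)\circ\iota = $ original --- are exactly what the paper's argument needs. The gap is in the specific lifts: $\tilde\varphi := \varphi\circ\rho_1$ is \emph{not} decomposable for $\Delta_*$ (nor for $\Delta*\Delta_b$), because
$$(\varphi\circ\rho_1)(\iota_1(a)\iota_2(b)\iota_1(c)) = \varphi(ac),$$
which cannot be written as a sum of values of a bounded alternating function over the pieces $\iota_1(a),\iota_2(b),\iota_1(c)$ unless $\varphi$ is a homomorphism on $F$; the retraction multiplies together all the first-factor syllables before applying $\varphi$, and this destroys the piecewise-additive structure that decomposability requires. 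The same problem, not just the one you flag for $\tilde\psi$, afflicts both lifts. (Note also that $\varphi\circ\rho_1$ is not even at bounded distance from the decomposable lift: on $(\iota_1(a)\iota_2(b))^n$ their difference is $n\varphi(a)-\varphi(a^n)$, which is unbounded whenever $\varphi(a)\neq\overline\varphi(a)$.)

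The fix is to use the free-product quasimorphisms $\varphi*\underline{0}$ and $\underline{0}*\psi$ instead of the pullbacks along $\rho_1,\rho_2$: these are \emph{defined} so as to be additive over the $\Delta_*$-pieces, and Lemma~\ref{iota_sum} shows they still pull back to $\varphi$ and $\psi$ along $\iota$ (so the desired reduction is preserved). With these lifts everything works formally. If $\Delta$ is the decomposition witnessing $\varphi\in\Sigma_{Ind}$ (or $\Delta=\Delta_{Rolli}$), take the free-product decomposition $\Delta*\Delta_b$ on $F*F$. Then $\varphi*\underline{0}$ is $\Delta*\Delta_b$-decomposable because $\varphi$ is $\Delta$-decomposable and $\underline{0}$ is trivially $\Delta_b$-decomposable. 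For the continuity side, your worry about an arbitrary $\psi$ is resolved not by hoping $\Delta_*$ is ``coarse enough,'' but by the elementary Example~\ref{Db_cont}: \emph{every} alternating quasimorphism is $\Delta_b$-continuous (its coboundary is determined by the $r$-part alone, which is the whole $\Delta_b$-triangle). So after replacing $\psi$ by an equivalent alternating representative, $\underline{0}*\psi$ is $\Delta*\Delta_b$-continuous by the free-product lemma, since $\underline{0}$ is $\Delta$-continuous and $\psi$ is $\Delta_b$-continuous. Theorem~\ref{cup_free_prod} then gives vanishing on $F*F$, and your reduction via Question~\ref{qcupall} and Lemma~\ref{iota_sum} completes the proof as you intended.
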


This result would be the first one giving triviality of the cup product for some class of non-trivial quasimorphisms with everything else, something much stronger than anything that has been proven so far. However, the general question asked by Heuer remains open:

\begin{question}
Is the cup product $\smile : H^2_b(F) \times H^2_b(F) \to H^4_b(F)$ trivial?
\end{question}

\subsection{Organisation}

In Section \ref{secbc} we will review the basics of bounded cohomology of discrete groups with trivial real coefficients, and define the cup product. We will spend most of our time on quasimorphisms, proving some basic results. We end the section by introducing the defect norm and proving that it is equivalent to the Gromov seminorm.

Section \ref{secf} is dedicated to the free group and its quasimorphisms. We begin by studying Rolli quasimorphisms, which allow us to prove that $H^2_b(F)$ is uncountably dimensional. After spending some time on combinatorics of words, we study Brooks quasimorphisms. The rest of the section is dedicated to infinite sums of Brooks quasimorphisms: the pointwise topology and Grigorchuk's density theorem, the space of Calegari quasimorphisms and its subspaces. The last subsection deals with the question of "largeness" of the spaces in question, and proves Proposition \ref{intro_twhom}.

In Section \ref{secdec} we review Heuer's framework of decompositions of the free group, as well as Heuer's theorem that the cup product of a decomposable and a continuous quasimorphism is trivial. We skip the proof of the theorem but give an explicit bound on the primitive of the relevant cup product, which was not present in Heuer's paper. We then introduce more examples of decompositions and of continuous quasimorphisms, which allow us to prove the main theorem (Theorem \ref{intro_main}), as well as the weaker vanishing result for big Brooks quasimorphisms (Theorem \ref{intro_cupbig}).

Section \ref{secprod} introduces the notion of free product of decompositions, and relates it to Rolli's constructions of quasimorphisms of free products. We introduce the map $\iota$, prove some of its properties, and end by proving Theorem \ref{intro_cupall}.

In Section \ref{secop} we review some open questions that were raised throughout the thesis.

Appendix \ref{og} deals with overlap graphs, and introduces a combinatorial approach to study families of infinite sums of Brooks quasimorphisms. As an application, we are able to generate more explicit examples of elements of $\Sigma_{Ind}$, and thus of more trivial cup products.

\subsection{Notation}
\label{not}

$G, H$ will always denote discrete groups, $F$ a non-abelian free group of finite rank with a fixed basis $S$, which by convention is not symmetric. Given any symmetric subset $W \subset F$, we denote by $W^+$ any subset of $W$ containing precisely one of $g, g^{-1}$ for any $1 \neq g \in F$. Whenever such sets appear in this thesis, the definitions using them will be independent of such a choice. \\

General functions will be denoted by $f$, quasimorphisms by $\varphi, \phi, \psi$. When writing down infinite sums of Brooks quasimorphisms, coefficients will be denoted by $(\alpha_w)_{w \in F}$ and $\alpha$ will be called a \textit{coefficient map}. \\

The following notation for sequences will be used in the section about decompositions, and is the same as the one in Heuer's paper \cite{Heuer}. Given a set $X$, we denote by $X^*$ the set of finite sequences of elements of $X$. Elements of $X^*$ will be denoted by $x = (x_1, \ldots, x_n)$. The length of such a sequence is $|x| = n$. Given a second sequence $y = (y_1, \ldots, y_m)$, we denote by $x \cdot y$ the concatenation $(x_1, \ldots, x_n, y_1, \ldots, y_m)$. If $X$ is a subset of some group (in our case, of $F$), we denote by $\underline{x}$ the product $x_1 \cdots x_n$, and by $x^{-1}$ the sequence $(x_n^{-1}, \ldots, x_1^{-1})$, so that $(\underline{x})^{-1} = \underline{x^{-1}}$. \\

$\mathbbm{1}$ will denote the indicator function of an event. For instance, given a subset $W \subset F$ and an element $g \in F$, $\mathbbm{1} \{ g \in W \} = 1$ if $g \in W$ and 0 otherwise. \\

We will use standard Landau asymptotic notation for estimates on the norms of the primitives of certain cup products. Given functions $f, g : \mathbb{R}_{\geq 0} \to \mathbb{R}_{\geq 0}$, we say that $f = \mathcal{O}(g)$ or $f << g$ if there exists a constant $C > 0$ such that $f(x) \leq Cg(x)$ for all large enough $x$. \\

When working with bounded cohomology, we will introduce the general concepts with the homogeneous resolution $C^\bullet_b(G)$, and then introduce the bar resolution $\overline{C}^\bullet_b(G)$. This distinction is important when first introducing the basic concepts, but since the main topic of this thesis is quasimorphisms, which are more natural to deal with in the inhomogeneous context, we will only work with the bar resolution. Therefore the notation is rigorous and distinct only for Subsection \ref{subsecdef}; from Subsection \ref{subsecqm} on, we will only work with the bar resolution and denote it $C^\bullet_b(G)$ for simplicity, without the bar on top. The reader will be reminded of this convention at the end of Subsection \ref{subsecdef}.

\pagebreak

\section{Bounded cohomology of discrete groups}
\label{secbc}

We start by introducing the framework of bounded cohomology of discete groups with trivial real coefficients. After giving the basic definitions, we will spend most of the time proving basic facts about quasimorphisms that will be useful in the sequel. Since the only focus of this thesis is bounded cohomology with trivial real coefficients, we will give the relevant definitions in this case only. For the more general context of discrete groups with coefficients, see \cite{Frigerio}. For the even more general setting of locally compact groups, see \cite{Monbook}.

\subsection{First definitions}
\label{subsecdef}

Let $G$ be a discrete group. Elements of $G^{n+1}$ will be denoted by $\overline{g} = (g_0, \ldots, g_n)$, and the diagonal action of $G$ on $G^{n+1}$ will be denoted by $g \overline{g} = (gg_0, \ldots, gg_n)$. For all $n \geq 0$, we define
$$C^n_b(G) := \{ f : G^{n+1} \to \mathbb{R} : ||f||_\infty < \infty \},$$
where $|| \cdot ||_\infty$ is the usual supremum norm. $C^n_b(G)$ is thus a Banach space with the norm $|| \cdot ||_\infty$. The diagonal action of $G$ on $G^{n+1}$ induces an action of $G$ on $C^n_b(G)$ by linear isometries, defined by $(g \cdot f)(\overline{g}) = f(g^{-1} \overline{g})$. We denote by $C^n_b(G)^G$ the subspace of $G$-invariant elements. Note that this is a closed subspace of $C^n_b(G)$, so it is a Banach space with the restriction of the norm $|| \cdot ||_\infty$. \\

We define the coboundary map $\delta^n : C^n_b(G) \to C^{n+1}_b(G)$ by
$$\delta^n(f)(g_0, \ldots, g_{n+1}) := \sum\limits_{i = 0}^{n+1} (-1)^i f(g_0, \ldots, \hat{g_i}, \ldots, g_{n+1}).$$
This is a bounded linear map of norm at most $(n+2)$. It is $G$-equivariant, meaning that $\delta(g \cdot f) = g \cdot \delta(f)$, so it restricts to a map $C^n_b(G)^G \to C^{n+1}_b(G)^G$, which we still denote by $\delta^n$. Moreover, $\delta^{n+1} \circ \delta^n = 0$. Thus, $(C^\bullet_b(G), \delta^\bullet)$ and $(C^\bullet_b(G)^G, \delta^\bullet)$ are cochain complexes of Banach spaces. This allows to consider the space of \textit{cocycles} $Z^n_b(G) := C^n_b(G)^G \cap \ker(\delta^n)$ and the space of \textit{coboundaries} $B^n_b(G) := \delta^{n-1}(C^{n-1}_b(G)^G) \subset Z^n_b(G)$. Finally, the quotient space $H^n_b(G) := Z^n_b(G) / B^n_b(G)$ is called the \textit{bounded cohomology of $G$} (with trivial real coefficients). We will denote by $[f]$ the class of a cocycle $f \in Z^n_b(G)$.

Given a coboundary $f \in B^n_b(G)$, any map $\pi \in C^{n-1}_b(G)$ such that $\delta^{n-1} \pi = f$ is called a \textit{(bounded) primitive of $f$}. \\

Since $\delta^n$ is a bounded linear map, the space $Z^n_b(G)$ is closed in $C^n_b(G)^G$, so it is a Banach space with the restriction of the norm $|| \cdot ||_\infty$. This induces a quotient seminorm on $H^n_b(G)$, which we call the \textit{Gromov seminorm}. This is a norm if and only if $B^n_b(G)$ is closed in $Z^n_b(G)$ (or equivalently in $C^n_b(G)^G$ or in $C^n_b(G)$).

\subsubsection{The bar resolution}

Since we are considering elements of $C^n_b(G)^G$, we have $n$ degrees of freedom instead of $(n+1)$ as in $C^n_b(G)$. So it suffices to look at the value of a given $f \in C^n_b(G)^G$ on a well-chosen subset of $G^{n+1}$ to obtain all the relevant information.

Specifically, we notice that every $f \in C^n_b(G)^G$ is completely determined by its value at elements of $G^{n+1}$ whose first coordinate is 1, and that any such element may be uniquely written as $(1, g_1, g_1 g_2, \ldots, g_1 \cdots g_n)$. In other words, if we let $\overline{C}^0_b(G) := \mathbb{R}$ and $\overline{C}^n_b(G) := C^{n-1}_b(G)$ for $n > 0$, still equipped with the norm $|| \cdot ||_\infty$, we have an isometric isomorphism defined by $B^0(f) = f(g)$ (elements of $C^0_b(G)^G$ are constant functions) and for $n > 0$:
$$B^n : C^n_b(G)^G \to \overline{C}^n_b(G) : f \mapsto ((g_1, \ldots, g_n) \mapsto f(1, g_1, g_1 g_2, \ldots, g_1 \cdots g_n)).$$
Under this isomorphism, the map $\delta^n : C^n_b(G)^G \to C^{n+1}_b(G)^G$ becomes $\overline{\delta}^n : \overline{C}^n_b(G) \to \overline{C}^{n+1}_b(G)$ (meaning that $B^{n+1} \circ \delta^n = \overline{\delta}^n \circ B^n$) defined by $\overline{\delta}^0 = 0$ and for $n > 0$:
$$\overline{\delta}^n(f)(g_1, \ldots, g_{n+1}) := f(g_2, \ldots, g_{n+1}) + \sum\limits_{i = 1}^n (-1)^i f(g_1, \ldots, g_i g_{i+1}, \ldots, g_{n+1}) + (-1)^{n+1} f(g_1, \ldots, g_n).$$
It follows that $\overline{\delta}^{n+1} \circ \overline{\delta}^n = 0$, so $(\overline{C}^\bullet_b(G), \overline{\delta}^\bullet)$ is a cochain complex of Banach spaces. By definition of $\overline{\delta}^\bullet$, the map $B^\bullet$ is a chain map $(C^\bullet_b(G)^G, \delta^\bullet) \to (\overline{C}^\bullet_b(G), \overline{\delta}^\bullet)$. Again we may consider the space of \textit{cocycles} $\overline{Z}^n_b(G) := \ker(\overline{\delta}^n)$ and the space of \textit{coboundaries} $\overline{B}^n_b(G) := \overline{\delta}^{n-1}(\overline{C}^{n-1}_b(G))$. Then the quotient space $\overline{Z}^n_b(G) / \overline{B}^n_b(G)$, equipped with the quotient seminorm, is isometrically isomorphic to $H^n_b(G)$ under the map induced by $B^n$ in cohomology.

\begin{example}[Degree 0]
By definition $\overline{C}^0_b(G) = \mathbb{R}$ and $\overline{B}^0_b(G) = 0$. Also by definition $\overline{\delta}^0 = 0$, so $\overline{Z}^0_b(G) = \overline{C}^0_b(G)$. Thus $H^0_b(G) = \mathbb{R}$ for any group $G$.
\end{example}

\begin{example}[Degree 1]
\label{deg1}
$\overline{Z}^1_b(G) = \{ f : G \to \mathbb{R} : ||f||_\infty < \infty \text{ and } f(g_2) - f(g_1 g_2) + f(g_1) = 0 \text{ for any } g_1, g_2 \in G \}$. In other words, $\overline{Z}^1_b(G)$ is the space of bounded homomorphisms from $G$ to $\mathbb{R}$. But if $f$ is a non-zero homomorphism from $G$ to $\mathbb{R}$, then given $g \in G$ such that $f(g) > 0$, for $n \geq 1$: $f(g^n) = nf(g) \xrightarrow{n \to \infty} \infty$. Thus there are no non-zero bounded homomorphisms, and so $\overline{Z}^1_b(G) = 0$. Therefore $H^1_b(G) = 0$ for any group $G$.
\end{example}

\subsubsection{Ordinary group cohomology and the comparison map}

If in all of the above we remove the requirements of boundedness, we obtain the cochain complexes $(C^\bullet(G), \delta^\bullet)$, $(C^\bullet(G)^G, \delta^\bullet)$, $(\overline{C}^\bullet(G), \overline{\delta}^\bullet)$, which are cochain complexes of (not normed) real vector spaces. We may as well consider the spaces of cocycles $Z^n(G), \overline{Z}^n(G)$, of coboundaries $B^n(G), \overline{B}^n(G)$, and the quotient space $H^n(G) := Z^n(G)/B^n(G) \cong \overline{Z}^n(G) / \overline{B}^n(G)$, which is called the \textit{cohomology of $G$} (with trivial real coefficients). \\

Group cohomology is a classical object of study. Its computation is made easier by the following deep result \cite[Corollary 4.8]{Frigerio}:

\begin{theorem}
Let $X$ be a path-connected topological space admitting a universal cover $\tilde{X}$, and suppose that $H_i(\tilde{X}, \mathbb{R}) = 0$ for all $i \geq 1$. Then $H^n(X, \mathbb{R})$ is canonically isomorphic to $H^n(\pi_1(X))$ for all $n \geq 1$.
\end{theorem}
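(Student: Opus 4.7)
The plan is to exhibit the singular chain complex $C_\bullet(\tilde{X}, \mathbb{R})$ as a free resolution of the trivial $\mathbb{R}[\pi_1(X)]$-module $\mathbb{R}$, and then identify the complex of $\pi_1(X)$-equivariant cochains on $\tilde{X}$ with the singular cochain complex on $X$.

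First I would show that each $C_n(\tilde{X}, \mathbb{R})$ is free over $\mathbb{R}[\pi_1(X)]$. Since $\tilde{X} \to X$ is a normal cover with deck group $\pi_1(X)$ acting freely on $\tilde{X}$, the induced action on the set of singular simplices $\Delta^n \to \tilde{X}$ is also free, and picking one representative from each orbit yields a free $\mathbb{R}[\pi_1(X)]$-basis of $C_n(\tilde{X}, \mathbb{R})$. Combined with path-connectedness of $\tilde{X}$ (giving $H_0(\tilde{X}, \mathbb{R}) = \mathbb{R}$) and the hypothesis $H_i(\tilde{X}, \mathbb{R}) = 0$ for $i \geq 1$, the augmented complex
\[
\cdots \to C_1(\tilde{X}, \mathbb{R}) \to C_0(\tilde{X}, \mathbb{R}) \xrightarrow{\varepsilon} \mathbb{R} \to 0
\]
is exact. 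Hence it is a free resolution of $\mathbb{R}$ as an $\mathbb{R}[\pi_1(X)]$-module, so by the standard characterisation of group cohomology via any projective resolution,
\[
H^n(\pi_1(X)) \cong H^n\bigl(\Hom_{\mathbb{R}[\pi_1(X)]}(C_\bullet(\tilde{X}, \mathbb{R}), \mathbb{R})\bigr).
\]

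Next I would identify this equivariant cochain complex with $C^\bullet(X, \mathbb{R})$. The unique path-lifting property of the covering $p : \tilde{X} \to X$ produces, for each $n$, a bijection between singular $n$-simplices on $X$ and $\pi_1(X)$-orbits of singular $n$-simplices on $\tilde{X}$: any $\sigma : \Delta^n \to X$ admits a lift, and two lifts differ by a unique deck transformation. This bijection commutes with face maps because $p$ does, so precomposition with $p$ gives an isomorphism of cochain complexes $C^\bullet(X, \mathbb{R}) \cong \Hom_{\mathbb{R}[\pi_1(X)]}(C_\bullet(\tilde{X}, \mathbb{R}), \mathbb{R})$. Passing to cohomology and combining with the previous identification yields the canonical isomorphism claimed.

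The sole place where the hypothesis is genuinely used is in establishing exactness of the augmented chain complex, i.e.\ in asserting that $C_\bullet(\tilde{X}, \mathbb{R})$ is actually a resolution of $\mathbb{R}$; everything else is formal, given that $\pi_1(X)$ acts freely on $\tilde{X}$ by deck transformations. The main obstacle is thus not conceptual but notational: one must verify carefully that the identifications above strictly intertwine the boundary and coboundary operators, which is routine but requires some bookkeeping with face maps and lifts.
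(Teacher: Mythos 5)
The paper does not prove this theorem; it is cited as \cite[Corollary 4.8]{Frigerio} and used as a black box, so there is no in-paper argument to compare against. Your proof is correct and is the standard resolution-theoretic argument: the deck group $\pi_1(X)$ acts freely on singular simplices of $\tilde{X}$, so $C_\bullet(\tilde{X},\mathbb{R})$ is a complex of free $\mathbb{R}[\pi_1(X)]$-modules, the homological hypothesis makes the augmented complex exact (hence a free resolution of $\mathbb{R}$), and the unique-lifting bijection identifies the equivariant cochain complex with $C^\bullet(X,\mathbb{R})$; the identifications manifestly intertwine the (co)boundary maps. This matches the proof given in the cited reference, and I see no gap.
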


This allows us to calculate the cohomology of all free groups of finite rank in positive degree:

\begin{theorem}
Let $F$ be a free group of finite rank. Then $H^n(F) = 0$ for all $n \geq 2$.
\end{theorem}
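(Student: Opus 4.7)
The plan is to apply the theorem just stated to a very concrete classifying space for $F$. Specifically, I would take $X = \bigvee_{i=1}^{\mathrm{rank}(F)} S^1$, a finite wedge of circles, which is a 1-dimensional CW complex with $\pi_1(X) \cong F$. Then I would verify the two hypotheses of the preceding theorem: path-connectedness is immediate, and the existence of a universal cover follows from $X$ being a reasonable (semilocally simply-connected) CW complex.

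Next, I would identify the universal cover $\tilde{X}$ explicitly as the Cayley graph of $F$ with respect to the free generating set $S$. The key geometric input is that this Cayley graph is a tree, and a tree is contractible. Consequently, $\tilde{X}$ has the homology of a point: $H_0(\tilde{X}, \mathbb{R}) = \mathbb{R}$ and $H_i(\tilde{X}, \mathbb{R}) = 0$ for all $i \geq 1$. This is exactly the vanishing condition required to invoke the previous theorem, which then yields the canonical isomorphism $H^n(F) \cong H^n(X, \mathbb{R})$ for all $n \geq 1$.

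Finally, I would conclude by computing $H^n(X, \mathbb{R})$ directly from the CW structure. Since $X$ is a 1-dimensional CW complex, its cellular cochain complex is concentrated in degrees $0$ and $1$, so $H^n(X, \mathbb{R}) = 0$ for all $n \geq 2$. Combining this with the isomorphism above gives $H^n(F) = 0$ for $n \geq 2$, as required. There is no real obstacle here: the entire argument is a three-line application of the quoted theorem together with the observation that free groups admit a 1-dimensional classifying space, namely a wedge of circles whose universal cover is a tree.
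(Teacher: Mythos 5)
Your proof is correct and takes essentially the same approach as the paper: identify $F$ with $\pi_1$ of a bouquet of circles, observe that the universal cover is a tree (hence contractible, hence acyclic), apply the quoted theorem to get $H^n(F)\cong H^n(X,\mathbb{R})$, and conclude from the fact that $X$ is one-dimensional.
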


\begin{proof}
Let $S$ be a basis of $F$. Then $F \cong \pi_1(X)$, where $X$ is a bouquet of $|S|$ circles. This is a one-dimensional cell complex so $H^n(X, \mathbb{R}) = 0$ for all $n \geq 2$. Moreover, $X$ admits a $2|S|$-valent tree $\tilde{X}$ as its universal cover. In particular, $\tilde{X}$ is contractible, so $H_i(\tilde{X},\mathbb{R}) = 0$ for all $i \geq 1$. We conclude by applying the previous theorem.
\end{proof}

Since we understand ordinary group cohomology much better than bounded cohomology, it is useful to have a way to compare the two. The key observation is that $(C^\bullet_b(G), \delta^\bullet)$ is a subcomplex of $(C^\bullet(G), \delta^\bullet)$. Therefore the inclusion $C^n_b(G)^G \to C^n(G)^G$ induces a map at the level of cohomology
$$c^n : H^n_b(G) \to H^n(G),$$
which we call the \textit{comparison map}. The kernel of this map is denoted by $EH^n_b(G)$, and called \textit{exact bounded cohomology of $G$} (with trivial real coefficients). From the previous theorem, we immediately deduce:

\begin{corollary}
\label{EHF}

Let $F$ be a free group of finite rank. Then $EH^n_b(F) = H^n_b(F)$ for all $n \geq 2$.
\end{corollary}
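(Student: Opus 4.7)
The plan is to observe that the statement is an essentially immediate consequence of the preceding theorem combined with the definition of exact bounded cohomology. Recall that $EH^n_b(G)$ was defined as the kernel of the comparison map $c^n : H^n_b(G) \to H^n(G)$. So the task reduces to showing that $c^n$ is the zero map for $n \geq 2$ when $G = F$.

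First, I would invoke the preceding theorem, which asserts that $H^n(F) = 0$ for all $n \geq 2$. This immediately forces the codomain of $c^n : H^n_b(F) \to H^n(F)$ to be trivial for $n \geq 2$, hence $c^n \equiv 0$. Therefore $\ker(c^n) = H^n_b(F)$, which by definition means $EH^n_b(F) = H^n_b(F)$.

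There is no real obstacle here: the entire content is packaged in the previous theorem, whose proof uses that a wedge of circles is a classifying space for $F$ together with the standard fact relating group cohomology to the cohomology of such a classifying space. The corollary is just unpacking the definition of $EH^n_b$ in the special case where the target of the comparison map vanishes.
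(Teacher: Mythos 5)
Your proof is correct and matches the paper's intent exactly: the paper states the corollary as an immediate consequence of the preceding theorem $H^n(F)=0$ for $n\geq 2$, and your unpacking (trivial codomain forces $c^n=0$, hence $\ker(c^n)=H^n_b(F)=EH^n_b(F)$) is precisely the argument meant.
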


\subsubsection{The cup product}

We start by defining the cup product on the level of cochain complexes:
$$\smile : C^n_b(G) \times C^m_b(G) \to C^{n+m}_b(G) : (f, f') \mapsto f \smile f';$$
where
$$(f \smile f')(g_0, \ldots, g_{n+m}) = f(g_0, \ldots, g_n) f'(g_n, \ldots, g_{n+m}).$$
This is a bilinear map which is $G$-equivariant, that is, $g \cdot (f \smile f') = g \cdot f \smile g \cdot f'$. So it restricts to a map $\smile : C^n_b(G)^G \times C^m_b(G)^G \to C^{n+m}_b(G)^G.$
Moreover:

\begin{lemma}
\label{cup_formula}
The coboundary of a cup product satisfies
$$\delta^{n+m}(f \smile f') = \delta^n f \smile f' + (-1)^nf \smile \delta^m f'.$$
\end{lemma}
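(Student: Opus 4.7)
The plan is a direct cochain-level computation: evaluate both sides of the identity at a generic tuple $(g_0, \ldots, g_{n+m+1}) \in G^{n+m+2}$ and match terms. Since boundedness plays no role here, this is the same formula as in ordinary group cohomology, and the content is purely combinatorial bookkeeping of indices and signs in the Alexander--Whitney cup product.

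Expanding the left side gives $\delta^{n+m}(f \smile f')(g_0, \ldots, g_{n+m+1}) = \sum_{i=0}^{n+m+1} (-1)^i (f \smile f')(g_0, \ldots, \hat{g_i}, \ldots, g_{n+m+1})$. The key observation is that, after deleting $g_i$, the $n$-th entry of the truncated $(n+m+1)$-tuple is $g_{n+1}$ when $i \leq n$ and $g_n$ when $i \geq n+1$, so the sum splits naturally in two. For $i \leq n$ the summand equals $(-1)^i f(g_0, \ldots, \hat{g_i}, \ldots, g_{n+1}) f'(g_{n+1}, \ldots, g_{n+m+1})$, and for $i \geq n+1$ it equals $(-1)^i f(g_0, \ldots, g_n) f'(g_n, \ldots, \hat{g_i}, \ldots, g_{n+m+1})$.

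The first piece differs from $(\delta^n f \smile f')(g_0, \ldots, g_{n+m+1})$ only by the missing $i = n+1$ summand, namely $(-1)^{n+1} f(g_0, \ldots, g_n) f'(g_{n+1}, \ldots, g_{n+m+1})$. After reindexing by $j = i - n$ and factoring out $(-1)^n$, the second piece equals $(-1)^n (f \smile \delta^m f')(g_0, \ldots, g_{n+m+1})$ minus the extra $j = 0$ summand, namely $(-1)^n f(g_0, \ldots, g_n) f'(g_{n+1}, \ldots, g_{n+m+1})$. These two correction terms are opposite in sign and cancel, yielding the Leibniz identity. There is no real obstacle: the only care needed is to correctly identify the cup-product overlap coordinate after deletion, and to track the signs through the reindexation.
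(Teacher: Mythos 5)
Your proposal is correct and follows essentially the same route as the paper's proof: split the coboundary sum at $i = n$, identify each half with $\delta^n f \smile f'$ and $(-1)^n f \smile \delta^m f'$ respectively up to the overlap term at the cut index, and observe that the two correction terms cancel. The paper simply absorbs the correction terms into bracketed expressions instead of displaying them separately, but the computation is the same.
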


\begin{proof}
$$\delta^{n+m}(f \smile f')(g_0, \ldots, g_{n+m+1}) = \sum\limits_{i = 0}^{n+m+1} (-1)^i (f \smile f')(g_0, \ldots, \hat{g_i}, \ldots, g_{n+m+1}) = $$
$$ = \sum\limits_{i = 0}^n (-1)^i f(g_0, \ldots, \hat{g_i}, \ldots, g_{n+1})f'(g_{n+1}, \ldots, g_{n+m+1}) + $$
$$ + \sum\limits_{i = n+1}^{n+m+1} (-1)^i f(g_0, \ldots, g_n)f'(g_n, \ldots, \hat{g_i}, \ldots, g_{n+m+1}) = $$
$$ = \left[ \delta^n f(g_0, \ldots, g_{n+1}) - (-1)^{n+1} f(g_0, \ldots, g_n) \right] f'(g_{n+1}, \ldots, g_{n+m+1}) + $$
$$ + (-1)^n f(g_0, \ldots, g_n) \left[ \delta^m f'(g_n, \ldots, g_{n+m+1}) - f'(g_{n+1}, \ldots, g_{n+m+1}) \right] = $$
$$ = \left[ \delta^n f \smile f' + (-1)^n f \smile \delta^m f' \right](g_0, \ldots, g_{n+m+1}).$$
\end{proof}

\begin{corollary}
\label{cup_bc}

The cup product descends to a well-defined map in bounded cohomology
$$\smile : H^n_b(G) \times H^m_b(G) \to H^{n+m}_b(G) : ([f], [f']) \mapsto [f] \smile [f'] := [f \smile f'],$$
which is continuous with respect to the topologies induced by the Gromov seminorms.
\end{corollary}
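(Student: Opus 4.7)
The plan is to verify three things in order: that the cup product of cocycles is a cocycle, that the resulting class depends only on the cohomology classes of the factors, and that the induced bilinear map is bounded (hence continuous) for the Gromov seminorm. All three will follow essentially from Lemma \ref{cup_formula} together with the trivial submultiplicativity $||f \smile f'||_\infty \leq ||f||_\infty \, ||f'||_\infty$, which is immediate from the pointwise definition.

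First I would plug $f \in Z^n_b(G)$ and $f' \in Z^m_b(G)$ into Lemma \ref{cup_formula}: both terms on the right vanish since $\delta^n f = 0$ and $\delta^m f' = 0$, so $f \smile f'$ lies in $Z^{n+m}_b(G)$. The restriction $\smile : Z^n_b(G) \times Z^m_b(G) \to Z^{n+m}_b(G)$ is then well-defined. For independence of representatives, I would replace $f$ by $f + \delta^{n-1} \pi$, using Lemma \ref{cup_formula} once more together with $\delta^m f' = 0$ to rewrite
$$\delta^{n-1} \pi \smile f' = \delta^{n+m-1}(\pi \smile f') - (-1)^{n-1} \pi \smile \delta^m f' = \delta^{n+m-1}(\pi \smile f'),$$
which is a coboundary, so the class $[f \smile f']$ is unchanged. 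The same computation with $f$ and $\pi'$ in the roles of $f'$ and $\pi$, applied in the second slot, handles variation of $f'$.

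For continuity, the bilinear cochain-level inequality $||f \smile f'||_\infty \leq ||f||_\infty \, ||f'||_\infty$ needs to be transferred to the quotient seminorm. Given classes $[f], [f']$ and any $\epsilon > 0$, I would choose primitives $\pi, \pi'$ with $||f + \delta^{n-1} \pi||_\infty \leq ||[f]|| + \epsilon$ and $||f' + \delta^{m-1} \pi'||_\infty \leq ||[f']|| + \epsilon$. Since $(f + \delta^{n-1} \pi) \smile (f' + \delta^{m-1} \pi')$ still represents $[f] \smile [f']$ by the previous step, submultiplicativity at the cochain level gives
$$||[f] \smile [f']|| \leq (||[f]|| + \epsilon)(||[f']|| + \epsilon),$$
and letting $\epsilon \to 0$ yields $||[f] \smile [f']|| \leq ||[f]|| \cdot ||[f']||$. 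A bounded bilinear map between normed spaces is continuous, finishing the argument.

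I do not expect a real obstacle: the statement is a routine consequence of Lemma \ref{cup_formula} and the triangle-free inequality satisfied by $|| \cdot ||_\infty$. The only subtlety worth highlighting is that the Gromov seminorm is a quotient seminorm whose defining infimum need not be attained; this forces the $\epsilon$-approximation in the continuity estimate rather than a clean equality on chosen representatives. No further ideas beyond this are needed.
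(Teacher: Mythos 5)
Your proposal is correct and follows essentially the same route as the paper: Lemma \ref{cup_formula} gives that cocycle $\smile$ cocycle is a cocycle and that mixing in a coboundary only changes the product by a coboundary, while the submultiplicativity $||f \smile f'||_\infty \leq ||f||_\infty ||f'||_\infty$ yields continuity for the quotient seminorms. You merely spell out the $\epsilon$-approximation by representatives and the bound $||[f] \smile [f']|| \leq ||[f]|| \cdot ||[f']||$, which the paper leaves implicit.
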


\begin{proof}
By Lemma \ref{cup_formula}, the cup product of two cocycles is a cocycle, and the cup product of a coycle and a coboundary, or that of a coboundary and a cocycle, is a coboundary. Therefore for classes $[f] \in H^n_b(G), [f'] \in H^m_b(G)$, the class $[f] \smile [f'] \in H^{n+m}_b(G)$ is well-defined. Since the map $\smile : Z^n_b(G) \times Z^m_b(G) \to Z^{n+m}_b(G)$ satisfies $||f \smile f'||_\infty \leq ||f||_\infty ||f'||_\infty$, it is continuous with respect to the norm $|| \cdot ||_\infty$, so the induced map on the quotient is also continuous.
\end{proof}

As usual, we can look at the same thing in the bar resolution.

\begin{lemma}
Under the canonical isometric isomorphism $B^\bullet : C^\bullet_b(G)^G \to \overline{C}^\bullet_b(G)$, the cup product becomes
$$f \smile f' (g_1, \ldots, g_{n+m}) := f(g_1, \ldots, g_n) f'(g_{n+1}, \ldots, g_{n+m}).$$
\end{lemma}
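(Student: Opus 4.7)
The plan is to directly unwind the definitions of $B^\bullet$ and of the homogeneous cup product, and show that the proposed bar-resolution formula equals $B^{n+m}(f \smile f')$ whenever $f \in C^n_b(G)^G$ and $f' \in C^m_b(G)^G$. This is a bookkeeping exercise, so the main thing to watch is the splitting index and the use of $G$-invariance to clear the offset that appears on the second factor.

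Concretely, first I would expand
\[
B^{n+m}(f \smile f')(g_1, \ldots, g_{n+m}) = (f \smile f')(1, g_1, g_1 g_2, \ldots, g_1 \cdots g_{n+m}),
\]
and then apply the homogeneous cup product definition, splitting the tuple after the entry $g_1 \cdots g_n$:
\[
= f(1, g_1, \ldots, g_1 \cdots g_n) \cdot f'(g_1 \cdots g_n, g_1 \cdots g_{n+1}, \ldots, g_1 \cdots g_{n+m}).
\]
The first factor is already in the form of $B^n(f)(g_1, \ldots, g_n)$, so no further work is needed there.

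For the second factor, I would use the $G$-invariance of $f'$ to act by $(g_1 \cdots g_n)^{-1}$ on each entry. The first coordinate becomes $1$, and the $i$-th coordinate for $i \geq 1$ becomes $g_{n+1} \cdots g_{n+i}$, so the second factor equals
\[
f'(1, g_{n+1}, g_{n+1} g_{n+2}, \ldots, g_{n+1} \cdots g_{n+m}) = B^m(f')(g_{n+1}, \ldots, g_{n+m}).
\]
Combining, $B^{n+m}(f \smile f')(g_1, \ldots, g_{n+m}) = B^n(f)(g_1, \ldots, g_n) \, B^m(f')(g_{n+1}, \ldots, g_{n+m})$, which is exactly the claimed formula. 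The only subtle step is the invariance shift in the second factor; everything else is immediate from the definitions.
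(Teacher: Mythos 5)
Your proof is correct and follows essentially the same route as the paper: expand $B^{n+m}(f\smile f')$ via the definition of $B^\bullet$, split the homogeneous cup product at the entry $g_1\cdots g_n$, and use $G$-invariance of $f'$ to strip the common left factor $g_1\cdots g_n$ from the second argument. The paper's proof is line-for-line the same computation.
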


\begin{proof}
The cup product in $\overline{C}^\bullet_b(G)$ is defined by
$$\smile : \overline{C}^n_b(G) \to \overline{C}^m_b(G) \to \overline{C}^{n+m}_b(G) : (B^n f, B^m f') \mapsto B^n f \smile B^m f' := B^{n+m}(f \smile f').$$
Let $h := g_1 \cdots g_n$. Then, using the $G$-invariance of $f'$:
$$B^n f \smile B^m f' (g_1, \ldots, g_{n+m}) = B^{n+m}(f \smile f')(g_1, \ldots, g_{n+m}) = $$
$$ = (f \smile f')(1, g_1, \ldots, g_1 \cdots g_{n+m}) = f(1, g_1, \ldots, g_1 \cdots g_n) \, f'(h, hg_{n+1}, \ldots, hg_{n+1} \cdots g_{n+m}) = $$
$$ = f(1, g_1, \ldots, g_1 \cdots g_n) f' (1, g_{n+1}, \ldots, g_{n+1} \cdots g_{n+m}) = B^nf (g_1, \ldots, g_n) B^m f'(g_{n+1}, \ldots, g_{n+m}).$$
\end{proof}

Although it is not clear from the definition, the cup product is \textit{graded-commutative}, meaning that if $([f], [g]) \in H^n_b(G) \times H^m_b(G)$, then $[f] \smile [g] = (-1)^{nm} [g] \smile [f]$. This can be proven easily by working with the resolution of \textit{alternating cocycles}, as in \cite[Subsection 4.10]{Frigerio}. However it will not be needed for our results, and so we will not prove it.

\subsubsection{Duality}

We can change our point of view and see $\overline{C}^n_b(G)$ as a dual space. This approach is very useful because it allows to use classical duality results from functional analysis and consider the weak-$*$ topology on $\overline{C}^n_b(G)$. \\

Define $\overline{C}_n(G)$ to be the real free vector space with basis $G^n$. Define the boundary map $d_n : \overline{C}_n(G) \to \overline{C}_{n-1}(G)$ on the basis by
$$d_n(g_1, \ldots, g_n) = (g_2, \ldots, g_n) + \sum\limits_{i = 1}^{n-1} (-1)^i (g_1, \ldots, g_ig_{i+1}, \ldots, g_n) + (-1)^n (g_1, \ldots, g_{n-1}).$$
Then $(\overline{C}_\bullet(G), d_\bullet)$ is a chain complex of vector spaces. The algebraic dual of $\overline{C}_n(G)$ is $\overline{C}^n(G)$ (maps from a free vector space are defined on the basis). Moreover, we can equip $\overline{C}_n(G)$ with an $\ell^1$ norm, that is
$$\left| \left| \, \sum \alpha_{\overline{g}} \cdot \overline{g} \, \right| \right|_1 := \sum |\alpha_{\overline{g}}|.$$
With this norm, $\overline{C}_n(G)$ is a normed vector space whose topological dual is $\overline{C}^n_b(G)$, and $d_n$ is a bounded linear map. \\

By construction, $\overline{\delta}^n$ is the dual map of $d_{n-1}$. Thus $(\overline{C}^\bullet(G), \overline{\delta}^\bullet)$ is the algebraic dual, and $(\overline{C}^\bullet_b(G), \overline{\delta}^\bullet)$ is the topological dual, of the chain complex of real normed vector spaces $(\overline{C}_\bullet(G), d_\bullet)$. \\

\textbf{From now on, we will only work with the bar resolution, so since there is no room for confusion we will just denote $C^\bullet_b, \delta^\bullet, Z^\bullet_b, B^\bullet_b, C^\bullet, Z^\bullet, B^\bullet, C_\bullet$, without the bar on top.}

\subsection{Quasimorphisms}
\label{subsecqm}

Quasimorphisms are the main object of interest of this thesis, since they describe the whole second bounded cohomology of the free group.

\subsubsection{Definition and first properties}

Let us focus on exact bounded cohomology in degree two: $EH^2_b(G)$. To identify our objects more clearly, we will denote by $\ell^\infty(G)$ the space of bounded real-valued functions on $G$, and notice that $C^1_b(G) = \ell^\infty(G)$; and by $Hom(G) := Hom(G, \mathbb{R})$ the space of real-valued homomorphisms of $G$, and notice that $Z^1(G) = Hom(G)$ (see Example \ref{deg1}). \\

Let $f \in Z^2_b(G)$ be such that $[f] \in EH^2_b(G)$. This is equivalent to the fact that $f$ is bounded and $f \in B^2(G)$; that is, there exists an element $\varphi \in C^1(G)$ (which, crucially, is not necessarily bounded) such that $\delta^1 \varphi = f$. This motivates the following definition:

\begin{definition}
For a map $\varphi : G \to \mathbb{R}$, define its \textit{defect} to be
$$D(\varphi) := \sup\limits_{g, h \in G} |\varphi(g) + \varphi(h) - \varphi(gh)|.$$
We say that $\varphi$ is a (real-valued) \textit{quasimorphism} if $D(\varphi) < \infty$. We denote by $Q(G)$ the space of quasimorphisms of $G$.
\end{definition}

The simplest examples of quasimorphisms are homomorphisms and bounded functions. Recall from Example \ref{deg1} that $\ell^\infty(G) \cap Hom(G) = \{ 0 \}$.

\begin{definition}
The space of \textit{trivial quasimorphisms} is $\ell^\infty(G) \oplus Hom(G) \leq Q(G)$.
\end{definition}

By definition of the bar resolution, $\varphi(g) + \varphi(h) - \varphi(gh) = \delta^1 \varphi (g, h)$. Therefore any element in $EH^2_b(G)$ can be represented by $\delta^1 \varphi$, where $\varphi$ is a quasimorphism. Moreover, $[\delta \varphi] = 0 \in H^2_b(G)$ if and only if $\delta^1 \varphi = \delta^1 \psi$ for some $\psi \in C^1_b(G) = \ell^\infty(G)$, in which case we can write $\varphi = \psi + (\varphi - \psi)$, where $\varphi - \psi \in Z^1(G) = Hom(G)$. We thus proved:

\begin{proposition}
\label{EHiso}

The map $\delta^1$ induces an isomorphism of vector spaces
$$Q(G) / \ell^\infty(G) \oplus Hom(G) \to EH^2_b(G).$$
\end{proposition}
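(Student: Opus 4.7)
The plan is to verify that the association $\varphi \mapsto [\delta^1 \varphi]$ defines a surjective linear map $Q(G) \to EH^2_b(G)$ whose kernel is exactly $\ell^\infty(G) \oplus \Hom(G)$. There is no real obstacle here: everything amounts to packaging the definitions carefully, working simultaneously with the bounded and unbounded bar resolutions.

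First I would check that the map is well-defined. For $\varphi \in Q(G)$, the defect condition gives immediately that $\delta^1 \varphi$ is a bounded $2$-cochain, hence lies in $C^2_b(G)$, and $\delta^2 \circ \delta^1 = 0$ places it in $Z^2_b(G)$. Viewing the same $\varphi$ as an element of the unbounded cochain complex $C^1(G)$, the cocycle $\delta^1 \varphi$ is a coboundary in $C^\bullet(G)$, so the class $[\delta^1 \varphi] \in H^2_b(G)$ is killed by the comparison map $c^2$ and thus lies in $EH^2_b(G)$. Linearity is immediate from linearity of $\delta^1$.

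For surjectivity, I would pick any class in $EH^2_b(G)$ together with a representing cocycle $f \in Z^2_b(G)$. By definition of $EH^2_b(G)$, the class of $f$ vanishes in $H^2(G)$, so there exists some $\varphi \in C^1(G)$ with $\delta^1 \varphi = f$. The boundedness of $f$ then reads $D(\varphi) = ||f||_\infty < \infty$, so $\varphi \in Q(G)$ and $[\delta^1 \varphi] = [f]$ as required.

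Finally, for the kernel, suppose $\varphi \in Q(G)$ satisfies $[\delta^1 \varphi] = 0$ in $H^2_b(G)$. Then there exists $\psi \in C^1_b(G) = \ell^\infty(G)$ with $\delta^1 \psi = \delta^1 \varphi$, equivalently $\varphi - \psi \in \ker(\delta^1) = Z^1(G) = \Hom(G)$. Writing $\varphi = \psi + (\varphi - \psi)$ exhibits $\varphi$ as an element of $\ell^\infty(G) + \Hom(G)$, and the sum is direct by Example \ref{deg1}, which shows that bounded homomorphisms vanish. The converse inclusion is immediate: homomorphisms are annihilated by $\delta^1$, and bounded cochains produce trivial classes in $H^2_b(G)$. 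This pins down the kernel as claimed and yields the desired isomorphism.
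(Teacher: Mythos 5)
Your proof is correct and follows essentially the same route as the paper: identify $EH^2_b(G)$ classes with coboundaries $\delta^1\varphi$ for $\varphi \in C^1(G)$ whose defect is bounded, and note that $[\delta^1\varphi]=0$ exactly when $\varphi$ differs from a bounded function $\psi$ by an element of $\ker(\delta^1)=Z^1(G)=\Hom(G)$. The only difference is presentational: the paper weaves the argument into the discussion preceding the proposition, whereas you spell out the well-definedness, linearity, surjectivity, and kernel computation explicitly; both rest on the same observations about the bar resolution and Example \ref{deg1}.
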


\begin{definition}
Two quasimorphisms are \textit{equivalent} if they differ by a trivial quasimorphism. By Proposition \ref{EHiso}, two quasimorphisms are equivalent if and only if their coboundaries represent the same bounded cohomology class.
\end{definition}

The definition of a quasimorphism gives us a finite bound for the distance between $\varphi(gh)$ and $\varphi(g) + \varphi(h)$. More generally:

\begin{lemma}
\label{qmsum}
Let $\varphi \in Q(G)$ and $g_1, \ldots, g_n \in G$. Then
$$\left| \sum\limits_{i = 1}^n \varphi(g_i) - \varphi(g_1 \cdots g_n) \right| \leq (n-1)D(\varphi).$$
\end{lemma}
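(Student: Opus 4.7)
The plan is a straightforward induction on $n$, using the triangle inequality to reduce each step to a single application of the defect bound.

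For the base case $n=1$, the expression inside the absolute value is simply $\varphi(g_1)-\varphi(g_1)=0$, and the bound $0 \leq 0 \cdot D(\varphi)$ holds trivially. The case $n=2$ is just the definition of the defect: $|\varphi(g_1)+\varphi(g_2)-\varphi(g_1 g_2)| \leq D(\varphi) = (2-1)D(\varphi)$.

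For the inductive step, assume the statement for $n-1$. I would write
\[
\sum_{i=1}^n \varphi(g_i) - \varphi(g_1 \cdots g_n) = \Bigl(\sum_{i=1}^{n-1} \varphi(g_i) - \varphi(g_1 \cdots g_{n-1})\Bigr) + \Bigl(\varphi(g_1 \cdots g_{n-1}) + \varphi(g_n) - \varphi(g_1 \cdots g_n)\Bigr),
\]
apply the triangle inequality, and bound the first summand by $(n-2)D(\varphi)$ using the induction hypothesis and the second summand by $D(\varphi)$ using the definition of the defect applied to the pair $(g_1\cdots g_{n-1},\, g_n)$. Adding these yields $(n-1)D(\varphi)$, completing the induction.

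There is no real obstacle here; the only thing to be slightly careful about is grouping the product associatively so that the second summand is literally an instance of the defect inequality applied to two group elements, but since $G$ is a group this is automatic. The result can be viewed as saying that quasimorphisms are "morphisms up to an error that grows at most linearly in the number of factors," which will be the foundational estimate behind the existence of the homogenization and many of the computations that follow.
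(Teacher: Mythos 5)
Your proof is correct and matches the paper's argument: both proceed by induction on $n$, splitting off the last factor $g_n$, applying the triangle inequality, and bounding the two pieces by the induction hypothesis and the defect, respectively. The only cosmetic difference is that you also record the trivial $n=1$ case, whereas the paper starts its base case at $n=2$.
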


\begin{proof}
The case $n = 2$ is by definition. Suppose that the statement holds up to $n$. Then
$$\left| \sum\limits_{i = 1}^{n+1} \varphi(g_i) - \varphi(g_1 \cdots g_{n+1}) \right| \leq \left|  \sum\limits_{i = 1}^n \varphi(g_i) - \varphi(g_1 \cdots g_n) \right| + |\varphi(g_1 \cdots g_n) + $$
$$ + \varphi(g_{n+1}) - \varphi(g_1 \cdots g_{n+1})| \leq (n-1)D(\varphi) + D(\varphi) = n D(\varphi).$$
\end{proof}

We can quickly reduce to a class of quasimorphisms that is better behaved:

\begin{definition}
Let $f : G \to \mathbb{R}$ be a map. We say that $f$ is \textit{alternating} if $f(g^{-1}) = -f(g)$ for any $g \in G$. In particular, $f(1) = 0$. We denote by $Q_{alt}(G)$ the space of alternating quasimorphisms, and by $\ell^\infty_{alt}(G)$ the space of alternating bounded functions.
\end{definition}

For a quasimorphism $\varphi \in Q(G)$, define
$$\varphi'(g) := \frac{\varphi(g) - \varphi(g^{-1})}{2}.$$

\begin{lemma}
\label{antisymm}

Let $\varphi \in Q(G)$. Then $\varphi' \in Q_{alt}(G)$. Moreover $||\varphi - \varphi'||_\infty \leq D(\varphi)$ (so $\varphi$ and $\varphi'$ are equivalent), and $D(\varphi') \leq D(\varphi)$.
\end{lemma}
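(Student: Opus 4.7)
The plan is to verify the three claims in sequence, using only the definition of the defect applied to a few well-chosen pairs $(g,h)$.

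First, that $\varphi'$ is alternating is immediate from the definition: $\varphi'(g^{-1}) = \tfrac{1}{2}(\varphi(g^{-1}) - \varphi(g)) = -\varphi'(g)$. So the real content lies in the two quantitative statements, and both will follow from a uniform bound on $\varphi(g) + \varphi(g^{-1})$.

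To obtain that bound, I would first evaluate $\delta^1\varphi$ at $(1,1)$: this gives $|\varphi(1)| \leq D(\varphi)$. Then evaluating $\delta^1\varphi$ at $(g, g^{-1})$ yields $|\varphi(g) + \varphi(g^{-1}) - \varphi(1)| \leq D(\varphi)$, and combining these by the triangle inequality gives $|\varphi(g) + \varphi(g^{-1})| \leq 2 D(\varphi)$. Since $\varphi(g) - \varphi'(g) = \tfrac{1}{2}(\varphi(g) + \varphi(g^{-1}))$, this immediately gives $\|\varphi - \varphi'\|_\infty \leq D(\varphi)$, which is the second claim.

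For the defect estimate, the clean approach is to write the coboundary $\delta^1 \varphi'$ in terms of $\delta^1 \varphi$. A direct computation yields
\[
\delta^1 \varphi'(g,h) \;=\; \tfrac{1}{2}\,\delta^1\varphi(g,h) \;-\; \tfrac{1}{2}\,\delta^1\varphi(h^{-1}, g^{-1}),
\]
where I use that $(gh)^{-1} = h^{-1}g^{-1}$. Taking absolute values and applying the definition of $D(\varphi)$ to each term gives $|\delta^1\varphi'(g,h)| \leq D(\varphi)$, so $D(\varphi') \leq D(\varphi)$ as required; in particular $\varphi' \in Q(G)$, hence $\varphi' \in Q_{alt}(G)$.

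There is no real obstacle here; the only thing worth flagging is that one must remember to bound $|\varphi(1)|$ separately before tackling $|\varphi(g) + \varphi(g^{-1})|$, since the defect controls the expression $\varphi(g) + \varphi(g^{-1}) - \varphi(1)$ rather than $\varphi(g) + \varphi(g^{-1})$ itself. Everything else is formal.
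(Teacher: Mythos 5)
Your proof is correct and follows essentially the same route as the paper: bounding $|\varphi(1)|$ via $\delta^1\varphi(1,1)$ and $|\varphi(g)+\varphi(g^{-1})|$ via $\delta^1\varphi(g,g^{-1})$ to get $\|\varphi-\varphi'\|_\infty \leq D(\varphi)$, and then expressing $\delta^1\varphi'(g,h)$ as the average of $\delta^1\varphi(g,h)$ and $-\delta^1\varphi(h^{-1},g^{-1})$ to bound the defect. No gaps; this matches the paper's argument step for step.
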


\begin{proof}
We start by estimating the distance of $\varphi$ and $\varphi'$:
$$|\varphi(g) - \varphi'(g)| = \left| \frac{\varphi(g) + \varphi(g^{-1})}{2} \right| = \left| \frac{\delta^1 \varphi(g, g^{-1})}{2} + \frac{\varphi(1)}{2} \right| \leq D(\varphi),$$
where we used that $|\varphi(1)| = |\delta^1 \varphi(1, 1)| \leq D(\varphi)$. From this it follows that $\varphi'$ is a quasimorphism, and it is clear that it is alternating. We estimate its defect:
$$|\delta^1 \varphi'(g, h)| = \left| \frac{\varphi(g) + \varphi(h) - \varphi(gh)}{2} - \frac{\varphi(h^{-1}) + \varphi(g^{-1}) - \varphi(h^{-1}g^{-1})}{2} \right| \leq D(\varphi).$$
\end{proof}

\begin{corollary}
The map $\delta^1$ induces an isomorphism of vector spaces
$$Q_{alt}(G)/\ell^\infty_{alt}(G) \oplus Hom(G) \to EH^2_b(G).$$
\end{corollary}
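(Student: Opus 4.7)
The plan is to deduce this corollary directly from Proposition \ref{EHiso} together with the "antisymmetrization" trick of Lemma \ref{antisymm}, by restricting the isomorphism there to the subspace of alternating quasimorphisms and checking nothing is lost.

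First I would verify surjectivity. Given a class $[c] \in EH^2_b(G)$, Proposition \ref{EHiso} provides a quasimorphism $\varphi \in Q(G)$ with $[\delta^1 \varphi] = [c]$. Setting $\varphi'(g) := (\varphi(g) - \varphi(g^{-1}))/2$, Lemma \ref{antisymm} guarantees that $\varphi' \in Q_{alt}(G)$ and that $\varphi - \varphi' \in \ell^\infty(G)$. Consequently $\delta^1(\varphi - \varphi') \in B^2_b(G)$, so $[\delta^1 \varphi'] = [\delta^1 \varphi] = [c]$, and a preimage in $Q_{alt}(G)$ has been produced.

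Next I would compute the kernel of the induced map $Q_{alt}(G) \to EH^2_b(G)$. By Proposition \ref{EHiso}, this kernel is precisely $Q_{alt}(G) \cap (\ell^\infty(G) \oplus Hom(G))$. The key observation, which I expect to be the only part requiring any care, is that every homomorphism is automatically alternating: for $h \in Hom(G)$, one has $h(g^{-1}) = -h(g)$. Thus if $\varphi \in Q_{alt}(G)$ decomposes as $\varphi = b + h$ with $b \in \ell^\infty(G)$ and $h \in Hom(G)$, then the alternating condition on $\varphi$ combined with the alternating property of $h$ forces $b(g^{-1}) = \varphi(g^{-1}) - h(g^{-1}) = -\varphi(g) + h(g) = -b(g)$, so $b \in \ell^\infty_{alt}(G)$. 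Hence the kernel is exactly $\ell^\infty_{alt}(G) \oplus Hom(G)$, where the direct sum is well-defined since $\ell^\infty_{alt}(G) \cap Hom(G) \subseteq \ell^\infty(G) \cap Hom(G) = \{0\}$ by Example \ref{deg1}.

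Combining surjectivity with the kernel computation and applying the first isomorphism theorem gives the stated isomorphism. No genuine obstacle arises here; the statement is essentially a bookkeeping refinement of Proposition \ref{EHiso}, made possible by the fact that antisymmetrization preserves the equivalence class of a quasimorphism (Lemma \ref{antisymm}) and that homomorphisms are inherently alternating.
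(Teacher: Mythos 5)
Your proof is correct and follows essentially the same route as the paper: surjectivity via the antisymmetrization $\varphi \mapsto \varphi'$ of Lemma \ref{antisymm} combined with Proposition \ref{EHiso}. The paper's proof consists only of that surjectivity remark, so your explicit kernel computation (using that homomorphisms are automatically alternating, hence the bounded part of the decomposition is alternating) simply fills in a step the paper leaves implicit.
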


\begin{proof}
By the previous lemma, every quasimorphism is at a bounded distance from an alternating one, so the map $Q_{alt}(G) \to EH^2_b(G)$ from Proposition \ref{EHiso} is surjective.
\end{proof}

We will shortly see that we can do even better, but for the moment it is useful to explicitely record this result since we will always work with alternating quasimorphisms. \\

Here is another useful fact about alternating quasimorphisms.

\begin{lemma}
Let $\varphi \in Q_{alt}(G)$. Then $\varphi([g, h]) \leq 3 D(\varphi)$.
\end{lemma}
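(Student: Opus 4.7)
The plan is to apply Lemma \ref{qmsum} to the four-term product $[g,h] = g \cdot h \cdot g^{-1} \cdot h^{-1}$, and then exploit the alternating property to cancel the sum $\varphi(g) + \varphi(h) + \varphi(g^{-1}) + \varphi(h^{-1})$.

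More precisely, the first step is to invoke Lemma \ref{qmsum} with $n = 4$ applied to $g_1 = g$, $g_2 = h$, $g_3 = g^{-1}$, $g_4 = h^{-1}$. This yields
\[
\left| \varphi(g) + \varphi(h) + \varphi(g^{-1}) + \varphi(h^{-1}) - \varphi([g,h]) \right| \leq 3 D(\varphi).
\]
The second step is to use the hypothesis that $\varphi$ is alternating, which gives $\varphi(g^{-1}) = -\varphi(g)$ and $\varphi(h^{-1}) = -\varphi(h)$, so the four-term sum collapses to zero. We are then left with $|\varphi([g,h])| \leq 3 D(\varphi)$, which is slightly stronger than the stated bound (no absolute value is strictly necessary since the inequality holds for both $\pm \varphi([g,h])$).

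There is essentially no obstacle here: the result is a direct consequence of the subadditivity estimate in Lemma \ref{qmsum} combined with the alternating property, which is precisely the reason it is convenient to have reduced to the alternating case in Lemma \ref{antisymm}. The only subtlety worth noting is that the statement as written gives an upper bound on $\varphi([g,h])$ (not on its absolute value), but the argument above produces the two-sided estimate $|\varphi([g,h])| \leq 3 D(\varphi)$ for free.
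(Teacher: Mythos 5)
Your proof is correct and is essentially identical to the paper's: both apply Lemma \ref{qmsum} with $n=4$ to the factorization $[g,h]=g\cdot h\cdot g^{-1}\cdot h^{-1}$ and then use the alternating property to cancel the sum $\varphi(g)+\varphi(h)+\varphi(g^{-1})+\varphi(h^{-1})$. The paper just writes these two steps as a single displayed inequality.
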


\begin{proof}
Using Lemma \ref{qmsum} and the alternating property:
$$|\varphi([g, h])| = |\varphi([g, h]) - \varphi(g) - \varphi(h) - \varphi(g^{-1}) - \varphi(h^{-1})| \leq 3 D(\varphi).$$
\end{proof}

As a corollary, we obtain a similar estimate for all quasimorphisms:

\begin{corollary}
\label{dcomm}

Let $\varphi \in Q(G)$. Then $\varphi([g, h]) \leq 4 D(\varphi)$.
\end{corollary}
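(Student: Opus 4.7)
The plan is to reduce the general case to the alternating case that has just been established. Given $\varphi \in Q(G)$, I would apply Lemma \ref{antisymm} to obtain the alternating quasimorphism $\varphi'$, which satisfies two crucial properties simultaneously: it is close to $\varphi$ in the supremum norm, with $\|\varphi - \varphi'\|_\infty \leq D(\varphi)$, and it has no larger defect, $D(\varphi') \leq D(\varphi)$.

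From here I would evaluate everything at the commutator $[g,h]$. Since $\varphi'$ is alternating, the previous lemma yields $|\varphi'([g,h])| \leq 3 D(\varphi') \leq 3 D(\varphi)$. Combining this with the pointwise bound $|\varphi([g,h]) - \varphi'([g,h])| \leq D(\varphi)$ via the triangle inequality gives
\[
|\varphi([g,h])| \leq |\varphi'([g,h])| + |\varphi([g,h]) - \varphi'([g,h])| \leq 3 D(\varphi) + D(\varphi) = 4 D(\varphi),
\]
which is the desired estimate.

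There is no real obstacle here: both ingredients have already been packaged in the two preceding lemmas, and the corollary is essentially a bookkeeping exercise in stacking their constants. The only minor subtlety is making sure to use $D(\varphi')\leq D(\varphi)$ rather than $D(\varphi')$ itself when applying the bound for alternating quasimorphisms, since the statement of the corollary is phrased in terms of the defect of $\varphi$. One could also observe that the constant $4$ is almost certainly not optimal, but that refinement is irrelevant for the applications to come.
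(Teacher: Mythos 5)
Your proof is correct and is essentially identical to the one in the paper: both replace $\varphi$ by its alternating part $\varphi'$ via Lemma~\ref{antisymm}, apply the $3D(\varphi')$ bound for alternating quasimorphisms, and then add the $\|\varphi-\varphi'\|_\infty \leq D(\varphi)$ estimate by the triangle inequality.
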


\begin{proof}
Applying Lemma \ref{antisymm}:
$$|\varphi([g, h])| \leq ||\varphi - \varphi'||_\infty + |\varphi'([g, h])| \leq D(\varphi) + 3D(\varphi') \leq 4D(\varphi).$$
\end{proof}

Using approximations by well-behaved classes of quasimorphisms (alternating or homogeneous, which will be the subject of the next paragraph) is a very useful technique. Here is another such example:

\begin{lemma}
\label{qm_gg-1}
Let $\varphi \in Q(G)$. Then $|\varphi(g) + \varphi(g^{-1})| \leq 2 D(\varphi)$ for all $g \in G$.
\end{lemma}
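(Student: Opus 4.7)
My plan is to prove this directly from the definition of the defect, using only the identity $gg^{-1} = 1$ together with the standard bound $|\varphi(1)| \le D(\varphi)$.

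First, I would rewrite $\varphi(g) + \varphi(g^{-1})$ in terms of the coboundary. Since $\delta^1 \varphi(g, g^{-1}) = \varphi(g) + \varphi(g^{-1}) - \varphi(gg^{-1}) = \varphi(g) + \varphi(g^{-1}) - \varphi(1)$, we obtain
$$\varphi(g) + \varphi(g^{-1}) = \delta^1 \varphi(g, g^{-1}) + \varphi(1).$$
By definition of the defect, the first term on the right is bounded in absolute value by $D(\varphi)$.

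Next, I would bound $|\varphi(1)|$. Applying the defect bound to the pair $(1,1)$ gives
$$|\varphi(1)| = |2\varphi(1) - \varphi(1)| = |\delta^1\varphi(1,1) + \varphi(1) - \varphi(1) \cdot 0| \le D(\varphi),$$
or more cleanly: $\delta^1\varphi(1,1) = \varphi(1) + \varphi(1) - \varphi(1) = \varphi(1)$, so $|\varphi(1)| = |\delta^1 \varphi(1,1)| \le D(\varphi)$.

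Combining the two estimates via the triangle inequality yields $|\varphi(g) + \varphi(g^{-1})| \le 2D(\varphi)$, as required. There is no real obstacle here; the lemma is essentially a repackaging of the computation already appearing at the start of the proof of Lemma \ref{antisymm} (where the quantity $\tfrac{\varphi(g)+\varphi(g^{-1})}{2}$ was shown to be bounded by $D(\varphi)$), and is being recorded separately for later reference.
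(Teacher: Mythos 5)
Your proof is correct, but it takes a slightly different path than the paper's. The paper deduces the lemma from Lemma \ref{antisymm}: since $\varphi'$ is alternating, $\varphi'(g) + \varphi'(g^{-1}) = 0$, so $\varphi(g) + \varphi(g^{-1}) = (\varphi(g) - \varphi'(g)) + (\varphi(g^{-1}) - \varphi'(g^{-1}))$, and each summand is bounded in absolute value by $\|\varphi - \varphi'\|_\infty \le D(\varphi)$. You instead argue directly from the definition of the defect: $\varphi(g) + \varphi(g^{-1}) = \delta^1\varphi(g,g^{-1}) + \varphi(1)$, bound the first term by $D(\varphi)$, and bound $|\varphi(1)| = |\delta^1\varphi(1,1)| \le D(\varphi)$. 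Your closing remark is exactly right: this is the very computation that proves Lemma \ref{antisymm} (where the same two quantities are packaged as $\tfrac{1}{2}(\delta^1\varphi(g,g^{-1}) + \varphi(1))$), so the two routes are algebraically the same argument, merely organized differently. The paper's version is shorter because it can cite Lemma \ref{antisymm} as a black box; yours is self-contained and avoids introducing $\varphi'$. Both give the constant $2$. One small nitpick: the displayed line $|\varphi(1)| = |2\varphi(1) - \varphi(1)| = |\delta^1\varphi(1,1) + \varphi(1) - \varphi(1)\cdot 0| \le D(\varphi)$ is mangled (the middle expression does not parse sensibly), but the ``cleaner'' derivation you give immediately afterward, $\delta^1\varphi(1,1) = \varphi(1)$, is correct and is what you should keep.
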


\begin{proof}
$$|\varphi(g) + \varphi(g^{-1})| = |(\varphi(g) - \varphi'(g)) + (\varphi(g^{-1}) - \varphi'(g^{-1}))| \leq 2 ||\varphi - \varphi'||_\infty \leq 2D(\varphi).$$
\end{proof}

\subsubsection{Homogeneous quasimorphisms}

We have seen that one can approximate any quasimorphism by an alternating one. This is useful because alternating quasimorphisms are those that appear more naturally, and the association $\varphi \mapsto \varphi'$ is simple and easy to compute. There is another special class of quasimorphisms that plays a more important role.

\begin{definition}
A quasimorphism $\varphi$ is called \textit{homogeneous} if $\varphi(g^n) = n \varphi(g)$ for all $g \in G$ and all $n \in \mathbb{Z}$. We denote by $Q_h(G)$ the space of homogeneous quasimorphisms.
\end{definition}

We will shortly see that one can approximate any quasimorphism \textit{uniquely} by a homogeneous one. This is much better than what happens with alternating quasimorphisms. The drawback is that the corresponding association $\varphi \mapsto \overline{\varphi}$ is much harder to compute. For the moment, let us start by showing just how well-behaved homogeneous quasimorphisms are:

\begin{lemma}
\label{hqm}
\begin{enumerate}
\item Homogeneous quasimorphisms restrict to homomorphisms on abelian subgroups.
\item Homogeneous quasimorphisms are conjugacy invariant.
\item If $\varphi \in Q_h(G)$ and $g, h \in G$, then $|\varphi([g, h])| \leq D(\varphi)$.
\item If $\varphi, \psi \in Q_h(G)$ satisfy $||\varphi - \psi||_\infty < \infty$, then $\varphi = \psi$.
\end{enumerate}
\end{lemma}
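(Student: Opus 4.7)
The common strategy I would use for the first three items is the classical ``divide by $n$ and let $n \to \infty$'' trick: homogeneity promotes any defect-sized error to an exact equality once the underlying elements are raised to the $n$-th power. For (1), given commuting $g, h$, I would note that $(gh)^n = g^n h^n$ and apply the defect once to get $|\varphi(g^n h^n) - \varphi(g^n) - \varphi(h^n)| \leq D(\varphi)$. Homogeneity turns this into $|n\varphi(gh) - n\varphi(g) - n\varphi(h)| \leq D(\varphi)$; dividing by $n$ and sending $n \to \infty$ yields additivity on the abelian subgroup.

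For (2), I would start from $(hgh^{-1})^n = hg^n h^{-1}$ and apply Lemma \ref{qmsum} to the triple product $h \cdot g^n \cdot h^{-1}$, giving $|\varphi(hg^n h^{-1}) - \varphi(h) - \varphi(g^n) - \varphi(h^{-1})| \leq 2 D(\varphi)$. Homogeneity, in particular the identity $\varphi(h^{-1}) = -\varphi(h)$ coming from the case $n = -1$, collapses this to $|n\varphi(hgh^{-1}) - n\varphi(g)| \leq 2 D(\varphi)$, and dividing by $n$ gives conjugacy invariance. With (2) in hand, (3) is immediate: write $[g,h] = (ghg^{-1}) \cdot h^{-1}$, apply the defect once to get $|\varphi([g,h]) - \varphi(ghg^{-1}) - \varphi(h^{-1})| \leq D(\varphi)$, and use $\varphi(ghg^{-1}) = \varphi(h)$ together with $\varphi(h^{-1}) = -\varphi(h)$ to see that the two ``main terms'' cancel, leaving $|\varphi([g,h])| \leq D(\varphi)$.

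For (4), the difference $\rho := \varphi - \psi$ is itself homogeneous (differences of homogeneous functions are homogeneous; the quasimorphism hypothesis plays no role here). So for any $g$ and any $n \in \mathbb{Z}$, $|n \rho(g)| = |\rho(g^n)| \leq ||\rho||_\infty$; letting $|n| \to \infty$ forces $\rho(g) = 0$. There is no real obstacle in this lemma: all four parts reduce to essentially the same one-step idea. The only subtlety worth flagging is that the relation $\varphi(h^{-1}) = -\varphi(h)$ is what makes every homogeneous quasimorphism alternating, and it is used implicitly in both (2) and (3); apart from that, the proofs are mechanical applications of the defect bound together with homogeneity.
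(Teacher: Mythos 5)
Your proof is correct and follows essentially the same strategy as the paper for all four items: the ``raise to the $n$-th power, apply the defect bound (or Lemma~\ref{qmsum}), divide by $n$, let $n\to\infty$'' argument for (1), (2), (4), and the one-step defect estimate for (3) after invoking (2) and the alternating property. No discrepancies to report.
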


\begin{proof}
1. Let $A \leq G$ be an abelian subgroup, and let $a, b \in A$. Then
$$|\varphi(a) + \varphi(b) - \varphi(ab)| = \frac{1}{n} |\varphi(a^n) + \varphi(b^n) - \varphi(a^n b^n)| \leq \frac{1}{n} D(\varphi) \xrightarrow{n \to \infty} 0.$$

2. Let $g, h \in G$. Then, using Lemma \ref{qmsum} and the fact that homogeneous quasimorphisms are alternating:
$$|\varphi(ghg^{-1}) - \varphi(h)| = \frac{1}{n}|\varphi(gh^ng^{-1}) - \varphi(h^n)| = $$
$$ = \frac{1}{n}|\varphi(gh^ng^{-1}) - \varphi(g) - \varphi(h^n) - \varphi(g^{-1})| \leq \frac{1}{n} 2 D(\varphi) \xrightarrow{n \to \infty} 0.$$

3. Using the previous point, and the fact that homogeneous quasimorphisms are alternating, we obtain $\varphi(ghg^{-1}) = \varphi(h) = - \varphi(h^{-1})$. Thus:
$$|\varphi([g, h])| = |\varphi(ghg^{-1}) + \varphi(h^{-1}) - \varphi(ghg^{-1}h^{-1})| \leq D(\varphi).$$

4. Since $Q_h(G)$ is a vector space, it is enough to show that any bounded homogeneous quasimorphism is trivial. Let $\varphi \in Q_h(G)$ be bounded by $C$. Then for any $g \in G$:
$$|\varphi(g)| = \frac{1}{n}|\varphi(g^n)| \leq \frac{1}{n} C \xrightarrow{n \to \infty} 0.$$
\end{proof}

The last point of this lemma shows in particular that for any quasimorphism, there exists at most one homogeneous quasimorphism that is at bounded distance from it. In fact, such a homogeneous quasimorphism does exist:

\begin{definition}
Let $\varphi \in Q(G)$. For $g \in G$, define
$$\overline{\varphi}(g) := \lim\limits_{n \to \pm \infty} \frac{\varphi(g^n)}{n}.$$
We call the map $\overline{\varphi}$ the \textit{homogenization} of $G$.
\end{definition}

\begin{proposition}
\label{homogenization}
For any quasimorphism $\varphi$, the homogenization $\overline{\varphi}$ is a well-defined homogeneous quasimorphism. Moreover, $D(\overline{\varphi}) \leq 4 D(\varphi)$ and $||\varphi - \overline{\varphi}||_\infty \leq D(\varphi)$. Therefore $\overline{\varphi}$ is the unique homogeneous quasimorphism that is at a bounded distance from $\varphi$.
\end{proposition}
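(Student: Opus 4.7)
The plan is to establish four things in order: existence of the limit defining $\overline{\varphi}$, the distance estimate $\|\varphi - \overline{\varphi}\|_\infty \leq D(\varphi)$, the defect estimate and homogeneity, and finally uniqueness.

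First I would fix $g \in G$ and show that the sequence $b_n := \varphi(g^n)/n$ is Cauchy. The key observation is that Lemma \ref{qmsum} applied to $g_1 = \cdots = g_m = g^n$ gives
\[ |\varphi(g^{nm}) - m\varphi(g^n)| \leq (m-1)D(\varphi), \]
so dividing by $nm$ yields $|b_{nm} - b_n| \leq D(\varphi)/n$. Symmetrically $|b_{nm} - b_m| \leq D(\varphi)/m$, hence $|b_n - b_m| \leq D(\varphi)(1/n + 1/m)$ and $(b_n)_{n \geq 1}$ converges. For $n \to -\infty$ one repeats the argument with $g^{-1}$, and Lemma \ref{qm_gg-1} ensures that the limits along $+\infty$ and $-\infty$ agree up to a vanishing error, so both yield the same value $\overline{\varphi}(g)$.

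Next, the inequality $|\varphi(g^n) - n\varphi(g)| \leq (n-1)D(\varphi)$ from Lemma \ref{qmsum} gives $|b_n - \varphi(g)| \leq (1 - 1/n) D(\varphi)$; passing to the limit produces $|\overline{\varphi}(g) - \varphi(g)| \leq D(\varphi)$, i.e. $\|\varphi - \overline{\varphi}\|_\infty \leq D(\varphi)$. Combined with the triangle inequality this immediately controls the defect:
\[ |\delta^1 \overline{\varphi}(g, h)| \leq |\delta^1 \varphi(g,h)| + 3 \|\varphi - \overline{\varphi}\|_\infty \leq 4 D(\varphi), \]
so $\overline{\varphi}$ is a quasimorphism with $D(\overline{\varphi}) \leq 4 D(\varphi)$. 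Homogeneity is a direct computation: for $k \in \mathbb{Z}$,
\[ \overline{\varphi}(g^k) = \lim_{n \to \infty} \frac{\varphi(g^{kn})}{n} = k \lim_{n \to \infty} \frac{\varphi(g^{kn})}{kn} = k\, \overline{\varphi}(g), \]
where for $k < 0$ one uses that the limit can be taken along $n \to -\infty$ without change.

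For uniqueness, if $\psi \in Q_h(G)$ satisfies $\|\varphi - \psi\|_\infty < \infty$, then $\|\overline{\varphi} - \psi\|_\infty < \infty$ by the triangle inequality. Both are homogeneous quasimorphisms at bounded distance from each other, so Lemma \ref{hqm}(4) forces $\overline{\varphi} = \psi$. The main (mildly delicate) step is the convergence argument in the first paragraph: one must be careful to treat both signs of $n$ consistently and verify that the two one-sided limits coincide, which is where Lemma \ref{qm_gg-1} is needed. Everything else reduces to bookkeeping with the defect inequality.
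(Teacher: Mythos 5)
Your proof is correct and follows essentially the same route as the paper's: the Cauchy argument via Lemma \ref{qmsum}, the use of Lemma \ref{qm_gg-1} to reconcile the two one-sided limits, the bound $\|\varphi - \overline{\varphi}\|_\infty \leq D(\varphi)$ from $|\varphi(g^n) - n\varphi(g)| \leq (n-1)D(\varphi)$, the resulting defect bound $D(\overline{\varphi}) \leq 4D(\varphi)$, and uniqueness via Lemma \ref{hqm}(4). The only difference is that you spell out the homogeneity computation and the uniqueness step a bit more explicitly than the paper does; the substance is the same.
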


\begin{remark}
In fact, the stronger inequality $D(\overline{\varphi}) \leq 2 D(\varphi)$ holds: see for example \cite[Lemma 2.58]{Calegari}. However the proof is more involved, and for our purposes all that matters is that the bound is linear.
\end{remark}

\begin{proof}
Fix $g \in G$. By Lemma \ref{qmsum}, for any $n, m \geq 1$:
$$\left| \frac{\varphi(g^{nm})}{nm} - \frac{\varphi(g^m)}{m} \right| = \frac{1}{nm} |\varphi((g^m)^n) - n\varphi(g^m)| \leq \frac{1}{nm}(n-1) D(\varphi) \leq \frac{1}{m} D(\varphi).$$
Thus:
$$\left| \frac{\varphi(g^n)}{n} - \frac{\varphi(g^m)}{m} \right| \leq \left| \frac{\varphi(g^n)}{n} - \frac{\varphi(g^{nm})}{nm} \right| + \left| \frac{\varphi(g^{nm})}{nm} - \frac{\varphi(g^m)}{m} \right| \leq \left( \frac{1}{n} + \frac{1}{m} \right) D(\varphi).$$
It follows that the sequence $(\varphi(g^n)/n)_{n \geq 1}$ is Cauchy. Replacing $g$ by $g^{-1}$, the same is true for the sequence $(\varphi(g^n)/n)_{n \leq -1}$. Moreover $|\varphi(g^n) + \varphi(g^{-n})| \leq 2D(\varphi)$ by Lemma \ref{qm_gg-1}. We conclude that
$$\overline{\varphi}(g) = \lim\limits_{n \to \pm \infty} \frac{\varphi(g^n)}{n}$$
exists for every $g \in G$. By construction this map satisfies $\overline{\varphi}(g^n) = n \varphi(g)$ for every $n \in \mathbb{Z}$. Moreover, by Lemma \ref{qmsum}
$$\left| \varphi(g) - \frac{\varphi(g^n)}{n} \right| \leq \frac{1}{n} |n \varphi(g) - \varphi(g^n)| \leq \frac{1}{n}(n-1) D(\varphi) \leq D(\varphi);$$
so by taking the limit $||\varphi - \overline{\varphi}||_\infty \leq D(\varphi)$. It follows immediately that $\overline{\varphi}$ is a quasimorphism and that it satisfies $D(\overline{\varphi}) \leq 3 ||\varphi - \overline{\varphi}||_\infty + D(\varphi) \leq 4D(\varphi)$.
\end{proof}

We deduce the following important result:

\begin{theorem}
\label{EHiso2}
The map $\delta^1$ induces an isomorphism of vector spaces
$$Q_h(F) / Hom(G) \to EH^2_b(G).$$
\end{theorem}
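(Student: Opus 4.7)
The plan is to use Proposition \ref{EHiso} as the starting point and to show that the surjection $Q(G) \to EH^2_b(G)$ factors through the quotient $Q_h(G)/\mathrm{Hom}(G)$, yielding an isomorphism. Concretely, I would first check well-definedness of the candidate map $[\varphi] \mapsto [\delta^1 \varphi]$ from $Q_h(G)/\mathrm{Hom}(G)$ to $EH^2_b(G)$: if $\varphi \in Q_h(G)$ then $\delta^1 \varphi$ is a bounded cocycle (since $\varphi$ is a quasimorphism) and it is a coboundary in the unbounded complex (with primitive $\varphi$), so its class lies in $EH^2_b(G)$; and if $\varphi - \psi \in \mathrm{Hom}(G)$ then $\delta^1(\varphi - \psi) = 0$, so $\delta^1 \varphi = \delta^1 \psi$ already at the cochain level.

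For surjectivity, I would start from an arbitrary class in $EH^2_b(G)$, represented by $\delta^1 \varphi$ for some $\varphi \in Q(G)$ by Proposition \ref{EHiso}. Proposition \ref{homogenization} produces the homogenization $\overline{\varphi} \in Q_h(G)$ with $\|\varphi - \overline{\varphi}\|_\infty \leq D(\varphi)$; hence $\overline{\varphi} - \varphi \in \ell^\infty(G)$, so $\delta^1 \overline{\varphi} - \delta^1 \varphi$ is a bounded coboundary of a bounded function, i.e., an element of $B^2_b(G)$. Therefore $[\delta^1 \overline{\varphi}] = [\delta^1 \varphi]$ in $H^2_b(G)$, proving surjectivity.

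For injectivity, suppose $\varphi \in Q_h(G)$ satisfies $[\delta^1 \varphi] = 0 \in H^2_b(G)$. Then there exists $\psi \in \ell^\infty(G)$ with $\delta^1 \varphi = \delta^1 \psi$, so $\varphi - \psi \in Z^1(G) = \mathrm{Hom}(G)$ by Example \ref{deg1}. Write $\varphi = (\varphi - \psi) + \psi$; since homomorphisms are homogeneous, $\psi = \varphi - (\varphi - \psi)$ is a difference of homogeneous quasimorphisms and hence is itself a bounded homogeneous quasimorphism. By Lemma \ref{hqm}(4), such a quasimorphism is identically zero, so $\varphi = \varphi - \psi \in \mathrm{Hom}(G)$, i.e., $[\varphi] = 0$ in the quotient.

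There is no real obstacle here: every ingredient has been prepared in the preceding lemmas, and the only mildly subtle point is the injectivity step, where one must remember that a homomorphism is automatically homogeneous in order to apply the uniqueness statement of Lemma \ref{hqm}(4) to $\psi$ rather than to $\varphi$ itself.
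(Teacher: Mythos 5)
Your proof is correct and uses exactly the same ingredients as the paper: Proposition \ref{EHiso}, the homogenization of Proposition \ref{homogenization} for surjectivity, and Lemma \ref{hqm}(4) for injectivity. The paper merely packages these facts as the direct sum decomposition $Q(G) = Q_h(G) \oplus \ell^\infty(G)$ and then quotes Proposition \ref{EHiso}, so your argument is the same proof written out in full detail.
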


\begin{proof}
Proposition \ref{homogenization} and the last point of Lemma \ref{hqm} imply that $Q(G) = Q_h(G) \oplus \ell^\infty(G)$. Now the result follows immediately from Proposition \ref{EHiso}.
\end{proof}

\subsubsection{The defect topology}

The map $D : Q_h(G) \to \mathbb{R}_{\geq 0}$ satisfies $D(\varphi + \psi) \leq D(\varphi) + D(\psi)$, for $\lambda \in \mathbb{R}$: $D(\lambda \varphi) = |\lambda|D(\varphi)$, and finally $D(\varphi) = 0$ if and only if $\varphi \in Hom(G)$. In other words: if we mod out homomorphisms, the defect is a norm.

\begin{definition}
The \textit{defect norm} on $EH^2_b(G)$ is the one induced by the norm $D$ on the space $Q_h(G)/Hom(G)$. The topology induced by this norm is called the \textit{defect topology}.
\end{definition}

Recall that $EH^2_b(G)$ also comes equipped with the restriction of the Gromov seminorm.

\begin{proposition}
\label{deftop}

The restriction of the Gromov seminorm to $EH^2_b(G)$ is a norm, and it is equivalent to the defect norm.
\end{proposition}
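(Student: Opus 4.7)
The plan is to use Theorem \ref{EHiso2} to identify $EH^2_b(G)$ with $Q_h(G)/Hom(G)$, so that every class $\alpha \in EH^2_b(G)$ is represented as $\alpha = [\delta^1 \overline{\varphi}]$ for a uniquely determined homogeneous quasimorphism $\overline{\varphi}$ (modulo homomorphisms). The defect norm of $\alpha$ is $D(\overline{\varphi})$, while the Gromov seminorm is
\[
\|\alpha\|_\infty \;=\; \inf_{\psi \in \ell^\infty(G)} \|\delta^1(\overline{\varphi} + \psi)\|_\infty \;=\; \inf_{\psi \in \ell^\infty(G)} D(\overline{\varphi} + \psi),
\]
the second equality using that $\|\delta^1 \xi\|_\infty = D(\xi)$ for any $\xi \in Q(G)$ in the bar resolution, and that homomorphisms do not contribute since they have vanishing coboundary. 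Both bounds in the equivalence will then follow from looking at specific choices of $\psi$ on one side, and from Proposition \ref{homogenization} on the other.

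First I would establish the easy inequality $\|\alpha\|_\infty \leq D(\overline{\varphi})$ by simply choosing $\psi = 0$ in the infimum. For the converse, I would fix an arbitrary $\psi \in \ell^\infty(G)$ and observe that $\overline{\varphi} + \psi$ is equivalent to $\overline{\varphi}$, so its homogenization is again $\overline{\varphi}$ by uniqueness (the last point of Lemma \ref{hqm}). Applying the bound $D(\overline{\eta}) \leq 4 D(\eta)$ from Proposition \ref{homogenization} to $\eta = \overline{\varphi} + \psi$ gives $D(\overline{\varphi}) \leq 4 D(\overline{\varphi} + \psi)$, and taking the infimum over $\psi$ yields $D(\overline{\varphi}) \leq 4 \|\alpha\|_\infty$. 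Putting the two inequalities together:
\[
\|\alpha\|_\infty \;\leq\; D(\overline{\varphi}) \;\leq\; 4 \|\alpha\|_\infty,
\]
which is exactly the equivalence of the two norms.

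Finally, to see that the Gromov seminorm is actually a norm on $EH^2_b(G)$, I would note that $\|\alpha\|_\infty = 0$ forces $D(\overline{\varphi}) = 0$ by the inequality just proved, which means $\overline{\varphi} \in Hom(G)$ and hence $\alpha = [\delta^1 \overline{\varphi}] = 0$ in $EH^2_b(G)$. There is no substantive obstacle to overcome here; the only subtle point is the bookkeeping that the infimum defining the Gromov seminorm is taken over bounded perturbations $\psi$ (which do not alter the homogenization), so that the key estimate from Proposition \ref{homogenization} can be applied with $\overline{\varphi}$ playing the role of the homogenization of every representative in the class.
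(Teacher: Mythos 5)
Your proof is correct and follows essentially the same route as the paper: identify the Gromov seminorm as $\inf_{b \in \ell^\infty(G)} D(\varphi + b)$, get the easy inequality by taking $b = 0$, and get the reverse by applying $D(\overline{\eta}) \le 4 D(\eta)$ from Proposition~\ref{homogenization} together with the fact that bounded perturbations do not change the homogenization. The only cosmetic difference is that you work directly with the homogeneous representative $\overline{\varphi}$, whereas the paper starts from an arbitrary $\varphi$ and invokes linearity of homogenization.
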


\begin{proof}
Let $c \in Z^2_b(G)$ be a cocycle whose class is in $EH^2_b(G)$. Let $\varphi \in Q(G)$ be such that $\delta^1 \varphi = c$, so that the defect norm of $[c]$ is $D(\overline{\varphi})$.
$$||[c]||_\infty = ||[\delta^1 \varphi] ||_\infty = \inf\limits_{b \in \ell^\infty(G)} ||\delta^1(\varphi + b)||_\infty = \inf\limits_{b \in \ell^\infty(G)} D(\varphi + b) \leq D(\overline{\varphi}).$$
Moreover, using linearity of the homogenization and Proposition \ref{homogenization}, for any $b \in \ell^\infty(G)$ we have
$$D(\overline{\varphi}) = D(\overline{\varphi + b})  \leq 4 D(\varphi + b).$$
We conclude that
$$||[c]||_\infty \leq D(\overline{\varphi}) \leq 4 ||[c]||_\infty.$$
Since $D$ is a norm, it follows that $|| \cdot ||_\infty$ is also a norm.
\end{proof}

\begin{remark}
The factor of 4 in the proof can be improved to a factor of 2, as we mentioned in Proposition \ref{homogenization}.
\end{remark}

In fact, by a result of Matsumoto and Morita, the Gromov seminorm on $H^2_b(G)$ is always a norm: for a proof see \cite[Corollary 6.7]{Frigerio}. Since we are only concerned with a free group $F$, for which $EH^2_b(F) = H^2_b(F)$, this result suffices.

\begin{corollary}
Let $F$ be a free group of finite rank. Then $H^2_b(F)$ equipped with the defect norm or the Gromov seminorm is a Banach space.
\end{corollary}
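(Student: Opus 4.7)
The plan is to combine Corollary \ref{EHF} with Proposition \ref{deftop} and the standard functional-analytic fact that a normed quotient of a Banach space by a closed subspace is itself a Banach space. Since the defect norm and the Gromov seminorm are equivalent on $EH^2_b(F)$, and by Corollary \ref{EHF} we have $EH^2_b(F) = H^2_b(F)$, it suffices to prove completeness for one of them; the Gromov seminorm is the more natural choice because it is defined as a quotient seminorm from an ambient Banach space.

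First I would observe that $C^2_b(F)$ is a Banach space with the sup norm by construction, and that $Z^2_b(F) = \ker(\delta^2)$ is a closed subspace because $\delta^2$ is a bounded linear map between Banach spaces. Hence $Z^2_b(F)$ is itself a Banach space with the restriction of $\|\cdot\|_\infty$. The quotient $H^2_b(F) = Z^2_b(F)/B^2_b(F)$, equipped with the Gromov seminorm, is then a Banach space precisely when $B^2_b(F)$ is closed in $Z^2_b(F)$, which in turn is equivalent to the Gromov seminorm being a genuine norm.

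Next I would invoke Corollary \ref{EHF} to identify $H^2_b(F)$ with $EH^2_b(F)$, and then apply Proposition \ref{deftop}, which asserts that the restriction of the Gromov seminorm to $EH^2_b(G)$ is a norm for any group $G$. Combined with the previous identification, this shows that the Gromov seminorm is a norm on all of $H^2_b(F)$, so $B^2_b(F)$ is closed in $Z^2_b(F)$, and hence $H^2_b(F)$ with the Gromov norm is a Banach space.

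Finally, Proposition \ref{deftop} also tells us that the defect norm and the Gromov norm are equivalent on $EH^2_b(F) = H^2_b(F)$, so completeness transfers from one to the other. I do not expect a main obstacle here: all the substantive work (equivalence of norms, and the fact that the Gromov seminorm is a norm on the exact part) has already been done in Proposition \ref{deftop}, and the vanishing of ordinary cohomology in degree $\geq 2$ provided by Corollary \ref{EHF} is what promotes that statement from $EH^2_b$ to the whole of $H^2_b$.
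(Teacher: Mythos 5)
Your proof is correct and follows essentially the same route as the paper: invoke Corollary \ref{EHF} to identify $H^2_b(F)$ with $EH^2_b(F)$, apply Proposition \ref{deftop} to conclude the Gromov seminorm is a norm (equivalent to the defect norm), and then observe that $B^2_b(F)$ is therefore closed in the Banach space $Z^2_b(F)$, so the quotient is complete. Your version simply spells out a few intermediate steps (e.g.\ that $Z^2_b(F)$ is closed because $\delta^2$ is bounded) that the paper leaves implicit.
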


\begin{proof}
By Corollary \ref{EHF}, $EH^2_b(F) = H^2_b(F)$. By Proposition \ref{deftop}, the Gromov seminorm on $H^2_b(F)$ is a norm equivalent to the defect norm. Since it is a norm, the space $B^2_b(F)$ is closed in the Banach space $Z^2_b(F)$, so the quotient space $H^2_b(F) = Z^2_b(F)/B^2_b(F)$ is also Banach.
\end{proof}

By Corollary \ref{cup_bc}, the cup product is continuous with respect to the topologies induced by the Gromov seminorms. Translating this to the defect topology, we deduce the following:

\begin{corollary}
\label{cup_cont}

Let $\varphi, \psi \in Q(F)$. Let $(\varphi_n)_{n \geq 1}, (\psi_n)_{n \geq 1} \subset Q(F)$ be such that $D(\varphi_n - \varphi), D(\psi_n - \psi) \xrightarrow{n \to \infty} 0$ and $[\delta^1 \varphi_n] \smile [\delta^1 \psi_n] = 0 \in H^4_b(F)$ for all $n \geq 1$. Then $[\delta^1 \varphi] \smile [\delta^1 \psi]$ has vanishing Gromov seminorm.
\end{corollary}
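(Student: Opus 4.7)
The plan is to combine two ingredients already in the excerpt: the continuity of the cup product with respect to the Gromov seminorm (Corollary \ref{cup_bc}), and the elementary identity $\|\delta^1 \varphi\|_\infty = D(\varphi)$ built into the definitions. Together these allow me to transport convergence in defect norm to convergence of the corresponding cup products in the Gromov seminorm on $H^4_b(F)$, and then the hypothesis that every $[\delta^1 \varphi_n] \smile [\delta^1 \psi_n]$ vanishes will force the limit to have vanishing Gromov seminorm.

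First I would observe that defect convergence implies Gromov seminorm convergence at the level of bounded cohomology classes. Since $\delta^1$ is linear, $[\delta^1 \varphi_n] - [\delta^1 \varphi] = [\delta^1(\varphi_n - \varphi)]$, and the Gromov seminorm of a class is bounded by the infinity norm of any representative, so
$$\|[\delta^1 \varphi_n] - [\delta^1 \varphi]\|_\infty \leq \|\delta^1(\varphi_n - \varphi)\|_\infty = D(\varphi_n - \varphi) \xrightarrow{n \to \infty} 0,$$
and the identical estimate gives $\|[\delta^1 \psi_n] - [\delta^1 \psi]\|_\infty \to 0$. (Here I am not using the full strength of Proposition \ref{deftop}; only one direction of the equivalence is needed, and it is immediate.)

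Next I would invoke the submultiplicative estimate $\|c \smile c'\|_\infty \leq \|c\|_\infty \|c'\|_\infty$ used in the proof of Corollary \ref{cup_bc}, together with the standard telescoping identity
$$[\delta^1 \varphi_n] \smile [\delta^1 \psi_n] - [\delta^1 \varphi] \smile [\delta^1 \psi] = ([\delta^1 \varphi_n] - [\delta^1 \varphi]) \smile [\delta^1 \psi_n] + [\delta^1 \varphi] \smile ([\delta^1 \psi_n] - [\delta^1 \psi]),$$
to conclude that the difference of the two cup products tends to $0$ in Gromov seminorm (the factor $\|[\delta^1 \psi_n]\|_\infty$ is uniformly bounded, being a convergent sequence).

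Since each $[\delta^1 \varphi_n] \smile [\delta^1 \psi_n]$ is the zero class and hence has zero Gromov seminorm, the triangle inequality yields $\|[\delta^1 \varphi] \smile [\delta^1 \psi]\|_\infty = 0$, which is exactly the claim. There is really no obstacle here; the statement is essentially a repackaging of the continuity result in Corollary \ref{cup_bc} phrased in the defect-norm language that the rest of the thesis uses. The only thing worth emphasizing in the write-up is why we obtain vanishing seminorm rather than the stronger "vanishing class": this is because the Gromov seminorm on $H^4_b(F)$ is not known to be Hausdorff, so $0$ need not be closed, which is precisely the subtlety alluded to in the introduction.
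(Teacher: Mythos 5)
Your proof is correct, and in fact it is a streamlining of the paper's argument. The paper establishes the convergence $[\delta^1\varphi_n]\to[\delta^1\varphi]$ by first passing to homogenizations, invoking Proposition \ref{homogenization} to get $D(\overline{\varphi_n}-\overline{\varphi})\le 4\,D(\varphi_n-\varphi)\to 0$, and then appealing to Proposition \ref{deftop} (equivalence of the defect norm and the Gromov seminorm) to translate this into Gromov-seminorm convergence. You bypass that detour entirely by observing that the Gromov seminorm of a class is bounded by the $\ell^\infty$-norm of any representative, so $\|[\delta^1(\varphi_n-\varphi)]\|_\infty\le\|\delta^1(\varphi_n-\varphi)\|_\infty=D(\varphi_n-\varphi)$; this is the ``easy'' direction inside Proposition \ref{deftop} and needs neither homogenization nor the factor of $4$. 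Your telescoping expansion of $[\delta^1\varphi_n]\smile[\delta^1\psi_n]-[\delta^1\varphi]\smile[\delta^1\psi]$, combined with the submultiplicativity $\|c\smile c'\|_\infty\le\|c\|_\infty\|c'\|_\infty$ and the uniform boundedness of the convergent sequence $\|[\delta^1\psi_n]\|_\infty$, is exactly the joint-continuity estimate implicit in Corollary \ref{cup_bc}, and the conclusion --- vanishing Gromov seminorm rather than the zero class, because $H^4_b(F)$ is not known to be Hausdorff --- is identical. In short: same structural route, but you buy a cleaner and more self-contained $H^2_b$-convergence step, while the paper's version emphasizes the defect-topology formalism it had just developed.
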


\begin{proof}
By Proposition \ref{homogenization}:
$$D(\overline{\varphi_n} - \overline{\varphi}) = D(\overline{\varphi_n - \varphi}) \leq 4 D(\varphi_n - \varphi) \xrightarrow{n \to \infty} 0.$$
Therefore $[\delta^1 \varphi_n] \to [\delta^1 \varphi]$ in the defect topology, so also in the topology induced by the Gromov seminorm, by Proposition \ref{deftop}. The same goes for $\psi_n, \psi$. It follows from Corollary \ref{cup_bc} that $0 = [\delta^1 \varphi_n] \smile [\delta^1 \psi_n] \to [\delta^1 \varphi] \smile [\delta^1 \psi]$ with respect to the topology induced by the Gromov seminorm on $H^4_b(F)$. The result follows by noticing that the closure of $0 \in H^4_b(F)$ is precisely the space of elements with vanishing Gromov seminorm.
\end{proof}

If the Gromov seminorm on $H^4_b(F)$ were a norm, it would follow that the conclusion of the previous corollary is actual vanishing of the cup product. However, nothing is known about $H^4_b(F)$. For comparison, we have already seen that the Gromov seminorm on $H^2_b(F)$ is a norm, and it turns out that in $H^3_b(F)$ (as for any acylindrically hyperbolic group) the subspace of elements with vanishing Gromov seminorm is uncountably dimensional \cite{FPS}.

\pagebreak

\section{Quasimorphisms of the free group}
\label{secf}

We will begin by introducing the reduced defect, a variation on the defect for quasimorphisms of the free group that makes some calculations simpler. Next, we define Rolli quasimorphisms, which constitute our first example of non-trivial quasimorphisms of the free group, and suffice to prove that $H^2_b(F)$ is uncountably dimensional. This will serve as a shorter way to prove this important result, since we will study Brooks quasimorphisms much more in depth; and as a first example of some ideas that will come back throughout the rest of this thesis.

We will then review some combinatorial notions on the free group, in order to work with Brooks quasimorphisms and their infinite sums, culminating in Grigorchuk's density theorem. The space $Cal$ will finally be introduced along with some of its subspaces, and then compared to the whole of $H^2_b(F)$. \\

Throughout, $F$ is a non-abelian free group of finite rank with a fixed basis $S = \{ s_1, \ldots, s_n \}$.

\subsection{The reduced defect}

When checking whether a map $\varphi : F \to \mathbb{R}$ is a quasimorphism, most of the time it is easier to estimate the quantity $|\varphi(g) + \varphi(h) - \varphi(gh)|$ when $gh$ is a reduced product, i.e., one with no cancellation. This can be stated equivalently as $|gh| = |g| + |h|$. The following lemma shows that, for alternating maps, estimating on such pairs is enough.
 
\begin{definition}
For a map $\varphi : F \to \mathbb{R}$, define its \textit{reduced defect} to be
$$\tilde{D}(\varphi) := \sup\limits_{gh \text{ reduced }} |\varphi(g) + \varphi(h) - \varphi(gh)|.$$
\end{definition}

\begin{lemma}
\label{rd}

Let $\varphi$ be an alternating map. Then:
$$\tilde{D}(\varphi) \leq D(\varphi) \leq 3 \tilde{D}(\varphi).$$
In particular, $\varphi$ is a quasimorphism if and only if $\tilde{D}(\varphi) < \infty$, and a homomorphism if and only if $\tilde{D}(\varphi) = 0$.
\end{lemma}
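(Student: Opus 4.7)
The first inequality $\tilde{D}(\varphi) \leq D(\varphi)$ is immediate, since $\tilde{D}$ is the supremum of the same quantity over a subset of pairs. The content of the lemma is the reverse inequality, and the strategy is to take an arbitrary pair $g, h \in F$ and reduce the estimate of $|\varphi(g) + \varphi(h) - \varphi(gh)|$ to three applications of $\tilde{D}(\varphi)$ on reduced products, using the alternating assumption to eliminate the intermediate terms.

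More precisely, given $g, h \in F$, I would use the standard normal form to write $g = ab$ and $h = b^{-1}c$, where $ab$, $b^{-1}c$, and $ac$ are all reduced products (here $b$ is the maximal common cancellation between $g$ and $h^{-1}$, and $gh = ac$). Each of these three reduced products lets me apply the reduced defect, giving
\[
|\varphi(ab) - \varphi(a) - \varphi(b)| \leq \tilde{D}(\varphi), \quad |\varphi(b^{-1}c) - \varphi(b^{-1}) - \varphi(c)| \leq \tilde{D}(\varphi), \quad |\varphi(ac) - \varphi(a) - \varphi(c)| \leq \tilde{D}(\varphi).
\]

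Now I would combine these three estimates. Using the alternating property $\varphi(b^{-1}) = -\varphi(b)$, the sum $\varphi(ab) + \varphi(b^{-1}c) - \varphi(ac)$ is within $3\tilde{D}(\varphi)$ of $(\varphi(a) + \varphi(b)) + (-\varphi(b) + \varphi(c)) - (\varphi(a) + \varphi(c)) = 0$. Since $\varphi(g) + \varphi(h) - \varphi(gh) = \varphi(ab) + \varphi(b^{-1}c) - \varphi(ac)$, this yields $D(\varphi) \leq 3\tilde{D}(\varphi)$. The degenerate cases where one of $a, b, c$ is trivial cause no trouble: the corresponding reduced-defect estimate becomes vacuous (since $\varphi(1) = 0$ by the alternating property applied to $1 = 1^{-1}$), and the remaining inequalities still give the bound.

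The "in particular" statement then follows formally from the equivalence: $\tilde{D}(\varphi) < \infty$ iff $D(\varphi) < \infty$, so $\varphi$ is a quasimorphism iff its reduced defect is finite; and $\tilde{D}(\varphi) = 0$ iff $D(\varphi) = 0$, the latter being equivalent to $\varphi$ being a homomorphism directly from the definition. The main subtlety to watch is making the cancellation decomposition precise and verifying it genuinely produces three reduced products, but this is standard free-group combinatorics rather than a real obstacle.
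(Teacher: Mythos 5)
Your proposal is correct and is essentially the paper's own argument: you use the same maximal-cancellation decomposition ($g = ab$, $h = b^{-1}c$, $gh = ac$, written in the paper as $g = t_1^{-1}c$, $h = c^{-1}t_2$, $gh = t_1^{-1}t_2$), apply the reduced defect to the same three reduced products, and cancel the middle term via the alternating property exactly as in the paper. The remarks about degenerate cases and the "in particular" clause are fine and add nothing problematic.
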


\begin{proof}
The first inequality is clear. Let $g, h \in F$. Then we can write $g = t_1^{-1}c$, $h = c^{-1}t_2$ as reduced expressions so that $gh = t_1^{-1}t_2$ is reduced. In other words, $c$ is the largest subword of $g$ that gets cancelled out when multiplying with $h$  (see Figure \ref{rd_fig}). Then, using that $\varphi(c^{-1}) = - \varphi(c)$:
$$|\varphi(g) + \varphi(h) - \varphi(gh)| \leq |\varphi(t_1^{-1}) + \varphi(c) - \varphi(t_1^{-1}c)| +  |\varphi(c^{-1}) + \varphi(t_2) - \varphi(c^{-1}t_2)| + $$
$$ + |\varphi(t_1^{-1}) + \varphi(t_2) - \varphi(t_1^{-1}t_2)| \leq 3 \tilde{D}(\varphi).$$
\end{proof}

\begin{figure}
  \centering
  \includegraphics[width=6cm]{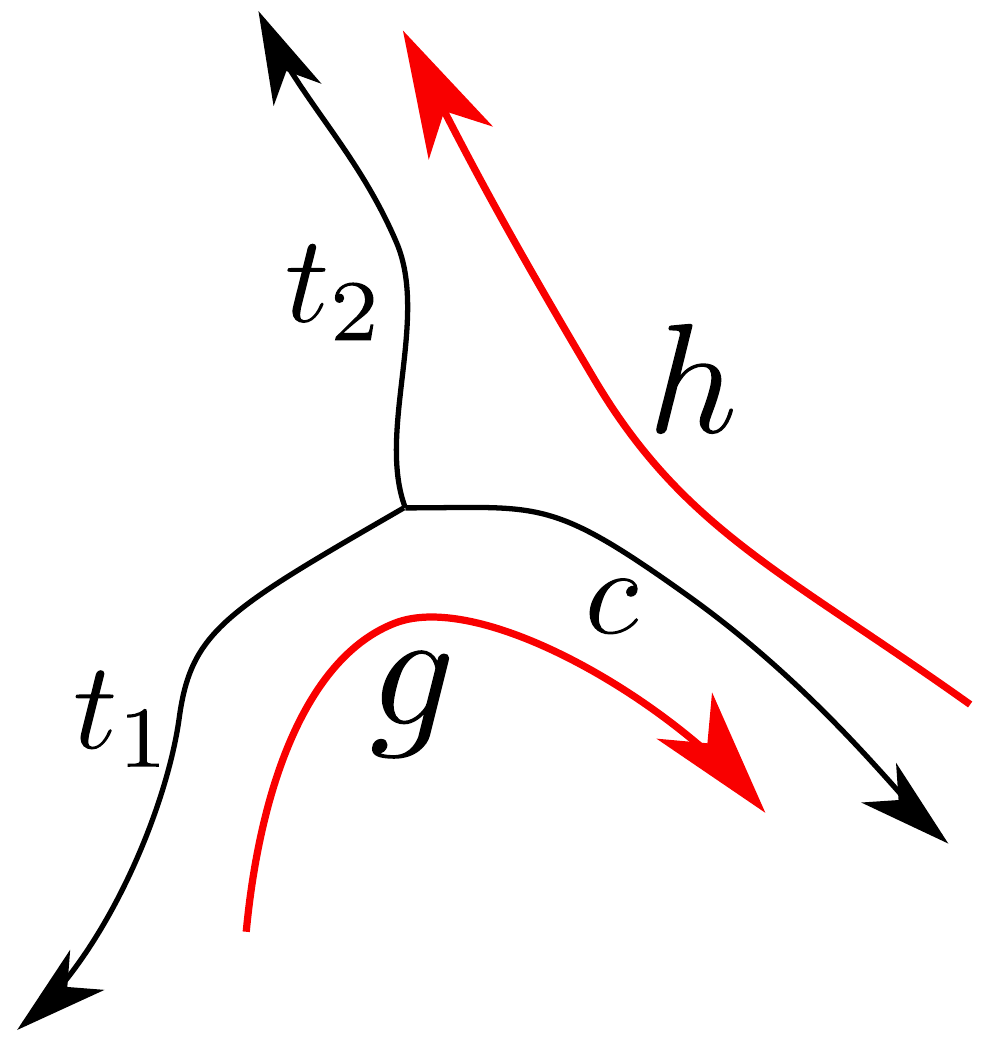}
  \caption{$c$ is the largest subword that gets simplified in the product $gh$.}
  \label{rd_fig}
\end{figure}

The requirement that $\varphi$ is alternating is necessary, as the next example shows.

\begin{example}
Define $f : F \to \mathbb{R} : g \mapsto |g|$. Then for any $g, h$ such that $gh$ is reduced, we have $|gh| = |g| + |h|$, so $\tilde{D}(f) = 0$. However, $f$ is not a quasimorphism. Indeed, $|f(g) + f(g^{-1}) - f(1)| = 2|g|$, which can be arbitrarily large.
\end{example}

\subsection{Rolli quasimorphisms}

Any element $1 \neq g \in F$ can be uniquely written as $g = \prod s_{i_j}^{m_j}$, where $s_{i_j} \neq s_{i_{j+1}}$ and $m_j \in \mathbb{Z} \, \backslash \, \{ 0 \}$. Now consider a sequence of bounded alternating maps $\lambda := (\lambda_1, \ldots, \lambda_n) \in \ell^\infty_{alt}(\mathbb{Z})^n$. Define
$$\phi_\lambda : F \to \mathbb{R} : g = \prod s_{i_j}^{m_j} \mapsto \sum \lambda_{i_j}(m_j).$$

We call maps of this form \textit{Rolli quasimorphisms}. The case in which $\lambda_i = \lambda_j$ for all $1 \leq i, j \leq n$ was introduced by Rolli in the paper \cite{Rolli}. Notice that since all $\lambda_i$ are alternating, the map $\phi_\lambda$ is alternating.

\begin{proposition}[Rolli]
With the notation above, $\phi_\lambda$ is an alternating quasimorphism.
\end{proposition}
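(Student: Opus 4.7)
The plan is to verify the alternating property directly from the definition, then use the reduced defect machinery from Lemma \ref{rd} to bound $D(\phi_\lambda)$ by analyzing what happens for a single reduced product $gh$.

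First I would check that $\phi_\lambda$ is alternating. If $g = \prod_{j=1}^k s_{i_j}^{m_j}$ is the normal form, then $g^{-1} = \prod_{j=1}^k s_{i_{k+1-j}}^{-m_{k+1-j}}$ is also in normal form (the sequence of letters is reversed but consecutive letters still differ). Since each $\lambda_{i}$ is alternating on $\mathbb{Z}$, i.e.\ $\lambda_i(-m) = -\lambda_i(m)$, summing gives $\phi_\lambda(g^{-1}) = -\phi_\lambda(g)$.

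By Lemma \ref{rd}, it therefore suffices to bound $\tilde{D}(\phi_\lambda)$. Fix a reduced product $gh$, with $g = \prod_{j=1}^k s_{i_j}^{m_j}$ and $h = \prod_{j=1}^l s_{i'_j}^{m'_j}$ in normal form. Because $gh$ is reduced, the only interaction at the seam is between $s_{i_k}^{m_k}$ and $s_{i'_1}^{m'_1}$, and there are two cases:
\begin{enumerate}
\item If $s_{i_k} \neq s_{i'_1}$, the concatenation is already the normal form of $gh$, and so $\phi_\lambda(gh) = \phi_\lambda(g) + \phi_\lambda(h)$, contributing $0$ to the defect.
\item If $s_{i_k} = s_{i'_1}$, then reducedness forces $m_k$ and $m'_1$ to have the same sign, and the normal form of $gh$ merges these two syllables into $s_{i_k}^{m_k + m'_1}$. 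All other syllables are unchanged, so
\[ \phi_\lambda(g) + \phi_\lambda(h) - \phi_\lambda(gh) = \lambda_{i_k}(m_k) + \lambda_{i_k}(m'_1) - \lambda_{i_k}(m_k + m'_1), \]
which is bounded in absolute value by $3 \max_i \|\lambda_i\|_\infty$.
\end{enumerate}

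Hence $\tilde{D}(\phi_\lambda) \leq 3 \max_i \|\lambda_i\|_\infty < \infty$, and Lemma \ref{rd} yields $D(\phi_\lambda) \leq 9 \max_i \|\lambda_i\|_\infty$, so $\phi_\lambda$ is an alternating quasimorphism. The only mild subtlety is the bookkeeping for the two cases at the seam of the reduced product; everything else is immediate from the definitions.
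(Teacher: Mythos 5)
Your proof is correct and follows essentially the same route as the paper: verify the alternating property from the alternation of the $\lambda_i$, then bound the reduced defect $\tilde{D}(\phi_\lambda)$ via the two-case analysis at the seam of a reduced product, invoking Lemma \ref{rd}. The case split and the bound $3\max_i\|\lambda_i\|_\infty$ match the paper's argument exactly.
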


\begin{proof}
We will show that $\tilde{D}(\phi_\lambda) < \infty$. Let $g, h \in F$ be such that $gh$ is reduced. We write out $g$ and $h$ as products of powers of generators: $g = s_{i_1}^{m_1} \cdots s_{i_k}^{m_k}$; $h = s_{j_1}^{p_1} \cdots s_{j_l}^{p_l}$. Since $gh$ is reduced, writing this element the same way we split into two cases: either $i_k \neq j_1$ and $gh = s_{i_1}^{m_1} \cdots s_{i_k}^{m_k} s_{j_1}^{p_1} \cdots s_{j_l}^{p_l}$; or $i_k = j_1 =: i$, which forces $m_k$ and $p_1$ to have the same sign, and $gh = s_{i_1}^{m_1} \cdots s_{i_{k-1}}^{m_{k-1}} s_i^{m_k + p_1} s_{j_2}^{p_2} \cdots s_{j_l}^{p_l}$. It is immediate to check that in the first case $\delta^1 \phi_\lambda(g, h) = 0$. In the second case:
$$|\delta^1 \phi_\lambda(g, h)| = |\lambda_i(m_k) + \lambda_i(p_1) - \lambda_i(m_k + p_1)| \leq 3||\lambda_i||_\infty.$$
So $\phi_\lambda$ is indeed a quasimorphism.
\end{proof}

These quasimorphisms alone are enough to prove that $H^2_b(F)$ is uncountaby dimensional:

\begin{proposition}[Rolli]
The map $\ell^\infty_{alt}(\mathbb{Z})^n \to H^2_b(F) : \lambda \mapsto [\delta^1 \phi_\lambda]$ is injective. Therefore $H^2_b(F)$ is uncountably dimensional.
\end{proposition}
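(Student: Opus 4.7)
Suppose $[\delta^1 \phi_\lambda] = 0 \in H^2_b(F)$. By Theorem \ref{EHiso2}, this is equivalent to the homogenization $\overline{\phi_\lambda}$ being a homomorphism. The plan is to first show that $\overline{\phi_\lambda}$ is the zero homomorphism (so that $\phi_\lambda$ is in fact bounded), and then to pin down the coefficient maps $\lambda_i$ by probing $\phi_\lambda$ on suitable alternating words.

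For the first step, note that for each generator $s_i$ and every $n \neq 0$ the element $s_i^n$ is already written in the normal form used to define $\phi_\lambda$, so $\phi_\lambda(s_i^n) = \lambda_i(n)$. Since $\lambda_i$ is bounded,
$$\overline{\phi_\lambda}(s_i) = \lim_{n \to \infty} \frac{\lambda_i(n)}{n} = 0.$$
A homomorphism vanishing on a generating set is identically zero, so $\overline{\phi_\lambda} \equiv 0$, and by Proposition \ref{homogenization} we conclude that $\phi_\lambda$ is at bounded distance from $0$, i.e.\ $\phi_\lambda \in \ell^\infty(F)$.

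The main step, and the one where the non-abelianness of $F$ is used, is to extract $\lambda = 0$ from the boundedness of $\phi_\lambda$. The naive estimate $|\lambda_i(m)| = |\phi_\lambda(s_i^m)| \leq \|\phi_\lambda\|_\infty$ is not sharp enough and must be amplified. Picking two distinct indices $i \neq j$, any $m, p \neq 0$, and any $N \geq 1$, the word $(s_i^m s_j^p)^N$ is already in normal form, hence
$$\phi_\lambda\bigl((s_i^m s_j^p)^N\bigr) = N \bigl( \lambda_i(m) + \lambda_j(p) \bigr).$$
Letting $N \to \infty$ and using that $\phi_\lambda$ is bounded forces $\lambda_i(m) + \lambda_j(p) = 0$ for all $i \neq j$ and all $m, p \neq 0$. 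Fixing $j$ and $p$ makes $\lambda_i$ constant on $\mathbb{Z} \setminus \{0\}$, and the alternating property $\lambda_i(-m) = -\lambda_i(m)$ then forces $\lambda_i \equiv 0$.

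This proves injectivity of $\lambda \mapsto [\delta^1 \phi_\lambda]$. To conclude that $H^2_b(F)$ is uncountably dimensional it suffices to note that $\ell^\infty_{alt}(\mathbb{Z})$ is isomorphic to $\ell^\infty(\mathbb{N}_{>0})$, which as an infinite-dimensional Banach space has uncountable Hamel dimension by the Baire category theorem, so $\ell^\infty_{alt}(\mathbb{Z})^n$ injects into $H^2_b(F)$ as an uncountable-dimensional subspace. The only mildly subtle point in the whole argument is that boundedness of $\phi_\lambda$ is not by itself strong enough to kill the individual $\lambda_i$: the amplification trick of taking the $N$-th power of an alternating word is precisely what converts boundedness into the pointwise relation $\lambda_i(m) + \lambda_j(p) = 0$.
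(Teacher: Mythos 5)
Your proof is correct and follows essentially the same route as the paper: both arguments first show that $\phi_\lambda$ must be bounded (you via showing $\overline{\phi_\lambda}$ vanishes on generators, the paper via the decomposition $\phi_\lambda = b + h$ from Proposition \ref{EHiso} and the boundedness of each $\lambda_i$), and then both apply the identical amplification step with $(s_i^m s_j^p)^N$ to force each $\lambda_i$ to be constant and hence, by the alternating condition, zero. Your closing remark on the uncountable Hamel dimension of $\ell^\infty$ spells out a step the paper leaves implicit, which is a nice touch.
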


\begin{proof}
Since $\lambda \mapsto \phi_\lambda$ is linear, as well as $\varphi \mapsto [\delta^1 \varphi]$, it is enough to show that if $\lambda \neq 0$, then $[\delta^1 \phi_\lambda] \neq 0$. So suppose that $[\delta^1 \phi_\lambda] = 0$. By Proposition \ref{EHiso}, $\phi_\lambda = b + h$, where $b \in \ell^\infty(F)$ and $h \in Hom(F)$. So for any $s_i \in S$ and any $k \geq 1$, we have $k \cdot h(s_i) = h(s_i^k) = \lambda_i(k) - b(k)$. Since the right-hand side is bounded, it follows that $h(s_i) = 0$ for all $1 \leq i \leq n$, and so $h = 0$ and $\phi_\lambda = b$ is bounded.

Now for $i \neq j$ and $a, b, c \geq 1$, it holds $\phi_\lambda((s_i^a s_j^b)^c) = c (\lambda_i(a) + \lambda_j(b))$. Since the left-hand side is bounded, so is the right-hand side, which forces $\lambda_i(a) + \lambda_j(b) = 0$ for all $a, b \geq 1$. This implies that $\lambda_i$ and $\lambda_j$ are constant and $\lambda_i = - \lambda_j$. Repeating the same argument with $b \leq -1$, using that $\lambda_j$ is alternating, we obtain $\lambda_i = \lambda_j$, and so $\lambda_i = \lambda_j = 0$. This being true for all $i \neq j$, we conclude that $\lambda = 0$.
\end{proof}

\subsection{Some combinatorics of words}
\label{s_cow}

Here we introduce some of the notions that are at the heart of the combinatorics of Brooks quasimorphisms and their infinite sums. The content of this section can be found in chapter one of \cite{cgt}, and chapters one and five of \cite{cow}. The second book deals with the combinatorics of words in the free monoid, so here the result are adapted to the setting of free groups.

\subsubsection{First definitions}


\begin{definition}
Let $w \in F$. We denote by $|w|$ the \textit{word length} of $w$ with respect to our fixed basis $S$. That is, if $w = s_1 \cdots s_n$, where $s_i \in S^{ \pm 1}$ and $s_{i+1} \neq s_i$, then $|w| = n$.

Let $w, w' \in F$. We say that the product $ww'$ is \textit{reduced} if there is no cancellation. That is, if $|ww'| = |w| + |w'|$.
\end{definition}

\begin{definition}
Let $w \in F$. Suppose that we can write $w = uw'v$ as a reduced product. Then we say that $w'$ is a \textit{subword} of $w$. If $u$ is empty, it is a \textit{prefix}. If $v$ is empty, it is a \textit{suffix}. If $w' \neq w$, it is a \textit{proper} subword, prefix or suffix. 
\end{definition}

\begin{remark}
In combinatorics of words, what we defined as a subword is usually called a \textit{factor}, while the term subword refers to another concept. However, for Brooks quasimorphisms this terminology is well-established enough that we decided to use this convention. The combinatorial notion of subword will never be dealt with in this thesis, so there is no risk of confusion.
\end{remark}

\begin{definition}
Let $w = s_1 \cdots s_n$ be a word. A \textit{cyclic permutation} of $w$ is a word of the form $s_{k+1} \cdots s_n s_1 \cdots s_k$.

A word is \textit{cyclically reduced} if it cannot be written as a reduced product of the form $xyx^{-1}$, where $x, y \neq 1$. Equivalently, $w$ is cyclically reduced if all of its cyclic permutations are reduced.

The \textit{length} of a conjugacy class in $F$ is the minimal length of one of its elements. Equivalently, it is the length of a cyclically reduced representative.
\end{definition}

Any conjugacy class has a cyclically reduced representative $w$, and the following lemma shows that the set of cyclically reduced representatives is precisely the set of cyclic permutations of $w$.

\begin{lemma}
\label{conj_cr}
Let $w, w'$ be cyclically reduced words in $F$. The following are equivalent:
\begin{enumerate}
\item $w$ and $w'$ are conjugate.
\item There exist words $u, v$ such that $w = uv$ and $w' = vu$ as reduced products.
\item $w'$ is a cyclic permutation of $w$.
\end{enumerate}
\end{lemma}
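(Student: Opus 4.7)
The plan is to prove the chain of implications $(3) \Rightarrow (2) \Rightarrow (1) \Rightarrow (3)$, since $(2) \Rightarrow (3)$ is also easy and will give the equivalence among all three.

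First, for $(3) \Rightarrow (2)$ and $(2) \Rightarrow (3)$: if $w = s_1 \cdots s_n$ and $w' = s_{k+1} \cdots s_n s_1 \cdots s_k$, I simply set $u = s_1 \cdots s_k$ and $v = s_{k+1} \cdots s_n$, so that $w = uv$ as a reduced product by definition. The only thing to check is that $w' = vu$ is also a reduced product; but this is immediate from the fact that $w$ is cyclically reduced, since by definition all its cyclic permutations are reduced. The converse $(2) \Rightarrow (3)$ is a matter of spelling out $u$ and $v$ letter by letter and concatenating.

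Next, $(2) \Rightarrow (1)$ is immediate: writing $w' = vu = u^{-1}(uv)u = u^{-1}wu$ shows $w$ and $w'$ are conjugate.

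The main content is $(1) \Rightarrow (3)$. Here I plan to induct on the length of a conjugator $g$ such that $w' = gwg^{-1}$, the base case $|g| = 0$ being trivial. For the inductive step, write $g = g' \cdot s$ reduced, and $w = t_1 \cdots t_n$ (assuming $w \neq 1$, else the statement is trivial). Consider the product $s_1 \cdots s_k \, t_1 \cdots t_n \, s_k^{-1} \cdots s_1^{-1}$. If no cancellation occurred at the junctions with $w$, this product would be reduced with first letter $s_1$ and last letter $s_1^{-1}$, contradicting cyclic reducedness of $w'$. Therefore either $s_k = t_1^{-1}$ or $s_k = t_n$; and because $w$ is cyclically reduced (so $t_n \neq t_1^{-1}$), these two cases are mutually exclusive. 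In the first case, the identity
\[
gwg^{-1} = (gs_k^{-1})\bigl(s_k w s_k^{-1}\bigr)(gs_k^{-1})^{-1} = g'\bigl(t_2 \cdots t_n t_1\bigr)g'^{-1}
\]
exhibits $w'$ as the conjugate of the cyclic permutation $t_1^{-1} w t_1 = t_2 \cdots t_n t_1$ of $w$ by the shorter element $g'$. The other case is symmetric, producing the cyclic permutation $t_n t_1 \cdots t_{n-1}$. Since a cyclic permutation of a cyclically reduced word is again cyclically reduced, the induction hypothesis applies and concludes that $w'$ is a cyclic permutation of a cyclic permutation of $w$, hence of $w$ itself.

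The main obstacle, and really the only delicate step, is verifying the cancellation dichotomy above: one must be careful that cyclic reducedness of $w$ forbids simultaneous cancellation at both ends, and that cyclic reducedness of $w'$ forces at least one cancellation to happen. Once this combinatorial observation is nailed down, the induction runs smoothly.
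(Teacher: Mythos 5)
Your proof is correct, but the route through $(1) \Rightarrow (3)$ is genuinely different from the paper's. The paper puts the work into $(1) \Rightarrow (2)$: starting from $w' = vwv^{-1}$, it argues in one shot that since $w'$ is cyclically reduced, the conjugator $v$ (or $v^{-1}$) must cancel out entirely on one side, immediately exhibiting $w = uv$, $w' = vu$; the implications $(2) \Rightarrow (3) \Rightarrow (1)$ are then bookkeeping. You instead prove $(3) \Rightarrow (2) \Rightarrow (1)$ as the easy part and do the work in $(1) \Rightarrow (3)$ by induction on the length of a conjugator $g$, peeling one letter of $g$ at a time and checking that each single-letter conjugation is a cyclic permutation. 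The cancellation dichotomy you isolate (at least one of $s_k = t_1^{-1}$ or $s_k = t_n$ must occur, and they are mutually exclusive by cyclic reducedness of $w$) is sound, including in the degenerate case $|w| = 1$; and since cyclic permutations of cyclically reduced words are again cyclically reduced, the induction hypothesis applies cleanly. Your argument is longer but more explicit — the paper's phrase ``not possible if only parts of the conjugating elements cancel out'' compresses a cancellation analysis that your induction makes letter-by-letter and unambiguous. The paper's version is slicker and produces the factorization $w = uv$, $w' = vu$ directly, which is why it states $(2)$ as a separate equivalent condition; your approach recovers $(2)$ only after the fact via $(3)$.
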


\begin{proof}
$1. \Rightarrow 2.$ Suppose that $w' = vwv^{-1}$ (where the product is not necessarily reduced). Since $w'$ is cyclically reduced, its first letter cannot be equal to the inverse of its last letter. This is not possible if only parts of the conjugating elements $v, v^{-1}$ cancel out. Therefore we may assume that all of $v^{-1}$ gets canceled out (the other case being analogous). This means that we can write $w = uv$ as a reduced product, and then $w' = v(uv)v^{-1} = vu$. This product is also reduced: otherwise the last letter of $v$ would cancel out the first letter of $u$, and $w$ would not be cyclically reduced.

$2. \Rightarrow 3.$ Let $w = s_1 \cdots s_n$, with $u = s_1 \cdots s_k$ and $v = s_{k+1} \cdots s_n$. Then $w' = vu = s_{k+1} \cdots s_n s_1 \cdots s_k$, which is a cyclic permutation of $w$.

$3. \Rightarrow 1.$ A cyclic permutation is a conjugate by a suffix.
\end{proof}

A special role is played by the conjugacy classes of length 1, which correspond to the conjugates of the elements of $S^{\pm 1}$.


\begin{definition}
A word is \textit{simple} if it is not a proper power. A conjugacy class is \textit{simple} if one of its elements is simple. It follows automatically that all of the elements of a simple conjugacy class are simple, since a conjugate of a $k$-th power is again a $k$-th power.
\end{definition}

\begin{remark}
We use this terminology following Grigorchuk's paper \cite{Grigor}. However, the standard term in combinatorics of words is \textit{irreducible}.
\end{remark}

\begin{lemma}
\label{simple}

Let $w$ be a cyclically reduced word. Then $w$ is simple if and only if it has $|w|$ cyclically reduced conjugates.
\end{lemma}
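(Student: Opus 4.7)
The plan is to use Lemma \ref{conj_cr} to identify the set of cyclically reduced conjugates of $w$ with the set of cyclic permutations of $w$. There are exactly $|w|$ cyclic permutations indexed by the rotation amount $k \in \{0, 1, \ldots, |w|-1\}$, and the question becomes whether these $|w|$ permutations are all distinct. So the statement to prove reduces to: $w$ is simple if and only if its $|w|$ cyclic permutations are pairwise distinct.

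For the ``only if'' direction, I would prove the contrapositive. Suppose two cyclic permutations coincide, so there exists $0 < d < |w|$ such that rotating $w$ by $d$ letters yields $w$ itself. Writing $w = s_1 \cdots s_n$ with $n = |w|$, this says that $s_{i+d} = s_i$ for all $i$, where indices are taken modulo $n$. Let $e = \gcd(d, n)$; a standard argument (iterated application of the rotation by $d$ combined with the fact that $e$ is a $\mathbb{Z}$-linear combination of $d$ and $n$) shows that $e$ is also a rotational period, i.e.\ $s_{i+e} = s_i$ for all $i$ mod $n$. Since $e$ divides $n$ and $e \leq d < n$, the prefix $v := s_1 \cdots s_e$ is a well-defined reduced subword of $w$ (it is reduced because $w$ is), and $w = v^{n/e}$ with $n/e \geq 2$. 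One needs to check that this factorisation is genuinely as an element of $F$, i.e.\ that the concatenation $v \cdot v$ is reduced; this is immediate because $w$ is cyclically reduced, which forces $s_n \neq s_1^{-1}$, and by periodicity $s_e$ equals $s_n$ up to repetition, giving $s_e \neq s_1^{-1}$. Hence $w$ is a proper power, so not simple.

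For the ``if'' direction, suppose $w$ is not simple, so $w = v^k$ for some $k \geq 2$ and some $v \in F$ which (since $w$ is cyclically reduced) is itself cyclically reduced of length $|v| = |w|/k$. Then rotating $w$ by $|v|$ letters yields $w$ again, so the $|w|$ cyclic permutations of $w$ fall into at most $|v| < |w|$ distinct words. Combining both directions gives the claim.

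The only delicate step is the number-theoretic argument that a rotational period $d$ forces $\gcd(d, n)$ to be a rotational period as well, and the verification that the resulting prefix of length $\gcd(d, n)$ squares reducedly; both are routine once the cyclic reducedness of $w$ is used, so I do not anticipate a real obstacle.
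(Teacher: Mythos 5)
Your proof takes essentially the same route as the paper: via Lemma \ref{conj_cr}, identify the cyclically reduced conjugates of $w$ with its $|w|$ cyclic permutations, then show $w$ is simple if and only if those permutations are pairwise distinct, arguing in both directions through periodicity. Your gcd step is in fact slightly more careful than the paper's: the paper asserts that if $s_1\cdots s_n$ equals its rotation by $k$ then it is a proper power of $s_1\cdots s_k$, which as stated is only correct when $k$ divides $n$; your reduction to the period $e=\gcd(d,n)$ is the standard fix, and the check that the prefix of length $e$ squares reducedly (using $s_n=s_e\neq s_1^{-1}$, which follows from cyclic reducedness) is exactly the detail the paper leaves implicit.
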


\begin{proof}
Suppose that $w$ is not simple, so we can write $w = u^k$ for some word $u$ and $k \geq 2$. Notice that $u$ is cyclically reduced: otherwise $u$ would start with $s$ and end with $s^{-1}$, and so $w$ would have the same property and not be cyclically reduced. Therefore $|w| = k|u|$. Then conjugating by $u$ gives back $w$, so there are at most $|u|$ cyclic permutations of $w$, and $|u| < k|u| = |w|$.

Suppose that $w$ has less than $|w|$ cyclically reduced conjugates. Then there exists such a conjugate $s_1 \cdots s_n$ of $w$ and some $1 \leq k < n$ such that $s_1 \cdots s_n = s_{k+1} \cdots s_n s_1 \cdots s_k$. Comparing letter by letter, it follows that $s_1 \cdots s_n$ is a proper power of $s_1 \cdots s_k$. Then $w$ is a proper power, being conjugate to a proper power.
\end{proof}

The notion of conjugacy for cyclically reduced words is purely combinatorial (i.e., all of the above can be adapted to the free monoid). Let us see how the group-theoretic notion of inversion behaves with respect to it:

\begin{lemma}
\label{w_conj_w-1}
A non-identity element of $F$ is not conjugate to its inverse.
\end{lemma}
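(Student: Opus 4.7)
The plan is to reduce to a cyclically reduced representative and then invoke Lemma \ref{conj_cr}, which characterizes conjugacy for cyclically reduced words. Since every conjugacy class contains a cyclically reduced element $w$ and inversion preserves cyclic reducedness, it suffices to show that a cyclically reduced $w \neq 1$ cannot be conjugate to $w^{-1}$.

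Suppose for contradiction such a $w$ is conjugate to $w^{-1}$. Both are cyclically reduced, so Lemma \ref{conj_cr} furnishes $u, v \in F$ with $w = uv$ and $w^{-1} = vu$ as reduced products. On the other hand, $w^{-1}$ automatically equals $v^{-1} u^{-1}$, and I would first check that this latter expression is also reduced: the condition for $uv$ to be reduced is that the last letter of $u$ is not the inverse of the first letter of $v$, a condition symmetric in the two letters involved, which is exactly what is needed to rule out cancellation between $v^{-1}$ and $u^{-1}$. Thus $vu$ and $v^{-1} u^{-1}$ are two reduced expressions of the same length representing the same element of $F$, and so they agree letter by letter.

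Writing $u = u_1 \cdots u_p$ and $v = v_1 \cdots v_q$ with $u_i, v_j \in S^{\pm 1}$, this equality yields the \emph{antipalindrome} relations $u_j = u_{p+1-j}^{-1}$ and $v_i = v_{q+1-i}^{-1}$ for all $j$ and $i$. Assuming $p \geq 1$, I would inspect the middle of $u$: if $p$ is odd the central letter satisfies $u_{(p+1)/2} = u_{(p+1)/2}^{-1}$, impossible in $S^{\pm 1}$; if $p$ is even the two central letters $u_{p/2}$ and $u_{p/2+1}$ are mutual inverses, contradicting the reducedness of $u$ (a subword of the reduced word $w$). Hence $p = 0$, i.e., $u = 1$, and by the same argument applied to $v$ one obtains $v = 1$. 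But then $w = uv = 1$, the desired contradiction.

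The main obstacle is purely bookkeeping: verifying carefully that $v^{-1} u^{-1}$ is genuinely a reduced expression (so that letter-by-letter comparison is legitimate), and handling the two parity cases cleanly in the antipalindrome analysis. The conceptual content of the proof—translating ``$w$ is conjugate to $w^{-1}$'' into an antipalindrome symmetry on the factors $u$ and $v$ produced by Lemma \ref{conj_cr}—is the substantive step, and everything else is combinatorics of reduced words.
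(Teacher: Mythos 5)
Your argument is correct and matches the paper's proof: both reduce to the cyclically reduced case, apply Lemma \ref{conj_cr} to write $w = uv$ and $w^{-1} = vu$, and compare the two reduced expressions $vu = v^{-1}u^{-1}$ to conclude $u = u^{-1}$ and $v = v^{-1}$, hence $u = v = 1$. Your explicit antipalindrome analysis for the final step is just a more hands-on way of seeing that $u^2 = v^2 = 1$ forces $u = v = 1$ in a free group.
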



\begin{proof}
Since any element is conjugate to a cyclically reduced word, and the inverse of a cyclically reduced word is cyclically reduced, it is enough to show the lemma for cyclically reduced words. So suppose by contradiction that $w$ and $w^{-1}$ are non-empty and conjugate cyclically reduced words. By Lemma \ref{conj_cr}, there exist words $u, v$ such that $w = uv$ and $w^{-1} = vu$ as reduced products. Thus $vu = v^{-1}u^{-1}$, so comparing lengths $v = v^{-1}$ and $u = u^{-1}$. But then $u$ and $v$ must be empty, a contradiction.
\end{proof}


The next definition is the key combinatorial concept behind many arguments in this thesis:

\begin{definition}
We say that two words $w_1, w_2$ \textit{overlap properly} if they can be written as reduced products of the form $w_i = xy$, $w_{3-i} = yz$ with $x, y, z$ non-empty. We say that they \textit{overlap} if either one is a proper subword of the other, or if they overlap properly.
\end{definition}

\begin{remark}
By our definition of overlap, the trivial subword $w$ of $w$ does not count as an overlap. The only way a word can overlap itself is properly.
\end{remark}

We will see that the less overlapping words, the easier the combinatorics. So it is useful to understand the two first possible overlaps: that of a word with its inverse, and with itself.

\begin{lemma}
\label{w_ol_w-1}

A non-empty word $w$ does not overlap its inverse.
\end{lemma}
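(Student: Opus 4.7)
The plan is to dispose of the two types of overlap separately, both essentially by a length/prefix comparison.

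First I would observe that since $|w| = |w^{-1}|$, neither of $w, w^{-1}$ can be a proper subword of the other. So the only case to rule out is a proper overlap. Up to interchanging the roles of $w$ and $w^{-1}$, this means we may write $w = xy$ and $w^{-1} = yz$ as reduced products, with $x, y, z$ all non-empty.

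Next I would use the fact that $w = xy$ is a reduced product, so its inverse $w^{-1} = y^{-1} x^{-1}$ is also a reduced product (reversing a reduced word and inverting each letter preserves reducedness). Hence $y^{-1} x^{-1}$ and $yz$ are two reduced expressions for the same element, and so they agree letter by letter. Comparing the prefix of length $|y|$ on each side: in $y^{-1} x^{-1}$ this prefix is $y^{-1}$ (since $|y^{-1}| = |y| \leq |y^{-1} x^{-1}|$ and the product is reduced), and in $yz$ this prefix is $y$. Therefore $y = y^{-1}$, which in a free group forces $y = 1$, contradicting the assumption that $y$ is non-empty.

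I do not anticipate any genuine obstacle here: the argument is essentially bookkeeping about lengths and prefixes of reduced words. The only mild subtlety is remembering to justify that $y^{-1}x^{-1}$ is itself reduced (so that reading off its initial segment is legitimate), which follows immediately from $w = xy$ being reduced. As a sanity check, one could alternatively invoke Lemma \ref{w_conj_w-1}: rewriting $w^{-1} = yz$ as $w = z^{-1} y^{-1}$ and combining with $w = xy$ yields $xy = z^{-1} y^{-1}$, from which one can extract a conjugacy between $w$ and $w^{-1}$ contradicting the earlier lemma; but the direct prefix comparison above is shorter and self-contained.
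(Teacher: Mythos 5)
Your proof is correct and is essentially the paper's own argument: rule out the subword case by length, then for a proper overlap write $w = xy$, $w^{-1} = yz$, observe $w^{-1} = y^{-1}x^{-1}$, and compare to force $y = y^{-1}$, hence $y = 1$. You spell out the prefix comparison and the reducedness of $y^{-1}x^{-1}$ in more detail than the paper, but the idea is identical.
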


\begin{proof}
Since $w$ and $w^{-1}$ are distinct and of the same length, one cannot be a subword of the other. Suppose that $w = xy$ and $w^{-1} = yz$ as in the definition. Thus $yz = y^{-1}x^{-1}$ and so $y = y^{-1}$, which is possible only if $y$ is empty.
\end{proof}

\begin{definition}
A word is \textit{self-overlapping} if it overlaps with itself.
\end{definition}

Following the definition, $w$ is self-overlapping if $w = xy = yz$ as a reduced product for some $x, y, z \neq 1$. But this is enough to deduce more \cite[Lemma 2]{Faziev}:


\begin{lemma}[Faiziev]
\label{xyx}

Let $w$ be self-overlapping. Then $w$ can be written as a reduced product $xyx$, where $x$ is non-empty and non-self-overlapping.
\end{lemma}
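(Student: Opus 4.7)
The plan is to take $x$ to be the shortest non-empty common prefix/suffix of $w$. First, I would observe that self-overlap is equivalent to the existence of a non-empty proper prefix of $w$ that is also a suffix of $w$: in the decomposition $w = x'y = yz'$ with $x', y, z'$ non-empty, the word $y$ is such a prefix-suffix; conversely, any non-empty proper prefix that is also a suffix gives rise to a decomposition of this form by setting $x', z'$ to be the remaining pieces. Hence, under the self-overlap assumption, the set of non-empty proper prefixes of $w$ that are also suffixes is non-empty, and I may define $x$ to be a shortest element of this set.

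Next I would check that $x$ is non-self-overlapping. If it were self-overlapping, there would exist a non-empty proper prefix $x'$ of $x$ that is also a suffix of $x$. Since $x$ is itself a prefix and a suffix of $w$, the word $x'$ would then be a prefix and a suffix of $w$ with $|x'| < |x|$, contradicting the minimality of $x$.

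The main obstacle is the inequality $2|x| \leq |w|$, which is what allows one to write $w = xyx$ with the two copies of $x$ disjoint (the reduced-product condition being automatic since $w$ is already a reduced word in $F$). I would argue by contradiction: suppose $2|x| > |w|$, and set $d := |w| - |x| < |x|$. Matching the $i$-th letter of the prefix $x$ with the $i$-th letter of the suffix $x$ yields $w_i = w_{i+d}$ for $1 \leq i \leq |x|$, so $w$ has period $d$. Writing $|w| = qd + r$ with $0 \leq r < d$, I would iterate the periodicity to exhibit a shorter common prefix-suffix of $w$. If $r = 0$, then iterating $w_{j} = w_{j-d}$ shows that the suffix of $w$ of length $d$ coincides with the prefix of length $d$, giving a common prefix-suffix of length $d < |x|$. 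If $r \geq 1$, the same iteration applied to the last $r$ letters shows that the suffix of length $r$ coincides with the prefix of length $r$, giving a common prefix-suffix of length $r < |x|$. In either case this contradicts the minimality of $x$.

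Having established $2|x| \leq |w|$, the decomposition $w = x \cdot y \cdot x$, where $y$ is the (possibly empty) middle block of letters, is a genuine concatenation of consecutive letters of the reduced word $w$, hence a reduced product in $F$. Combined with non-emptiness and non-self-overlap of $x$ from the earlier steps, this yields the desired factorization.
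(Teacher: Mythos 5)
Your proof is correct and follows the same main strategy as the paper: take $x$ to be a shortest non-empty word that is simultaneously a proper prefix and a proper suffix of $w$, show that $x$ is non-self-overlapping via the ``if $x = ab = bc$ then $b$ is a shorter common prefix-suffix'' argument, and then establish $2|x| \leq |w|$ so that the two copies of $x$ are disjoint in $w$. The first two steps coincide essentially word for word with Faiziev's argument as presented in the paper.

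Where you diverge is the final step, the inequality $2|x| \leq |w|$. The paper gets this almost for free from what was just proved: if $2|x| > |w|$, the prefix copy of $x$ and the suffix copy of $x$ overlap inside $w$, and the common middle block is simultaneously a non-empty proper suffix and a non-empty proper prefix of $x$, i.e.\ $x$ self-overlaps, which contradicts the non-self-overlapping property established in the previous step. You instead run a periodicity (Fine--Wilf style) argument: from $2|x| > |w|$ you extract the period $d = |w| - |x|$, iterate $w_i = w_{i+d}$, and via $|w| = qd + r$ produce a strictly shorter common prefix-suffix of length $d$ or $r$, contradicting minimality of $x$ directly rather than via its non-self-overlap. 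Both are valid; the paper's version is shorter and reuses the structure you already built, while yours is more self-contained (it would work even if one had not yet observed that $x$ is non-self-overlapping) at the cost of a longer bookkeeping argument.
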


\begin{proof}
Let $P$ be the set of proper prefixes and $S$ the set of proper suffixes of $w$. By hypothesis $P \cap S \neq \emptyset$. Let $x$ be the shortest non-empty word of this intersection. We claim that $x$ is non-self-overlapping. Indeed, suppose that $x = ab = bc$. Since $x = bc \in P$, $b \in P$; and since $x = ab \in S$, $b \in S$. Therefore $b \in P \cap S$, and by minimality $b = x$. But this implies that $|x| \leq |w|/2$, else the prefix $x$ would overlap the suffix $x$.
\end{proof}


\subsubsection{Lyndon words and fundamental sets}

We have already mentioned that every element of $F$ has a cyclically reduced conjugate. The following is probably more surprising (\cite[Lemma 3]{Faziev}, \cite[Lemma 5.5]{Grigor}):


\begin{lemma}[Faiziev]
\label{conj_nso}

Any simple conjugacy class has a cyclically reduced, non-self-overlapping representative.
\end{lemma}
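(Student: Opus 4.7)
The plan is to use the classical combinatorial technique of minimal cyclic rotations (Lyndon words). Fix an arbitrary total ordering on $S \cup S^{-1}$ and extend it lexicographically to words of a common length. Given a simple conjugacy class, pick any cyclically reduced representative $w$; by Lemma \ref{simple} it admits exactly $|w|$ distinct cyclic permutations, all cyclically reduced, and by Lemma \ref{conj_cr} these are exactly the cyclically reduced representatives of the class. Let $w_0$ be the lex-minimum among them. I will show that $w_0$ is non-self-overlapping, which will complete the proof.

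Suppose for contradiction $w_0$ is self-overlapping, so $w_0 = xy = yz$ as reduced products with $x, y, z$ non-empty (which in particular forces $|x| = |z|$). Consider the cyclic rotation $w_1 := yx$ of $w_0$, obtained by moving the prefix $x$ to the end. Since $w_0$ is cyclically reduced and simple, $w_1$ is also a cyclically reduced representative of the same conjugacy class, and by minimality $w_0 \leq w_1$ in our lex order, i.e., $xy \leq yx$. Since $w_0 = xy = yz$ already starts with $y$, the two words $xy$ and $yx$ agree on their first $|y|$ letters, so the inequality $xy \leq yx$ reduces to a comparison of what comes after. A short case analysis on whether $|x| \leq |y|$ or $|x| > |y|$, combined with the identity $xy = yz$, forces a non-trivial periodicity on $w_0$: concretely, $y$ turns out to be a prefix of some power of $x$ (or a symmetric relation holds), and iterating shows that $w_0$ is a positive power of a word strictly shorter than $w_0$. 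This contradicts the simplicity of the conjugacy class.

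The main obstacle is the clean combinatorial bookkeeping in the last step: deducing from $xy = yz$ together with $xy \leq yx$ that $w_0$ is a proper power. This is essentially a special case of the Fine--Wilf theorem on periods of words, and a careful induction on $|y|$ (with base case $|y| = 1$, handled directly by comparing first letters) works. An alternative route that avoids ordering arguments is to iterate Lemma \ref{xyx}: starting from any self-overlapping cyclic rotation $w = aba$ with $a$ non-empty and non-self-overlapping, one passes to the cyclic rotation $a^2 b$, applies Lemma \ref{xyx} again, and uses the non-self-overlapping property of $a$ to control the possible decompositions; this should likewise force $w$ to be a proper power, contradicting simplicity and thereby producing a non-self-overlapping cyclic rotation.
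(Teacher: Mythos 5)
Your overall strategy (pass to the lexicographically minimal cyclically reduced conjugate, i.e.\ a Lyndon word, and show it cannot be self-overlapping) is exactly the paper's strategy, but the execution has a genuine gap: minimality against the \emph{single} rotation $yx$ is not enough. From $w_0 = xy = yz$ one gets for free (no ordering needed) that $x = pq$, $z = qp$, $y = (pq)^k p$ for some words $p,q$ and $k \geq 0$; so ``$y$ is a prefix of a power of $x$'' carries no contradiction by itself, and the additional inequality $xy \preceq yx$ (equivalently $z \preceq x$) still does not force $w_0$ to be a proper power. Concretely, take $w_0 = abaab$ with $a \prec b$, $x = aba$, $y = ab$, $z = aab$: then $xy = yz = w_0$, all of $x,y,z$ are non-empty, and $xy = abaab \prec ababa = yx$, yet $abaab$ is primitive. (Of course $abaab$ is not the minimal rotation -- $aabab$ is smaller -- but your argument never uses minimality beyond the one comparison with $yx$, and that comparison is satisfied here.) The appeal to Fine--Wilf also misfires: the border only provides the single period $|x|$, and there is no second period nor the length hypothesis $|w_0| \geq p + q - \gcd(p,q)$ to apply it.

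The repair is to use minimality against a \emph{second} rotation. For instance, comparing $w_0 = yz$ also with $zy$ gives $pq \preceq qp$, while your comparison with $yx$ gives $qp \preceq pq$; hence $pq = qp$, so $p$ and $q$ are powers of a common word and $w_0 = (pq)^{k+1}p$ is a proper power, contradicting simplicity. The paper does essentially this but more cleanly: it first invokes Lemma \ref{xyx} to write the self-overlapping word as $w_0 = uvu$ with $u \neq 1$ (and $v \neq 1$ by simplicity), notes by Lemma \ref{simple} that $uvu$, $uuv$, $vuu$ are distinct rotations, and then minimality gives both $uv \prec vu$ (from $uvu \prec vuu$) and $vu \prec uv$ (from $uvu \prec uuv$), an immediate contradiction with no word-equation bookkeeping at all. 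Your sketched ``alternative route'' of iterating Lemma \ref{xyx} without any ordering is left at the level of ``should force a proper power''; as stated it is not an argument, and the paper's use of Lemma \ref{xyx} is a single application combined with the ordering, not an iteration.
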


We will shortly prove this lemma, but first, let us state the consequence that is of greatest interest to us.

\begin{definition}
\label{def_fund}

A \textit{fundamental set} is a symmetric set of cyclically reduced non-self-overlapping representatives of simple conjugacy classes of length at least 2.
\end{definition}

The definition is a mouthful, but it will hopefully seem natural after seeing the statement of Grigorchuk's density Theorem \ref{gdt}. \\

From Lemmas \ref{w_conj_w-1} and \ref{conj_nso} we immediately deduce:

\begin{corollary}[Faiziev]
\label{fund_set}

A free group of finite rank admits a fundamental set.
\end{corollary}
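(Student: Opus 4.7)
The plan is simply to combine Lemma \ref{w_conj_w-1} with Lemma \ref{conj_nso} via a choice of representatives that is compatible with inversion. Before doing so, I would verify that each of the four conditions packaged into Definition \ref{def_fund} (``cyclically reduced,'' ``non-self-overlapping,'' ``of length $\geq 2$,'' and ``representative of a simple conjugacy class'') is preserved under $w \mapsto w^{-1}$. Cyclic reducedness and word length are immediate; simplicity is inherited because inversion sends proper powers to proper powers, and sends conjugacy classes to conjugacy classes; and for the overlap property, if $w = xy = yz$ is a proper self-overlap with $x,y,z \neq 1$, then $w^{-1} = z^{-1}y^{-1} = y^{-1}x^{-1}$ is a proper self-overlap of $w^{-1}$ with middle segment $y^{-1}$. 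Hence inversion preserves the class of words admissible for a fundamental set.

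Next, inversion descends to an involution $\iota$ on the set $\mathcal{S}$ of simple conjugacy classes of $F$ of length $\geq 2$, sending $[w]$ to $[w^{-1}]$. By Lemma \ref{w_conj_w-1}, $\iota$ has no fixed points: if $[w] = [w^{-1}]$ then $w$ is conjugate to $w^{-1}$, forcing $w = 1$, contradicting length $\geq 2$. Therefore $\mathcal{S}$ splits canonically into a disjoint union of unordered pairs $\{[w], [w^{-1}]\}$. Choose one class from each pair, and for each chosen class $[w]$ apply Lemma \ref{conj_nso} to select a cyclically reduced, non-self-overlapping representative $w_0 \in [w]$. Let $T$ be the set of all such $w_0$.

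The fundamental set is then $T \cup T^{-1}$. By construction it is symmetric. By the invariance observations of the first paragraph, each element of $T^{-1}$ is a cyclically reduced, non-self-overlapping representative of the paired class $[w^{-1}]$, so every element of $T \cup T^{-1}$ has the required properties. Moreover, $T \cup T^{-1}$ meets each class in $\mathcal{S}$ exactly once (once via $T$ for one member of a pair, once via $T^{-1}$ for the other), which is the last condition for a fundamental set. The only non-bookkeeping ingredient is Lemma \ref{conj_nso}, which is already established, so there is no real obstacle: the corollary is essentially a consistency check ensuring that the Faiziev choice can be made compatibly with the $w \mapsto w^{-1}$ symmetry.
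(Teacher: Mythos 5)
Your proof is correct and follows precisely the route the paper intends: the paper deduces the corollary ``immediately'' from Lemma~\ref{w_conj_w-1} (no non-trivial element is conjugate to its inverse, so the induced involution on simple conjugacy classes of length $\geq 2$ is free) and Lemma~\ref{conj_nso} (every simple conjugacy class has a cyclically reduced non-self-overlapping representative), and your write-up simply spells out the bookkeeping — the inversion-invariance of the defining properties and the choice of a transversal of the involution — that the paper leaves implicit.
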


In order to prove Lemma \ref{conj_nso}, we introduce the following special type of words. In what follows, we fix a total order $\preceq$ on $S^{\pm 1}$, and extend it to the lexicographic order on $F$.

We recall how this is defined: given two words $w, w'$, if $w$ is a prefix of $w'$, then $w \preceq w'$. If neither word is a prefix of the other, we can write $w = xay$ and $w' = xbz$ as reduced products, where $x, y, z$ are (possibly empty) reduced words, $a, b \in S^{\pm 1}$ and $a \neq b$. Then $w \preceq w'$ if and only if $a \preceq b$.

We will also use the symbol $\prec$ in the obvious way.

\begin{definition}
A cyclically reduced simple word is a \textit{Lyndon word} if it is the $\preceq$-minimal element among its cyclically reduced conjugates.
\end{definition}

Lyndon words are an object of great interest in combinatorics of words, especially because of their factorization properties. We refer the reader to \cite[Chapter 5]{cow} for more information. \\

By definition, every simple conjugacy class contains a Lyndon word. Therefore Lemma \ref{conj_nso} follows directly from \cite[Proposition 1.1]{lynd_nso}:

\begin{lemma}[Duval]
\label{lynd_nso}

Lyndon words are non-self-overlapping.
\end{lemma}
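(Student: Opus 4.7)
The plan is to argue by contradiction. Suppose $w$ is a Lyndon word which is self-overlapping. Since a word cannot be a proper subword of itself, $w$ must overlap itself properly, so I can write $w = uv = vz$ as reduced products with $u, v, z$ all non-empty. Comparing the two factorisations immediately forces $|u| = |z|$, and this length equality is what will make the two halves of the final contradiction perfectly symmetric.

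Next I would exploit simplicity to produce two useful cyclic permutations. By Lemma \ref{simple}, a cyclically reduced simple word has exactly $|w|$ distinct cyclic permutations, all of which are themselves cyclically reduced. The two rotations I care about are $vu$, obtained from $w = uv$ by moving the suffix $v$ to the front, and $zv$, obtained from $w = vz$ by moving the prefix $v$ to the end. I would check that each of them really is distinct from $w$: if $vu = w = uv$ then $u$ and $v$ commute in the free monoid on $S^{\pm 1}$, so they are powers of a common root and $w$ is a proper power, contradicting simplicity; the same argument rules out $zv = w$.

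Since $w$ is Lyndon, $w \prec vu$ and $w \prec zv$ strictly in the lexicographic order. From each of these I extract a strict comparison between $u$ and $z$, viewed as reduced words of the common length $|u|=|z|$. Rewriting the first inequality as $vz \prec vu$, both sides begin with the prefix $v$, so after stripping it the comparison reduces to $z \prec u$. Rewriting the second as $uv \prec zv$, both sides have the same total length $|u|+|v|=|z|+|v|$, and their initial segments of length $|u|$ are $u$ and $z$ respectively, so the first position at which the two words disagree lies in these initial segments and the comparison reduces to $u \prec z$. This directly contradicts $z \prec u$, completing the proof.

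The main obstacle is really the step where one must be sure that $vu$ and $zv$ are genuine, distinct cyclic permutations of $w$: this is where Lemma \ref{simple} and the primitivity implicit in being Lyndon enter, and once it is secured, the rest is routine bookkeeping with the lexicographic order. Note that this approach bypasses the refined decomposition $w = xyx$ provided by Lemma \ref{xyx} and works directly from the raw definition of proper self-overlap, which is convenient because the structural lemma alone does not immediately resolve the case where $x$ and $y$ share a long common prefix.
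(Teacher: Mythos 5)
Your proof is correct, and it reaches the same shape of contradiction as the paper's ($A \prec B$ and $B \prec A$, both forced by Lyndon minimality) but by a genuinely different route. The paper first invokes Lemma~\ref{xyx} to pass to the symmetric decomposition $w = uvu$, with $u$ the \emph{shortest} non-empty common prefix--suffix, and then compares $w$ with the rotations $uuv$ and $vuu$ to get $uv \prec vu$ and $vu \prec uv$. You instead work directly from the raw definition of a proper self-overlap $w = uv = vz$ (any non-empty common prefix--suffix $v$, not just the shortest), compare $w$ with the rotations $vu$ and $zv$, and use $|u| = |z|$ to extract the clean contradiction $z \prec u$ and $u \prec z$. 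Your route is slightly more economical in that it never needs Lemma~\ref{xyx}. One substitution worth making: to show $vu \neq w$ and $zv \neq w$ you appeal to the Lyndon--Sch\"utzenberger fact that commuting words in a free monoid are powers of a common root, which the paper never states; the paper instead obtains distinctness of its rotations straight from Lemma~\ref{simple}, and you could do the same (if $vu = w$ then the rotation by $|u|$ coincides with the identity rotation, so $w$ has fewer than $|w|$ cyclically reduced conjugates and is not simple). Finally, your closing worry that the $w = xyx$ decomposition ``does not immediately resolve the case where $x$ and $y$ share a long common prefix'' is unfounded: in $uvu \prec vuu$ the two words agree on the trailing block of length $|u|$, so the first disagreement falls in the first $|u|+|v|$ letters and one reads off $uv \prec vu$; symmetrically $uvu \prec uuv$ gives $vu \prec uv$. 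The paper's proof has no gap; yours is simply a parallel, equally valid, and somewhat more direct argument.
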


\begin{proof}
Let $w$ be a Lyndon word, and suppose by contradiction that it is self-overlapping. By Lemma \ref{xyx}, we can write $w = uvu$ as a reduced product, where $u$ is non-empty. Since $w$ is simple, $v$ is also non-empty, and $uuv, uvu$ and $vuu$ are all distinct by Lemma \ref{simple}. Since $w$ is Lyndon, $w = uvu \prec uuv, vuu$. First, $uvu \prec vuu$ implies that $uv \prec vu$. Second, $uvu \prec uuv$ implies $vu \prec uv$. This is a contradiction, and we conclude.
\end{proof}

This lemma actually allows to identify explicitly some fundamental sets:

\begin{corollary}
Given a choice of $F^+$ and of a total order on $S^{\pm 1}$, let $\mathcal{F}^+$ denote the set of Lyndon words in $F^+$. Let $\mathcal{F} := \mathcal{F}^+ \cup (\mathcal{F}^+)^{-1}$. Then $\mathcal{F}$ is a fundamental set.
\end{corollary}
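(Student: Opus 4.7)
The plan is to verify the four clauses of Definition \ref{def_fund} for $\mathcal{F}$: symmetry, cyclic reducedness, non-self-overlapping, and being a representative of a simple conjugacy class of length $\geq 2$. Symmetry is immediate from the construction $\mathcal{F} = \mathcal{F}^+ \cup (\mathcal{F}^+)^{-1}$. For the remaining three properties, my strategy is to first verify them for the Lyndon words comprising $\mathcal{F}^+$, and then show that each is preserved under inversion $w \mapsto w^{-1}$.

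For Lyndon words, cyclic reducedness is built into the definition, simplicity is explicit, and non-self-overlapping is precisely Lemma \ref{lynd_nso}. The length-$\geq 2$ condition I would address by interpreting ``Lyndon words in $F^+$'' as those of length at least $2$; this excludes the trivial case of single letters $s \in S^{\pm 1}$, which are Lyndon but have length-$1$ conjugacy classes and so are ineligible for the fundamental set.

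To pass to $(\mathcal{F}^+)^{-1}$, I observe that all three properties transport under inversion. Cyclic reducedness: the condition that the last letter of a reduced word is not the inverse of its first is symmetric under $w \mapsto w^{-1}$. Simplicity: $w = u^k$ if and only if $w^{-1} = (u^{-1})^k$, so non-powers invert to non-powers, and lengths are preserved. Non-self-overlapping: if $w^{-1}$ admitted a decomposition $w^{-1} = xy = yz$ with $x, y, z$ nonempty and reduced, then inverting would yield $w = z^{-1} y^{-1} = y^{-1} x^{-1}$, exhibiting $w$ as self-overlapping and contradicting Lemma \ref{lynd_nso} applied to $w \in \mathcal{F}^+$. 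I do not anticipate a serious obstacle, since everything reduces to bookkeeping with the earlier combinatorial lemmas; the only subtlety worth flagging is the length-$\geq 2$ restriction noted above.
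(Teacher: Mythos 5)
Your verification of the element-wise conditions is essentially correct: symmetry is immediate from the construction, cyclic reducedness and simplicity are built into the definition of a Lyndon word, non-self-overlapping is Lemma \ref{lynd_nso}, and your transport of each of these properties under $w \mapsto w^{-1}$ is the right reduction. The length-$\geq 2$ caveat is also fair to flag: the generators are length-$1$ Lyndon words and must be discarded, which the corollary's wording leaves implicit.

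However, Definition \ref{def_fund} asks for a set of \emph{representatives} of the simple conjugacy classes of length at least $2$, meaning $\mathcal{F}$ must meet each such class exactly once — this is what makes the passage from $\mathcal{F}$ to $\mathcal{F}^+$ in the Notation section sensible, and what Theorem \ref{gdt} requires for linear independence, since $\overline{h_w}$ depends on $w$ only through its conjugacy class, up to sign. Your proof never touches this, and it is the genuinely delicate point. Indeed, as stated the corollary fails for some choices of $F^+$, because the Lyndon representative of $[g^{-1}]$ need not be the inverse of the Lyndon representative of $[g]$. Take $a \prec b \prec a^{-1} \prec b^{-1}$ and $g = aba^{-1}b^{-1}$: the Lyndon word of $[g]$ is $g$ itself (only one cyclic permutation starts with $a$), while the Lyndon word of $[g^{-1}]$ is $v = ab^{-1}a^{-1}b$, and $v^{-1} = b^{-1}aba^{-1} \neq g$. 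Since $g \neq v^{-1}$, it is legitimate to choose $F^+$ containing both $g$ and $v$; then $\mathcal{F}$ contains the two distinct conjugate words $g$ and $v^{-1}$, both in $[g]$. If instead $F^+$ contains $g^{-1}$ and $v^{-1}$, neither is Lyndon, so $\mathcal{F}$ misses $[g]$ altogether. The construction therefore needs $F^+$ to be chosen compatibly with the ordering (one Lyndon word per inverse-pair of conjugacy classes), and your proof, like the corollary's phrasing, leaves this essential step unaddressed.
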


\subsubsection{Special families of words}

We denote by $\mathcal{N}$ the set of non-self-overlapping words. Notice that $\mathcal{N} = \mathcal{N}^{-1}$. \\

When dealing with infinite sums of Brooks quasimorphisms, the combinatorics will be determined by how the corresponding words overlap with one another. We consider two extreme cases. The following definition follows the terminology in \cite{qout}:

\begin{definition}
A family $I \subset F$ is \textit{independent} if no two distinct words in $I$ overlap. In particular, all words in $I$ must be non-self-overlapping.
\end{definition}

\begin{example}
\label{ex_ind}

The families $I = \{ ab^nab^{-1} : n \geq 1 \}$ and $I' = \{ a^n(ab)^nb : n \geq 1 \}$ are independent. $I'$ is the family considered by Grigorchuk in \cite[Lemma 5.10]{Grigor}. Notice that also $I \cup I^{-1}$ and $I' \cup I'^{-1}$ are independent.
\end{example}

On the opposite side of the spectrum are families in which any two distinct words overlap. Among these, compatible families play an important role.

\begin{definition}
Let $u, v \in F$ be such that $uv$ is reduced. We call $u|v$ a \textit{reduced expression}. We say that $w$ \textit{overlaps the juncture of} $u|v$ if we can write $u = xu'$, $v = v'y$ and $w = u'v'$ as reduced products, with $u', v' \neq 1$. It follows that $|w| \geq 2$. We denote this by $w \in j(u|v)$.
\end{definition}

\begin{definition}
\label{comp_def}

Let $C$ be a finite collection of words. We say that $C$ is a \textit{compatible family} if there exists a reduced expression $u|v$ such that for all $w \in C$ we have $w \in j(u|v)$. For an infinite $C$, we give the same definition but allow $u$ to be left-infinite and $v$ to be right-infinite.

A compatible family described by the reduced expression $u|v$ is called \textit{full} if it contains all words $w$ such that $w \in j(u|v)$.
\end{definition}

The following should be clear.

\begin{lemma}
All words in a compatible family have length at least 2, and any two distinct words in the same compatible family overlap.

Every word of length at least two forms a compatible family of size 1. Two words with an overlap of size at least two form a compatible family of size 2.
\end{lemma}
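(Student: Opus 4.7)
The plan is to verify each of the four clauses directly from the definition of a compatible family and the definition of $w \in j(u|v)$, since they are really combinatorial unfoldings rather than substantive results. I will treat them in the order they appear.

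For the length bound, if $w$ belongs to a compatible family described by $u|v$, then by definition $w = u'v'$ with $u', v' \neq 1$, so $|w| \geq 2$. For the second clause, let $w_1, w_2$ be distinct words in the family, say $w_i = u_i' v_i'$ with $u_i'$ a (non-empty) suffix of $u$ and $v_i'$ a (non-empty) prefix of $v$. After possibly swapping the labels I may assume $|u_1'| \leq |u_2'|$, so that $u_1'$ is a suffix of $u_2'$, say $u_2' = z u_1'$. If in addition $|v_1'| \leq |v_2'|$, write $v_2' = v_1' z'$; then $w_2 = z u_1' v_1' z' = z w_1 z'$, so $w_1$ is a subword of $w_2$. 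If instead $|v_2'| \leq |v_1'|$, write $v_1' = v_2' z'$; then $w_2$ ends with $u_1' v_2'$ and $w_1$ begins with $u_1' v_2'$, giving either a proper overlap (if both $z, z'$ are non-empty) or, in the degenerate case, a subword relation.

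For the third clause, given a word $w$ with $|w| \geq 2$, just split $w = u'v'$ into two non-empty pieces; then the juncture $u \mid v := u' \mid v'$ witnesses $w$ as a one-element compatible family. For the fourth clause, I distinguish the two ways in which two words $w_1, w_2$ can overlap. If the overlap is proper, write $w_1 = xy$ and $w_2 = yz$ reduced with $|y| \geq 2$, split $y = y_1 y_2$ into non-empty pieces, and take the juncture $(xy_1) \mid (y_2 z)$: then $w_1 = (xy_1)(y_2)$ and $w_2 = (y_1)(y_2 z)$ both lie in $j(u|v)$. If instead $w_1$ is a subword of $w_2$, write $w_2 = a w_1 b$ reduced, split $w_1 = u_1' v_1'$ into non-empty pieces (using $|w_1| \geq 2$), and take the juncture $(a u_1') \mid (v_1' b)$; both words then lie in the resulting family.

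I do not anticipate any serious obstacle here, as everything reduces to bookkeeping of prefixes and suffixes. The only delicate point is to be careful in the second clause with the degenerate cases (when some of the $z, z'$ are empty), so that one correctly concludes overlap in the sense of the paper rather than a strict proper overlap; and in the fourth clause to notice that the hypothesis ``overlap of size at least $2$'' is precisely what is needed to guarantee that the shared portion can be split into two non-empty pieces, which is what makes the common juncture exist.
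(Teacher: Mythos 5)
Your proof is correct, and it fills in all the cases cleanly; the paper itself offers no proof, simply remarking that the statement "should be clear," so your write-up is a faithful unfolding of the definitions. The one point worth spelling out a bit more, which you invoke implicitly, is that the junctures you construct in the third and fourth clauses are genuinely reduced expressions: in the proper-overlap case $(xy_1)\mid(y_2z)$, the product $xy_1y_2z = xyz$ is reduced because $xy$ and $yz$ are each reduced (so no cancellation at the $x\mid y$ or $y\mid z$ interfaces) and $y$ is itself reduced; the subword case $(au_1')\mid(v_1'b)$ is similar, using that $w_2 = aw_1b$ is a reduced product and $w_1$ is a reduced word. With that noted, the case analysis in the second clause (the degenerate cases where one of $z, z'$ is empty giving a proper prefix/suffix and hence a proper subword, and both empty being excluded by distinctness) is exactly right.
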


\begin{example}
The families $I = \{a^nb^m : n, m \geq 1 \}$ and $I' = \{ a^{2n}b^{2n-1} \cdots a^2 b a b^2 \cdots a^{2n-1} b^{2n} : n \geq 1 \}$ are compatible.
\end{example}

Notice that by Lemma \ref{w_ol_w-1} if $w \in C$, then $w^{-1} \notin C$.

\subsection{Brooks quasimorphisms}

Brooks quasimorphisms were the first non-trivial quasimorphisms to be defined, in Brooks's paper \cite{Brooks}. There are some variations of this definition, and some inconsistencies in the definitions and the notations used in the literature. We use Calegari's notation from \cite{Calegari}.

\begin{definition}
\label{Brooks}

Let $w \in F$. Define the \textit{big counting function} by
$$C_w : F \to \mathbb{N} : g \mapsto \#\text{ occurrences of } w \text{ as a subword of } g,$$
and the \textit{big Brooks quasimorphism} by
$$H_w : F \to \mathbb{Z} : g \mapsto C_w(g) - C_{w^{-1}}(g).$$
Define the \textit{small counting function} by
$$c_w : F \to \mathbb{N} : g \mapsto \text{maximal \# of \textit{disjoint} occurrences of } w \text{ as a subword of } g,$$
and the \textit{small Brooks quasimorphism} by
$$h_w : F \to \mathbb{Z} : g \mapsto c_w(g) - c_{w^{-1}}(g).$$
\end{definition}

\begin{example}
Let $w = ab$, so $w^2 = abab$. We calculate some values of the Brooks quasimorphisms on $w$ and $w^2$, which give us a non-self-overlapping and a self-overlapping example. Evaluating at $g = ababab = w^3$:
$$H_w(g) = h_w(g) = 3;$$
$$H_{w^2}(g) = 2; \,\,\,\,\,\,\, h_{w^2}(g) = 1.$$
Evaluating at $g = a^2bab^2(a^{-1}b^{-1})^4 = aw^2ba^{-1}w^{-3}b^{-1}$:
$$H_w(g) = h_w(g) = 2 - 3 = -1;$$
$$H_{w^2}(g) = 1 - 2 = -1; \,\,\,\,\,\,\, h_{w^2}(g) = 1 - 1 = 0.$$
\end{example}

Here is a very simple but important fact, which the reader should have noticed in the previous example:

\begin{lemma}
\label{nso_disj}

Let $w$ be a non-self-overlapping word. Then all occurrences of $w$ and $w^{-1}$ in a reduced word are disjoint. In particular $C_w = c_w$ and $C_{w^{-1}} = c_{w^{-1}}$, so $H_w = h_w$.
\end{lemma}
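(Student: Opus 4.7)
The plan is a direct case analysis based on how two non-disjoint occurrences of $w$ or $w^{-1}$ in a reduced word $g$ would force $w$ to be self-overlapping, or force $w$ and $w^{-1}$ to overlap, both of which are forbidden. First I would fix a reduced word $g = s_1 \cdots s_n$ and consider two distinct occurrences of letters from $\{w, w^{-1}\}$ as subwords of $g$, say starting at positions $i < j$ of $g$, that are \emph{not} disjoint, i.e. $j \leq i + |w| - 1$ (since $|w| = |w^{-1}|$). Setting $k := j - i$ with $1 \leq k \leq |w|-1$, the middle block $s_j \cdots s_{i+|w|-1}$ is simultaneously a suffix of length $|w|-k$ of the first occurrence and a prefix of length $|w|-k$ of the second occurrence.

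I would then split into three cases according to which of $w, w^{-1}$ each occurrence corresponds to. If both occurrences are of $w$, this gives reduced factorizations $w = xy = yz$ with $x, y, z$ non-empty (with $|x| = |z| = k$ and $|y| = |w|-k$), so $w$ overlaps itself, contradicting the assumption that $w$ is non-self-overlapping. If both are occurrences of $w^{-1}$, the same argument applied to $w^{-1}$ shows $w^{-1}$ is self-overlapping, and inverting the factorizations shows that $w$ is then also self-overlapping, again a contradiction. Finally, if one occurrence is of $w$ and the other of $w^{-1}$, the same middle block shows that $w$ and $w^{-1}$ overlap (either one is a proper subword of the other if $k = 0$, which we excluded, or they overlap properly), contradicting Lemma \ref{w_ol_w-1}.

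Having shown that all occurrences of $w$ and $w^{-1}$ in any reduced word are pairwise disjoint, I would conclude the equalities $C_w = c_w$ and $C_{w^{-1}} = c_{w^{-1}}$ immediately: the maximal number of \emph{disjoint} occurrences of $w$ (respectively $w^{-1}$) in $g$ equals the total number of occurrences, since every collection of occurrences is automatically disjoint. The identity $H_w = h_w$ then follows at once from the definitions
\[
H_w = C_w - C_{w^{-1}} = c_w - c_{w^{-1}} = h_w.
\]

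I do not expect a real obstacle here; the only point that requires any care is checking that the non-disjointness assumption really forces $k \geq 1$ and $k \leq |w|-1$ (so that all of $x, y, z$ are non-empty and the conclusion is an honest overlap in the sense of the definition given earlier), and that non-self-overlap of $w$ is equivalent to non-self-overlap of $w^{-1}$, which is an immediate consequence of inverting the factorizations $w = xy = yz$.
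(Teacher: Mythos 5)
Your proof is correct. The paper actually omits a proof of this lemma entirely, presenting it as a ``very simple but important fact'' that follows from the preceding example, so there is nothing in the paper to compare against; your careful case analysis supplies the missing argument. The three cases (both $w$, both $w^{-1}$, mixed) are handled correctly, and you rightly invoke Lemma~\ref{w_ol_w-1} for the mixed case and the closure of non-self-overlap under inversion for the $w^{-1}$/$w^{-1}$ case. Two cosmetic remarks: since $|w| = |w^{-1}|$, a proper-subword relation between two occurrences can never arise, so the parenthetical about $k = 0$ corresponding to a subword relation is a slight red herring; and the tacit reduction to $i < j$ (rather than $i \leq j$) is justified because two occurrences at the same position would have to be the same subword of $g$, forcing $w = w^{-1}$ and hence $w = 1$ — worth a one-line remark, but not a gap.
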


In general big Brooks quasimorphisms have simpler combinatorics and are easier to compute, while small Brooks quasimorphisms behave better algebraically, in that they have a uniformly bounded defect. This is why non-self-overlapping words are so much easier to work with.

\begin{proposition}
\label{bigsmall}

Let $w \in F$ be any word. Then $\tilde{D}(H_w) \leq |w| - 1$ and $\tilde{D}(h_w) \leq 1$. Thus $D(H_w) \leq 3(|w| - 1)$ and $D(h_w) \leq 3$.

Moreover, the value of $\delta^1 H_w(g, h)$ only depends on the $|w|$-neighbourhood of the center of the tripod with endpoints $1, g, gh$.
\end{proposition}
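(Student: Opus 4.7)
The plan is to first observe that both $H_w$ and $h_w$ are alternating: reading a word backwards and inverting each letter gives a bijection between occurrences of $w$ in $g$ and occurrences of $w^{-1}$ in $g^{-1}$, so $C_w(g^{-1}) = C_{w^{-1}}(g)$, and the same bijection respects disjointness so $c_w(g^{-1}) = c_{w^{-1}}(g)$. Consequently $H_w$ and $h_w$ satisfy $H_w(g^{-1}) = -H_w(g)$ and $h_w(g^{-1}) = -h_w(g)$. By Lemma \ref{rd}, it then suffices to prove the two reduced-defect bounds $\tilde{D}(H_w) \leq |w|-1$ and $\tilde{D}(h_w) \leq 1$; the claimed bounds on $D(H_w)$ and $D(h_w)$ follow by multiplying by $3$.

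For $\tilde{D}(H_w)$, take $g,h \in F$ with $gh$ a reduced product and partition the occurrences of $w$ in $gh$ into those entirely inside $g$, those entirely inside $h$, and those that straddle the juncture between the two. The first two classes contribute exactly $C_w(g)$ and $C_w(h)$, and writing $N$ (resp.\ $N'$) for the number of straddling occurrences of $w$ (resp.\ $w^{-1}$) one gets $\delta^1 H_w(g,h) = N' - N$. A straddling occurrence of any word of length $|w|$ must start at a position $i$ with $|g|-|w|+1 \leq i \leq |g|-1$, which leaves only $|w|-1$ possible starting positions; moreover at a fixed starting position the letters determine a unique length-$|w|$ subword, which can equal at most one of $w$, $w^{-1}$. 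Hence $N+N' \leq |w|-1$ and the bound follows. For $\tilde{D}(h_w)$ the argument is the same in spirit but uses disjointness: any maximal packing of disjoint occurrences of $w$ in $gh$ contains at most one straddling member (two would share a juncture letter), so the sandwich $c_w(g) + c_w(h) \leq c_w(gh) \leq c_w(g) + c_w(h) + 1$ holds, and similarly for $w^{-1}$. Subtracting the two sandwiches gives $|\delta^1 h_w(g,h)| \leq 1$.

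For the locality statement, I would mimic the decomposition used in the proof of Lemma \ref{rd}: write $g = t_1^{-1}c$ and $h = c^{-1}t_2$ as reduced products so that $gh = t_1^{-1}t_2$ is reduced, and observe that the tripod with endpoints $1, g, gh$ in the Cayley tree has center precisely $t_1^{-1}$. A direct expansion, combined with the alternation identity $H_w(c) + H_w(c^{-1}) = 0$, yields
\[
\delta^1 H_w(g, h) = \delta^1 H_w(t_1^{-1}, t_2) \;-\; \delta^1 H_w(t_1^{-1}, c) \;-\; \delta^1 H_w(c^{-1}, t_2).
\]
Each of the three terms on the right is $\delta^1 H_w$ evaluated on a \emph{reduced} product, so by the counting analysis of the second paragraph each depends only on the $(|w|-1)$ letters immediately on either side of its juncture. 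But in all three cases that juncture, viewed in the Cayley tree, sits at the tripod center $t_1^{-1}$, and the relevant collars are exactly the first $|w|-1$ letters of the legs $t_1$, $c$ and $t_2$ leaving the center. Hence $\delta^1 H_w(g,h)$ is determined by the $|w|$-neighborhood of the tripod center, as claimed.

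The counting arguments in the second paragraph are routine, and the only genuinely structural point is the algebraic identity in the last paragraph: it is what converts the three a priori unrelated reduced-product defects into a statement about a single geometric point (the tripod center). Beyond writing this identity carefully, the main thing to check is that one correctly identifies which letters of $t_1$, $c$, $t_2$ appear in the $(|w|-1)$-neighborhood of the center, which is straightforward once one pictures the tripod concretely in the tree.
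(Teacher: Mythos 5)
Your proof is correct and takes essentially the same route as the paper: bound the reduced defect by counting occurrences of $w^{\pm 1}$ straddling the juncture of a reduced product, then handle general pairs and the locality claim via the alternating property and the tripod decomposition $g = t_1^{-1}c$, $h = c^{-1}t_2$, $gh = t_1^{-1}t_2$. The only cosmetic difference is that you get $N + N' \leq |w|-1$ (and the sandwich for $c_w$) by noting that a fixed starting position determines the subword, whereas the paper invokes Lemma \ref{w_ol_w-1} that $w$ and $w^{-1}$ never overlap; both yield the same estimates.
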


\begin{proof}
Let $g|h$ be a reduced expression. Consider an occurrence of $w^{\pm 1}$ in $g$ or in $h$. Then this occurrence is counted once in $H_w(g) + H_w(h)$ and once in $H_w(gh)$, so it is not counted in $\delta^1 H_w(g, h)$. The only occurrences that do not cancel out are those that overlap the juncture of $g|h$, since they are counted in $H_w(gh)$ but not in $H_w(g) + H_w(h)$. But there are at most $|w| - 1$ copies of $w^{\pm 1}$ that can overlap the juncture of $g|h$. Moreover there are either only copies of $w$ or only copies of $w^{-1}$, since $w$ and $w^{-1}$ do not overlap by Lemma \ref{w_ol_w-1}. This shows both that $|\delta^1 H_w (g, h)| \leq |w| - 1$ and that $\delta^1 H_w (g, h)$ only depends on the $|w|$-neighbourhood of the center of $g|h$. Moving to general pairs, write $g = t_1^{-1}c$, $h = c^{-1} t_2$ and $gh = t_1^{-1}t_2$ as reduced products (see Figure \ref{rd_fig}). Then, using that $H_w$ is alternating:
$$\delta^1 H_w(g, h) = H_w(g) + H_w(h) - H_w(gh) = -\delta^1 H_w(t_1^{-1}, c) - \delta^1 H_w(c^{-1}, t_2) + \delta^1 H_w(t_1^{-1}, t_2).$$
By the reduced case, each of the last three terms only depends on the $|w|$-neighbourhood of the center of the tripod, and so $\delta^1 H_w(g, h)$ does too. \\

Moving to $h_w$, let again $g|h$ be a reduced expression. Let $U_g$ be a maximal collection of disjoint occurrences of $w$ as a subword of $g$, where by maximal we mean that $c_w(g) = |U_g|$. Let $U_h$ be the analogue for $h$. Then $U_g \cup U_h$ is a collection of disjoint occurrences of $w$ as a subword of $gh$, which shows that $c_w(gh) \geq c_w(g) + c_w(h)$. Now let $U_{gh}$ be a maximal collection of disjoint occurrences of $w$ as a subword of $gh$. If $U_{gh}$ does not contain a copy of $w$ that intersects the juncture of $g|h$, then it decomposes as $V_g \cup V_h$, where all occurrences in $V_g$ are contained in $g$ and analogously for $V_h$ and $h$; so $c_w(gh) \leq c_w(g) + c_w(h)$. Else $U_{gh}$ contains a copy of $w$ intersecting the juncture, in which case there is precisely one, since all occurrences of $w$ in $U_{gh}$ are disjoint, and so similarly $c_w(gh) \leq c_w(g) + c_w(h) + 1$. Putting everything together, $c_w(gh) - c_w(g) - c_w(h) \in \{0, 1 \}$, and 1 can be the value only if $w \in j(g|h)$. Similarly $c_{w^{-1}}(gh) - c_{w^{-1}}(g) - c_{w^{-1}}(h) \in \{ 0, 1 \}$, and 1 can be the value only if $w^{-1} \in j(g|h)$. But $w$ and $w^{-1}$ do not overlap by Lemma \ref{w_ol_w-1}, so only one of these two values can be non-zero. We conclude that $\delta^1 h_w(g, h) \in \{-1, 0, 1 \}$.
\end{proof}

\begin{remark}
In general we cannot do better than $D(H_w) = \mathcal{O}(|w|)$: for instance $\delta^1 H_{a^n} (a^n, a^n) = 1-n$ for any $n \geq 1$. It turns out that $D(h_w) \leq 2$, but the proof is more involved and we will not need this sharper bound; for a proof see \cite[Proposition 2.30]{Calegari}.
\end{remark}

The fact that $\delta^1 H_w(g, h)$ only depends on a small neighbourhood of the center of the tripod with endpoints $1, g, gh$ makes the calculations much easier, since there are large amounts of information that can be safely ignored. This is not the case for $h_w$.

\begin{example}
\label{small_Brooks}
Let $w$ be a cyclically reduced word. Then $\delta^1 h_{w^2}(w^{2n}, w^{2n}) = n + n - 2n = 0$, while $\delta^1 h_{w^2}(w^{2n+1}, w^{2n+1}) = n + n - (2n + 1) = -1$. So adding relatively small words at the extremities can change the value of the coboundary. 
\end{example}

The argument in the proof for $H_w$ can be turned into a useful formula for non-self-overlapping words:

\begin{lemma}
\label{formula_hw}

Let $w$ be a non-self-overlapping word and $g|h$ a reduced expression. Then:
$$\delta^1 h_w(g, h) = \delta^1 H_w(g, h) = -\mathbbm{1} \{w \in j(g|h) \} + \mathbbm{1} \{w^{-1} \in j(g|h) \}.$$
\end{lemma}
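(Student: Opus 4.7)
The equality $\delta^1 h_w(g,h) = \delta^1 H_w(g,h)$ is essentially free: since $w$ is non-self-overlapping, Lemma \ref{nso_disj} gives $H_w = h_w$ as functions, so their coboundaries coincide on every pair. The real content is the closed formula for $\delta^1 H_w(g,h)$, and I would prove it by refining the counting argument already carried out for the general $H_w$ in Proposition \ref{bigsmall}.

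First, I would recall from the proof of Proposition \ref{bigsmall} that when $g|h$ is reduced, every occurrence of $w$ or $w^{-1}$ as a subword of $g$ (resp.\ of $h$) also appears as a subword of $gh$, and is counted with the same multiplicity on both sides of $\delta^1 H_w(g,h) = H_w(g) + H_w(h) - H_w(gh)$. Hence these contributions cancel, and
$$\delta^1 H_w(g,h) = -\bigl( N_w - N_{w^{-1}} \bigr),$$
where $N_u$ denotes the number of occurrences of $u$ as a subword of $gh$ that are \emph{not} contained in $g$ and not contained in $h$; equivalently, the number of occurrences of $u$ that overlap the juncture of $g|h$, i.e.\ that witness $u \in j(g|h)$.

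Next, I would upgrade $N_w, N_{w^{-1}} \in \{0,1\}$ and show they cannot both be $1$. For the first, two distinct occurrences of $w$ overlapping the juncture must themselves overlap each other inside the word $gh$ (both lie in a window of width less than $2|w|-1$ around the juncture), and an overlap of two copies of $w$ produces a reduced factorization $w = xy = yz$ with $y \neq 1$, contradicting non-self-overlap; the same argument bounds $N_{w^{-1}}$ since $w^{-1}$ is non-self-overlapping whenever $w$ is. For the mutual exclusion, an occurrence of $w$ and an occurrence of $w^{-1}$ both overlapping the juncture would produce an overlap of $w$ with $w^{-1}$, contradicting Lemma \ref{w_ol_w-1}. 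Moreover $N_w = 1$ is equivalent to $w \in j(g|h)$ directly by the definition of the relation $w \in j(u|v)$.

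Putting these three facts together yields the stated formula. The step I expect to need the most care is articulating why an overlap of two copies of $w$ straddling the juncture forces a genuine self-overlap in the sense of our definition (ensuring $x, y, z$ are all non-empty); this is where non-self-overlap of $w$ is actually used, and is the only nontrivial combinatorial point — everything else is cancellation bookkeeping.
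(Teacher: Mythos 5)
Your proof is correct and takes essentially the same route the paper does: the paper states the lemma as a direct consequence of the cancellation argument in the proof of Proposition \ref{bigsmall}, and you have simply made explicit the two additional observations needed (at most one occurrence of $w^{\pm 1}$ can straddle the juncture by non-self-overlap, and $w$, $w^{-1}$ cannot both do so by Lemma \ref{w_ol_w-1}). The point you flag as requiring care — that two distinct juncture-straddling copies of $w$ force a proper self-overlap $w = xy = yz$ with $x,y,z \neq 1$ — is handled correctly, since both copies necessarily contain the two positions adjacent to the juncture, forcing $y \neq 1$, while the distinctness of the occurrences forces $x, z \neq 1$.
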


Apart from the statement of Theorem \ref{gdt3} and the discussion at the end of Subsection \ref{subsec_so}, we will only be concerned with big Brooks quasimorphisms from now on. Thus, whenever we write $h_w$, it is to stress the fact that $w$ is a non-self-overlapping word, and so all occurrences of $w$ in a reduced word are disjoint by Lemma \ref{nso_disj}.

\subsubsection{Homogenization}

The homogenization of a Brooks quasimorphism on a non-self-overlapping word also has a combinatorial definition \cite[Lemma 5.2]{Grigor}.

\begin{definition}
Let $g \in F$. We denote by $\overline{g}$ the cyclic word on $g$.
\end{definition}

One should visualize $\overline{g}$ as a circular word. The crucial point to notice is that in $\overline{g}$ we lose track of where $g$ begins and where it ends. This is expressed more formally in the following lemma:

\begin{lemma}
The map $g \mapsto \overline{g}$ is conjugacy invariant. We have $|g| \geq |\overline{g}|$, with equality if and only if $g$ is cyclically reduced.
\end{lemma}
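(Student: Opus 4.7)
The plan is to interpret $\overline{g}$ as the sequence of letters of a cyclically reduced representative of the conjugacy class of $g$, considered up to cyclic permutation; the length $|\overline{g}|$ is the number of letters in this cyclic sequence, i.e.\ the length of any cyclically reduced representative. With this interpretation, conjugacy invariance will reduce almost immediately to Lemma~\ref{conj_cr}, while the length inequality will follow from a short induction on $|g|$.

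For conjugacy invariance, I would first note that every element of $F$ has a cyclically reduced conjugate: if the first letter of $g$ is the inverse of the last, write $g = s g' s^{-1}$ as a reduced product with $s \in S^{\pm 1}$, which is conjugate to $g'$ of strictly smaller length, and iterate until the process stops. Next, if $g \sim h$ and $c,c'$ are cyclically reduced representatives of their common conjugacy class, Lemma~\ref{conj_cr} gives that $c$ and $c'$ are cyclic permutations of one another. Hence they determine the same cyclic word, so $\overline{g} = \overline{h}$.

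For the length statement, let $c$ be a cyclically reduced representative of the conjugacy class of $g$, so that $|\overline{g}| = |c|$. I would prove $|g| \geq |c|$ by induction on $|g|$. If $g$ is itself cyclically reduced, then $g$ is such a representative and $|g| = |c|$, giving equality. Otherwise, as above $g = s g' s^{-1}$ is a reduced product with $|g'| = |g| - 2$, and $g' \sim g$, so by induction $|g'| \geq |c|$, whence $|g| = |g'| + 2 > |c|$. This yields $|g| \geq |\overline{g}|$ in all cases, with equality precisely when $g$ is cyclically reduced.

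I do not anticipate any serious obstacle here: the only genuinely non-trivial input is Lemma~\ref{conj_cr}, which has already been established. The one point requiring a sentence of clarification is the definition of $\overline{g}$ and of $|\overline{g}|$, but the combinatorial vocabulary developed in Subsection~\ref{s_cow} makes this straightforward.
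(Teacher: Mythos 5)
Your proof is correct, and it is the natural argument; the paper itself states this lemma without proof, so there is nothing to compare against. You correctly identify that $\overline{g}$ should be read as the cyclic word of a cyclically reduced conjugate of $g$, that conjugacy invariance and well-definedness both reduce to Lemma~\ref{conj_cr}, and that the length comparison follows by peeling off conjugating letters one pair at a time, with strict inequality in the non-cyclically-reduced case giving the ``only if'' direction of the equality statement.
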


\begin{lemma}
Let $w$ be a non-self-overlapping word. Then
$$\overline{h_w} = \# \text{ occurrences of } w \text{ in } \overline{g} - \# \text{ occurrences of } w^{-1} \text{ in } \overline{g}.$$
\end{lemma}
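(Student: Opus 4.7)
The plan is to reduce to the cyclically reduced case and then do a direct combinatorial count of occurrences in $g^n$. By Lemma \ref{hqm}(2), $\overline{h_w}$ is conjugacy invariant; and the cyclic word $\overline{g}$ depends only on the conjugacy class of $g$. So I may replace $g$ by a cyclically reduced conjugate and assume $g$ itself is cyclically reduced, of length $k = |g|$. Then $g^n$ is a reduced word of length $nk$ for every $n \geq 1$.

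Because $g^n$ is reduced and $w$ is non-self-overlapping, Lemma \ref{nso_disj} gives $c_w(g^n) = C_w(g^n)$ and $c_{w^{-1}}(g^n) = C_{w^{-1}}(g^n)$, so $h_w(g^n) = C_w(g^n) - C_{w^{-1}}(g^n)$. It therefore suffices to show that
$$\lim_{n \to \infty} \frac{C_w(g^n)}{n} = N_w, \qquad \lim_{n \to \infty} \frac{C_{w^{-1}}(g^n)}{n} = N_{w^{-1}},$$
where $N_w$ denotes the number of occurrences of $w$ in the cyclic word $\overline{g}$, and similarly for $N_{w^{-1}}$. The behaviour as $n \to -\infty$ then follows since $\overline{h_w}$ is alternating and $\overline{g^{-1}}$ is the cyclic word $\overline{g}$ read backwards (with every letter inverted), so $N_w$ for $g^{-1}$ equals $N_{w^{-1}}$ for $g$.

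The key combinatorial step is the counting itself. Index the letters of $g^n$ by $\{1, \dots, nk\}$ and the letters of $\overline{g}$ cyclically by $\mathbb{Z}/k\mathbb{Z}$. Then position $i$ of $g^n$ is the start of an occurrence of $w$ (as a subword of $g^n$) if and only if $i + |w| - 1 \leq nk$ and the class $\overline{i} \in \mathbb{Z}/k\mathbb{Z}$ is the start of an occurrence of $w$ in $\overline{g}$. The set of valid $i$ is $\{1, \dots, nk - |w| + 1\}$: this contains every residue class mod $k$ at least $n - \lceil |w|/k \rceil$ times and at most $n$ times, so
$$(n - \lceil |w|/k \rceil) N_w \;\leq\; C_w(g^n) \;\leq\; n N_w,$$
and in particular $C_w(g^n) = n N_w + O(1)$. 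The identical estimate holds for $w^{-1}$, giving the claimed limit.

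I do not expect any serious obstacle here; the only thing requiring care is the initial reduction (checking that both sides of the identity are unchanged under replacing $g$ by a cyclically reduced conjugate), and the boundary effect in the counting, both of which are minor. The statement really just packages the observation that in $g^n$ every ``internal'' period of $g$ contributes exactly the cyclic count of $w$ in $\overline{g}$.
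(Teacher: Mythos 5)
Your proof is correct, and it takes a genuinely different route from the paper's. You compute the defining limit $\lim_{n\to\pm\infty}h_w(g^n)/n$ directly: reduce to $g$ cyclically reduced (both sides are conjugacy invariant, so this is legitimate), note that on the reduced word $g^n$ one has $h_w = C_w - C_{w^{-1}}$ by Lemma \ref{nso_disj}, and then count starting positions modulo $k = |g|$ to get $C_w(g^n) = nN_w + O(1)$. The paper instead never touches the limit: it defines $\tilde{h}_w$ by the cyclic count, shows $\tilde{h}_w$ is conjugacy invariant and satisfies $\tilde{h}_w(g^n) = n\tilde{h}_w(g)$, bounds $\|\tilde{h}_w - h_w\|_\infty$ by a constant (comparing straight and cyclic counts near the juncture of $g|g$ via Lemma \ref{formula_hw}), and invokes uniqueness of the homogenization (Lemma \ref{hqm}(4)). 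Your argument is more explicit and computational, and incidentally proves the same formula for the big Brooks quasimorphism $\overline{H_w}$ on arbitrary (possibly self-overlapping) words, since the residue-class count never used the non-self-overlapping hypothesis beyond identifying $h_w$ with $C_w - C_{w^{-1}}$. The paper's argument is shorter because uniqueness of homogenization does the analytic work for free, and it yields an explicit bound on $\|\overline{h_w} - h_w\|_\infty$ as a by-product. One small point worth making explicit in your write-up: when $|w| > k$ the "cyclic occurrence'' wraps around $\overline{g}$ more than once; for non-self-overlapping $w$ and cyclically reduced $g$ there are then no occurrences at all (any wrap-around occurrence forces a self-overlap of $w$, exactly as in the paper's proof of Proposition \ref{twhom}), so both sides vanish and the correspondence of start positions remains valid.
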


\begin{proof}
Let $\tilde{h}_w$ denote the map defined in the statement. Since $g \mapsto \overline{g}$ is conjugacy invariant, so is $\tilde{h}_w$. We need to show that $\tilde{h}_w = \overline{h_w}$ is the homogenization of $h_w$. We will show this by estimating the distance between $\tilde{h}_w$ and $h_w$.

Let $g$ be cyclically reduced, so that $g|g$ is a reduced expression. Then if $|w| \leq |g|$, counting occurrences of $w$ in $\overline{g}$ is the same as counting them in $g$, except for those which overlap the juncture of $g|g$. If $|w| > |g|$, then $\tilde{h}_w(g) = h_w(g) = 0$. So by Lemma \ref{formula_hw} we have
$$\tilde{h}_w(g) - h_w(g) \in \{ -\delta h_w(g, g), 0 \}.$$
In particular $||\tilde{h}_w - h_w||_\infty \leq 1$ on cyclically reduced words. Now let $g$ be any word, and write $g = xhx^{-1}$ as a reduced product, where $h$ is cyclically reduced. Then, using that $\tilde{h}_w$ is conjugacy invariant and $h_w$ is alternating:
$$|\tilde{h}_w(g) - h_w(g)| = |\tilde{h}_w(h) - h_w(xhx^{-1})| \leq |\tilde{h}_w(h) - h_w(h)| + |h_w(h) - h_w(xhx^{-1})| \leq $$
$$ \leq 1 + |h_w(xhx^{-1}) - h_w(x) - h_w(h) - h_w(x^{-1})| \leq 1 + 2D(h_w).$$
It follows that $\tilde{h}_w$ is at a bounded distance from $h_w$, in particular it is a quasimorphism. Moreover, the reader can check that $\tilde{h}_w(g^n) = n\tilde{h}_w(g)$, so $\tilde{h}_w$ is a homogeneous quasimorphism. Since $\overline{h_w}$ is the unique homogeneous quasimorphism that is at a bounded distance from $h_w$, we conclude.
\end{proof}

\subsection{The pointwise topology}

Recall that we have an isomorphism $Q(F) / Hom(F) \oplus \ell^\infty(F) \cong H^2_b(F)$ (Proposition \ref{EHiso}) and that since $Q(F) = Q_h(F) \oplus \ell^\infty(F)$, we even have $Q_h(F) / Hom(F) \cong H^2_b(F)$ (Proposition \ref{EHiso2}). This is true in a very general context, but for the free group we can go one step further.

\begin{definition}
Define $Q^0_h(F) := \{ \varphi \in Q_h(F) : \varphi|_S \equiv 0\}$.
\end{definition}

\begin{lemma}[Grigorchuk]

We have $Q_h(F) = Q^0_h(F) \oplus Hom(F)$. Therefore the map $\delta^1$ induces an isomorphism of vector spaces:
$$Q^0_h(F) \to H^2_b(F).$$
\end{lemma}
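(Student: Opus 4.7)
The plan is to exhibit an explicit direct sum decomposition of $Q_h(F)$, and then compose with the isomorphism $Q_h(F)/Hom(F) \to H^2_b(F)$ already established in Theorem \ref{EHiso2} (using Corollary \ref{EHF} to identify $EH^2_b(F)$ with $H^2_b(F)$).

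First I would verify that the sum $Q^0_h(F) + Hom(F)$ is the whole of $Q_h(F)$. Given $\varphi \in Q_h(F)$, use the universal property of the free group $F$ on $S$ to define a homomorphism $h \in Hom(F)$ by setting $h(s_i) := \varphi(s_i)$ for each $s_i \in S$ and extending uniquely. Since $Q_h(F)$ is a vector space containing $Hom(F)$, the difference $\psi := \varphi - h$ lies in $Q_h(F)$, and by construction $\psi(s_i) = 0$ for every $s_i \in S$, so $\psi \in Q^0_h(F)$. This gives $\varphi = \psi + h$ with $\psi \in Q^0_h(F)$ and $h \in Hom(F)$.

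Next I would check that the intersection $Q^0_h(F) \cap Hom(F)$ is trivial: any element of this intersection is a homomorphism that vanishes on the generating set $S$, hence vanishes identically on $F$. Combined with the previous step, this establishes $Q_h(F) = Q^0_h(F) \oplus Hom(F)$.

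Finally, the isomorphism in the statement follows by combining this decomposition with Theorem \ref{EHiso2}: quotienting by $Hom(F)$ gives $Q^0_h(F) \cong Q_h(F)/Hom(F)$, and the map induced by $\delta^1$ identifies $Q_h(F)/Hom(F)$ with $EH^2_b(F) = H^2_b(F)$ (the second equality by Corollary \ref{EHF}). The composition
\[
Q^0_h(F) \hookrightarrow Q_h(F) \twoheadrightarrow Q_h(F)/Hom(F) \xrightarrow{\ \delta^1\ } H^2_b(F)
\]
is then an isomorphism, which is exactly the claim. No real obstacle arises here; the only subtle point is invoking that $F$ is free so that the prescription $s_i \mapsto \varphi(s_i)$ really does extend to a (well-defined) homomorphism, which would fail for a general group.
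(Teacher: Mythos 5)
Your proposal is correct and follows essentially the same route as the paper: the paper defines the projection $\pi : Q_h(F) \to Hom(F)$ sending $\varphi$ to the homomorphism agreeing with $\varphi$ on the basis (which is exactly your $h$, well-defined by freeness) and notes its kernel is $Q^0_h(F)$, which is your surjectivity-plus-trivial-intersection argument in compressed form. The final identification via Theorem \ref{EHiso2} and Corollary \ref{EHF} is also as in the paper.
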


\begin{proof}
We have a natural projection $\pi : Q_h(F) \to Hom(F)$, by mapping $\varphi$ to the homomorphism that coincides with $\varphi$ on the basis. The kernel of $\pi$ is precisely $Q^0_h(F)$.
\end{proof}

This allows to identify $H^2_b(F)$ with a space of real-valued functions on $F$. This space comes naturally equipped with the subspace topology of the product topology of $\{ f : F \to \mathbb{R} \} \cong \mathbb{R}^F$. Since $F$ is countable, this space is first-countable, and so convergent sequences determine the topology. By definition of the product topology, $f_n \to f$ in this topology if and only if $f_n$ converges pointwise to $f$, which is why this topology on $Q^0_h(F)$ is called the \textit{topology of pointwise convergence}. The same goes for the spaces $Q(F)$ and $Q_{alt}(F)$.

\begin{definition}
The \textit{pointwise topology} on $H^2_b(F)$ is the one corresponding to the topology of pointwise convergence on $Q^0_h(F)$ under the isomorphism induced by $\delta^1$.
\end{definition}

\subsubsection{Grigorchuk's density theorem}

Any infinite sum of Brooks quasimorphisms is a well-defined map. Indeed, given $g \in F$, $h_w(g), \overline{h_w}(g) \neq 0$ only if $|w| \leq |g|$, and there are only finitely many such words. The relation with the previous definition is the following:
$$\sum_{\substack{w \in \mathcal{F}^+ \\ |w| \leq n}} \alpha_w \overline{h_w} \xrightarrow{n \to \infty} \sum\limits_{w \in \mathcal{F}^+} \alpha_w \overline{h_w}$$
in the pointwise topology on $Q^0_h(F)$, if the right-hand side is indeed an element of $Q^0_h(F)$ (we will shortly go back to this question). \\

Now let $\mathcal{F}$ be a fundamental set (Definition \ref{def_fund}). Notice that if $w \in \mathcal{F}$, then by definition $|w| \geq 2$, so $\overline{h_w} \in Q^0_h(F)$. In fact, much more is true \cite[Theorem 5.7]{Grigor}.

\begin{definition}
Let $V$ be a real topological vector space. We say that the set $\{ x_i \}_{i \geq 1}$ is a \textit{basis} of $V$ if for any $x \in V$ there exist unique coefficients $a_i \in \mathbb{R}$ such that
$$\sum\limits_{i \leq N} a_i x_i \xrightarrow{N \to \infty} x.$$
\end{definition}

\begin{theorem}[Grigorchuk]
\label{gdt}

The set $\{ \overline{h_w} \}_{w \in \mathcal{F}^+}$ is a basis of $Q^0_h(F)$ with the pointwise topology. In other words, any $\varphi \in Q^0_h(F)$ may be uniquely written as
$$\sum\limits_{w \in \mathcal{F}^+} \alpha_w \overline{h_w},$$
for some coefficient map $\alpha : \mathcal{F}^+ \to \mathbb{R}$.
\end{theorem}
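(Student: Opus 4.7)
My plan is to construct the coefficient map $\alpha$ by induction on word length, exploiting the fact that the formal sum is automatically locally finite at every point regardless of the size of the coefficients.

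The key observation is: for any $g \in F$ and any $w' \in F$, if $|w'| > |\overline{g}|$ (the length of the cyclic word associated to $g$), then $\overline{h_{w'}}(g) = 0$. Indeed, $\overline{h_{w'}}$ is conjugacy invariant, so evaluating at $g$ reduces to counting occurrences of $w'$ and $(w')^{-1}$ in $\overline{h}$, where $h$ is the cyclically reduced conjugate of $g$, and no such occurrence can fit if $|w'| > |\overline{h}|$. Therefore $\sum_w \alpha_w \overline{h_w}(g)$ always has only finitely many non-zero terms, and the partial sums $\psi_n := \sum_{w \in \mathcal{F}^+, \, |w| \leq n} \alpha_w \overline{h_w}$ stabilize with $\psi_n(g) = \psi(g)$ for all $n \geq |\overline{g}|$.

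Next I establish the triangular identity: for $w, w' \in \mathcal{F}^+$ of the same length $n$, $\overline{h_{w'}}(w) = 1$ if $w = w'$ and $0$ otherwise. Indeed, $\overline{h_{w'}}(w)$ counts occurrences of $w'$ and $(w')^{-1}$ in the cyclic word $\overline{w}$ of length $n$; any such occurrence forces $w'$ (resp. $(w')^{-1}$) to be a cyclic rotation of $w$, hence conjugate to it. Since $\mathcal{F}$ contains exactly one representative per simple conjugacy class and $\mathcal{F}^+$ contains only one of $w', (w')^{-1}$, the only surviving case is $w' = w$, with exactly one occurrence of $w$ in $\overline{w}$; the occurrence of $w^{-1}$ is excluded by Lemma \ref{w_conj_w-1}. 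With this, the induction defining $\alpha$ runs as follows: enumerate $\mathcal{F}^+$ by increasing length starting at $2$, and for each $w \in \mathcal{F}^+$ of length $n$ set $\alpha_w := \varphi(w) - \psi_{n-1}(w)$, so that $\psi_n(w) = \varphi(w)$, and hence $\psi(w) = \varphi(w)$ for every $w \in \mathcal{F}^+$.

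It remains to upgrade the agreement on $\mathcal{F}^+$ to agreement on all of $F$. Each $\overline{h_w}$ is a homogeneous, conjugacy invariant function vanishing on $S$ (since $|w| \geq 2$), and these properties pass to pointwise limits, so $\psi$ inherits them. Now any function on $F$ that is homogeneous, conjugacy invariant, and vanishes on $S$ is determined by its values on $\mathcal{F}^+$: every non-trivial element of $F$ is conjugate to a power of a simple cyclically reduced word $w$, which is either conjugate to a generator (forcing the value to vanish since $f(s^{-1}) = -f(s) = 0$ by homogeneity) or has length $\geq 2$, in which case some cyclic rotation of $w$ or $w^{-1}$ lies in $\mathcal{F}^+$ by definition of a fundamental set, and the value is determined via conjugacy invariance plus the alternating relation $f(w^{-1}) = -f(w)$ (itself a consequence of homogeneity). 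Since both $\varphi$ and $\psi$ satisfy these properties and agree on $\mathcal{F}^+$, we conclude $\psi = \varphi$. Uniqueness follows directly from the triangular identity: if two coefficient maps $\alpha, \beta$ gave the same sum, evaluating $\sum_w (\alpha_w - \beta_w) \overline{h_w}$ at a word $w$ of minimal length where they differ would yield $\alpha_w - \beta_w = 0$, a contradiction.

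The main obstacle is the triangular identity $\overline{h_{w'}}(w) = \delta_{w, w'}$ for $w, w' \in \mathcal{F}^+$ of the same length, since it requires combining all three defining properties of a fundamental set (cyclically reduced, non-self-overlapping, representatives of simple conjugacy classes) with Lemma \ref{w_conj_w-1} and the convention on $\mathcal{F}^+$. Once this and the local finiteness observation are in place, the induction and the globalization to $\psi = \varphi$ reduce to routine use of homogeneity and conjugacy invariance.
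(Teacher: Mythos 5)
Your proposal is correct and takes essentially the approach the paper intends: the paper itself omits the proof of Theorem \ref{gdt} (deferring to Grigorchuk), but its proof of the analogous Theorem \ref{gdt2} uses exactly your induction-on-word-length scheme, and the two extra ingredients you supply for the homogenized case --- the evaluation identity $\overline{h_{w'}}(w) = \mathbbm{1}\{w' = w\}$ for $w, w' \in \mathcal{F}^+$ of equal length, and the reconstruction of a conjugacy-invariant function satisfying $f(g^n) = nf(g)$ and vanishing on $S$ from its values on $\mathcal{F}^+$ --- are precisely the adaptations the paper alludes to when it remarks that Grigorchuk's argument generates all functions with those three properties. I see no gaps.
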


\begin{remark}
Since $\overline{h_w} = - \overline{h_{w^{-1}}}$, after choosing $\mathcal{F}$, this basis depends on the choice of $\mathcal{F}^+$ only up to sign.
\end{remark}

This theorem should make the definition of a fundamental set look more natural. The fact that elements of $\mathcal{F}$ are representatives of conjugacy classes reflects the fact that homogeneous quasimorphisms are conjugacy invariant. The restriction to simple conjugacy classes reflects the fact that homogeneous quasimorphisms satisfy $\varphi(g^n) = n \varphi(g)$ for all $g \in F$ and all $n \in \mathbb{Z}$. Finally, excluding the conjugacy classes of length 1 reflects the fact that elements of $Q^0_h(F)$ vanish on the basis. \\

We will not prove Theorem \ref{gdt}, but Grigorchuk's proof can be easily adapted to show a similar result, which is also relevant in our setting.

\begin{theorem}
\label{gdt2}

The set $\{H_w\}_{w \in F^+}$ is a basis of $Q_{alt}(F)$ with the topology of pointwise convergence. In other words, any $\varphi \in Q_{alt}(F)$ may be uniquely written as
$$\sum\limits_{w \in F^+} \alpha_w H_w,$$
for some coefficient map $\alpha : F^+ \to \mathbb{R}$.
\end{theorem}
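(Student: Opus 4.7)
The plan is to adapt Grigorchuk's strategy from Theorem \ref{gdt}, building the coefficient map by induction on word length. Two preliminary observations make the infinite sums well defined. First, each $H_w$ is alternating, since
\[
H_w(g^{-1}) = C_w(g^{-1}) - C_{w^{-1}}(g^{-1}) = C_{w^{-1}}(g) - C_w(g) = -H_w(g).
\]
Second, for any fixed $g \in F$ one has $H_w(g) = 0$ whenever $|w| > |g|$, so only finitely many terms of any formal sum over $F^+$ are nonzero at $g$. Therefore every coefficient map $\alpha : F^+ \to \mathbb{R}$ yields a pointwise well defined alternating function $\sum_{w \in F^+} \alpha_w H_w$ on $F$, and the question reduces to existence and uniqueness of such a representation of a given $\varphi \in Q_{alt}(F)$.

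For existence, I would construct $\alpha$ inductively on word length by setting
\[
\alpha_w := \varphi(w) - \sum_{\substack{v \in F^+ \\ |v| < |w|}} \alpha_v H_v(w).
\]
The crucial combinatorial fact is that for $v \in F^+$ with $|v| = |w|$ one has $H_v(w) = 1$ if $v = w$, $H_v(w) = -1$ if $v = w^{-1}$, and $H_v(w) = 0$ otherwise; since $F^+$ contains exactly one of $\{g, g^{-1}\}$, the case $v = w^{-1}$ is forbidden and only the term $v = w$ survives on the diagonal. Consequently the partial sums $S_n := \sum_{w \in F^+,\, |w| \leq n} \alpha_w H_w$ satisfy $S_n(w_0) = \varphi(w_0)$ for every $w_0 \in F^+$ with $|w_0| \leq n$. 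For $w_0 \notin F^+$ with $w_0 \neq 1$, both $\varphi$ and $S_n$ are alternating, so $S_n(w_0) = -S_n(w_0^{-1}) = -\varphi(w_0^{-1}) = \varphi(w_0)$; at $w_0 = 1$ both sides vanish since $\varphi(1) = 0$ and $H_w(1) = 0$ for $|w| \geq 1$. Because $S_n(g)$ is eventually constant in $n$ for each fixed $g$, the sum $\sum_{w \in F^+} \alpha_w H_w$ converges pointwise to $\varphi$.

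For uniqueness, assume $\sum_{w \in F^+} \beta_w H_w = 0$ pointwise with some $\beta_{w_0} \neq 0$, and pick $w_0 \in F^+$ of minimal length with this property. Evaluating the sum at $g = w_0$, the terms with $|w| > |w_0|$ vanish because $H_w(w_0) = 0$, the terms with $|w| < |w_0|$ vanish by minimality of $w_0$, and among terms with $|w| = |w_0|$ only $w = w_0$ contributes by the diagonal computation above. This forces $\beta_{w_0} = 0$, a contradiction.

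The reason this argument is cleaner than that of Theorem \ref{gdt} is that we do not need to verify a posteriori that the resulting infinite sum is a quasimorphism: by construction it coincides pointwise with the given $\varphi \in Q_{alt}(F)$, which lies in $Q_{alt}(F)$ by assumption. The only genuinely delicate structural point, which the notation set up in Section \ref{not} was designed to handle, is the unambiguous selection of exactly one element of each pair $\{g, g^{-1}\}$, since it is precisely this choice that makes the length-$|w|$ diagonal of the inductive equation collapse to the invertible equation $\alpha_w \cdot 1 = \varphi(w) - (\text{lower order terms})$; with this convention in place no obstacle remains.
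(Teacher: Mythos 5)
Your proposal is correct and follows essentially the same route as the paper's proof: uniqueness by evaluating a vanishing sum at a nonzero coefficient of minimal length, and existence by an inductive construction of the coefficients on word length, using the fact that $H_v(w_0)=\mathbbm{1}\{v=w_0\}$ when $|v|=|w_0|$ and $v,w_0\in F^+$. The paper phrases the inductive step via the partial approximants $f_N$ rather than an explicit formula for $\alpha_w$, and notes in passing that the argument in fact expands every alternating function $F\to\mathbb{R}$, not just quasimorphisms, which is the same observation underlying your closing remark.
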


\begin{remark}
Since $H_w = - H_{w^{-1}}$, this basis depends on the choice of $F^+$ only up to sign.
\end{remark}

\begin{proof}
We start by showing that the $H_w$ are linearly independent. Consider the (not necessarily finite) sum $f = \sum \alpha_w H_w$, where not all $\alpha_w$ are 0. Let $w_0$ be a word of minimal length such that $\alpha_{w_0} \neq 0$. Now if $|w| \geq |w_0|$ and $w \neq w_0^{\pm 1}$, then $H_w(w_0) = 0$; and if $|w| < |w_0|$, then $\alpha_w = 0$ by the choice of $w_0$. Thus $f(w_0) = \sum\limits_{|w| \geq |w_0|} \alpha_w H_w(w_0) =  \alpha_{w_0} \neq 0$. \\

Now we show that $\{H_w\}_{w \in F^+}$ spans $Q_{alt}(F)$. As a matter of fact, it spans all alternating functions $F \to \mathbb{R}$. Indeed, let $f : F \to \mathbb{R}$ be an alternating function. We construct the sequence $(\alpha_w)_{w \in F^+}$ by induction on the length of words. Define $F_N$ to be the subset of $F^+$ of elements of length at most $N$. Define $f_1$ to be the homomorphism that coincides with $f$ on the basis. Then $f_1 \in \langle H_w : w \in F_1 \rangle$ and $f - f_1$ vanishes on $F_1$. Now suppose we have defined $f_N \in \langle H_w : w \in F_N \rangle$ such that $f - f_N$ vanishes on $F_N$. Given $w \in F^+$ of length $N + 1$, define $\alpha_w := f(w) - f_N(w)$. Define $g_{N + 1} := \sum_{|w| = N + 1} \alpha_w H_w$, and $f_{N + 1} = f_N + g_{N + 1}$. Then a similar argument as the one for linear independence shows that $f - f_{N + 1}$ vanishes on $F_{N + 1}$. Also, $f_{N + 1}$ is a finite sum with coefficients supported on $F_{N + 1}$ and it has the same coefficients as $f_N$ on $F_N$. Therefore $f_N \to f$ pointwise and $f = \sum \alpha_w H_w$. 
\end{proof}

\begin{remark}
There is a mistake in the proof of linear independence in Grigorchuk's paper, and moreover only finite sums are considered. This can be corrected by choosing a word of minimal length instead of a word of maximal length, just as we did in the proof of Theorem \ref{gdt2}.
\end{remark}

It emerges from the proof that we can do the exact same thing with small Brooks quasimorphisms, although the coefficients will probably be different. Actually we can generalize this even more, since that the only thing we needed to get both linear independence and generation is that if $|w| \geq |w_0|$, then $H_w(w_0) = \pm \mathbbm{1} \{ w = w_0^{\pm 1} \}$. So we get:

\begin{theorem}
\label{gdt3}

For all $w \in F^+$, choose $\varphi_w \in Q_{alt}(F)$ such that if $|w| \geq |w_0|$, then $\varphi_w(w_0) = \pm \mathbbm{1} \{ w = w_0^{\pm 1} \}$. Then $\{ \varphi_w \}_{w \in F^+}$ is a basis of $Q_{alt}(F)$. In particular, $\{ h_w \}_{w \in F^+}$ is a basis of $Q_{alt}(F)$.
\end{theorem}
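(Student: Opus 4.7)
The strategy is to mirror the proof of Theorem \ref{gdt2} essentially verbatim, since the only properties of $H_w$ that were used there — vanishing on all words strictly shorter than $|w|$, and evaluating to $\pm 1$ at a word of the same length exactly when that word is $w^{\pm 1}$ — are precisely what the hypothesis imposes on the abstract $\varphi_w$. A preliminary observation is that the hypothesis forces $\varphi_w(g) = 0$ whenever $|w| > |g|$ (since then $w \neq g^{\pm 1}$), so for any fixed $g$ and any coefficient map $\alpha : F^+ \to \mathbb{R}$, the formal sum $\sum_{w \in F^+} \alpha_w \varphi_w(g)$ has only finitely many nonzero terms and thus defines an alternating function on $F$ in the pointwise topology.

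For linear independence, I would suppose $\sum \alpha_w \varphi_w \equiv 0$ with not all $\alpha_w$ equal to zero, pick $w_0 \in F^+$ of minimal length with $\alpha_{w_0} \neq 0$, and evaluate at $w_0$. Terms with $|w| < |w_0|$ vanish by minimality of $w_0$, and for $|w| \geq |w_0|$ the hypothesis gives $\varphi_w(w_0) = \pm \mathbbm{1}\{w = w_0^{\pm 1}\}$; since $w, w_0$ both lie in $F^+$ and $F^+$ contains exactly one of each inverse pair, the condition $w = w_0^{\pm 1}$ reduces to $w = w_0$. Thus the sum evaluates to $\pm \alpha_{w_0} \neq 0$, a contradiction.

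For spanning, I would proceed by induction on word length exactly as in the proof of Theorem \ref{gdt2}. Setting $F^+_N := \{w \in F^+ : |w| \leq N\}$, suppose $f_N \in \langle \varphi_w : w \in F^+_N \rangle$ has been constructed with $(f - f_N)|_{F^+_N} \equiv 0$. For each $w \in F^+$ with $|w| = N+1$, define $\alpha_w$ so that $\alpha_w \varphi_w(w) = (f - f_N)(w)$ — which is possible since $\varphi_w(w) = \pm 1$ — and let $f_{N+1} := f_N + \sum_{|w| = N+1} \alpha_w \varphi_w$. The hypothesis on $\varphi_w$ ensures that adding these new terms does not disturb the values of $f_N$ on shorter words or the values at other length-$(N+1)$ elements of $F^+$, so $(f - f_{N+1})|_{F^+_{N+1}} \equiv 0$. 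Since $f$ is alternating and the $\varphi_w$ are alternating, agreement on $F^+$ extends to agreement on all of $F$, and passing to the limit gives $\sum \alpha_w \varphi_w = f$ pointwise.

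There is no real obstacle here beyond faithfully recycling Grigorchuk's argument; the only mild bookkeeping is tracking the $\pm$ signs in the hypothesis when solving for $\alpha_w$. The ``in particular'' statement follows by observing that for $w, w_0 \in F^+$ with $|w_0| \leq |w|$ one has $h_w(w_0) = c_w(w_0) - c_{w^{-1}}(w_0) = \mathbbm{1}\{w = w_0\} - \mathbbm{1}\{w = w_0^{-1}\}$, and the second indicator vanishes because $w_0 \in F^+$ forces $w_0^{-1} \notin F^+$.
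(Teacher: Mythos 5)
Your proposal is correct and follows essentially the same route as the paper, which proves Theorem \ref{gdt3} by observing that the proof of Theorem \ref{gdt2} only uses the property $\varphi_w(w_0) = \pm \mathbbm{1}\{w = w_0^{\pm 1}\}$ for $|w| \geq |w_0|$ and then repeating the minimal-length argument for independence and the induction on word length for spanning. The only cosmetic remark is that for the "in particular" statement the restriction $w_0 \in F^+$ is unnecessary: $h_w(w_0) = \mathbbm{1}\{w = w_0\} - \mathbbm{1}\{w = w_0^{-1}\}$ already has the required form $\pm \mathbbm{1}\{w = w_0^{\pm 1}\}$ for every $w_0$.
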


Crucially, not every sum will give rise to a quasimorphism. Our proof of Theorem \ref{gdt2} shows that $\{ H_w \}_{w \in F^+}$ generates the space of all alternating functions $f : F \to \mathbb{R}$. Similarly, Grigorchuk's proof of Theorem \ref{gdt} shows that $\{ \overline{h_w} \}_{w \in \mathcal{F}^+}$ generates the space of all real-valued functions on $F$ that are conjugacy invariant, vanish on the basis, and satisfy $f(g^n) = n f(g)$ for all $g \in F$ and $n \in \mathbb{Z}$. These are not all quasimorphisms.

\begin{example}
Let $\mathcal{F}$ be a fundamental set, and suppose that it contains $ab$. Define $f : \mathcal{F} \to \mathbb{R}$ sending $ab$ to 1, $b^{-1}a^{-1}$ to $-1$, and everything else to 0. Extend $f$ to a map on all of $F$ that is conjugacy invariant, vanishes on the basis and such that $f(g^n) = nf(g)$ for all $g \in F$ and all $n \in \mathbb{Z}$. The resulting $f$ is, however, not a quasimorphism. For instance, $f((ab)^n) + f(a) - f((ab)^na) = f((ab)^n) = n$, which can be arbitrarily large.
\end{example}

The next two subsections will be devoted to trying to understand when a given coefficient map $\alpha$ gives rise to a quasimorphism.

\subsubsection{Continuity properties of the cup product}

We have already seen that the cup product is continuous with respect to the defect topology. However, a priori it is not continuous with respect to the pointwise topology. We cannot prove that it is definitely not continuous: in fact, such a proof would imply that $H^4_b(F) \neq 0$, which is a major open question.

But in this thesis we are merely interested in trivial cup products, and in that case we can say something.

\begin{lemma}
Let $G$ be a countable group, $f_n \in C^k_b(G)$. Suppose that $||f_n||_\infty \leq K$ for all $n \geq 1$. Then $(f_n)_{n \geq 1}$ admits a subsequence that converges pointwise to some $f \in C^k_b(G)$ with $||f||_\infty \leq K$.
\end{lemma}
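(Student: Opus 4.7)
The plan is to use a standard Cantor diagonal argument, exploiting countability of $G$ together with the fact that the sequence $(f_n)$ is uniformly bounded in $\ell^\infty$.

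First I would observe that $C^k_b(G)$, in the bar resolution, is the space of bounded real-valued functions on $G^k$ (with the convention $G^0 = \{*\}$). Since $G$ is countable, so is $G^k$, and we may fix an enumeration $G^k = \{x_1, x_2, \ldots\}$. For each fixed $i$, the real sequence $(f_n(x_i))_{n \geq 1}$ lies in the compact interval $[-K, K]$, so by Bolzano--Weierstrass it admits a convergent subsequence.

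Next I would run the diagonal construction: extract a subsequence $(f_n^{(1)})_{n \geq 1}$ of $(f_n)$ such that $f_n^{(1)}(x_1)$ converges; then extract a subsequence $(f_n^{(2)})$ of $(f_n^{(1)})$ such that $f_n^{(2)}(x_2)$ converges; and so on. The diagonal sequence $(f_n^{(n)})_{n \geq 1}$ is, from index $i$ onward, a subsequence of $(f_n^{(i)})$, so $f_n^{(n)}(x_i)$ converges for every $i$. Define
\[
f(x_i) := \lim_{n \to \infty} f_n^{(n)}(x_i).
\]
Then $f_n^{(n)} \to f$ pointwise on $G^k$.

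Finally, for the norm bound, for every $x \in G^k$ we have $|f_n^{(n)}(x)| \leq \|f_n^{(n)}\|_\infty \leq K$, and taking limits gives $|f(x)| \leq K$, so $\|f\|_\infty \leq K$ and hence $f \in C^k_b(G)$. The only mildly delicate point is the bookkeeping of the diagonal subsequence; there is no real obstacle, since countability of $G^k$ is what makes the argument elementary (without it, one would need to invoke Tychonoff on $[-K,K]^{G^k}$ and argue via nets rather than subsequences).
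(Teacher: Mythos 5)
Your proof is correct. It uses a direct Cantor diagonal argument, whereas the paper appeals to the Banach--Alaoglu theorem: it views $C^k_b(G)$ as the dual of $\ell^1(G^k)$, notes that closed balls are weak-$*$ compact, and then upgrades compactness to sequential compactness. Your route is more elementary and, in this instance, actually cleaner. The paper's argument as written contains a small slip: it asserts that $C^k_b(G)$, being the dual of a separable space, is itself separable. That implication is false in general (for instance $\ell^\infty = (\ell^1)^*$ is not separable). What is true, and what the paper evidently intends, is that the weak-$*$ topology restricted to any bounded subset of the dual of a separable Banach space is metrizable, which is what legitimately converts compactness into sequential compactness. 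Your diagonalization avoids this subtlety entirely and is, in effect, the hands-on proof of that metrizability/sequential-compactness fact in the special case at hand. Your closing remark about Tychonoff and nets for uncountable $G$ is also accurate: countability of $G^k$ is precisely what permits the sequential formulation used here.
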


\begin{proof}
Recall that $C^k_b(G)$ with its supremum norm $|| \cdot ||_\infty$ can be seen as the dual of $C_k(G)$ with its $\ell^1$ norm. Now $C_k(G)$ embeds densely into $\ell^1(G^k)$, which is a separable Banach space since $G$ is countable. Therefore $C^k_b(G)$ may also be seen as the dual of $\ell^1(G^k)$. By the Banach-Alaoglu theorem, closed balls in $C^k_b(G)$ are compact in the weak-$*$ topology. Since $C^k_b(G)$ is the dual of a separable space, so it is itself separable, compactness implies sequential compactness. Therefore, up to subsequence, $f_n$ converges in the weak-$*$ topology to some $f$ with $||f||_\infty \leq K$. Finally, weak-$*$ convergence for $C^k_b(G) = C_k(G)^*$ means pointwise convergence for $C^k_b(G) = \{ f : G^k \to \mathbb{R} : ||f||_\infty < \infty \}$.
\end{proof}

\begin{corollary}
Let $f_n \in B^k_b(G)$, and suppose that $f_n \to f \in C^k_b(G)$ pointwise. Suppose that there exists a $K > 0$ and $\pi_n \in C^{k-1}_b(G)$ such that $\delta^{k-1} \pi_n = f_n$ and $||\pi_n||_\infty \leq K$. Then $f \in B^k_b(G)$.
\end{corollary}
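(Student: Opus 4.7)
The plan is to apply the preceding lemma to the bounded sequence of primitives $(\pi_n)_{n \geq 1} \subset C^{k-1}_b(G)$ rather than to $(f_n)_{n \geq 1}$ itself. Since by hypothesis $\|\pi_n\|_\infty \leq K$ uniformly in $n$, the previous lemma yields a subsequence $(\pi_{n_j})_{j \geq 1}$ converging pointwise to some $\pi \in C^{k-1}_b(G)$ with $\|\pi\|_\infty \leq K$. The candidate primitive of $f$ is then $\pi$.

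The next step is to observe that the coboundary operator $\delta^{k-1}$ is continuous with respect to pointwise convergence. This is immediate from its definition: for each fixed $(g_1, \ldots, g_k) \in G^k$, the value $\delta^{k-1}\pi_{n_j}(g_1, \ldots, g_k)$ is a fixed finite linear combination of values of $\pi_{n_j}$ at finitely many points of $G^{k-1}$, and each of these converges to the corresponding value of $\pi$. Thus $\delta^{k-1}\pi_{n_j} \to \delta^{k-1}\pi$ pointwise.

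On the other hand, $\delta^{k-1}\pi_{n_j} = f_{n_j}$, and since pointwise convergence passes to subsequences, $f_{n_j} \to f$ pointwise. Pointwise limits being unique, we conclude $\delta^{k-1}\pi = f$, so $f \in B^k_b(G)$ with bounded primitive $\pi$.

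There is no real obstacle here: the only subtlety is to remember to extract a subsequence for the $\pi_n$, which a priori need not converge even though the $f_n$ do, and then to use that $f_{n_j}$ still converges pointwise to $f$ along this subsequence. The essential content is that the previous lemma, which relies on Banach--Alaoglu for the separable predual $\ell^1(G^{k-1})$, provides weak-$*$ (i.e.\ pointwise) compactness of bounded sets in $C^{k-1}_b(G)$, and $\delta^{k-1}$ is trivially pointwise-continuous.
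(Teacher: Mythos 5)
Your proof is correct and follows the same route as the paper: apply the previous lemma to the uniformly bounded primitives $\pi_n$ to extract a pointwise-convergent subsequence with bounded limit $\pi$, then pass to the limit in $\delta^{k-1}\pi_{n_j} = f_{n_j}$ using pointwise continuity of $\delta^{k-1}$. Your write-up merely makes explicit the subsequence extraction and the pointwise continuity of the coboundary, which the paper leaves implicit.
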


\begin{proof}
By the previous lemma, up to subsequence $\pi_n \to \pi$ pointwise, where $||\pi||_\infty \leq K$. Then $\delta^{k-1} \pi = f$ by pointwise convergence, and so $f$ has a bounded primitive.
\end{proof}

We record this corollary explicitly in the special case we are interested in:

\begin{corollary}
Let $\varphi, \psi \in Q(G)$. Let $(\varphi_n)_{n \geq 1}, (\psi_n)_{n \geq 1} \subset Q(G)$ be such that $\varphi_n \to \varphi, \psi_n \to \psi$ pointwise and $[\delta^1 \varphi_n] \smile [\delta^1 \psi_n] = 0 \in H^4_b(G)$ for all $n \geq 1$. Assume moreover that these cup products admit a sequence of uniformly bounded primitives. Then $[\delta^1 \varphi] \smile [\delta^1 \psi] = 0$.
\end{corollary}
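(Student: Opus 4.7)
The plan is to reduce this corollary to a direct application of the previous (uniformly bounded primitives) corollary, with the only real content being to verify that pointwise convergence of quasimorphisms propagates through the coboundary and cup product operations to give pointwise convergence of the relevant $4$-cocycles.

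First, I would unpack the pointwise hypothesis. Since $\varphi_n \to \varphi$ pointwise on $G$, for any fixed $(g,h) \in G^2$ we have
\[
\delta^1 \varphi_n(g,h) = \varphi_n(g) + \varphi_n(h) - \varphi_n(gh) \xrightarrow{n \to \infty} \varphi(g) + \varphi(h) - \varphi(gh) = \delta^1 \varphi(g,h),
\]
so $\delta^1 \varphi_n \to \delta^1 \varphi$ pointwise in $C^2_b(G)$, and likewise $\delta^1 \psi_n \to \delta^1 \psi$. Next, from the explicit bar-resolution formula for the cup product,
\[
(\delta^1 \varphi_n \smile \delta^1 \psi_n)(g_1,g_2,g_3,g_4) = \delta^1\varphi_n(g_1,g_2)\,\delta^1\psi_n(g_3,g_4),
\]
I would conclude that $\delta^1 \varphi_n \smile \delta^1 \psi_n \to \delta^1 \varphi \smile \delta^1 \psi$ pointwise on $G^4$, simply as the pointwise product of two pointwise-convergent sequences of real numbers. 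Note that the limit $\delta^1 \varphi \smile \delta^1 \psi$ lies in $C^4_b(G)$ because $\varphi, \psi \in Q(G)$ forces $\delta^1 \varphi, \delta^1 \psi$ to be bounded.

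Now I would invoke the hypothesis: for each $n \geq 1$, the cup product $\delta^1 \varphi_n \smile \delta^1 \psi_n$ is a coboundary in $C^4_b(G)$, and moreover there exist bounded primitives $\pi_n \in C^3_b(G)$ with $\delta^3 \pi_n = \delta^1 \varphi_n \smile \delta^1 \psi_n$ and $\|\pi_n\|_\infty \leq K$ for some uniform constant $K$. Together with the pointwise convergence established above, this puts us exactly in the situation of the preceding corollary, applied in degree $k = 4$. That corollary then yields $\delta^1 \varphi \smile \delta^1 \psi \in B^4_b(G)$, which is precisely the statement that $[\delta^1 \varphi] \smile [\delta^1 \psi] = 0 \in H^4_b(G)$.

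There is no real obstacle here, as the preceding lemma (the Banach--Alaoglu extraction of a pointwise limit from a uniformly bounded sequence in $C^3_b(G)$) does all the heavy lifting. The only subtle point, worth emphasizing in the write-up, is that the uniform boundedness of the primitives is essential: pointwise convergence alone of a sequence of coboundaries is not enough to conclude that the limit is itself a coboundary, precisely because the topology of pointwise convergence is strictly coarser than the one induced by the Gromov seminorm. This is exactly why the statement is phrased with the extra hypothesis on bounded primitives, and why, later in the paper, the estimates on norms of primitives produced for specific families of Brooks quasimorphisms are what make density-type arguments actually useful.
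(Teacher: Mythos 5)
Your proof is correct and follows exactly the reduction the paper intends: transport pointwise convergence through $\delta^1$ and through the bilinear cup product on cochains, then apply the preceding corollary with $k=4$, using the hypothesis of uniformly bounded primitives $\pi_n$. The paper gives no written proof of this corollary (it is presented as an immediate specialization of the preceding one), so your write-up is simply a careful unwinding of the same argument, and the side remark at the end — that uniform boundedness of the primitives is what makes the pointwise topology useful here — accurately identifies why the extra hypothesis is there.
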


\subsection{Calegari quasimorphisms}
\label{subsec_cal}

From now on, $\alpha$ will always denote a map $F \to \mathbb{R}$ that is alternating and supported on $\mathcal{N}$. We denote
$$\varphi_\alpha := \sum\limits_{w \in \mathcal{N}^+} \alpha_w h_w,$$
which is a well-defined alternating function from $F$ to $\mathbb{R}$. Notice that $\alpha_{w^{-1}} h_{w^{-1}} = \alpha_w h_w$, so this definition is independent of the choice of $\mathcal{N}^+$. \\

We will focus our attention on a family of infinite sums that give quasimorphisms:

\begin{definition}
Define
$$\kappa_\alpha(1) := \sup\limits \left( \sum\limits_{w \in C} |\alpha_w| \right),$$
where the supremum runs over all compatible families $C$. We say that $\varphi_\alpha$ is a \textit{Calegari quasimorphism} if $\kappa_\alpha(1) < \infty$. We denote by $Cal$ the space of Calegari quasimorphisms.
\end{definition}

Notice that since an element of length at least two forms a compatible family of size 1, $\alpha$ must be bounded in order for $\varphi_\alpha$ to be in $Cal$. \\

The notation $\kappa_\alpha(1)$ will be justified shortly. Recall that if $w \in C$, then $w^{-1} \notin C$. Also notice that the sum over an infinite compatible family is the supremum of the sum over its finite compatible subfamilies, so one can let the supremum run only over all \textit{finite} compatible families to compute $\kappa_\alpha(1)$. \\

As the name suggests, Calegari quasimorphisms are in fact quasimorphisms \cite[Proposition 2.34]{Calegari}:

\begin{proposition}[Calegari]
\label{cal}

Calegari quasimorphisms are quasimorphisms. More precisely, $\tilde{D}(\varphi_\alpha) \leq \kappa_\alpha(1)$, and so $D(\varphi_\alpha) \leq 3 \kappa_\alpha(1)$.
\end{proposition}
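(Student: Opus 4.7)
The plan is to reduce to bounding the reduced defect $\tilde{D}(\varphi_\alpha)$, since this is much simpler to compute on reduced expressions, and then pay a factor of $3$ via Lemma \ref{rd}. First I would observe that $\varphi_\alpha$ is alternating: each $h_w$ is alternating and $\alpha$ is alternating by hypothesis, so the sum is alternating. Thus by Lemma \ref{rd} it suffices to prove $\tilde{D}(\varphi_\alpha) \leq \kappa_\alpha(1)$, from which $D(\varphi_\alpha) \leq 3\kappa_\alpha(1)$ follows.

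Next, fix a reduced expression $g|h$. Since for any fixed $g' \in F$ only finitely many $w$ satisfy $h_w(g') \neq 0$ (the length of $w$ is bounded by $|g'|$), the expansion
$$\delta^1 \varphi_\alpha(g, h) = \sum_{w \in \mathcal{N}^+} \alpha_w \, \delta^1 h_w(g, h)$$
is a finite sum, and Lemma \ref{formula_hw} (valid since every $w \in \mathcal{N}^+$ is non-self-overlapping) gives
$$\delta^1 h_w(g, h) = -\mathbbm{1}\{w \in j(g|h)\} + \mathbbm{1}\{w^{-1} \in j(g|h)\}.$$
Using that $\alpha$ is alternating, the second indicator contributes a copy of $-\alpha_u \mathbbm{1}\{u \in j(g|h)\}$ for each $u \in \mathcal{N}^- := (\mathcal{N}^+)^{-1}$ (via the substitution $u = w^{-1}$), so the two pieces combine into a single sum over all of $\mathcal{N}$:
$$\delta^1 \varphi_\alpha(g, h) = -\sum_{w \in \mathcal{N}} \alpha_w \, \mathbbm{1}\{w \in j(g|h)\}.$$

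Finally, set $C := \{ w \in \mathcal{N} : w \in j(g|h) \}$. By definition every element of $C$ overlaps the juncture of $g|h$, so $C$ is a compatible family in the sense of Definition \ref{comp_def}; note that $w$ and $w^{-1}$ are not simultaneously in $C$ by Lemma \ref{w_ol_w-1}. Taking absolute values therefore gives
$$|\delta^1 \varphi_\alpha(g, h)| \leq \sum_{w \in C} |\alpha_w| \leq \kappa_\alpha(1),$$
by the very definition of $\kappa_\alpha(1)$ as the supremum of such sums over compatible families. Taking the supremum over all reduced expressions $g|h$ yields $\tilde{D}(\varphi_\alpha) \leq \kappa_\alpha(1)$, and Lemma \ref{rd} then gives $D(\varphi_\alpha) \leq 3\kappa_\alpha(1)$. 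I do not expect substantial obstacles: everything rests on the clean formula of Lemma \ref{formula_hw} for $\delta^1 h_w$ on non-self-overlapping words, and the only delicate point is the alternating bookkeeping needed to pass from a sum over $\mathcal{N}^+$ (with signed indicator differences) to a single sum over $\mathcal{N}$ indexed by a compatible family.
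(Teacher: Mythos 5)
Your proposal is correct and follows essentially the same route as the paper: expand $\delta^1 \varphi_\alpha$ termwise using Lemma \ref{formula_hw}, identify the surviving terms as a compatible family given by the juncture $g|h$, bound by $\kappa_\alpha(1)$, and transfer from $\tilde D$ to $D$ via Lemma \ref{rd}. The only difference is cosmetic — you spell out the alternating bookkeeping that collapses the two indicator terms into a single sum over $\mathcal{N}$, a step the paper carries out more tersely.
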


\begin{proof}
Let $u|v$ be a reduced expression. Using the formula for $\delta^1 h_w$ from Lemma \ref{formula_hw}, and the fact that $\alpha$ is alternating:
$$\delta^1 \varphi_\alpha(u, v) = \sum\limits_{w \in \mathcal{N}^+} \alpha_w \delta^1 h_w (u, v) = -\sum\limits_{w \in j(u|v)} \alpha_w.$$
The termwise application of the map $\delta^1$ is justified by the fact that for any $g \in F$, only finitely terms of the sum satisfy $h_w(g) \neq 0$.

Notice that $\{ w : w \in j(u|v) \}$ is a finite full compatible family, and all finite full compatible families are of this form. The result follows by taking the supremum over all reduced expressions.

The last inequality follows from Lemma \ref{rd}.
\end{proof}

This class of quasimorphisms does not a priori describe all quasimorphisms of the form $\varphi_\alpha$, where $\alpha$ is supported on $\mathcal{N}$. We will see in Subsection \ref{big_cal} how these two classes differ.

The rest of this subsection will be devoted to highlighting some subspaces of $Cal$, and showing how they sit inside each other. This is summarized in the following diagram, where arrows indicate inclusion:

\begin{center}
	\begin{tikzcd}
		& & Cal \\
		& \ell^1_{Ind} \arrow[ru] & & \kappa(c_0) \arrow[lu] \\
		\Sigma_{Ind} \arrow[ru] & & \ell^1_{Br} \arrow[lu] \arrow[ru] & & \kappa(\ell^1) \arrow[lu] \\
		& & & w\ell^1_{Br} \arrow[lu] \arrow[ru] \\
		& & \Sigma_{Br} \arrow[ru] \arrow[lluu]
	\end{tikzcd}
\end{center}

\subsubsection{$\Sigma_{Br}$, $w\ell^1_{Br}$ and $\ell^1_{Br}$}

These are the simplest examples of Calegari quasimorphisms:

\begin{definition}
Define $\Sigma_{Br}$ to be the space of finite sums of Brooks quasimorphisms on non-self-overlapping words, i.e., of those $\varphi_\alpha$ such that $\alpha$ has finite support. Define $\ell^1_{Br}$ to be the space of $\ell^1$ sums of Brooks quasimorphisms on non-self-overlapping words, i.e., of those $\varphi_\alpha$ such that $\alpha$ is an $\ell^1$ sequence. Finally, define $w\ell^1_{Br}$ to be the space of those $\varphi_\alpha$ such that
$$\sum\limits_{w \in \mathcal{N}^+} |w||\alpha_w| < \infty.$$
\end{definition}

\begin{lemma}
$\Sigma_{Br} \subset w \ell^1_{Br} \subset \ell^1_{Br} \subset Cal$.
\end{lemma}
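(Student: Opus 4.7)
The statement is a chain of three inclusions which all follow directly from the definitions of the three relevant summability conditions on $\alpha$. The plan is to treat each inclusion separately.

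For $\Sigma_{Br} \subset w\ell^1_{Br}$: if $\alpha$ has finite support, then $\sum_{w \in \mathcal{N}^+} |w||\alpha_w|$ is a finite sum of finite terms, hence finite. This is immediate and requires nothing beyond the definitions.

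For $w\ell^1_{Br} \subset \ell^1_{Br}$: since every $w \in \mathcal{N}$ has $|w| \geq 1$, the pointwise bound $|\alpha_w| \leq |w||\alpha_w|$ holds on the support of $\alpha$, and summing yields $||\alpha||_1 \leq \sum_{w \in \mathcal{N}^+} |w||\alpha_w| < \infty$.

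The only inclusion requiring slightly more care is $\ell^1_{Br} \subset Cal$, and here the plan is to show directly that $\kappa_\alpha(1) \leq ||\alpha||_1$. I expect the only nontrivial point to be reconciling the asymmetry that $||\alpha||_1$ is summed over a chosen set $\mathcal{N}^+$ of positive representatives, whereas a compatible family $C$ is a subset of $\mathcal{N}$ with no a priori fixed orientation. The resolution is Lemma \ref{w_ol_w-1}, which prevents $C$ from containing both $w$ and $w^{-1}$; combined with the fact that $\alpha$ is alternating, so $|\alpha_w| = |\alpha_{w^{-1}}|$, this lets one inject $C$ into $\mathcal{N}^+$ via $w \mapsto w$ or $w^{-1}$ (whichever lies in $\mathcal{N}^+$) while preserving absolute values. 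Hence $\sum_{w \in C} |\alpha_w| \leq ||\alpha||_1$, and taking the supremum over all compatible families yields $\kappa_\alpha(1) \leq ||\alpha||_1 < \infty$, so $\varphi_\alpha \in Cal$ by Proposition \ref{cal}.

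Overall I do not expect to meet any genuine obstacle here: the lemma is a bookkeeping exercise linking three increasingly generous summability conditions on $\alpha$, and the only mild subtlety is the orientation issue in the last step, dispatched by Lemma \ref{w_ol_w-1} and the alternating property.
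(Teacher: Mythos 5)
Your proposal is correct and follows essentially the same route as the paper: the first two inclusions are immediate from the definitions, and the third is proved by bounding $\kappa_\alpha(1)$ by the $\ell^1$-norm of $\alpha$. The only cosmetic difference is that you track the $\mathcal{N}^+$ orientation carefully to obtain the sharper bound $\kappa_\alpha(1) \leq \sum_{w \in \mathcal{N}^+}|\alpha_w|$, whereas the paper simply bounds by $\sum_{w \in \mathcal{N}}|\alpha_w|$ and sidesteps the issue; both are valid.
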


\begin{proof}
The first two inclusions are clear. For the last one, given any compatible family $C$:
$$\sum\limits_{w \in C} |\alpha_w| \leq \sum\limits_{w \in \mathcal{N}} |\alpha_w|,$$
and the latter is bounded independently of $C$ if $\varphi_\alpha \in \ell^1_{Br}$.
\end{proof}

Grigorchuk had already noticed that elements of $\ell^1_{Br}$ are quasimorphisms \cite[Proposition 5.9]{Grigor}. However, these are far from being the only ones, as we will now see.

\subsubsection{$\Sigma_{Ind}$ and $\ell^1_{Ind}$}

In \cite[Proposition 5.10]{Grigor}, Grigorchuk proves that any bounded coefficient map $\alpha$ supported on $I := \{a^n(ab)^nb : n \geq 1 \}$ yields a quasimorphism. Recall from Example \ref{ex_ind} that this is an independent family, and that moreover $I^{\pm 1}$ is also independent. This can be generalized:

\begin{definition}
Define $\Sigma_{Ind}$ to be the space of those $\varphi_\alpha$ such that $\alpha = \alpha_1 + \cdots + \alpha_n$ and $\alpha_i$ is bounded, alternating, and supported on a symmetric independent family $I_i$. Define $\ell^1_{Ind}$ to be the space of those $\varphi_\alpha$ such that
$$\alpha = \sum\limits_{i \geq 1} \alpha_i; \,\,\,\,\,\,\,\,\,\, \sum\limits_{i \geq 1} ||\alpha_i||_\infty < \infty,$$
where $\alpha_i$ is as before. For $\varphi_\alpha \in \ell^1_{Ind}$, the infimum of the value $\sum ||\alpha_i||_\infty$ as in the definition is denoted by $SInd_{\alpha}$.
\end{definition}

\begin{lemma}
$\Sigma_{Ind} \subset \ell^1_{Ind} \subset Cal$. Moreover $\Sigma_{Br} \subset \Sigma_{Ind}$ and $\ell^1_{Br} \subset \ell^1_{Ind}$.
\end{lemma}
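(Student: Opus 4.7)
The inclusions are essentially bookkeeping built on the observation that a compatible family and a (symmetric) independent family can share at most one word. I would organize the argument in four short steps.

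\textbf{Step 1: $\Sigma_{Br} \subset \Sigma_{Ind}$.} Given $\varphi_\alpha \in \Sigma_{Br}$, the support of $\alpha$ is a finite symmetric subset of $\mathcal{N}$ (symmetric because $\alpha$ is alternating). For each pair $\{w, w^{-1}\}$ in the support, the two-element set $\{w, w^{-1}\}$ is a symmetric independent family: $w$ is non-self-overlapping by assumption, and $w$ does not overlap $w^{-1}$ by Lemma~\ref{w_ol_w-1}. Writing $\alpha$ as the finite sum of the restrictions of $\alpha$ to these pairs exhibits $\varphi_\alpha$ as an element of $\Sigma_{Ind}$.

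\textbf{Step 2: $\ell^1_{Br} \subset \ell^1_{Ind}$.} Starting from the same decomposition as in Step 1, now indexed over all of $\mathcal{N}^+ \cap \mathrm{supp}(\alpha)$, each piece $\alpha^{(w)}$ is supported on the symmetric independent family $\{w, w^{-1}\}$ and satisfies $\|\alpha^{(w)}\|_\infty = |\alpha_w|$. Hence $\sum_w \|\alpha^{(w)}\|_\infty = \sum_w |\alpha_w| < \infty$, so $\varphi_\alpha \in \ell^1_{Ind}$ with $SInd_\alpha \leq \|\alpha\|_1$.

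\textbf{Step 3: $\Sigma_{Ind} \subset \ell^1_{Ind}$.} A finite sum of the required form is an infinite sum with all but finitely many $\alpha_i$ set to zero; the series $\sum_i \|\alpha_i\|_\infty$ is then just a finite sum and so certainly converges.

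\textbf{Step 4: $\ell^1_{Ind} \subset Cal$.} This is the only substantive inclusion. Write $\alpha = \sum_{i \geq 1} \alpha_i$ with $\alpha_i$ supported on a symmetric independent family $I_i$ and $\sum_i \|\alpha_i\|_\infty < \infty$. Fix any compatible family $C$. The key combinatorial observation is that $|C \cap I_i| \leq 1$ for every $i$: two distinct elements of $C$ overlap (by the lemma following Definition~\ref{comp_def}), while two distinct elements of the independent family $I_i$ do not. Therefore
\[
\sum_{w \in C} |\alpha_w| \;\leq\; \sum_{i \geq 1} \sum_{w \in C} |\alpha_{i,w}| \;\leq\; \sum_{i \geq 1} \|\alpha_i\|_\infty,
\]
where Fubini is legitimate because all terms are nonnegative. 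Taking the supremum over $C$ and the infimum over decompositions yields $\kappa_\alpha(1) \leq SInd_\alpha < \infty$, so $\varphi_\alpha \in Cal$.

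The main (mild) obstacle is recognizing the compatible-versus-independent dichotomy in Step 4; once that is in place, everything else is routine. The estimate $\kappa_\alpha(1) \leq SInd_\alpha$ is a bonus that will likely be useful later when bounding defects of elements of $\ell^1_{Ind}$.
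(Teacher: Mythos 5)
Your proposal is correct and follows essentially the same route as the paper: the only substantive point is $\ell^1_{Ind} \subset Cal$, which you prove exactly as the paper does, by noting that a compatible family meets each symmetric independent family $I_i$ in at most one word, giving $\kappa_\alpha(1) \leq \sum_i \|\alpha_i\|_\infty$. The remaining inclusions are the same routine bookkeeping the paper dismisses as clear (your decomposition into pairs $\{w, w^{-1}\}$ is the standard way to see them), and the extra estimate $\kappa_\alpha(1) \leq SInd_\alpha$ is consistent with how these quantities are used later.
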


\begin{proof}
The only inclusion that is not clear is $\ell^1_{Ind} \subset Cal$. Let $\varphi_\alpha \in \ell^1_{Ind}$ where $\alpha = \sum \alpha_i$, and $\alpha_i$ is supported on the symmetric independent family $I_i$, as in the definition. Any two distinct elements of $I_i$ cannot overlap, so they cannot be in the same compatible family. Therefore any compatible family contains at most one element of each $I_i$. Thus, given any compatible family $C$:
$$\sum\limits_{w \in C} |\alpha_w| \leq \sum\limits_{i \geq 1} ||\alpha_i||_\infty,$$
and the latter is bounded independently of $C$ if $\varphi_\alpha \in \ell^1_{Ind}$.
\end{proof}

In Appendix \ref{og}, we will study $\Sigma_{Ind}$ with a graph-theoretic approach. In particular, we will provide the following recipe for constructing larger and larger elements of $\Sigma_{Ind}$ (Example \ref{ex_SInd}):

\begin{example}
Let $V$ be a finite disjoint union of symmetric independent families, and let $N \geq 1$. Let $W$ be the set of non-self-overlapping words that can be written as reduced products $xwy$, where $|x|, |y| \leq N$ and $w \in V$. Then for any bounded symmetric coefficient map $\alpha$ supported on $W$, $\varphi_\alpha \in \Sigma_{Ind}$.
\end{example}

We will also show that $\ell^1_{Ind}$, which at first sight is the largest family we have defined, is not the whole of $Cal$ (Example \ref{ex_l1Ind}):

\begin{example}
Let $F = F_3 = \langle a, b, c \rangle$, let $V = \{ (ab^nc)(ab^mc) : n > m \}$. Then $V \subset \mathcal{N}$; in fact, any word in $V$ is Lyndon with respect to any total ordering such that $a \prec b \prec c$. Let $\alpha$ be any bounded symmetric coefficient map such that $supp(\alpha) = V^{\pm 1}$. Then $\varphi_\alpha \in Cal \, \backslash \, \Sigma_{Ind}$. Suppose moreover that $\inf_{w \in supp(\alpha)} |\alpha_w| > 0$. Then $\varphi_\alpha \in Cal \, \backslash \, \ell^1_{Ind}$.
\end{example}

\subsubsection{The map $\kappa_{\alpha}$}

Given $\alpha$ as before, we can define more generally a map $\kappa_\alpha$ that takes into account the length of the words in a compatible family as well:
\begin{definition}
For $n \geq 0$, define
$$\kappa_\alpha(n) := \sup \left( \sum_{\substack{w \in C \\ |w| > n}} |\alpha_w| \right).$$
\end{definition}

This agrees with our definition of $\kappa_\alpha(1)$, since any word in a compatible family has length at least 2. Also notice that $\kappa_\alpha(0) = \kappa_\alpha(1)$.

Clearly $(\kappa_\alpha(n))_{n \geq 0}$ is a non-increasing sequence. We may formally allow this to take infinite values, but it would not tell us anything interesting:

\begin{lemma}
\label{kappa_fin}

Suppose that there exists some $N$ such that $\kappa_\alpha(N) < \infty$. Then $\kappa_\alpha(1) < \infty$, so $\kappa_\alpha(n)< \infty$ for all $n \geq 0$.
\end{lemma}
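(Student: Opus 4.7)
The plan is to show that $\kappa_\alpha(1)$ differs from $\kappa_\alpha(N)$ by at most a finite correction coming from the ``short'' words in each compatible family, and to bound this correction uniformly in $C$. The crux is that there are only finitely many words of each bounded length in $F$ (since $F$ has finite rank), and each compatible family contains only finitely many short words.

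First, I would note that since $F$ has finite rank, the set $\{w \in F : 2 \leq |w| \leq N\}$ is finite. Since $\alpha : F \to \mathbb{R}$ takes real values, it is automatically bounded on this finite set, so we may set $M := \max\{|\alpha_w| : 2 \leq |w| \leq N\} < \infty$.

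Next, I would estimate, for an arbitrary compatible family $C$ described by a reduced expression $u|v$ (with $u$ possibly left-infinite and $v$ possibly right-infinite), the number of $w \in C$ with $|w| \leq N$. Any such $w$ has a unique decomposition $w = u'v'$ where $u'$ is a non-empty suffix of $u$ and $v'$ a non-empty prefix of $v$. Writing $k = |u'| \geq 1$ and $\ell = |v'| \geq 1$ with $k + \ell = |w| \leq N$, for each admissible pair $(k, \ell)$ there is at most one corresponding $w \in C$. Hence the number of $w \in C$ with $|w| \leq N$ is at most
\[
\sum_{k=1}^{N-1} (N-k) \; = \; \binom{N}{2}.
\]

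Finally, I would split the sum defining $\kappa_\alpha(1)$ over any compatible family $C$ as
\[
\sum_{w \in C} |\alpha_w| \; = \; \sum_{\substack{w \in C \\ |w| > N}} |\alpha_w| \; + \; \sum_{\substack{w \in C \\ |w| \leq N}} |\alpha_w| \; \leq \; \kappa_\alpha(N) + \binom{N}{2} M,
\]
where the right-hand side is finite and independent of $C$. Taking the supremum over all compatible families yields $\kappa_\alpha(1) \leq \kappa_\alpha(N) + \binom{N}{2} M < \infty$, and then monotonicity of $n \mapsto \kappa_\alpha(n)$ gives the finiteness for all $n \geq 0$. There is no serious obstacle here; the only minor point to verify carefully is the uniform counting bound on short words in a compatible family, which uses that each prefix/suffix length pair determines at most one word.
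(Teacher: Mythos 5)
Your proof is correct and follows essentially the same route as the paper's: split $\sum_{w \in C} |\alpha_w|$ into the contributions of short and long words, bound the long-word part by $\kappa_\alpha(N)$, and control the short-word part using the finiteness of the ball of radius $N$ in $F$. The only difference is that the paper bounds the short-word term crudely by $\sum_{|w| \leq N} |\alpha_w|$ (the sum over the whole ball), whereas you additionally observe that a compatible family described by $u|v$ contains at most $\binom{N}{2}$ words of length $\leq N$ (one per admissible pair of prefix/suffix lengths), which gives the sharper constant $\binom{N}{2} M$; both bounds ultimately rely on the same finite-rank input, so this is a refinement rather than a different approach. One small nit: your phrase ``has a unique decomposition $w = u'v'$'' is not quite accurate (a word can straddle the juncture in more than one way, e.g.\ if $u$ ends and $v$ begins with a common block), but this does not affect your count, since what you actually use is that each pair $(k, \ell)$ determines at most one word, giving an injection from $\{w \in C : |w| \leq N\}$ into the set of such pairs.
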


\begin{proof}
Let $C$ be a compatible family.
$$\sum\limits_{w \in C} |\alpha_w| \leq \sum\limits_{|w| \leq N} |\alpha_w| + \sum_{\substack{w \in C \\ |w| > N}} |\alpha_w| \leq \sum\limits_{|w| \leq N} |\alpha_w| + \kappa_\alpha(N).$$
Since the ball of radius $N$ is finite, this quantity is finite and bounded independently of $C$. The last claim follows from the fact that $\kappa_\alpha(n)$ is non-increasing, and $\kappa_\alpha(0) = \kappa_\alpha(1)$.
\end{proof}

\begin{definition}
Define $\kappa(c_0)$ to be the space of those $\varphi_\alpha$ such that $\kappa_\alpha(n) \xrightarrow{n \to \infty} 0$. Define $\kappa(\ell^1)$ to be the space of those $\varphi_\alpha$ such that $(\kappa_\alpha(n))_{n \geq 0}$ is an $\ell^1$ sequence, with $\sum_{n \geq 0} \kappa_\alpha(n) =: S \kappa_\alpha$.
\end{definition}

\begin{lemma}
$\kappa(\ell^1) \subset \kappa(c_0) \subset Cal$. Moreover $\ell^1_{Br} \subset \kappa(c_0)$ and $w \ell^1_{Br} \subset \kappa(\ell^1)$.
\end{lemma}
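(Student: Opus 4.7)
The plan is to prove the four inclusions in turn, each reducing quickly to a tail-of-series estimate together with the combinatorial fact (Lemma \ref{w_ol_w-1}) that no compatible family contains both $w$ and $w^{-1}$. The sequence $(\kappa_\alpha(n))_{n \geq 0}$ is non-increasing by definition, which will be used implicitly throughout.

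The two abstract inclusions are immediate. For $\kappa(\ell^1) \subset \kappa(c_0)$: a summable sequence of non-negative real numbers has terms tending to zero, which is precisely the condition $\kappa_\alpha(n) \to 0$ defining $\kappa(c_0)$. For $\kappa(c_0) \subset Cal$: if $\kappa_\alpha(n) \to 0$ then in particular $\kappa_\alpha(N) < \infty$ for all large enough $N$, and Lemma \ref{kappa_fin} then yields $\kappa_\alpha(1) < \infty$, i.e., $\varphi_\alpha \in Cal$.

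The two concrete inclusions both rest on the same upper bound for $\kappa_\alpha(n)$. Fix a compatible family $C$. Since elements of $C$ are pairwise distinct and, by Lemma \ref{w_ol_w-1}, no $w \in C$ satisfies $w^{-1} \in C$, the map sending each $w \in C$ to the unique representative of $\{w, w^{-1}\}$ in $\mathcal{N}^+$ is an injection $C \hookrightarrow \mathcal{N}^+$. Combined with the symmetry $|\alpha_{w^{-1}}| = |\alpha_w|$ (as $\alpha$ is alternating), this gives
$$\sum_{w \in C,\, |w| > n} |\alpha_w| \;\leq\; \sum_{w \in \mathcal{N}^+,\, |w| > n} |\alpha_w|,$$
and taking the supremum over $C$ yields $\kappa_\alpha(n) \leq \sum_{w \in \mathcal{N}^+,\, |w| > n} |\alpha_w|$. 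For $\ell^1_{Br} \subset \kappa(c_0)$ this tail tends to $0$ as $n \to \infty$ because $\alpha \in \ell^1$. For $w\ell^1_{Br} \subset \kappa(\ell^1)$, summing over $n$ and exchanging the order of summation (Tonelli for non-negative terms) gives
$$\sum_{n \geq 0} \kappa_\alpha(n) \;\leq\; \sum_{n \geq 0} \sum_{\substack{w \in \mathcal{N}^+ \\ |w| > n}} |\alpha_w| \;=\; \sum_{w \in \mathcal{N}^+} |w|\, |\alpha_w|,$$
which is finite by the definition of $w\ell^1_{Br}$.

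There is no real obstacle here; the only care needed is the bookkeeping for the reduction of a sum over $C$ to a sum over $\mathcal{N}^+$, which is precisely where Lemma \ref{w_ol_w-1} enters.
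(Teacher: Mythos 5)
Your proof is correct and follows the same route as the paper's: the two abstract inclusions via basic facts about non-negative sequences together with Lemma \ref{kappa_fin}, and the two concrete inclusions via the bound $\kappa_\alpha(n) \leq \sum_{w \in \mathcal{N}^+,\,|w|>n}|\alpha_w|$, with the second one concluded by Tonelli. Your explicit identification of the injection $C \hookrightarrow \mathcal{N}^+$ via Lemma \ref{w_ol_w-1} spells out a step the paper leaves implicit, but the argument is the same.
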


\begin{proof}
The first inclusion is clear, and the second one follows from Lemma \ref{kappa_fin}. For $\ell^1_{Br} \subset \kappa(c_0)$:
$$\sum_{\substack{w \in C \\ |w| > n}} |\alpha_w| \leq \sum\limits_{|w| > n} |\alpha_w|,$$
and the latter is independent of $C$ and goes to 0 as $n \to \infty$ if $\varphi_\alpha \in \ell^1_{Br}$.

For $w \ell^1_{Br} \subset \kappa(\ell^1)$, using also the estimate above:
$$\sum\limits_{n \geq 0} \kappa_\alpha(n) \leq \sum\limits_{n \geq 0} \sum_{\substack{w \in \mathcal{N}^+ \\ |w| > n}} |\alpha_w| = \sum\limits_{w \in \mathcal{N}^+} \sum\limits_{n < |w|} |\alpha_w| = \sum\limits_{w \in \mathcal{N}^+} |w||\alpha_w|,$$
and the latter is bounded independently of $C$ if $\varphi_\alpha \in w \ell^1_{Br}$.
\end{proof}

The reason why we introduced the space $w\ell^1_{Br}$ is because it is an easier to check condition than $\kappa(\ell^1)$ (which will actually appear in the statement of our main theorem): indeed, for $w \ell^1_{Br}$ we only need to estimate one sum, while for $\kappa(\ell^1)$ we need, at least in principle, to estimate infinitely many. Moreover, when generalizing part of our main theorem to big Brooks quasimorphisms on self-overlapping words in Subsection \ref{subsec_so}, it is the definition of space $w \ell^1_{Br}$ which is the easiest to adapt. \\

The importance of $\kappa(c_0)$ lies in the following result:

\begin{lemma}
\label{kc0}

$\Sigma_{Br}$ is defect-dense in $\kappa(c_0)$.
\end{lemma}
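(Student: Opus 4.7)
The plan is to prove defect-density by truncating the coefficient map at larger and larger word lengths. Given $\varphi_\alpha \in \kappa(c_0)$, for each $n \geq 1$ define
\[
\beta^{(n)}_w := \begin{cases} \alpha_w & \text{if } |w| \leq n, \\ 0 & \text{otherwise.} \end{cases}
\]
Since the ball of radius $n$ in $F$ is finite, $\beta^{(n)}$ has finite support, so $\varphi_{\beta^{(n)}} \in \Sigma_{Br}$. The truncated map $\beta^{(n)}$ is still alternating and supported on $\mathcal{N}$, and so is the difference $\alpha - \beta^{(n)}$. By linearity of the formal sum defining $\varphi_\bullet$, we have $\varphi_\alpha - \varphi_{\beta^{(n)}} = \varphi_{\alpha - \beta^{(n)}}$.

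Next I would apply Calegari's estimate (Proposition \ref{cal}) to bound the defect. This gives $D(\varphi_\alpha - \varphi_{\beta^{(n)}}) \leq 3 \kappa_{\alpha - \beta^{(n)}}(1)$, provided the right-hand side is finite. Since $\alpha_w - \beta^{(n)}_w$ vanishes on words of length at most $n$, for any compatible family $C$:
\[
\sum_{w \in C} |\alpha_w - \beta^{(n)}_w| = \sum_{\substack{w \in C \\ |w| > n}} |\alpha_w| \leq \kappa_\alpha(n).
\]
Taking the supremum over compatible families, $\kappa_{\alpha - \beta^{(n)}}(1) \leq \kappa_\alpha(n)$. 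Hence
\[
D(\varphi_\alpha - \varphi_{\beta^{(n)}}) \leq 3\kappa_\alpha(n).
\]

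Finally, since $\varphi_\alpha \in \kappa(c_0)$ we have $\kappa_\alpha(n) \to 0$ as $n \to \infty$, so $D(\varphi_\alpha - \varphi_{\beta^{(n)}}) \to 0$, proving that $\varphi_\alpha$ lies in the defect-closure of $\Sigma_{Br}$. There is no real obstacle here: the key point is simply that the hypothesis $\varphi_\alpha \in \kappa(c_0)$ was tailor-made so that truncation by word length produces a defect-convergent approximation via Calegari's bound. The only thing to double-check is that the truncated sequence really is in $\Sigma_{Br}$ (finite support, which is immediate since only finitely many words have length $\leq n$), and that Calegari's bound transfers correctly to the difference — which it does because the class of coefficient maps to which Calegari's proposition applies is closed under subtraction.
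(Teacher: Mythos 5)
Your proof is correct and follows essentially the same route as the paper: truncate the coefficient map at word length $n$, observe the truncation lands in $\Sigma_{Br}$, and bound the defect of the tail via Calegari's Proposition \ref{cal}. The only cosmetic difference is that you bound $\kappa_{\alpha - \beta^{(n)}}(1) \leq \kappa_\alpha(n)$ whereas the paper notes these two quantities are actually equal; either way the conclusion follows.
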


\begin{proof}
Let $\varphi_\alpha \in \kappa(c_0)$. We will show that $\sum_{|w| \leq N} \alpha_w h_w \xrightarrow{N \to \infty} \varphi_\alpha$ in the defect topology. Indeed, let $\alpha^N = \alpha \cdot \mathbbm{1} \{ |w| > N \}$, which is still bounded, alternating and supported on $\mathcal{N}$. Then $ \left( \varphi_\alpha - \sum_{|w| \leq N} \alpha_w h_w \right) = \varphi_{\alpha^N} \in Cal$. Moreover, $\kappa_{\alpha^N}(1) = \kappa_\alpha(N)$. By Proposition \ref{cal}:
$$D \left( \varphi_\alpha - \sum_{|w| \leq N} \alpha_w h_w \right) = D(\varphi_{\alpha^N}) \leq 3 \kappa_{\alpha^N}(1) = 3 \kappa_\alpha(N) \xrightarrow{N \to \infty} 0.$$
\end{proof}

\subsection{How big is the space $Cal$?}
\label{big_cal}

Our main theorem will concern the space $Cal$, and the subspaces that were defined in the previous subsection. In this subsection we try to understand how big this space is with respect to $H^2_b(F)$. We start by comparing $Cal$ to the space of quasimorphisms of the free group that can be written as a sum of Brooks quasimorphisms supported on non-self-overlapping words. Then, we show a result that suggests that looking at these quasimorphisms should be enough to describe the whole of $H^2_b(F)$.

\subsubsection{The space $\mathcal{N}_{Br}$}

We define $\mathcal{N}_{Br} := \{ \varphi_\alpha \in Q(F) : supp(\alpha) \subset \mathcal{N} \}$ with the same notation as in the previous subsection. By definition $Cal \subset \mathcal{N}_{Br}$, and here we want to see the difference between these two spaces. We start by proving an equivalent characterization of $Cal$ that allows to get rid of the absolute value on the coefficients:

\begin{lemma}
\label{cal_noabs}

$\varphi_\alpha \in Cal$ if and only if
$$\sup \left| \sum\limits_{w \in C} \alpha_w \right| < \infty;$$
where the supremum runs over all compatible families in $\mathcal{N}$.
\end{lemma}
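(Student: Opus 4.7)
The plan is to prove the two implications separately. The forward direction is immediate: if $\varphi_\alpha \in Cal$, then for any compatible family $C$ the triangle inequality gives
\[
\left| \sum_{w \in C} \alpha_w \right| \leq \sum_{w \in C} |\alpha_w| \leq \kappa_\alpha(1) < \infty,
\]
and taking the supremum over $C$ bounds the quantity on the left by $\kappa_\alpha(1)$.

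For the converse, suppose that $M := \sup_C \left| \sum_{w \in C} \alpha_w \right| < \infty$, where the supremum runs over compatible families in $\mathcal{N}$. The key observation I would use is that compatibility is a hereditary property: by Definition \ref{comp_def} a compatible family is determined by a single reduced expression $u|v$, so any subfamily of a compatible family is itself compatible (it uses the same $u|v$). Thus, given a compatible family $C \subset \mathcal{N}$, the partition $C = C^+ \sqcup C^-$ with $C^+ = \{ w \in C : \alpha_w \geq 0 \}$ and $C^- = \{ w \in C : \alpha_w < 0 \}$ consists of two compatible subfamilies.

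The computation is then one line: using that $\alpha_w$ has constant sign on each piece,
\[
\sum_{w \in C} |\alpha_w| = \sum_{w \in C^+} \alpha_w - \sum_{w \in C^-} \alpha_w = \left| \sum_{w \in C^+} \alpha_w \right| + \left| \sum_{w \in C^-} \alpha_w \right| \leq 2M.
\]
Taking the supremum over $C$ yields $\kappa_\alpha(1) \leq 2M < \infty$, so $\varphi_\alpha \in Cal$. Finally, the fact that the supremum in the statement is over compatible families in $\mathcal{N}$ rather than all compatible families is irrelevant, since $\alpha$ is supported on $\mathcal{N}$ by our standing assumption and intersecting with $\mathcal{N}$ preserves compatibility. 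There is no real obstacle here; the argument is just the observation that the absolute value can be dropped up to a factor of $2$ by a sign-splitting trick, exactly as in the analogous characterization of bounded variation measures by signed integrals.
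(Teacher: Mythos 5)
Your proof is correct and follows essentially the same route as the paper: both directions hinge on the triangle inequality one way and the sign-splitting of $C$ into $C^+$ and $C^-$ the other way, using that compatibility passes to subfamilies. The only thing you add is the explicit (and correct) justification that compatibility is hereditary, which the paper takes for granted.
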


\begin{proof}
The "only if" is clear. For the "if", fix $\varphi_\alpha$ and let $T$ be the finite quantity from the statement. Let $C$ be a compatible family. Write $C = C_+ \sqcup C_-$, where $C_+ := \{ w \in C : \alpha_w \geq 0 \}, C_- := \{ w \in C : \alpha_w < 0 \}$. Then $C_+$ and $C_-$ are also compatible families. So
$$\sum\limits_{w \in C} |\alpha_w| = \sum\limits_{w \in C_+} \alpha_w - \sum\limits_{w \in C_-} \alpha_w \leq 2T.$$
Taking the supremum over all compatible families, we obtain $\kappa_\alpha(1) \leq 2T < \infty$, so $\varphi_\alpha \in Cal$.
\end{proof}

For $\mathcal{N}_{Br}$ we have a similar characterization, but we only need to check full compatible families (Definition \ref{comp_def}).

\begin{lemma}
\label{N_Br}

$\varphi_\alpha \in \mathcal{N}_{Br}$ if and only if
$$\sup \left| \sum\limits_{w \in C} \alpha_w \right| < \infty;$$
where the supremum runs over all \underline{full} compatible families in $\mathcal{N}$.
\end{lemma}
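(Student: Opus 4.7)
The plan is to mimic closely the proof of Proposition \ref{cal}, but now reading the formula in both directions and noting that only \emph{full} compatible families appear.

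First, I would recall from the proof of Proposition \ref{cal} that for any reduced expression $u|v$ with $u,v \in F$ finite, the termwise application of $\delta^1$ gives the identity
\[
\delta^1 \varphi_\alpha(u,v) \;=\; -\sum_{w \in j(u|v)} \alpha_w,
\]
where only finitely many terms are nonzero because $|w| \leq |u|+|v|$. The proof of Proposition \ref{cal} also observes that the sets $\{w : w \in j(u|v)\}$ are exactly the finite full compatible families. Since $\varphi_\alpha$ is alternating (each $h_w$ is), Lemma \ref{rd} tells us that $\varphi_\alpha \in Q(F)$ if and only if $\tilde{D}(\varphi_\alpha) < \infty$, and of course $\mathrm{supp}(\alpha) \subset \mathcal{N}$ by hypothesis, so $\varphi_\alpha \in \mathcal{N}_{Br}$ iff $\tilde{D}(\varphi_\alpha) < \infty$.

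For the ``if'' direction, suppose $T := \sup_C \bigl|\sum_{w \in C}\alpha_w\bigr| < \infty$ where $C$ ranges over full compatible families. Any finite full compatible family $C$ is of the form $\{w : w \in j(u|v)\}$, so by the displayed identity
\[
|\delta^1 \varphi_\alpha(u,v)| \;=\; \Bigl|\sum_{w \in j(u|v)} \alpha_w\Bigr| \;\leq\; T.
\]
Taking the supremum over all reduced expressions, $\tilde{D}(\varphi_\alpha) \leq T < \infty$, and Lemma \ref{rd} gives $\varphi_\alpha \in \mathcal{N}_{Br}$.

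For the ``only if'' direction, suppose $\varphi_\alpha \in \mathcal{N}_{Br}$, so $D := \tilde{D}(\varphi_\alpha) < \infty$ by Lemma \ref{rd}. For any finite full compatible family $C = \{w : w \in j(u|v)\}$, the same identity gives $|\sum_{w \in C} \alpha_w| = |\delta^1 \varphi_\alpha(u,v)| \leq D$. For an \emph{infinite} full compatible family $C$ described by a (possibly one- or two-sided infinite) reduced expression $u|v$, I would truncate: let $u_n, v_n$ be the length-$n$ suffix of $u$ and length-$n$ prefix of $v$, so $u_n|v_n$ is a finite reduced expression, and $C_n := \{w : w \in j(u_n|v_n)\}$ is a finite full compatible family with $C_n \subset C_{n+1}$ and $\bigcup_n C_n = C$ (any $w \in j(u|v)$ has finite length, hence appears in $C_n$ once $n \geq |w|$). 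The partial sums $\sum_{w \in C_n} \alpha_w = -\delta^1 \varphi_\alpha(u_n, v_n)$ are uniformly bounded by $D$, so interpreting $\sum_{w \in C} \alpha_w$ as $\lim_n \sum_{w \in C_n} \alpha_w$ (when this limit exists) or, more robustly, as the supremum over finite subfamilies that are still full compatible for some truncation, the bound by $D$ persists.

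The main (minor) obstacle is the bookkeeping around infinite full compatible families, where absolute convergence is not guaranteed and one has to specify what the sum means; the exhaustion $C = \bigcup C_n$ by finite full subfamilies is the natural fix, and it is enough because each partial sum is controlled by the defect via the same coboundary identity used in Proposition \ref{cal}. Everything else is essentially a rereading of that proposition with the inequality run in both directions.
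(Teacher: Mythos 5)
Your proof is correct and follows essentially the same route as the paper: the paper's own proof is the single sentence that the quantity in the statement equals $\tilde{D}(\varphi_\alpha)$ by the proof of Proposition \ref{cal}, which is exactly the identity $\delta^1\varphi_\alpha(u,v) = -\sum_{w \in j(u|v)}\alpha_w$ combined with Lemma \ref{rd} that you use. Your extra paragraph spelling out what the sum over an \emph{infinite} full compatible family should mean (via exhaustion by the finite full families $C_n$ coming from truncations of $u|v$) addresses a genuine imprecision that the paper glosses over, so it is a welcome clarification rather than a detour.
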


\begin{proof}
It follows directly from the proof of Proposition \ref{cal} that the quantity in the statement equals $\tilde{D}(\varphi_\alpha)$.
\end{proof}

The only difference is that in the quantity that needs to be bounded for $\mathcal{N}_{Br}$ there can be large cancellations. The proof of Lemma \ref{cal_noabs} amounts to showing that in $Cal$, since the quantity needs to be bounded for all subfamilies of full compatible families, the cancellations cannot be arbitrarily large. \\

It should be possible to construct an example of a quasimorphism $\varphi_\alpha \in \mathcal{N}_{Br} \, \backslash \, Cal$. However, the more interesting question is the following:

\begin{question}
Is every $\varphi_\alpha \in \mathcal{N}_{Br}$ equivalent to some $\varphi_\beta \in Cal$?
\end{question}

The characterization of $\mathcal{N}_{Br}$ from Lemma \ref{N_Br} allows to prove one desirable property:

\begin{corollary}
\label{a_bdd}

Let $\varphi_\alpha \in \mathcal{N}_{Br}$. Then $\alpha$ is bounded.
\end{corollary}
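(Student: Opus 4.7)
My plan is to apply the characterization of $\mathcal{N}_{Br}$ from Lemma \ref{N_Br} directly, by choosing full compatible families whose $\alpha$-sums telescope to isolate $\alpha_w$ for any given $w$ of length at least $2$. Since $F$ has finite rank, there are only finitely many words of length $\leq 1$, so those coefficients are automatically bounded; the content is to control $\alpha_w$ for $|w| \geq 2$.

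Set $T := \tilde{D}(\varphi_\alpha)$, which is finite by hypothesis, and fix $w = s_1 s_2 \cdots s_n \in \mathcal{N}$ in reduced form with $n \geq 2$. For every $2 \leq k \leq n$ I would consider the reduced expression $s_1 | s_2 \cdots s_k$. Its full compatible family consists of words $u'v'$ where $u'$ is a non-empty suffix of $s_1$ and $v'$ is a non-empty prefix of $s_2 \cdots s_k$; since $s_1$ has only one non-empty suffix, this family is exactly
\[
\{\, s_1 s_2,\ s_1 s_2 s_3,\ \ldots,\ s_1 s_2 \cdots s_k \,\},
\]
the set of prefixes of $w$ of length between $2$ and $k$. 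Writing $S_k := \sum_{j=2}^{k} \alpha_{s_1 \cdots s_j}$ (where terms with $s_1 \cdots s_j \notin \mathcal{N}$ vanish because $\alpha$ is supported on $\mathcal{N}$), Lemma \ref{N_Br} gives $|S_k| \leq T$ for every $k$.

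The conclusion is now a simple telescoping. If $n = 2$, then $S_2 = \alpha_w$ and so $|\alpha_w| \leq T$. If $n \geq 3$, then $\alpha_w = S_n - S_{n-1}$ and hence $|\alpha_w| \leq 2T$. In either case $|\alpha_w| \leq 2\tilde{D}(\varphi_\alpha)$, uniformly in $w$, which proves that $\alpha$ is bounded.

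There is no real obstacle here beyond picking the right family; the only mild subtlety is to check that the prefixes we pick up in $S_k$ need not themselves lie in $\mathcal{N}$, but that is harmless since $\alpha$ vanishes off $\mathcal{N}$, so those spurious terms do not contribute. The whole argument is a one-shot consequence of Lemma \ref{N_Br} once the telescoping choice of junctures is in place.
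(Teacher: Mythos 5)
Your proof is correct and follows essentially the same route as the paper: both difference the $\alpha$-sums over the full compatible families for the reduced expressions $s_1 \mid s_2 \cdots s_n$ and $s_1 \mid s_2 \cdots s_{n-1}$, whose junctures pick up exactly the prefixes of $w$ of length $\geq 2$ up to length $n$ and $n-1$ respectively, so subtracting isolates $\alpha_w$ and gives $|\alpha_w| \leq 2T$. The auxiliary sums $S_k$ for intermediate $k$ are not used, but the final telescoping step ($S_n - S_{n-1}$) is precisely the paper's argument.
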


\begin{proof}
Let $T \geq 0$ be the finite quantity in Lemma \ref{N_Br}. Let $w \in F$. We want to give a uniform bound for $|\alpha_w|$, so we can assume that $|w| \geq 3$. Then we can write $w = xvy$ as a reduced expression, where $x, y$ are in the basis. Let $C$ be the full compatible family given by the reduced expression $x|vy$, and $C'$ the one given by $x|v$. Then $C = C' \cup \{ w \}$. Thus:
$$|\alpha_w| = \left| \sum\limits_{u \in C} \alpha_u - \sum\limits_{u \in C'} \alpha_u \right| \leq 2T.$$
\end{proof}

\subsubsection{Termwise homogenization}

Grigorchuk's density theorem shows that $H^2_b(F)$ is described by infinite sums of \textit{homogenizations} of Brooks quasimorphisms supported on non-self-overlapping words: in fact, on a fundamental set $\mathcal{F}$. However, the combinatorics of Brooks quasimorphisms are simpler than those of their homogenizations, in particular in the framework of decompositions that will be dealt with in the next section. Therefore one could ask if the infinite span of $\{ h_w \}_{w \in \mathcal{F}^+}$ also represents the whole of $H^2_b(F)$; this would imply that $\mathcal{N}_{Br}$ represents the whole of $H^2_b(F)$. This would be true if any quasimorphism of the form $\sum_{w \in \mathcal{F}^+} \alpha_w \overline{h_w}$ were equivalent to $\sum_{w \in \mathcal{F}^+} \alpha_w h_w$. We are not able to prove this, but the following proposition is a partial result in that direction that seems to be of independent interest.

\begin{proposition}
\label{twhom}

Let
$$\varphi_\alpha := \sum\limits_{w \in \mathcal{N}^+} \alpha_w h_w; \,\,\,\,\,\, \tilde{\varphi}_\alpha := \sum\limits_{w \in \mathcal{N}^+} \alpha_w \overline{h_w}.$$
Suppose further that $\varphi_\alpha$ is a quasimorphism. Then $\tilde{\varphi}_\alpha$ is a quasimorphism and $\overline{\varphi_\alpha} = \tilde{\varphi}_\alpha$.
\end{proposition}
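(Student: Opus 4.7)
The plan is to identify $\tilde{\varphi}_\alpha$ with the homogenization $\overline{\varphi_\alpha}$, exploiting the standing assumption that $\varphi_\alpha$ is already a quasimorphism, so that $\overline{\varphi_\alpha}$ is well-defined by the usual limit $\overline{\varphi_\alpha}(g) = \lim_n \varphi_\alpha(g^n)/n$. Since $\overline{\varphi_\alpha}$ is conjugacy invariant (Lemma \ref{hqm}) and $\tilde{\varphi}_\alpha$ is also conjugacy invariant (each summand $\overline{h_w}$ being homogeneous, hence conjugacy invariant), and every element of $F$ is conjugate to a cyclically reduced word, it will be enough to check $\overline{\varphi_\alpha}(g) = \tilde{\varphi}_\alpha(g)$ for $g$ cyclically reduced.

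The combinatorial core of the argument will be the following observation. Let $g$ be cyclically reduced and let $P$ denote its primitive period, i.e.\ write $g = h^m$ with $h$ simple of length $P$. Then for any non-self-overlapping $w$ with $|w| > P$ one has $h_w(g^n) = 0$ for all $n \geq 1$, and likewise $\overline{h_w}(g) = 0$. Indeed, any occurrence of $w^{\pm 1}$ as a subword of $g^n$ would embed $w^{\pm 1}$ into the periodic infinite word $h^\infty$, forcing $w$ to inherit the period $P < |w|$; the resulting common prefix/suffix of length $|w|-P$ would then exhibit $w$ as $xy = yz$ with $x,y,z$ all non-empty, contradicting non-self-overlap. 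Consequently, for cyclically reduced $g$ both sums under scrutiny reduce to \emph{finite} sums supported on $\{w \in \mathcal{N}^+ : |w| \leq P\}$:
$$\varphi_\alpha(g^n) = \sum_{\substack{w \in \mathcal{N}^+ \\ |w| \leq P}} \alpha_w h_w(g^n), \qquad \tilde{\varphi}_\alpha(g) = \sum_{\substack{w \in \mathcal{N}^+ \\ |w| \leq P}} \alpha_w \overline{h_w}(g),$$
and, crucially, the index set in the first identity does not depend on $n$.

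With this in hand, I will divide the first identity by $n$ and let $n \to \infty$: each ratio $h_w(g^n)/n$ converges to $\overline{h_w}(g)$ by the very definition of the homogenization (Proposition \ref{homogenization}), and since the index set is finite, limit and sum commute, yielding $\lim_n \varphi_\alpha(g^n)/n = \tilde{\varphi}_\alpha(g)$. The left-hand side equals $\overline{\varphi_\alpha}(g)$ because $\varphi_\alpha$ is a quasimorphism by assumption, so $\overline{\varphi_\alpha} = \tilde{\varphi}_\alpha$ on cyclically reduced elements, and hence on all of $F$ by the conjugacy invariance noted above. In particular $\tilde{\varphi}_\alpha$ agrees with the homogenization of $\varphi_\alpha$, so it is itself a (homogeneous) quasimorphism, as required.

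The main obstacle I foresee is the combinatorial observation above, that no non-self-overlapping word longer than the primitive period of $g$ can appear as a subword of any power of $g$. Once that is established, the apparent difficulty of interchanging the limit with the infinite sum defining $\tilde{\varphi}_\alpha$ evaporates: for each fixed $g$ the sum collapses to a finite one whose index set is independent of $n$, so the limit passes inside for free.
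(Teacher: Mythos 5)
Your proof is correct, and it takes a genuinely different route from the paper's. The paper's argument is an estimate: it bounds $\|\varphi_\alpha - \tilde{\varphi}_\alpha\|_\infty$ directly in terms of $D(\varphi_\alpha)$ by showing that for cyclically reduced $g$, the difference $\varphi_\alpha(g) - \tilde{\varphi}_\alpha(g)$ reduces to $\delta^1\varphi_\alpha(g,g)$ plus a sum over $|g| < |w| \leq 2|g|$ of $\alpha_w h_w(g^2)$, which is shown to vanish; uniqueness of the homogenization then gives $\overline{\varphi_\alpha} = \tilde{\varphi}_\alpha$. You instead compute the homogenization $\overline{\varphi_\alpha}(g) = \lim_n \varphi_\alpha(g^n)/n$ directly, reducing to a finite sum whose index set is independent of $n$ by the combinatorial observation that a non-self-overlapping $w$ with $|w| > P$ (the length of the simple root $h$ of $g$) cannot occur in any $g^n$, since a factor of length $>P$ of the $P$-periodic word $h^\infty$ has period $P$ and hence a nonempty proper border $y$, exhibiting $w = xy = yz$ as self-overlapping. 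This is a slight strengthening of the paper's vanishing claim (which only establishes $h_w(g^2) = 0$ for $|g| < |w| \leq 2|g|$), and it lets you pass the limit through the sum for free. Your approach is arguably cleaner and more conceptual, but it does not produce the explicit bound $\|\varphi_\alpha - \tilde{\varphi}_\alpha\|_\infty = \mathcal{O}(D(\varphi_\alpha))$ which the paper's argument yields as a byproduct (though the proposition as stated does not require it). Both proofs make the same reductions to cyclically reduced elements via conjugacy invariance of both sides.
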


\begin{proof}
We need to show that $|| \varphi_\alpha - \tilde{\varphi}_\alpha ||_\infty < \infty$. We will bound this quantity in terms of $D(\varphi_\alpha)$, which is finite by hypothesis. Notice that $\tilde{\varphi}_\alpha$ is a sum of homogeneous quasimorphisms, so it is conjugacy-invariant, and similarly $\varphi_\alpha$ is alternating. Therefore, for all $a, g \in F$, we have
$$|\varphi_\alpha(aga^{-1}) - \tilde{\varphi}_\alpha(aga^{-1})| \leq |\varphi_\alpha(aga^{-1}) - \varphi_\alpha(g)| + |\varphi_\alpha(g) - \tilde{\varphi}_\alpha(g)| \leq 2 D(\varphi_\alpha) + |\varphi_\alpha(g) - \tilde{\varphi}_\alpha(g)|.$$
So we only need to bound $|\varphi_\alpha(g) - \tilde{\varphi}_\alpha(g)|$ when $g$ is cyclically reduced, in which case $g|g$ is a reduced expression and $|\overline{g}| = |g|$.

Now let $g$ be cyclically reduced. We calculate:
$$\varphi_\alpha(g) - \tilde{\varphi}_\alpha(g) = \sum\limits_{|w| \leq |g|} \alpha_w (h_w(g) - \overline{h_w}(g)).$$
Since $g$ is cyclically reduced, every occurrence of $w^{\pm 1}$ in $g$ is also an occurrence of $w^{\pm 1}$ in $\overline{g}$. So, for $|w| \leq |g|$, $h_w(g) - \overline{h_w}(g) \neq 0$ only if $w$ intersects the juncture of $g|g$, that is, only if $\delta^1 h_w(g, g) \neq 0$. Moreover, since we are only considering $|w| \leq |g|$, if $w$ intersects the juncture of $g|g$, then this copy of $w$ is also contained in $\overline{g}$. It follows from Lemma \ref{formula_hw} that $h_w(g) - \overline{h_w}(g) = \delta^1 h_w(g, g)$. Thus:
$$\varphi_\alpha(g) - \tilde{\varphi}_\alpha(g) = \sum\limits_{|w| \leq |g|} \alpha_w \delta^1 h_w(g, g) = \delta^1 \varphi (g, g) - \sum\limits_{|g| < |w| \leq 2 |g|} \alpha_w h_w(g^2).$$

We are left to show that the last term is finite. In fact, we will show that for any $|g| < |w| \leq 2|g|$, we have $h_w(g^2) = 0$. Since $|g| < |w| \leq 2|g|$ any two occurrences of $w^{\pm 1}$ in $g^2$ will overlap. Since $w$ does not overlap $w$ or $w^{-1}$, we have at most one occurrence of $w$ or one occurrence of $w^{-1}$. Suppose that there is indeed one occurrence, and without loss of generality assume that $w$ occurs in $g^2$. Let $g = s_1 \cdots s_k$ be the canonical reduced expression. The fact that $w$ occurs in $g^2$ means that $w = s_i \cdots s_k s_1 \cdots s_j$ for some $1 \leq i, j \leq k$. Since $|w| > |g|$, $i < j$. But then $w = (s_i \cdots s_j) \cdot (s_{j + 1} \cdots s_{i - 1}) \cdot (s_i \cdots s_j)$. We are assuming that $w$ is non-self-overlapping, so this is not possible. Therefore $h_w(g^2) = 0$, which concludes the proof.
\end{proof}

\begin{question}
Let $\varphi_\alpha, \tilde{\varphi}_\alpha$ be as above. Suppose that $\tilde{\varphi}_\alpha$ is a quasimorphism. Is then $\varphi_\alpha$ a quasimorphism? What if we restrict $supp(\alpha)$ to a (well-chosen) fundamental set?
\end{question}

Notice that if the answer to the above is affirmative, it follows from Proposition \ref{twhom} that $\overline{\varphi}_\alpha = \tilde{\varphi}_\alpha$.

\pagebreak

\section{Decompositions of the free group}
\label{secdec}

In the paper \cite{Heuer}, Heuer proves that the cup product $[\delta \varphi] \smile [\delta \psi]$ vanishes if $\varphi, \psi$ are either Brooks quasimorphisms on non-self-overlapping words or Rolli quasimorphisms. These two kinds of quasimorphisms are quite different at a first glance, but there is a way to deal with them together, which is to introduce the notion of \textit{decomposition} of a free group. In the first two subsections, we will go through Heuer's definitions and theorem, also giving an explicit bound on the primitive of the cup product that is not present in the original paper. In the third subsection, we will see how some of the subspaces of $Cal$ that we introduced previously can also be treated in this framework. With these tools, we are finally able to prove the main theorem. The last section uses the same techniques to prove some weaker vanishing results for cup products with big Brooks quasimorphisms on general (i.e., not necessarily non-self-overlapping) words. \\

Throughout, $F$ is a non-abelian free group of finite rank with a fixed basis $S = \{s_1, \ldots, s_n\}$. We refer the reader to Subsection \ref{not} for the notation regarding finite sequences.

\subsection{Definitions and first examples}

\subsubsection{Decompositions}

Intuitively, a decomposition is a way of writing out elements of the free group as reduced words in a non-standard alphabet, in such a way that the concatenation of the decompositions of $g$ and $h$ does not differ too much from the decomposition of $gh$ around the center of the tripod with endpoints $1, g, gh$.

\begin{definition}
\label{Dec}

Let $\mathcal{P} \subset F$ be a (possibly infinite) symmetric set of elements of $F$ called \textit{pieces}, such that $1 \notin \mathcal{P}$. A \textit{decomposition} of $F$ with pieces $\mathcal{P}$ is a map $\Delta: F \to \mathcal{P}^*$ assigning to $1 \in F$ the empty sequence, and to every element $1 \neq g \in F$ a finite sequence $\Delta(g) = (g_1, \ldots, g_k)$ with $g_j \in \mathcal{P}$ such that:

\begin{enumerate}

\item For every $1 \neq g \in F$ with $\Delta(g) = (g_1, \ldots, g_k)$ we have $g = g_1 \cdots g_k$ as a reduced product (no cancellation). This implies in particular that $S^{\pm 1} \subseteq \mathcal{P}$.

\item $\Delta(g^{-1}) = \Delta(g)^{-1}$ for all $g \in F$.

\item For every $1 \neq g \in F$ with $\Delta(g) = (g_1, \ldots, g_k)$ we have $\Delta(g_i \cdots g_j) = (g_i, \ldots, g_j)$ for $1 \leq i \leq j \leq k$. We refer to this property as $\Delta$ being \textit{infix closed}. This implies in particular that if $g \in \mathcal{P}$, then either $g$ never appears in the decomposition of an element, or $\Delta(g) = (g)$.

\item Draw out the \textit{$\Delta$-triangle} for $g, h$, i.e., the one given by $\Delta(g) \cup \Delta(h) \cup \Delta(gh)$ like in Figure \ref{Dec_fig}. Highlight the subsequences $c_1^{-1}$, which is the largest one both at the beginning of $\Delta(g)$ and of $\Delta(gh)$; $c_2$, which is the largest one both at the end of $\Delta(g)$ and of $\Delta(h^{-1})$; and $c_3$, which is the largest one both at the end of $\Delta(h)$ and of $\Delta(gh)$. What remains is three strings $r_i$ forming a relation around the center: $\underline{r_1} \underline{r_2} \underline{r_3} = 1$. We call these the \textit{$c$-part}, respectively the \textit{$r$-part}, of this $\Delta$-triangle. For $\Delta$ to be a decomposition, we ask that $|r_i| \leq D$ (where $|r_i|$ indicates the length of the sequence, and not the word length of $\underline{r_i}$) for some constant $D \geq 0$.

\end{enumerate}

The minimal $D$ from the fourth condition is called the \textit{defect} of the decomposition, and is denoted by $D(\Delta)$.

\end{definition}

\begin{figure}
  \centering
  \includegraphics[width=7cm]{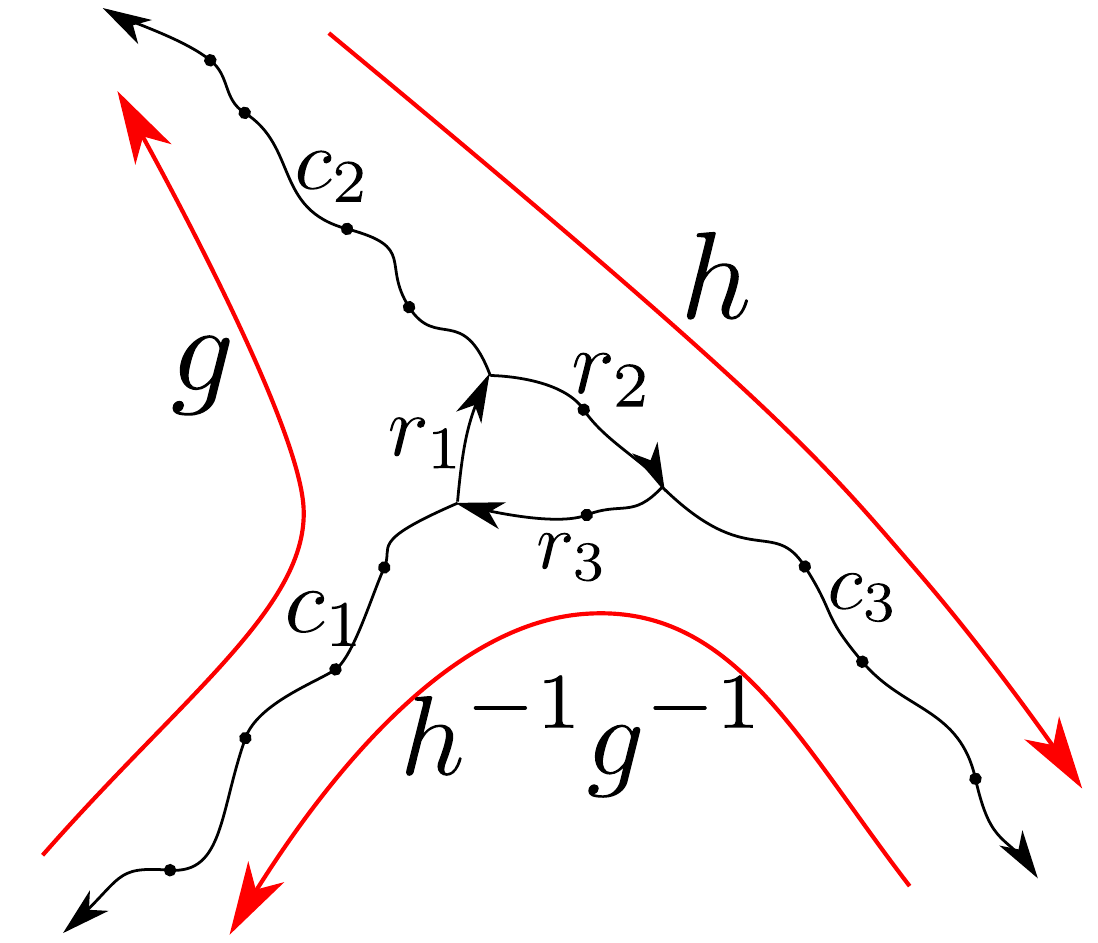}
  \caption{The $r$-part has to be bounded.}
  \label{Dec_fig}
\end{figure}

Here are some examples of decompositions:

\begin{example}
\label{Dec_ex}

1. We can write out the elements of $F$ as usual, with pieces $\mathcal{P}_{triv} = S^{\pm 1}$, giving rise to a trivial decomposition $\Delta_{triv}$. The defect is 0. \\

2. Another trivial example is by taking $\mathcal{P}_{b} = F \, \backslash \, \{ 1 \}$ and $\Delta(g) = (g)$ for all $1 \neq g \in F$. Let us denote this decomposition by $\Delta_b$. The defect is 1. \\

3. Given a non-self-overlapping word $w \in F$, we can write any element of $F$ by isolating all the (disjoint) occurrences of $w$ and $w^{-1}$. To do this we use the pieces $\mathcal{P}_w = \{w^{\pm 1} \} \cup \{ \text{words not containing } w^{\pm 1} \text{ as a subword} \}$. This yields the decomposition $\Delta_w$. The defect is 3: the worst-case scenario is of the form $r_1 = (xw_1)$, $r_2 = (w_2y)$, where $w_1w_2 = w$, and $r_3^{-1} = (x, w, y)$. \\

4. We can write out the elements of $F$ as products of powers of generators. For this we need the pieces $\mathcal{P}_{Rolli} = \{ s^m : s \in S, m \in \mathbb{Z} \, \backslash \, \{ 0 \} \}$, giving the decomposition $\Delta_{Rolli}$. The defect is 1: the worst-case scenario is of the form $r_1 = (s^n)$, $r_2 = (s^m)$, where $m \neq -n$, and $r_3^{-1} = (s^{n + m})$.

\end{example}

Examples 3. and 4. are the ones that allow to prove the main theorem in Heuer's paper. We remark that this concept was already hinted at in earlier work: the decomposition $\Delta_w$ was called \textit{w-decomposition} in Faiziev's paper \cite{Faziev}, and the decomposition $\Delta_{Rolli}$ was called \textit{factorization} in Rolli's paper \cite{Rolli}.

\begin{lemma}
\label{D_triv}

Let $\Delta$ be a decomposition with pieces $\mathcal{P}$, and suppose without loss of generality that each $p \in \mathcal{P}$ appears in the decomposition of some element. Then the following are equivalent:
\begin{enumerate}
\item $\mathcal{P} = S^{\pm 1}$.
\item $D(\Delta) = 0$.
\item $\Delta = \Delta_{triv}$.
\end{enumerate}
\end{lemma}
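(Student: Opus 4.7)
The plan is to close a small cycle of implications: $(3) \Rightarrow (1)$, $(3) \Rightarrow (2)$, and $(1) \Rightarrow (3)$ are all essentially immediate, so the real content is $(2) \Rightarrow (1)$. The first two implications are recorded directly in the first item of Example \ref{Dec_ex}, which states that $\Delta_{triv}$ has pieces $S^{\pm 1}$ and defect $0$. For $(1) \Rightarrow (3)$, axiom 1 of Definition \ref{Dec} forces $\underline{\Delta(g)} = g$ to be a reduced product of elements of $\mathcal{P} = S^{\pm 1}$; since the reduced expression of $g$ in the free basis is unique, this forces $\Delta(g) = \Delta_{triv}(g)$ for every $g \in F$.

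For the main implication $(2) \Rightarrow (1)$, the inclusion $S^{\pm 1} \subseteq \mathcal{P}$ is automatic from axiom 1 of Definition \ref{Dec} (each generator $s$ has $\Delta(s) = (s)$ by infix closure, forcing $s \in \mathcal{P}$), so it suffices to rule out any piece of word length at least $2$. I would argue by contradiction: suppose $p \in \mathcal{P}$ with $|p| \geq 2$. By the standing WLOG assumption, $p$ appears in the $\Delta$-sequence of some element, and then infix closure yields $\Delta(p) = (p)$. Split $p = s \cdot p'$ as a reduced product with $s \in S^{\pm 1}$ and $p' \neq 1$, and examine the $\Delta$-triangle for $(g, h) := (s, p')$, so that $gh = p$, $\Delta(g) = (s)$, and $\Delta(gh) = (p)$.

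The aim is to show $r_3 = (p)$, which has length $1 > 0 = D(\Delta)$, yielding the contradiction. Since $\Delta(gh)$ has only one entry, this reduces to checking that both $c_1$ and $c_3$ are the empty sequence. For $c_1$: the unique first entry of $\Delta(g)$ is $s$, while that of $\Delta(gh)$ is $p$, and these differ because $|s| = 1 < 2 \leq |p|$. For $c_3$: by axiom 1 the word lengths of the entries of $\Delta(h) = \Delta(p')$ sum to $|p'| = |p| - 1 < |p|$, so in particular every entry of $\Delta(h)$ has word length strictly less than $|p|$ and none of them can equal $p$; in particular the last entry does not. Hence $c_1 = c_3 = \emptyset$ and $r_3 = (p)$, contradicting $D(\Delta) = 0$.

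The only bookkeeping is the length comparison that kills the $c$-parts, and once the relevant sequences are written out it is immediate, so there is no substantive obstacle.
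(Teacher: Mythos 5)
Your proof is correct and follows essentially the same route as the paper: the paper's proof of $(2)\Rightarrow(1)$ also takes a piece $p$ of length at least $2$, splits off its first generator $s_1$, and observes that the $\Delta$-triangle for $(s_1, s_2\cdots s_n)$ has $r_3^{-1}=(p)$, forcing $D(\Delta)\geq 1$; your argument is the same, just with the empty $c$-parts checked explicitly (a detail the paper leaves implicit) and the remaining implications closed via the same uniqueness-of-reduced-words and Example \ref{Dec_ex} observations.
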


\begin{proof}
Let $\Delta$ be such a decomposition. Then $\Delta(p) = (p)$ for every $p \in \mathcal{P}$ by point 3. of Definition \ref{Dec}. Moreover $S^{\pm 1} \subseteq \mathcal{P}$ by point 2. of Definition \ref{Dec}. If $\mathcal{P} = S^{\pm 1}$, then $\Delta = \Delta_{triv}$, since there is a unique way to write elements of the free group as reduced products in the basis $S$ (this is the definition of a basis). This gives $1. \Rightarrow 3$. Else there exists $p \in \mathcal{P}$ with $p = s_1 \cdots s_n$ and $n > 1$. Then the $\Delta$-triangle for $s_1, s_2 \cdots s_n$ has $r_1 = (s_1)$, $r_2 = \Delta(s_2 \cdots s_n)$ and $r_3^{-1} = (p)$. So $D(\Delta) \geq 1$. This gives $2. \Rightarrow 1$. Finally $3. \Rightarrow 2.$ follows from Example \ref{Dec_ex}.
\end{proof}

\subsubsection{$\Delta$-Decomposable quasimorphisms}

Now we use decompositions to construct quasimorphisms:

\begin{definition}
\label{D-dec}

Let $\Delta$ be a decomposition with pieces $\mathcal{P}$ and $\lambda \in \ell^\infty_{alt}(\mathcal{P})$, i.e., $\lambda : \mathcal{P} \to \mathbb{R}$ is bounded and $\lambda(p^{-1}) = - \lambda(p)$ for any $p \in \mathcal{P}$. Then the map
$$\phi_{\lambda, \Delta} : F \to \mathbb{R} : g \mapsto \sum\limits_{j = 1}^k \lambda(g_j),$$
where $\Delta(g) = (g_1, \ldots, g_k)$, is called a \textit{$\Delta$-decomposable} quasimorphism.

\end{definition}

Notice that since $\lambda$ is alternating, $\phi_{\lambda, \Delta}$ is alternating by condition 2. of Definition \ref{Dec}.

\begin{proposition}
\label{D-dec-prop}

$\Delta$-decomposable quasimorphisms are alternating quasimorphisms. The value $\delta^1 \phi_{\lambda, \Delta} (g, h)$ only depends on the $r$-part of the corresponding $\Delta$-triangle, and $D(\phi_{\lambda, \Delta}) \leq 3 ||\lambda||_\infty D(\Delta).$
\end{proposition}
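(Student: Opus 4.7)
The plan is to unpack the structure of the $\Delta$-triangle and compute $\delta^1 \phi_{\lambda,\Delta}(g,h)$ directly, observing that the $c$-parts cancel because $\lambda$ is alternating.

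First I would dispose of the alternating property. For any $g \in F$, by condition 2 of Definition \ref{Dec} we have $\Delta(g^{-1}) = \Delta(g)^{-1}$, so if $\Delta(g) = (g_1, \ldots, g_k)$ then $\Delta(g^{-1}) = (g_k^{-1}, \ldots, g_1^{-1})$. Since $\lambda$ is alternating, summing $\lambda$ over this reversed inverted sequence gives $-\sum_j \lambda(g_j) = -\phi_{\lambda,\Delta}(g)$. For the rest of the argument I will abbreviate $\Sigma(x) := \sum_{i=1}^{|x|} \lambda(x_i)$ for a sequence $x \in \mathcal{P}^*$; then alternation of $\lambda$ gives $\Sigma(x^{-1}) = -\Sigma(x)$, and $\phi_{\lambda,\Delta}(g) = \Sigma(\Delta(g))$.

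Next I would read off the $\Delta$-triangle for $(g,h)$ in the notation of Definition \ref{Dec}. By the definition of the $c$- and $r$-parts, we may write
\begin{align*}
\Delta(g) &= c_1^{-1} \cdot r_1 \cdot c_2, \\
\Delta(h) &= c_2^{-1} \cdot r_2 \cdot c_3, \\
\Delta(gh) &= c_1^{-1} \cdot r_3^{-1} \cdot c_3,
\end{align*}
where the last line uses that $c_1^{-1}$ is the common beginning of $\Delta(g)$ and $\Delta(gh)$, $c_3$ is the common ending of $\Delta(h)$ and $\Delta(gh)$, and $\underline{r_1}\,\underline{r_2}\,\underline{r_3} = 1$ gives the relation around the center. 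Applying $\Sigma$ and using $\Sigma(x \cdot y) = \Sigma(x) + \Sigma(y)$ together with $\Sigma(c_i^{-1}) = -\Sigma(c_i)$, we compute
\begin{align*}
\delta^1 \phi_{\lambda,\Delta}(g,h) &= \phi_{\lambda,\Delta}(g) + \phi_{\lambda,\Delta}(h) - \phi_{\lambda,\Delta}(gh) \\
&= \bigl(-\Sigma(c_1) + \Sigma(r_1) + \Sigma(c_2)\bigr) + \bigl(-\Sigma(c_2) + \Sigma(r_2) + \Sigma(c_3)\bigr) \\
&\quad - \bigl(-\Sigma(c_1) - \Sigma(r_3) + \Sigma(c_3)\bigr) \\
&= \Sigma(r_1) + \Sigma(r_2) + \Sigma(r_3).
\end{align*}
This makes explicit that $\delta^1\phi_{\lambda,\Delta}(g,h)$ depends only on the $r$-part of the $\Delta$-triangle.

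Finally I would estimate: since each $|r_i| \leq D(\Delta)$ by condition 4 of Definition \ref{Dec}, the triangle inequality yields
$$|\delta^1 \phi_{\lambda,\Delta}(g,h)| \leq \sum_{i=1}^3 |\Sigma(r_i)| \leq \sum_{i=1}^3 |r_i|\,\|\lambda\|_\infty \leq 3\, D(\Delta)\,\|\lambda\|_\infty,$$
so $D(\phi_{\lambda,\Delta}) \leq 3\, D(\Delta)\,\|\lambda\|_\infty$ and in particular $\phi_{\lambda,\Delta}$ is a quasimorphism. No step presents a serious obstacle; the only subtlety is bookkeeping the sign conventions so that one correctly identifies the roles of $r_3$ and $r_3^{-1}$ in $\Delta(gh)$, which is why I would explicitly write out the three decompositions before collecting terms.
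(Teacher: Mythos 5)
Your proof is correct and follows essentially the same route as the paper's: write the three decompositions $\Delta(g) = c_1^{-1} \cdot r_1 \cdot c_2$, $\Delta(h) = c_2^{-1} \cdot r_2 \cdot c_3$, $\Delta(gh) = c_1^{-1} \cdot r_3^{-1} \cdot c_3$, use the alternating property to cancel the $c_i$ contributions, and bound each $|r_i|$ by $D(\Delta)$. The only cosmetic difference is that you introduce the shorthand $\Sigma(x)$ for the sum of $\lambda$ over a sequence, whereas the paper writes $\phi_{\lambda,\Delta}(\underline{r_i})$, which coincides with $\Sigma(r_i)$ via infix-closedness; the substance is identical.
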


\begin{proof}
Let $c_i, r_i$ denote the $c$-part and the $r$-part of the $\Delta$-triangle for $g, h$, as in Figure \ref{Dec_fig}. We thus have the following reduced products:
$$g = \underline{c_1}^{-1} \underline{r_1} \underline{c_2}; \,\,\, h = \underline{c_2}^{-1} \underline{r_2} \underline{c_3}; \,\,\, gh = \underline{c_1}^{-1} \underline{r_3}^{-1} \underline{c_3}.$$
By conditions 2. and 3. of Definition \ref{Dec}, these products correspond to concatenations of the corresponding decompositions:
$\Delta(g) = \Delta(\underline{c_1}^{-1}) \cdot \Delta(\underline{r_1}) \cdot \Delta(\underline{c_2})$ and so on. Thus, using the definition of $\phi_{\lambda, \Delta}$ and the fact that $\phi_{\lambda, \Delta}$ is alternating,
$$\delta^1 \phi_{\lambda, \Delta}(g, h) = \phi_{\lambda, \Delta}(g) + \phi_{\lambda, \Delta}(h) - \phi_{\lambda, \Delta}(gh) = (-\phi_{\lambda, \Delta}(\underline{c_1}) + \phi_{\lambda, \Delta}(\underline{r_1}) + \phi_{\lambda, \Delta}(\underline{c_2})) + $$
$$+ (-\phi_{\lambda, \Delta}(\underline{c_2}) + \phi_{\lambda, \Delta}(\underline{r_2}) + \phi_{\lambda, \Delta}(\underline{c_3}))- (-\phi_{\lambda, \Delta}(\underline{c_1}) - \phi_{\lambda, \Delta}(\underline{r_3}) + \phi_{\lambda, \Delta}(\underline{c_3})) + = $$
$$ = \phi_{\lambda, \Delta}(\underline{r_1}) + \phi_{\lambda, \Delta}(\underline{r_2}) + \phi_{\lambda, \Delta}(\underline{r_3}).$$
This shows that the value only depends on the $r$-part. Moreover:
$$|\phi_{\lambda, \Delta}(\underline{r_i})| \leq ||\lambda||_\infty |r_i| \leq ||\lambda||_\infty D(\Delta),$$
and the bound on defect follows.
\end{proof}

\begin{example}
\label{D-dec_ex}

Going back to the decompositions from Example \ref{Dec_ex}, we can find familiar examples of quasimorphisms of the free group. \\

1. A quasimorphism is $\Delta_{triv}$-decomposable if and only if it is a homomorphism. \\

2. A quasimorphism is $\Delta_b$-decomposable if and only if it is in $\ell^\infty_{alt}(F)$. \\

3. Define $\lambda : \mathcal{P}_w \to \mathbb{R} : w^{\pm 1} \mapsto \pm 1; \, w^{\pm 1} \neq x \mapsto 0$. Then the $\Delta_w$-decomposable quasimorphism $\phi_{\lambda, \Delta_w}$ is precisely the Brooks quasimorphism $h_w$. \\

4. Let $S = \{s_1, \ldots, s_n\}$ and pick $\lambda_1, \ldots, \lambda_n \in \ell^\infty_{alt}(\mathbb{Z})$. Define $\lambda : \mathcal{P}_{Rolli} \to \mathbb{R} : s_i^m \mapsto \lambda_i(m)$. The $\Delta_{Rolli}$-decomposable quasimorphisms arising this way are precisely the Rolli quasimorphisms.

\end{example}

\begin{remark}

Given a decomposition $\Delta$, the set of $\Delta$-decomposable quasimorphisms is a subspace of the space of alternating quasimorphisms.

\end{remark}

\subsubsection{$\Delta$-continuous quasimorphisms}

The last concept we need to introduce is that of $\Delta$-continuous quasimorphisms. Recall that the coboundary of a $\Delta$-decomposable quasimorphism only depends on the $r$-part of the corresponding $\Delta$-triangle. The notion of $\Delta$-continuous quasimorphisms generalizes this: they are quasimorphisms that depend mostly on a small neighbourhood of the $r$-part. The first step is to formalize what we mean by "small":

\begin{definition}
\label{N-D-cont}

Let $\Delta$ be a decomposition. We define the function $N_\Delta : F^2 \times F^2 \to \mathbb{Z}_{\geq 0} \cup \{ \infty \}$ as follows. Let $(g, h), (g', h') \in F^2$, and draw the corresponding $\Delta$-triangles. If they do not have the same $r$-part, then $N_{\Delta}((g, h), (g', h')) := 0$. Otherwise we can draw the two corresponding $\Delta$-triangles on top of each other like in Figure \ref{D-cont_fig}. Then for each $i = 1, 2, 3$, the sequences $c_i, c_i'$ travel together for $N_i$ steps, and then split (the steps are seen as elements of the sequence, not as generators of $F$). If $c_i = c_i'$, we let $N_i := \infty$ (even though the sequences $c_i = c_i'$ are finite). We then define $N_{\Delta}((g, h), (g', h')) := \min(N_1, N_2, N_3)$.

\end{definition}

\begin{remark}
It follows from the definition that $N_\Delta((g, h), (g', h')) = \infty$ if and only if $(g, h) = (g', h')$.
\end{remark}

\begin{figure}
  \centering
  \includegraphics[width=7cm]{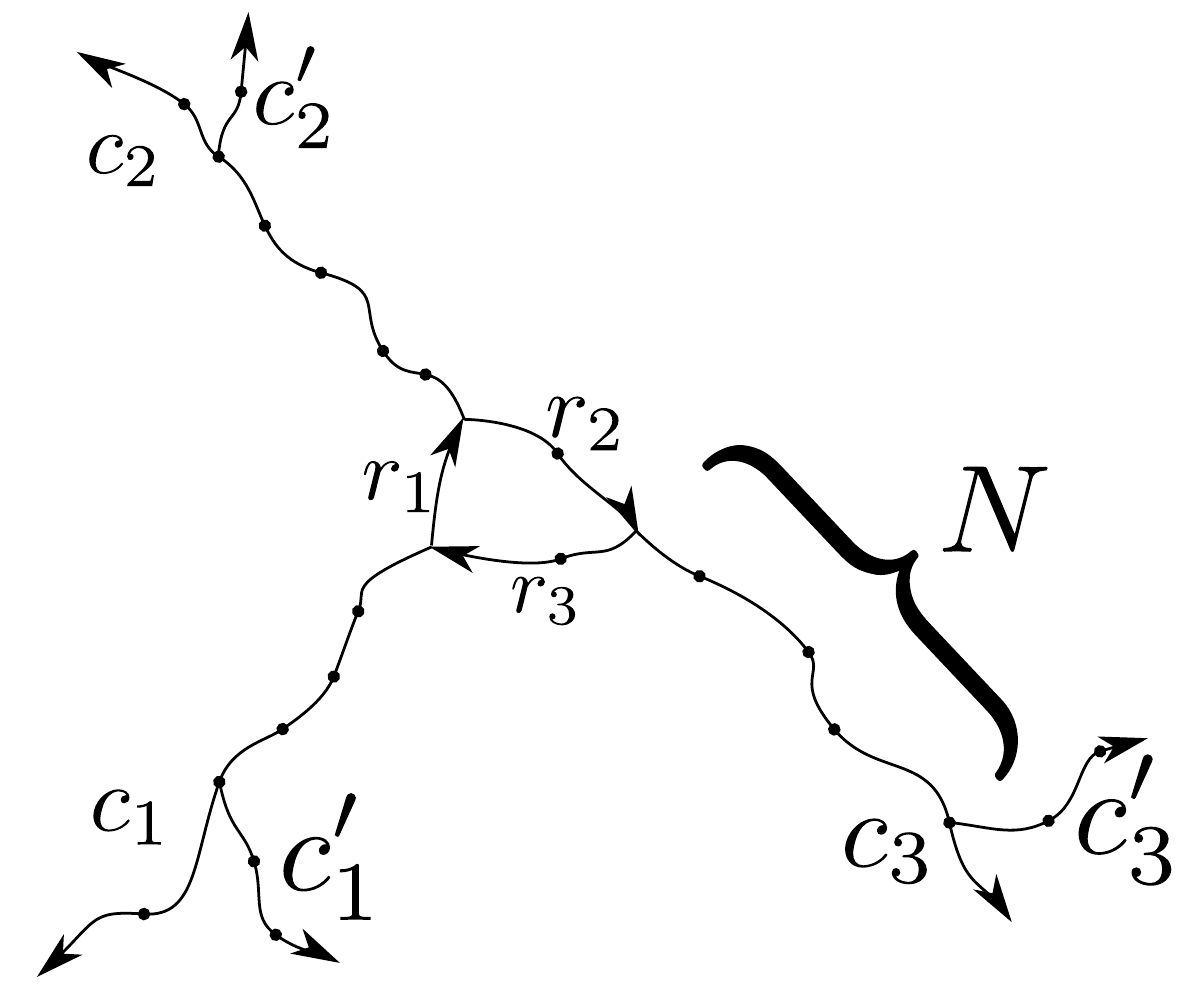}
  \caption{$N_\Delta$ would be the minimal $N$ that fits in all three branches.}
  \label{D-cont_fig}
\end{figure}

Notice that this function takes into account the "absolute time" during which the $c_i$ travel together, but \textit{not} the "relative time". That is, in Figure \ref{D-cont_fig}, the two separate ends of $c_i, c_i'$ could be very large compared to $N$, but $N_\Delta$ will not record that information.

\begin{definition}
\label{D-cont}

Let $\varphi \in Q_{alt}(F)$. We say that $\varphi$ is \textit{$\Delta$-continuous} if there exists a non-negative $\ell^1$ sequence $(x_j)_{j \geq 0}$ such that for all $(g, h) \neq (g', h') \in F^2$, we have:
$$|\delta^1 \varphi(g, h) - \delta^1 \varphi(g', h')| \leq x_N; \hspace{1cm} N = N_\Delta((g, h), (g', h')).$$
The infimum of the value $\sum_{j \geq 0} x_j$ over all sequences satisfying the above condition is denoted by $S_{\varphi, \Delta}$.
\end{definition}

One should interpret the condition of $(x_j)_{j \geq 0}$ being $\ell^1$ as very fast convergence to 0, so the values of $\delta^1 \varphi(g, h)$ and $\delta^1 \varphi(g', h')$ get very close when the corresponding $\Delta$-triangles have the same $r$-part and a large overlap around it. Of course the precise condition is used explicitly in the proof of Heuer's Theorem.

\begin{example}
\label{Db_cont}
Any alternating quasimorphism is $\Delta_b$-continuous. Indeed, any $\Delta_b$-triangle consists of only the $r$-part, so an alternating quasimorphism $\varphi$ is $\Delta_b$-continuous using the sequence $(x_j)_{j \geq 0}$, where $x_0 = 2 D(\varphi)$ and $x_j = 0$ for all $j \geq 1$.
\end{example}

We have two more main examples, one of which we have already mentioned:

\begin{proposition}
\label{D-Dec-D-cont}

If $\varphi$ is a $\Delta$-decomposable quasimorphisms, then it is $\Delta$-continuous, and $S_{\varphi, \Delta} \leq 2 D(\varphi)$. A Brooks quasimorphism on a non-self-overlapping word is $\Delta$-continuous for \underline{any} decomposition $\Delta$, and $S_{h_w, \Delta} \leq 6 |w|$
\end{proposition}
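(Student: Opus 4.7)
The plan is to treat the two assertions separately, extracting locality information from Propositions \ref{D-dec-prop} and \ref{bigsmall} respectively.

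For a $\Delta$-decomposable $\varphi = \phi_{\lambda, \Delta}$, I will start from the fact, proved in Proposition \ref{D-dec-prop}, that $\delta^1 \varphi(g, h)$ depends only on the $r$-part of the corresponding $\Delta$-triangle. By Definition \ref{N-D-cont}, whenever $N_\Delta((g,h),(g',h')) \geq 1$ the two $r$-parts coincide, so $\delta^1 \varphi(g, h) - \delta^1 \varphi(g', h')$ vanishes; for $N_\Delta = 0$ the triangle inequality bounds the difference by $2 D(\varphi)$. The sequence $x_0 = 2 D(\varphi)$ and $x_j = 0$ for $j \geq 1$ is then $\ell^1$ with sum $2 D(\varphi)$ and witnesses the $\Delta$-continuity of $\varphi$ with the desired bound.

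For the second assertion, let $w$ be non-self-overlapping and set $N := N_\Delta((g,h),(g',h'))$. I will rely on Proposition \ref{bigsmall}, according to which $\delta^1 h_w(g, h) = \delta^1 H_w(g, h)$ is determined by the labeled $|w|$-neighbourhood of the Cayley-graph tripod center with endpoints $1, g, gh$. The key claim is that $N \geq |w|$ forces the $|w|$-neighbourhoods of the tripod centers for $(g,h)$ and for $(g',h')$ to be isomorphic as labeled graphs, so the coboundaries coincide. Combining this with the crude bound $|\delta^1 h_w(g,h) - \delta^1 h_w(g',h')| \leq 2 D(h_w) \leq 6$ when $N < |w|$, the sequence $x_j = 6$ for $0 \leq j < |w|$ and $x_j = 0$ otherwise exhibits $\Delta$-continuity with $S_{h_w, \Delta} \leq 6|w|$.

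The main obstacle is establishing the claim about $|w|$-neighbourhoods, which reduces to geometric bookkeeping inside the Cayley graph. The standard formula $d(1, T) = (|g| + |gh| - |h|)/2$ for the tripod center $T$, combined with the expressions $|g| = |\underline{c_1}| + |\underline{r_1}| + |\underline{c_2}|$ and its analogues, places $T$ at distance $|\underline{c_1}| + (|\underline{r_1}| + |\underline{r_3}| - |\underline{r_2}|)/2$ from $1$; the triangle inequality applied to the relation $\underline{r_1}\underline{r_2}\underline{r_3} = 1$ then locates $T$ inside the $r_1$-portion of the arm to $g$. Hence, modulo identification of the common $r$-parts, the two tripod centers occupy identical positions, and the $|w|$-ball around $T$ reaches at most $|w|$ edges into any $c_i$-arm. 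Since every piece has word length at least one, the hypothesis that the first $N \geq |w|$ pieces of each $c_i$ agree supplies $\geq |w|$ edges of labeled agreement along each $c_i$-arm from the boundary with $r_i$ outward, so the $|w|$-ball lies entirely in the common labeled region, and the claim follows.
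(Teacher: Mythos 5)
Your proposal is correct and follows essentially the same route as the paper: the first part uses that $\delta^1\phi_{\lambda,\Delta}$ depends only on the $r$-part (so $x_0 = 2D(\varphi)$, $x_j = 0$ for $j \geq 1$ works), and the second uses Proposition \ref{bigsmall} to get $\delta^1 h_w(g,h) = \delta^1 h_w(g',h')$ once $N_\Delta \geq |w|$, with $x_j = 2D(h_w) \leq 6$ below $|w|$. The only difference is that you spell out the tripod-center bookkeeping (locating $T$ inside the $r$-region and counting at least one letter per piece along each $c_i$-arm), which the paper asserts without detail; that verification is accurate.
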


\begin{remark}
The second statement will be proven in greater generality in the third subsection.
\end{remark}

\begin{proof}
The first statement follows from the fact that if $\varphi$ is $\Delta$-decomposable, then $\delta^1 \varphi(g, h)$ only depends on the $r$-part of the corresponding $\Delta$-triangle. So one can choose $x_0 = 2 D(\varphi)$ and $x_j = 0$ for all $j \geq 1$. So $S_{\varphi, \Delta} \leq 2D(\varphi)$.

For the second statement, given a non-self-overlapping word $w$ and the corresponding Brooks quasimorphism $h_w$, the value of $\delta^1 h_w (g, h)$ only depends on the $|w|$-neighbourhood of the center of the tripod with endpoints $1, g, gh$, by Proposition \ref{bigsmall}. In particular, if $N_\Delta((g, h), (g', h')) \geq |w|$, then $\delta^1 h_w(g, h) = \delta^1 h_w (g', h')$. So we can define $x_j = 2 D(h_w) \leq 6$ for $0 \leq j < |w|$ and $x_j = 0$ for $j \geq |w|$. So $S_{h_w, \Delta} \leq 6 |w|$.
\end{proof}

\begin{remark}

Given a decomposition $\Delta$, the set of $\Delta$-continuous quasimorphisms is a subspace of the space of alternating quasimorphisms.

\end{remark}

\subsection{Heuer's Theorem}

We can finally state Heuer's Theorem:

\begin{theorem}[Heuer]
\label{Heuer_B}

Let $\varphi, \psi : F \to \mathbb{R}$ be quasimorphisms. Let $\Delta$ be a decomposition such that $\varphi$ is $\Delta$-decomposable and $\psi$ is $\Delta$-continuous. Then $[\delta \varphi] \smile [\delta \psi] = 0 \in H^4_b(F)$.
\end{theorem}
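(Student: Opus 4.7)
The plan is to construct an explicit bounded primitive $\pi \in C^3_b(F)$ with $\delta^3 \pi = \delta^1\varphi \smile \delta^1\psi$. The starting point is the formal identity from Lemma \ref{cup_formula}: since $\delta^2\delta^1\psi = 0$, we have $\delta^3(\varphi \smile \delta^1\psi) = \delta^1\varphi \smile \delta^1\psi$ as an identity in $C^4(F)$. The 3-cochain $\varphi \smile \delta^1\psi$ is unbounded only because of the factor $\varphi$, but it encodes the right coboundary. The strategy is therefore to correct it by an unbounded 3-cocycle $C$ so that $\pi := \varphi \smile \delta^1\psi - C$ is bounded; since $\delta^3 C = 0$, $\pi$ will still be a primitive.

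To produce $C$, I would exploit the decomposition of the first argument. Writing $\Delta(g_1) = (p_1, \ldots, p_k)$ with $p_j \in \mathcal{P}$, Definition \ref{D-dec} gives $\varphi(g_1) = \sum_{j=1}^{k}\lambda(p_j)$. I would replace the rigid factor $\delta^1\psi(g_2, g_3)$ by a telescope along the pieces of $g_1$: set $u_j := p_{j+1}\cdots p_k$ and try the formula
\[
\pi(g_1, g_2, g_3) \;:=\; \sum_{j=1}^{k} \lambda(p_j)\,\Bigl[\,\delta^1\psi(g_2, g_3) \;-\; \delta^1\psi(u_j\, g_2,\, g_3)\,\Bigr],
\]
or a symmetrized variant that also telescopes on the third argument. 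The point is that peeling off the prefix $p_1\cdots p_j$ from $g_1$ moves the center of the tripod by one piece at a time, so the $\Delta$-triangles of the pairs $(u_j g_2, g_3)$ all share the same $r$-part as $(g_2, g_3)$ (by the infix-closure of $\Delta$ and condition 4 of Definition \ref{Dec}), while their depth of agreement $N_\Delta$ grows roughly linearly in $j$. By Definition \ref{D-cont} the bracket is then bounded by the $\ell^1$-sequence $x_j$, so
\[
\|\pi\|_\infty \;\leq\; \|\lambda\|_\infty \cdot S_{\psi,\Delta},
\]
up to a multiplicative constant depending on $D(\Delta)$.

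What remains is to verify the coboundary identity $\delta^3\pi = \delta^1\varphi \smile \delta^1\psi$. Here the plan is a direct expansion: using Lemma \ref{cup_formula} on $\varphi \smile \delta^1\psi$ and subtracting the coboundary of the telescoping correction, the bulk cancels thanks to the fact that concatenating the decompositions of $g_1$ and $g_2$ agrees with $\Delta(g_1 g_2)$ except on the $c$-parts that cancel around the center; these cancellations are precisely where the telescope collapses. The error from the differences $\Delta(g_1)\cdot \Delta(g_2)$ versus $\Delta(g_1 g_2)$ is supported on the bounded $r$-part of the $\Delta$-triangle of $(g_1, g_2)$, giving a finite sum of terms each controlled again by $\|\lambda\|_\infty S_{\psi,\Delta}$.

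The main obstacle I foresee is the bookkeeping of the coboundary: showing that the telescoping correction has exactly the right coboundary requires careful tracking of how the decomposition of the product $g_1 g_2$ relates to the decompositions of $g_1$ and $g_2$, and how the shifted pairs $(u_j g_2, g_3)$ interact with the $\delta^3$ faces $(g_1 g_2, g_3, g_4)$, $(g_1, g_2 g_3, g_4)$, etc. Matching the indices of the telescope across these four faces, and confirming that the non-cancelled residues combine to $\delta^1\varphi(g_1,g_2)\cdot \delta^1\psi(g_3, g_4)$, is the delicate combinatorial core of the argument; everything else is a norm estimate packaged from $D(\Delta)$ and the $\ell^1$-tail $S_{\psi,\Delta}$.
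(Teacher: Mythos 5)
The paper itself does not prove this theorem: it states it and refers the reader to Heuer's article \cite{Heuer}, extracting quantitative bounds from Heuer's Propositions~4.5, 4.8, 4.9, 4.10, Claim~4.11 and Lemma~4.13 only in Subsection~\ref{bound}. So there is no ``paper's own proof'' to compare against; I can only assess your sketch on its merits.

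Your overall strategy --- start from the obvious unbounded primitive $\varphi \smile \delta^1\psi$ and subtract an unbounded $3$-cocycle $C$ so that the difference becomes bounded --- is the right \emph{type} of approach. However, the specific telescope you propose does not achieve boundedness, and the reason is a misreading of what $N_\Delta$ measures. Consider the pair $(g_2,g_3)$ versus $(u_j g_2, g_3)$ in the case where $u_j g_2$ is a reduced product (no cancellation between the suffix $u_j$ of $g_1$ and $g_2$). Translating by $u_j$ shows that the center of the tripod for $(u_j g_2, g_3)$ is the image of the center for $(g_2, g_3)$, so the branches $c_2$ and $c_3$ agree entirely ($N_2 = N_3 = \infty$). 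But the branch $c_1$ (from the center back towards $1$) for $(g_2,g_3)$ is a \emph{proper prefix} of the branch $c_1'$ for $(u_j g_2, g_3)$: starting from the center, they agree for exactly $|c_1|$ steps, where $|c_1|$ is the $\Delta$-length of the path from the center to $1$ in the tripod for $(g_2,g_3)$ --- a quantity that depends on $g_2, g_3$ and \emph{not at all on} $j$. As the paper's Definition~\ref{N-D-cont} emphasizes, $N_\Delta$ only records the \emph{absolute} time of agreement from the center, not the lengths of the dangling ends. Hence $N_\Delta((g_2,g_3),(u_j g_2,g_3)) = |c_1|$ is constant in $j$, the brackets are all bounded by $x_{|c_1|}$ rather than by an $\ell^1$-sequence in $j$, and the sum $\sum_{j=1}^k \lambda(p_j)\bigl[\,\cdots\,\bigr]$ is of order $k\cdot \|\lambda\|_\infty\cdot x_{|c_1|}$, which is unbounded as $|\Delta(g_1)| = k \to \infty$. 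Your claim that ``$N_\Delta$ grows roughly linearly in $j$'' confuses the distance between the \emph{endpoints} $g_2$ and $u_j g_2$ (which is indeed $|u_j|$) with agreement of the $c$-parts near the $r$-part (which does not change).

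There is a second, separate gap that you partially acknowledge: you never verify that the correction $C = \sum_j \lambda(p_j)\, \delta^1\psi(u_j g_2, g_3)$ is a cocycle, which is what the identity $\delta^3 \pi = \delta^1\varphi \smile \delta^1\psi$ requires given that $\delta^3(\varphi\smile\delta^1\psi)$ already equals $\delta^1\varphi\smile\delta^1\psi$. Expanding $\delta^3 C$ involves the decomposition of the product $g_1 g_2$, whose pieces differ from those of $g_1$ followed by those of $g_2$ near the juncture, and it is far from clear the resulting error terms vanish or even stay bounded. For comparison, Heuer's actual argument is a considerably more elaborate multi-step construction (the paper cites six of his intermediate results just to extract a norm estimate), involving a careful ``aligned'' replacement of the cocycle localised at the tripod structure; it is not a one-formula telescope. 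Your idea is a reasonable first attempt, but as written it neither produces a bounded cochain nor establishes the required coboundary identity, and the boundedness failure is not a technicality one could hope to patch by a ``symmetrised variant'' --- it comes from a fundamental mismatch between the index $j$ and the quantity $N_\Delta$.
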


We will not prove the theorem here: the reader is referred to the paper \cite{Heuer} for the proof. Combining this with Proposition \ref{D-Dec-D-cont}, we get:

\begin{corollary}[Heuer]
\label{Heuer_A}

Let $\varphi, \psi : F \to \mathbb{R}$ be quasimorphisms which are either Brooks quasimorphisms on non-self-overlapping words or Rolli quasimorphisms. Then $[\delta \varphi] \smile [\delta \psi] = 0$.
\end{corollary}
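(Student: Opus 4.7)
The plan is to apply Theorem \ref{Heuer_B} directly, by exhibiting, for each of the three kinds of pairs $(\varphi,\psi)$, a single decomposition $\Delta$ with respect to which $\varphi$ is $\Delta$-decomposable and $\psi$ is $\Delta$-continuous. The key observations supplied by Proposition \ref{D-Dec-D-cont} are: every $\Delta$-decomposable quasimorphism is automatically $\Delta$-continuous, and every Brooks quasimorphism on a non-self-overlapping word is $\Delta$-continuous for every decomposition $\Delta$. Combined with the identifications made in Example \ref{D-dec_ex}, this turns the proof into a three-case bookkeeping exercise, with no real obstacle beyond the black box of Heuer's Theorem \ref{Heuer_B} itself.

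In the Rolli-Rolli case, take $\Delta = \Delta_{Rolli}$ from Example \ref{Dec_ex}. By Example \ref{D-dec_ex} both $\varphi$ and $\psi$ are $\Delta_{Rolli}$-decomposable; the first statement of Proposition \ref{D-Dec-D-cont} then gives that $\psi$ is in particular $\Delta_{Rolli}$-continuous, and Theorem \ref{Heuer_B} yields the desired vanishing.

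In the Brooks-Brooks case, with $\varphi = h_{w_1}$ and $\psi = h_{w_2}$ for non-self-overlapping $w_1, w_2$, take $\Delta = \Delta_{w_1}$, again from Example \ref{Dec_ex}. By Example \ref{D-dec_ex} the quasimorphism $\varphi = h_{w_1}$ is $\Delta_{w_1}$-decomposable, while the second statement of Proposition \ref{D-Dec-D-cont} ensures that $\psi = h_{w_2}$ is $\Delta_{w_1}$-continuous, regardless of whether $w_1$ and $w_2$ are related in any way. Theorem \ref{Heuer_B} closes the case.

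Finally, in the mixed case, invoking graded commutativity of the cup product we may assume $\varphi$ is the Rolli quasimorphism and $\psi = h_w$ is the Brooks quasimorphism on a non-self-overlapping word $w$. Take $\Delta = \Delta_{Rolli}$: then $\varphi$ is $\Delta_{Rolli}$-decomposable by Example \ref{D-dec_ex}, and $\psi$ is $\Delta_{Rolli}$-continuous by the second statement of Proposition \ref{D-Dec-D-cont}, since Brooks quasimorphisms on non-self-overlapping words are $\Delta$-continuous for \emph{every} decomposition. A final application of Theorem \ref{Heuer_B} concludes the proof. The corollary is therefore the simple observation that, in each of the four pairings, one of the two quasimorphisms admits a distinguished decomposition making it decomposable, while the other remains continuous with respect to that same decomposition.
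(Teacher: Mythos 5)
Your proposal is correct and takes essentially the same route as the paper: in each case one chooses $\Delta_{Rolli}$ or $\Delta_{w}$, uses Proposition \ref{D-Dec-D-cont} to supply decomposability respectively continuity, and applies Theorem \ref{Heuer_B}. The only point of difference is the mixed case in the order (Brooks, Rolli), where you invoke graded commutativity of the cup product --- a fact the paper states but deliberately does not prove, claiming it is "not needed" --- and since a Rolli quasimorphism need not be $\Delta_w$-continuous, some such step really is required for that ordering; the paper's own one-line proof leaves it implicit, so your treatment is, if anything, the more explicit one.
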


\begin{proof}
Such Brooks and Rolli quasimorphisms are decomposable for $\Delta_{Rolli}$ or $\Delta_w$ for some $w \in \mathcal{N}$. Moreover Rolli quasimorphisms are $\Delta_{Rolli}$-continuous, and Brooks quasimorphisms are $\Delta$-continuous for any $\Delta$.
\end{proof}

\subsubsection{Explicit bound on the primitive}
\label{bound}

Heuer does not give an explicit bound for a primitive of this cup product in his paper. That is, for a map $\pi \in C^3_b(F)$ such that $\delta^3 \pi = \delta^1 \varphi \smile \delta^1 \psi$. We give one here, by making reference to the proof of Theorem C in \cite{Heuer}. It is not really important to give a precise bound, but a rough one will be useful when considering $\ell^1$ sums of quasimorphisms. We will use the same notation as in the paper (which differs slightly from ours), so that the reader can follow the proof more easily. \\

Let $\Delta \neq \Delta_{triv}$ be a decomposition with pieces $\mathcal{P}$, $\phi = \phi_{\lambda, \Delta}$ a $\Delta$-decomposable quasimorphism, $\omega$ a $\Delta$-continuous 2-cocycle (i.e., the coboundary of a $\Delta$-continuous quasimorphism). Let $R$ be the defect of $\Delta$; we are assuming $\Delta \neq \Delta_{triv}$ and so $R \neq 0$ by Lemma \ref{D_triv}. Let $S_{\omega, \Delta}$ the sum of the $\ell^1$ sequence in the definition of $\Delta$-continuity for $\omega$. Let us further define:
$$C_{\lambda, \omega, \Delta} := ||\lambda||_\infty(R ||\omega||_\infty + S_{\omega, \Delta}).$$

The key point is Proposition 4.5, since all estimates start from there. Looking at all three items of the proof, we see that the "uniformly bounded" translates to $"\leq 3 ||\lambda||_\infty R ||\omega||_\infty"$ for Item 1, and to $"\leq 2 ||\lambda||_\infty R ||\omega||_\infty + ||\lambda||_\infty S_{\omega, \Delta}"$ for Items 2 and 3. In all cases, the expressions are bounded by $3C_{\lambda, \omega, \Delta}$. \\

In Proposition 4.8, 4.5 is used three times; in Proposition 4.9, 4.5 is used once in the first two cases, and five times in the last case. In Proposition 4.10, the bound on $D(\theta)$ is given by applying Claim 4.11 five times, and Claim 4.11 is proven by applying 4.5 six times. Finally, the uniform bound of Lemma 4.13 is linear in terms of the defect of the quasimorphism $\rho$. When applying it in the end of the proof, we choose $\rho = \theta$ so once again we get a linear bound in terms of $C_{\lambda, \omega, \Delta}$.

We conclude that there exists a universal constant $N$ such that $||\pi||_\infty \leq N C_{\lambda, \omega, \Delta}.$ \\

Putting this back into our notation, we have proven that:

\begin{lemma}
\label{lbound}

Let $\Delta \neq \Delta_{triv}$ be a decomposition, $\phi_{\lambda, \Delta}$ a $\Delta$-decomposable quasimorphism and $\psi$ a $\Delta$-continuous one. Define
$$C_{\lambda, \psi, \Delta} := ||\lambda||_\infty(D(\Delta) D(\psi) + S_{\psi, \Delta}).$$
Then there exists a map $\pi \in C^3_b(F)$ such that $\delta^3 \pi = \delta^1 \phi_{\lambda, \Delta} \smile \delta^1 \psi$ and $$||\pi||_\infty = \mathcal{O}( C_{\lambda, \psi, \Delta}),$$
where the implied constant is universal
\end{lemma}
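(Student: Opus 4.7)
The plan is to revisit Heuer's proof of Theorem \ref{Heuer_B} in \cite{Heuer} and track the constants at every intermediate step, since that proof provides an explicit construction of a primitive $\pi$ but only asserts qualitative boundedness. Nothing in the construction itself needs to change; what is required is to record, at each application of each preliminary lemma, the quantitative dependence on $||\lambda||_\infty$, $D(\Delta)$, $D(\psi)$ and $S_{\psi, \Delta}$, and verify that no step performs a genuinely non-linear operation on these quantities.

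The starting point is Proposition 4.5 of \cite{Heuer}, which is the only place where the hypotheses of $\Delta$-decomposability and $\Delta$-continuity enter essentially. First I would inspect each of its three items: in Item 1 the bound is of the form $3 ||\lambda||_\infty D(\Delta) D(\psi)$ (using Proposition \ref{D-dec-prop} for $\phi_{\lambda, \Delta}$ and the trivial bound $||\omega||_\infty \leq D(\psi)$ for the cocycle $\omega = \delta^1 \psi$), while Items 2 and 3 require $\Delta$-continuity, producing an additional contribution of the form $||\lambda||_\infty S_{\psi, \Delta}$. Thus all three bounds are dominated by a universal multiple of the quantity $C_{\lambda, \psi, \Delta}$ defined in the statement.

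Next I would propagate these estimates through Propositions 4.8, 4.9 and 4.10 of \cite{Heuer}. Each invokes Proposition 4.5 a bounded number of times (three times in 4.8, between one and five times in 4.9 depending on the case, and five times in 4.10 via Claim 4.11, which itself applies 4.5 six times), so the resulting auxiliary quasicocycle $\theta$ satisfies $D(\theta) = \mathcal{O}(C_{\lambda, \psi, \Delta})$ with a universal implied constant. Finally, the uniform bound of Lemma 4.13 is linear in the defect of the quasicocycle to which it is applied; applying it with $\rho = \theta$ produces the primitive $\pi$ with $||\pi||_\infty$ bounded by a universal multiple of $D(\theta)$, and chaining the estimates yields $||\pi||_\infty = \mathcal{O}(C_{\lambda, \psi, \Delta})$.

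The only obstacle here is bookkeeping: Heuer's construction spans several pages of auxiliary cocycles built from one another, and one must confirm at each recursive step that no hidden multiplicative dependence on $D(\Delta)$ or $D(\psi)$ appears beyond what is already captured by $C_{\lambda, \psi, \Delta}$. The assumption $\Delta \neq \Delta_{triv}$ guarantees $D(\Delta) \geq 1$ by Lemma \ref{D_triv}, which is used implicitly to ensure that the $D(\Delta) D(\psi)$ term in $C_{\lambda, \psi, \Delta}$ dominates the purely defect-based bounds arising from trivial applications of Proposition \ref{D-dec-prop}. Once this is verified, the claim follows directly from the constant-tracking outlined above.
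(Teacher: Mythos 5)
Your proposal is correct and follows essentially the same route as the paper: both arguments proceed by tracking constants through Heuer's Proposition 4.5 and its downstream applications in Propositions 4.8--4.10 and Lemma 4.13, observing that the only essential inputs are $\Delta$-decomposability and $\Delta$-continuity, each entering linearly. Your itemized count of applications of Proposition 4.5 (three in 4.8, up to five in 4.9, five in 4.10 via Claim 4.11 which itself uses six) matches the paper's bookkeeping exactly, as does the identification $||\delta^1\psi||_\infty = D(\psi)$ used to express the bound in terms of $D(\psi)$ rather than $||\omega||_\infty$.
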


\begin{example}
Let $w, v \in \mathcal{N}$, and let $\phi_\lambda, \phi_\mu$ be the Rolli quasimorphisms defined by $\lambda, \mu \in \ell^\infty_{alt}(\mathbb{Z})^n$. Then we get the following bounds for the primitives of the cup products from Corollary \ref{Heuer_A}:
\begin{enumerate}
\item For $\delta^3 \pi = \delta^1 h_w \smile \delta^1 h_w$: $||\pi||_\infty << 1 \cdot (D(\Delta_w) \cdot D(h_w) + S_{h_w, \Delta_w}) \leq (3 \cdot 3 + 6) << 1$.

\item For $\delta^3 \pi = \delta^1 h_w \smile \delta^1 h_v$: $||\pi||_\infty << 1 \cdot (D(\Delta_w) \cdot D(h_v) + S_{h_v, \Delta_w}) \leq (3 \cdot 3 + 6|v|) << |v|$.

\item For $\delta^3 \pi = \delta^1 \phi_\lambda \smile \delta^1 h_v$: $||\pi||_\infty << ||\lambda||_\infty \cdot (D(\Delta_{Rolli}) \cdot D(h_v) + S_{h_v, \Delta_{Rolli}}) \leq ||\lambda||_\infty \cdot (1 \cdot 3 + 6|v|) << ||\lambda||_\infty \cdot |v|$.

\item For $\delta^3 \pi = \delta^1 \phi_\lambda \smile \delta^1 \phi_\mu$: $||\pi||_\infty << ||\lambda||_\infty \cdot (D(\Delta_{Rolli}) \cdot D(\phi_\mu) + S_{\phi_\mu, \Delta_{Rolli}}) \leq ||\lambda||_\infty \cdot (1 \cdot 3 ||\mu||_\infty + 2 \cdot 3 ||\mu||_\infty) << ||\lambda||_\infty \cdot ||\mu||_\infty.$
\end{enumerate}
\end{example}

\subsection{More examples}

The core part of the prof of our main theorem is to provide more examples of decompositions and decomposable and continuous quasimorphisms. By applying Heuer's Theorem \ref{Heuer_B}, we will immediately obtain triviality of the corresponding cup products.

\subsubsection{Decompositions $\Delta_I$}

We would like to define a decomposition that admits sums of Brooks quasimorphisms as decomposable quasimorphisms. The decomposition $\Delta_w$ works by isolating all occurrences of $w$, and gives rise to the decomposable quasimorphism $h_w$. Now if we have two non-self-overlapping words $v, w$ and we want to similarly define a decomposition giving rise to $h_v + h_w$, we run into trouble if $v$ and $w$ overlap, since then it is not clear how one can isolate occurrences of both $v$ and $w$. This motivates us to look at independent families.

\begin{definition}
Let $I$ be an independent symmetric family of words. Define $\Delta_I$ by having as pieces $\mathcal{P}_I := I \cup \{ \text{words not containing words in } I \text{ as subwords} \}$; and by writing out words isolating all occurrences of elements of $I$.
\end{definition}

Since $I$ is symmetric and independent, $\Delta_I$ is well-defined. Moreover, it is a decomposition:

\begin{lemma}
\label{D_I}

$\Delta_I$ is a decomposition with defect at most 5.

\end{lemma}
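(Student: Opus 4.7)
The plan is to verify the four conditions of Definition \ref{Dec} for $\Delta_I$. Conditions (1), (2), (3) are routine: for (1), scanning an element of $F$ left-to-right and isolating maximal subwords that belong to $I$ produces a reduced expression as a concatenation of pieces in $\mathcal{P}_I$, and $S^{\pm 1} \subseteq \mathcal{P}_I$ because a single letter either lies in $I$ or contains no $I$-word as a subword; condition (2) is immediate from the symmetry of $I$, since the occurrences of $I$-words in $g^{-1}$ are precisely the inverses of the occurrences in $g$, whence $\Delta_I(g^{-1}) = \Delta_I(g)^{-1}$; and condition (3) holds because the presence of an $I$-occurrence at a given subword of a reduced word is a local property, so the decomposition of any infix $g_i \cdots g_j$ agrees with the corresponding infix of $\Delta_I(g)$.

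The main work is verifying condition (4), the defect bound. Given $g, h \in F$, let $m$ be the center of the tripod with endpoints $1, g, gh$, so that $g = m\alpha$, $h = \alpha^{-1}\beta$, and $gh = m\beta$ are reduced products. The key observation, combining the independence of $I$ with Lemma \ref{w_ol_w-1}, is that two distinct $I$-occurrences inside any reduced word are disjoint and an $I$-word does not overlap its inverse; in particular, at most one occurrence can straddle position $|m|$ in $g$, and the occurrences entirely within the prefix $m$ coincide with the occurrences in $m$ viewed as a standalone word. Letting $c_t$ denote the largest endpoint of such an occurrence (or $0$ if none exists), the pieces of $\Delta_I(g)$ with endpoint at most $c_t$ therefore coincide with the corresponding pieces of $\Delta_I(gh)$ and are absorbed into the common prefix $c_1^{-1}$; an entirely analogous argument inside $\alpha$ (using the smallest startpoint of an $O(\alpha)$-occurrence) shows that a long suffix of $\Delta_I(g)$ is absorbed into $c_2$ as the shared suffix with $\Delta_I(h^{-1})$.

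Consequently $r_1$ is confined to a boundary region of $\Delta_I(g)$ around position $|m|$, and I would bound its length by counting the cuts of $\Delta_I(g)$ inside this region. A straddling occurrence, if present, contributes two cuts (at its endpoints $p$ and $q$) and precludes $|m|$ itself from being a cut; in the absence of straddling, the position $|m|$ contributes at most one extra cut (arising when $m$ ends with an $I$-word or $\alpha$ starts with one). A short case analysis, organized by whether straddling occurs in $g$, in $gh$, or in both (and, when both, by whether their starting positions agree), caps the number of boundary pieces and yields $|r_1| \leq 5$. The bounds $|r_2|, |r_3| \leq 5$ follow from the same argument applied at the junctions $|\alpha|$ of $h$ and $|m|$ of $gh$. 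The main obstacle will be carrying out this case analysis cleanly and uniformly, since the boundary behaviors of $\Delta_I(g)$, $\Delta_I(h)$, and $\Delta_I(gh)$ around the center interact subtly: the position $|m|$ (and its counterpart in $h$) may be a cut in some of the three decompositions but not others, and straddling occurrences in different members of the triangle need not start at the same position.
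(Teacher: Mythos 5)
Your strategy for condition (4) diverges from the paper's. The paper first uses infix-closedness to reduce to $c_1 = c_2 = c_3 = \emptyset$, so that each $r_i$ is an entire decomposition, and then bounds $|\Delta_I(g)|, |\Delta_I(h)|, |\Delta_I(gh)|$ by a short case analysis on how pieces of $\Delta_I(g)$ and $\Delta_I(h)$ merge into pieces of $\Delta_I(gh)$, giving $|r_1|, |r_2| \leq 2$ and $|r_3| \leq 5$. You instead argue directly that $c_1^{-1}$ and $c_2$ absorb almost all of $\Delta_I(g)$, which is sound: the $I$-occurrences lying wholly inside $m$ are common to $g$ and $gh$, those lying wholly inside $\alpha$ are common to $g$ and $h^{-1}$, and at most one $I$-occurrence of $g$ straddles position $|m|$. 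Pushed through cleanly, this actually gives the sharper bound $|r_i| \leq 3$: the part of $\Delta_I(g)$ not absorbed into $c_1^{-1}$ or $c_2$ consists of the unique straddling $I$-piece (if it exists) flanked by at most two non-$I$ stretches.

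The gap is that the counting is never carried out, and the casework you sketch is not the right organization for it. You propose to split by whether straddling occurs in $g$, in $gh$, or in both, and by agreement of starting positions; but a straddling occurrence in $gh$ creates no pieces of $\Delta_I(g)$ whatsoever — it can only make $c_1^{-1}$ longer and hence $r_1$ shorter — so it is irrelevant to the count you need. The bound is a direct count of $I$-occurrences in a single interval, and once you observe there is at most one such occurrence the bound is immediate with no case split at all. As written, the claim $|r_1| \leq 5$ is asserted as the outcome of unfinished and misdirected casework, and you yourself flag carrying it out as "the main obstacle." The paper's infix-closedness reduction exists precisely to sidestep the delicacy of tracking where $c_1^{-1}$ and $c_2$ actually end — the very issue that made you wary of your own argument.
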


\begin{proof}
Conditions 1., 2. and 3. are clear. Let us focus on condition 4. Let $g, h \in F$ and consider the corresponding $\Delta_I$-triangle. Since we want to bound the $r$-part, we may assume that $c_1 = c_3 = \emptyset$: we can do this because $\Delta_I$ is infix-closed. Therefore we are in the case $\Delta(g) = (g_1, \ldots, g_k)$, $\Delta(h) = (h_1, \ldots, h_l)$ and $\Delta(gh) = (k_1, \ldots, k_m)$, with $g_1 \neq k_1$, $h_l \neq k_m$ and $r_3^{-1} = \Delta(gh)$. This means that when writing $gh = g_1 \cdots g_k h_1 \cdots h_l$, after simplifying and grouping the elements of $I$, we obtain $k_1 \cdots k_m$. In particular the first and last term must change. But for $g_1$ to be different than $k_1$, there are only three possible cases:

\begin{enumerate}

\item All of $g$ gets simplified. In this case $g = \underline{c_2}$ and so $r_1 = \emptyset$.

\item All terms from $g_2$ to $g_k$ get simplified, as well as part of $g_1$, so that $k_1$ is a proper subword of $g_1$, and $r_1 = (g_1)$.

\item All terms from $g_3$ to $g_k$ get simplified, as well as part of $g_2$, but $g_1 \notin I$, so $g_2 \in I$ (since otherwise the product $g_1 g_2$ would be a single term in the sequence) and $g_1$ is a proper subword of $k_1$ (the latter can be in $I$ or not). So $r_1 = (g_1, g_2)$.

\end{enumerate}

In all cases, $|r_1| \leq 2$. The situation with $h$ is analogous, so $|r_2| \leq 2$. Up to removing $c_2$, again because $\Delta_I$ is infix-closed, this allows us to reduce to the case $k, l \leq 2$. Therefore we only need to bound $m$. \\

Let us write $g = t_1 d$, $h = d^{-1} t_2$ and $gh = t_1 t_2$ as reduced products. We rewrite the sequence for $gh$ as $\Delta(gh) = (k_1, \ldots, k_m, k, k_{m+1}, \ldots, k_{m + n})$, where $k_1 \cdots k_m$ is a subword of $t_1$, $k$ overlaps the juncture of $t_1 | t_2$, and $k_{m + 1} \cdots k_{m + n}$ is a subword of $t_2$ (some of these are allowed be empty). We will show that $m \leq 2$, and similarly $n \leq 2$, which allows us to conclude that $|r_3| = |\Delta(gh)| \leq 5$. Once again we consider the three cases from before. Since $c_2$ has been removed, each case simplifies further.

\begin{enumerate}
\item In this case $g = \emptyset$, so $m = 0$.

\item In this case $g = g_1$. Therefore, for $i \leq m$, $k_i$ is a subword of $t_1$, which is a subword of $g = g_1$. But if $m \geq 2$, then both $k_1$ and $k_2$ are proper subwords of $g_1$, and either $k_1$ or $k_2$ must belong to $I$ (otherwise their product would be a single term in the sequence). If $g_1 \notin I$, then this contradicts the fact that $g_1$ contains no element of $I$ as a subword. If $g_1 \in I$, then this contradicts the fact that different words of $I$ do not overlap. Therefore in this case $m \leq 1$.

\item In this case have $g = g_1 g_2$, with $g_1 \notin I$, $g_2 \in I$, and $g_1$ is a proper subword of $k_1$. Since only part of $g_2$ gets simplified when writing $gh$, and $d$ is the largest suffix of $g$ that gets simplified, we can write $t_1 = g_1 t_1'$, where $t_1'$ is a subword of $g_2$. Therefore $k_2, \ldots, k_m$ are all subwords of $t_1'$, and hence of $g_2$. Now if $m \geq 3$, then both $k_2$ and $k_3$ are proper subwords of $g_2$, and either $k_2$ or $k_3$ must belong to $I$, by the same argument as in the previous case. Since they are proper subwords of $g_2 \in I$, we get a contradiction as before. Therefore in this case $m \leq 2$.
\end{enumerate}

Analogously, $n \leq 2$. Finally, we have shown that $|r_1|, |r_2| \leq 2$, $|r_3| \leq 5$, so $D(\Delta_I) \leq 5$.
\end{proof}

\begin{remark}

We have defined this decomposition on independent families in order to have a well-defined way of writing down elements. But looking at the proof of the lemma, we see that the independence property is also used extensively to get a bound on the defect.

\end{remark}

\begin{corollary}
\label{pnoqm}

Let $I$ be a symmetric independent family of words, and let $\alpha$ be a bounded coefficient map supported on $I$. Then $\varphi_\alpha$ is a $\Delta_I$-decomposable quasimorphism. It follows that an element of $\Sigma_{Ind}$ is a finite sum of decomposable quasimorphisms.

\end{corollary}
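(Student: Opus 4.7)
The plan is to realise $\varphi_\alpha$ as $\phi_{\lambda,\Delta_I}$ for a judicious choice of $\lambda \in \ell^\infty_{alt}(\mathcal{P}_I)$. First I would set $\lambda(w) := \alpha_w$ for $w \in I$ and $\lambda(p) := 0$ for all the remaining pieces; boundedness and the alternating property of $\lambda$ are then immediate from the corresponding properties of $\alpha$ together with the symmetry of $I$.

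The main step is to verify $\phi_{\lambda,\Delta_I}(g) = \varphi_\alpha(g)$ for every $g \in F$. Writing $\Delta_I(g) = (g_1,\ldots,g_k)$, the key combinatorial observation is that the indices $j$ with $g_j \in I$ biject with the occurrences of elements of $I$ as subwords of $g$: since $I$ is symmetric and independent and its elements are non-self-overlapping, any two such occurrences in $g$ are automatically disjoint, so the isolation procedure defining $\Delta_I$ catches all of them exactly once. Consequently, for each $w \in I$, the multiplicity of $w$ in the sequence $\Delta_I(g)$ is precisely the big counting function $C_w(g)$. Pairing $w$ with $w^{-1}$ and using that $\alpha$ is alternating then yields
$$\phi_{\lambda,\Delta_I}(g) \;=\; \sum_{j = 1}^k \lambda(g_j) \;=\; \sum_{w \in I^+} \alpha_w \bigl(C_w(g) - C_{w^{-1}}(g)\bigr) \;=\; \sum_{w \in I^+} \alpha_w\, h_w(g) \;=\; \varphi_\alpha(g),$$
where the identification $h_w = H_w$ for $w \in \mathcal{N}$ is Lemma \ref{nso_disj}.

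For the concluding sentence of the statement, I would simply unfold the definition of $\Sigma_{Ind}$: any element is of the form $\sum_{i=1}^n \varphi_{\alpha_i}$ with each $\alpha_i$ bounded, alternating and supported on a symmetric independent family $I_i$, so the first part exhibits each summand as $\Delta_{I_i}$-decomposable.

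I expect no serious obstacle here: the only point which is not pure bookkeeping is the claim that $\Delta_I$ isolates every occurrence of every $w \in I$, and this is already built into the well-definedness of $\Delta_I$ proved in Lemma \ref{D_I}. The verification is essentially a matter of unwinding definitions once the correct $\lambda$ has been written down.
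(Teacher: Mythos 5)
Your proof is correct and is the natural unwinding that the paper leaves implicit (the corollary is stated without a separate proof, resting on Lemma \ref{D_I} and the definition of $\Delta$-decomposable quasimorphisms). Your choice of $\lambda$ is exactly the generalisation of Example \ref{D-dec_ex}.3 from $\Delta_w$ to $\Delta_I$, and the counting argument showing $\phi_{\lambda,\Delta_I} = \varphi_\alpha$ correctly uses that independence of $I$ forces all occurrences of elements of $I$ to be disjoint, so that $\Delta_I$ catches each one as a separate piece.
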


We end by noting that this decomposition had already been defined for finite independent families, by Hartnick and Schweitzer in their paper \cite{qout}.

\subsubsection{Continuity of $\kappa(\ell^1)$}

The proof that Brooks quasimorphisms are $\Delta$-continuous for any $\Delta$ can be extended to the whole space $\kappa(\ell^1)$. Recall that for $\varphi_\alpha \in \kappa(\ell^1)$, we denote $S \kappa_\alpha = \sum_{n \geq 0} \kappa_\alpha(n)$.

\begin{proposition}
\label{Ccont}

Let $\varphi_\alpha \in\kappa(\ell^1)$. Then $\varphi_\alpha$ is $\Delta$-continuous for any decomposition $\Delta$. Moreover, $S_{\varphi_\alpha, \Delta} \leq 36 S \kappa_\alpha$.
\end{proposition}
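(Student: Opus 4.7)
The core idea is to leverage Proposition \ref{bigsmall}, which says that $\delta^1 h_w(g,h)$ depends only on the $|w|$-neighbourhood of the center of the tripod with vertices $1, g, gh$. The plan is to show that in the difference $\delta^1 \varphi_\alpha(g,h) - \delta^1 \varphi_\alpha(g',h')$, all terms indexed by words of length at most $N := N_\Delta((g,h),(g',h'))$ cancel out, so that only the ``tail'' of the coefficient map contributes, and this tail has small defect thanks to the $\kappa(\ell^1)$ hypothesis.

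The first step is the following cancellation lemma: if $N_\Delta((g,h),(g',h')) \geq N$, then for every $w \in \mathcal{N}$ with $|w| \leq N$, one has $\delta^1 h_w(g,h) = \delta^1 h_w(g',h')$. This is essentially the argument used in Proposition \ref{D-Dec-D-cont} for a single Brooks quasimorphism. The hypothesis $N_\Delta \geq N$ forces the $r$-parts of the two $\Delta$-triangles to coincide and the first $N$ pieces of each $c_i$ to agree with $c_i'$; since no piece is trivial, this gives agreement of the two tripods, as labeled subgraphs of the Cayley graph, in a ball of radius at least $N$ around the (aligned) center. For $|w| \leq N$ the $|w|$-neighbourhoods of the centers coincide, and Proposition \ref{bigsmall} closes the argument.

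Next, define the truncated coefficient map $\alpha^{(N)}_w := \alpha_w \mathbbm{1}\{|w| > N\}$, which remains bounded, alternating, and supported on $\mathcal{N}$. Since for any fixed pair only finitely many $h_w$ are nonzero, the coboundary commutes with the infinite sum, and the cancellation lemma gives
$$\delta^1 \varphi_\alpha(g,h) - \delta^1 \varphi_\alpha(g',h') = \delta^1 \varphi_{\alpha^{(N)}}(g,h) - \delta^1 \varphi_{\alpha^{(N)}}(g',h').$$
A direct computation from the definition yields $\kappa_{\alpha^{(N)}}(1) = \kappa_\alpha(N)$, so $\varphi_{\alpha^{(N)}} \in Cal$ and, by Proposition \ref{cal}, $D(\varphi_{\alpha^{(N)}}) \leq 3 \kappa_\alpha(N)$. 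The triangle inequality then gives $|\delta^1 \varphi_\alpha(g,h) - \delta^1 \varphi_\alpha(g',h')| \leq 6 \kappa_\alpha(N)$, and taking $x_N := 6 \kappa_\alpha(N)$ (with the $N = 0$ case covered by the trivial bound $2 D(\varphi_\alpha) \leq 6 \kappa_\alpha(0)$) produces a valid $\ell^1$ sequence as in Definition \ref{D-cont} with $\sum_{N \geq 0} x_N = 6 S\kappa_\alpha \leq 36 S\kappa_\alpha$.

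The main obstacle is the careful bookkeeping in the cancellation lemma: one must verify that the bounded-length $r$-part of the $\Delta$-triangle together with the first $N$ matching pieces of each $c_i$ really do cover the $|w|$-neighbourhood of the tripod's center whenever $|w| \leq N$. This reduces to the observation that the $r$-part agrees \emph{exactly}, so no buffer is needed there, while each matching piece along a branch contributes at least one letter of labeled agreement between the two tripods.
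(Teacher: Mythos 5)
Your argument is correct and shares the paper's key lemma (that $N_\Delta((g,h),(g',h')) \geq N$ kills every term $\delta^1 h_w(g,h) - \delta^1 h_w(g',h')$ with $|w| \leq N$, via Proposition~\ref{bigsmall}), but it organizes the tail estimate differently and the difference is worth noting. The paper, after this cancellation, re-runs a compatible-family argument from scratch: for fixed $(g,h)$ the nonzero terms $\delta^1 h_w(g,h)$ occur only for $w^{\pm 1}$ in one of three compatible families (from the decomposition $g = t_1 d$, $h = d^{-1} t_2$), and likewise for $(g',h')$, giving six families in total and the estimate
\[
\sum_{|w|>N} |\alpha_w|\,|\delta^1 h_w(g,h) - \delta^1 h_w(g',h')| \leq 36\,\kappa_\alpha(N).
\]
You instead package the surviving tail into the truncated quasimorphism $\varphi_{\alpha^{(N)}}$, observe that $\kappa_{\alpha^{(N)}}(1) = \kappa_\alpha(N)$ (the same computation that drives Lemma~\ref{kc0}), invoke Proposition~\ref{cal} for $D(\varphi_{\alpha^{(N)}}) \leq 3\kappa_\alpha(N)$, and finish by the triangle inequality. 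This reuses machinery already proved instead of repeating it, and it also tightens the constant: you get $x_N = 6\kappa_\alpha(N)$ rather than $36\kappa_\alpha(N)$, so $S_{\varphi_\alpha,\Delta} \leq 6\,S\kappa_\alpha$. That is a genuine (if minor) quantitative improvement over the paper's stated bound, and since Lemma~\ref{lbound} is linear in $S_{\psi,\Delta}$, it propagates to a slightly better primitive bound in Theorem~\ref{th_main}. The one thing to be careful about, which you flag correctly, is the cancellation lemma itself: the paper also treats it informally in Proposition~\ref{D-Dec-D-cont}, and the justification rests on the fact that the $r$-parts agree exactly and that each of the first $N$ matching pieces along each $c_i$ contributes at least one letter, so the tripods coincide on a $|w|$-ball about the center whenever $|w| \leq N$. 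Your write-up of this point is at least as explicit as the paper's.
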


\begin{proof}
Let $\Delta$ be a decomposition. Let $N \geq 1$, and let $g, h, g', h'$ be such that $N_{\Delta}((g, h), (g', h')) = N$. We estimate:
$$\delta^1 \varphi_\alpha (g, h) - \delta^1 \varphi_\alpha (g', h') = \sum\limits_{|w| > N} \alpha_w (\delta^1 h_w(g, h) - \delta^1 h_w(g', h')).$$
The words of length not larger than $N$ get simplified because $N_\Delta ((g, h), (g', h')) = N$, just as in the proof of continuity of Brooks quasimorphisms (Proposition \ref{D-Dec-D-cont}).
Let $g = t_1d, h = d^{-1}t_2$ and $gh = t_1t_2$ as reduced expressions. Then $\delta^1 h_w(g, h) \neq 0$ only if $w^{\pm 1}$ is in one of the three compatible families given by $t_1|d, d^{-1}|t_2$ or $t_1|t_2$. Similarly for $\delta^1 h_w(g', h')$, with three other compatible families. Denote by $C_1, \ldots, C_6$ the compatible families mentioned. Then the quantity above can be estimated by
$$\sum\limits_{|w| > N} |\alpha_w| |\delta^1 h_w(g, h) - \delta^1 h_w(g', h')| \leq 6 \sum\limits_{i = 1}^6 \sum_{\substack{|w| > N \\ w^{\pm 1} \in C_i}} |\alpha_w| \leq 36 \kappa_\alpha(N).$$
It follows that we can choose $x_N = 36 \kappa_\alpha(N)$. Then the $\ell^1$ convergence of $\kappa_\alpha(N)$ implies that of $x_N$, which gives $\Delta$-continuity. The value of the corresponding $\ell^1$ sum can be estimated accordingly.
\end{proof}

\subsection{Proof of the main theorem}
\label{proof_main}

We finally have all elements needed for the proof of our main theorem. Recall the definitions of the spaces $\ell^1_{Ind}$, $\kappa(\ell^1)$, $\kappa(c_0)$ from Subsection \ref{subsec_cal}. Moreover, recall that to $\varphi_\alpha \in \ell^1_{Ind}$ we associate the value $SInd_\alpha$, and to $\varphi_\beta \in \kappa(\ell^1)$ we associate the value $S \kappa_\beta$.

\begin{theorem}[Main]
\label{th_main}

Let $\varphi_\alpha, \varphi_\beta \in Cal$. Let $\phi_\lambda$ be a Rolli quasimorphism.

\begin{enumerate}
\item If $\alpha$ and $\beta$ are supported on the same symmetric independent family $I$, then $[\delta^1 \varphi_\alpha] \smile [\delta^1 \varphi_\beta] = 0 \in H^4_b(F)$. The norm of the corresponding primitive is $\mathcal{O}(||\alpha||_\infty \cdot ||\beta||_\infty)$, where the implied constant is universal.

\item If $\varphi_\alpha \in \ell^1_{Ind}$ and $\varphi_\beta \in \kappa(\ell^1)$, then $[\delta^1 \varphi_\alpha] \smile [\delta^1 \varphi_\beta] = 0 \in H^4_b(F)$. The norm of the corresponding primitive is $\mathcal{O}(SInd_\alpha \cdot S \kappa_\beta)$, where the implied constant is universal.

\item If $\varphi_\beta \in \kappa(\ell^1)$, then $[\delta^1 \phi_\lambda] \smile [\delta^1 \varphi_\beta] = 0 \in H^4_b(F)$. The norm of the corresponding primitive is $\mathcal{O}(||\lambda||_\infty \cdot S \kappa_\beta)$, where the implied constant is universal.

\item If $\varphi_\alpha \in \ell^1_{Ind}$ and $\varphi_\beta \in \kappa(c_0)$, then $[\delta^1 \varphi_\alpha] \smile [\delta^1 \varphi_\beta]$ has vanishing Gromov seminorm.

\item If $\varphi_\beta \in \kappa(c_0)$, then $[\delta^1 \phi_\lambda] \smile [\delta^1 \varphi_\beta]$ has vanishing Gromov seminorm.
\end{enumerate}
\end{theorem}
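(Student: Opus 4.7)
The plan is to reduce all five items to a combination of Heuer's Theorem \ref{Heuer_B}, its quantitative refinement Lemma \ref{lbound}, the continuity statement for $\kappa(\ell^1)$ (Proposition \ref{Ccont}), and, for items 4 and 5, the defect-approximation Lemma \ref{kc0} together with continuity of the cup product in the defect norm (Corollary \ref{cup_cont}).

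For item 1, since $\alpha$ and $\beta$ are supported on the same symmetric independent family $I$, Corollary \ref{pnoqm} shows that both $\varphi_\alpha$ and $\varphi_\beta$ are $\Delta_I$-decomposable. Then $\varphi_\beta$ is automatically $\Delta_I$-continuous by Proposition \ref{D-Dec-D-cont}, and Theorem \ref{Heuer_B} gives the vanishing. Lemma \ref{lbound} yields the norm estimate, using $D(\Delta_I) \leq 5$ (Lemma \ref{D_I}), $D(\varphi_\beta) = \mathcal{O}(||\beta||_\infty)$ (Proposition \ref{D-dec-prop}), and $S_{\varphi_\beta, \Delta_I} \leq 2 D(\varphi_\beta)$. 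Item 3 is analogous with $\Delta_I$ replaced by $\Delta_{Rolli}$: $\phi_\lambda$ is $\Delta_{Rolli}$-decomposable (Example \ref{D-dec_ex}), and $\varphi_\beta \in \kappa(\ell^1)$ is $\Delta_{Rolli}$-continuous with $S_{\varphi_\beta, \Delta_{Rolli}} \leq 36\, S\kappa_\beta$ by Proposition \ref{Ccont}, so Lemma \ref{lbound} gives $||\pi||_\infty = \mathcal{O}(||\lambda||_\infty \, S\kappa_\beta)$.

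For item 2, I would fix $\varepsilon > 0$ and choose a decomposition $\alpha = \sum_i \alpha_i$ with each $\alpha_i$ supported on a symmetric independent family $I_i$ and $\sum_i ||\alpha_i||_\infty \leq SInd_\alpha + \varepsilon$. Each $\varphi_{\alpha_i}$ is $\Delta_{I_i}$-decomposable, and $\varphi_\beta$ is $\Delta_{I_i}$-continuous for every $i$ thanks to Proposition \ref{Ccont}, with $S_{\varphi_\beta, \Delta_{I_i}} \leq 36\, S\kappa_\beta$ independently of $i$. Heuer's Theorem and Lemma \ref{lbound} furnish primitives $\pi_i \in C^3_b(F)$ of $\delta^1 \varphi_{\alpha_i} \smile \delta^1 \varphi_\beta$ with $||\pi_i||_\infty = \mathcal{O}(||\alpha_i||_\infty \, S\kappa_\beta)$ and universal implied constant, so that $\pi := \sum_i \pi_i$ converges in $\ell^\infty$ with $||\pi||_\infty = \mathcal{O}(SInd_\alpha \cdot S\kappa_\beta)$ after letting $\varepsilon \to 0$. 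The main subtle point of the whole argument is to verify that $\delta^3 \pi = \delta^1 \varphi_\alpha \smile \delta^1 \varphi_\beta$; this will follow from the explicit formula $\delta^1 \varphi_\gamma(u,v) = -\sum_{w \in j(u|v)} \gamma_w$ on reduced expressions (from the proof of Proposition \ref{cal}), which together with the uniform bound $||\delta^1 \varphi_{\alpha_i}||_\infty \leq 15 ||\alpha_i||_\infty$ and Fubini gives $\sum_i \delta^1 \varphi_{\alpha_i} = \delta^1 \varphi_\alpha$ in $\ell^\infty$; continuity of $\smile$ and of $\delta^3$ in the sup norm then yields the identity.

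Finally, for items 4 and 5, I would truncate: set $\beta_N := \beta \cdot \mathbbm{1}\{|w| \leq N\}$, so that $\varphi_{\beta_N} \in \Sigma_{Br} \subset \kappa(\ell^1)$, and by (the proof of) Lemma \ref{kc0}, $D(\varphi_\beta - \varphi_{\beta_N}) \leq 3 \kappa_\beta(N) \to 0$. Item 2 applied to $\varphi_\alpha \in \ell^1_{Ind}$ and $\varphi_{\beta_N}$ gives $[\delta^1 \varphi_\alpha] \smile [\delta^1 \varphi_{\beta_N}] = 0$ for every $N$, and item 3 applied to $\phi_\lambda$ and $\varphi_{\beta_N}$ does the same; Corollary \ref{cup_cont}, applied to the constant sequence $\varphi_\alpha$ (resp.\ $\phi_\lambda$) and the sequence $(\varphi_{\beta_N})$, then delivers vanishing of the Gromov seminorm of the limit cup product. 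The only genuine obstacle throughout is item 2's assembly of infinitely many Heuer primitives into a single cobounding $\pi$; once that identification is settled, every remaining statement reduces to a routine application of the machinery built in Section \ref{secdec}.
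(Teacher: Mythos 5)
Your proposal is correct and follows essentially the same route as the paper: item 1 via $\Delta_I$-decomposability of both quasimorphisms, item 3 via $\Delta_{Rolli}$ together with Proposition \ref{Ccont}, item 2 by assembling primitives $\pi_i$ from the pairs $(\Delta_{I_i}$-decomposable, $\Delta_{I_i}$-continuous$)$ using the bound of Lemma \ref{lbound}, and items 4--5 by truncation and Corollary \ref{cup_cont}. The only cosmetic difference is in item 2, where you justify $\delta^3\pi = \delta^1\varphi_\alpha\smile\delta^1\varphi_\beta$ via $\ell^\infty$-convergence of $\sum_i\delta^1\varphi_{\alpha_i}$ and continuity of $\smile$ and $\delta^3$ in the sup norm, while the paper argues through pointwise absolute convergence of $\varphi_\alpha = \sum_i \varphi_{\alpha_i}$; both handle the interchange correctly.
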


\begin{proof}
1. $\varphi_\alpha$ and $\varphi_\beta$ are both $\Delta_I$-decomposable, and so in particular $\varphi_\beta$ is $\Delta_I$-continuous by Proposition \ref{D-Dec-D-cont}. By Theorem \ref{Heuer_B}, we obtain $[\delta^1 \varphi_\alpha] \smile [\delta^1 \varphi_\beta] = 0$.

We estimate the norm of the corresponding primitive. $D(\varphi_\beta) \leq 3 ||\beta||_\infty D(\Delta_I) \leq 15 ||\beta||_\infty$ by Proposition \ref{D-dec-prop} and Lemma \ref{D_I}; and $S_{\varphi_\beta, \Delta_I} \leq 2 D(\varphi_\beta) \leq 30 ||\beta||_\infty$ by Proposition \ref{D-Dec-D-cont}. By Lemma \ref{lbound}, the norm of the primitive is bounded by a universal constant times:
$$C_{\alpha, \varphi_\beta, \Delta_I} = ||\alpha||_\infty (D(\Delta_I)D(\varphi_\beta) + S_{\varphi_\beta, \Delta_I}) \leq ||\alpha||_\infty(5 \cdot 15 ||\beta||_\infty + 30 ||\beta||_\infty) << ||\alpha||_\infty \cdot ||\beta||_\infty.$$

2. By Proposition \ref{cal}, $D(\varphi_\beta) \leq 3 \kappa_\beta(1)$. Let $\alpha = \sum \alpha_i$, where $\alpha_i$ is supported on the symmetric independent family $I_i$ and $SInd_{\alpha} = \sum ||\alpha_i||_\infty < \infty$ (the value $SInd_\alpha$ is an infimum which a priori is not attained, but one can take the infimum over all possible sequences $(\alpha_i)_{i \geq 1}$ at the end of the calculation). For each $i \geq 1$, consider the quasimorphism $\varphi_{\alpha_i}$ and the decomposition $\Delta_{I_i}$. Then $\varphi_{\alpha_i}$ is $\Delta_{I_i}$-decomposable, and $\varphi_\beta$ is $\Delta_{I_i}$-continuous with $S_{\varphi_\beta, \Delta_{I_i}} \leq 36 S \kappa_\beta$ by Proposition \ref{Ccont}. Therefore $[\delta^1 \varphi_{\alpha_i}] \smile [\delta^1 \varphi_\beta] = 0$ by Theorem \ref{Heuer_B}. By Lemma \ref{lbound}, the primitive $\pi_i$ has norm bounded by a universal constant times:
$$C_{\alpha_i, \varphi_\beta, \Delta_{I_i}} = ||\alpha_i||_\infty (D(\Delta_{I_i}) D(\varphi_\beta) + S_{\varphi_\beta, \Delta_{I_i}}) << ||\alpha_i||_\infty S \kappa_\beta.$$

Now $\varphi_\alpha = \sum \varphi_{\alpha_i}$. This equality makes sense because when evaluated at any $g \in F$ the right-hand side is an $\ell^1$ sum:
$$\sum\limits_{i \geq 1} |\varphi_{\alpha_i}(g)| \leq \sum\limits_{i \geq 1} \sum\limits_{|w| \leq |g|} |(\alpha_i)_w \, h_w(g)| \leq \sum\limits_{i \geq 1} (2|S|)^{|g|} ||\alpha_i||_\infty |g|  \leq (2|S|)^{|g|} |g| SInd_\alpha.$$
Therefore if we define $\pi := \sum \pi_i$, then $\delta^1 \varphi_\alpha \smile \delta^1 \varphi_\beta = \delta^3 \pi$, and this equality makes sense by the same argument. Now
$$||\pi||_\infty \leq \sum\limits_{i \geq 1} ||\pi_i||_\infty << \sum\limits_{i \geq 1} ||\alpha_i||_\infty S \kappa_\beta = S Ind_\alpha \cdot S \kappa_\beta.$$
In particular $\pi$ is bounded, so $[\delta^1 \varphi_\alpha] \smile [\delta^1 \varphi_\beta] = 0$. \\

3. $\phi_\lambda$ is $\Delta_{Rolli}$-decomposable, $\Delta_{Rolli} = 1$ by Example \ref{Dec_ex}, so $D(\phi_\lambda) \leq 3||\lambda||_\infty$  by Proposition \ref{D-dec-prop}. Moreover $\varphi_\beta$ is $\Delta_{Rolli}$-continuous with $S_{\varphi_\beta, \Delta_{Rolli}} \leq 36 S \kappa_\beta$ by Proposition \ref{Ccont}. So $[\delta^1 \phi_\lambda] \smile [\delta^1 \varphi_\beta] = 0$ by Theorem \ref{Heuer_B}. As in the previous item $D(\varphi_\beta) \leq 3 \kappa_\beta(1)$. So the primitive is bounded by a universal constant times
$C_{\lambda, \varphi_\beta, \Delta_{Rolli}} << ||\lambda||_\infty \cdot S \kappa_\beta$. \\

Item 4. follows from item 2. and item 5. follows from item 3., after a direct application of Corollary \ref{cup_cont} (continuity of the cup product) and Lemma \ref{kc0} (defect-density of $\Sigma_{Br}$ in $\kappa(c_0)$).
\end{proof}

\subsection{Self-overlapping words}
\label{subsec_so}

We have focused on non-self-overlapping words, because they are easier to deal with and we were able to prove stronger results. However, something can be said also for self-overlapping words. In all of this subsection, words will not be assumed to be non-self-overlapping.

\begin{lemma}
A big Brooks quasimorphism is $\Delta$-continuous for \underline{any} decomposition $\Delta$, and $S_{H_w, \Delta} \leq 2 |w| D(H_w)$.
\end{lemma}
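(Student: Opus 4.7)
The plan is to mimic the proof of the second statement of Proposition \ref{D-Dec-D-cont}, using only the weaker local dependence of $\delta^1 H_w$ that is still available for self-overlapping words. The crucial input is Proposition \ref{bigsmall}, which tells us that $\delta^1 H_w(g,h)$ depends only on the $|w|$-neighbourhood of the center of the tripod with endpoints $1, g, gh$. This is exactly the property that was used for $h_w$ on non-self-overlapping words, and it holds verbatim for $H_w$ on arbitrary words; the only thing that gets worse is the uniform bound on $D(H_w)$, which is now $\leq 3(|w|-1)$ instead of a universal constant.

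First I would fix $(g,h), (g',h') \in F^2$ and set $N := N_\Delta((g,h), (g',h'))$. If $N \geq |w|$, then by definition of $N_\Delta$ the three $c$-branches of the two $\Delta$-triangles travel together for at least $|w|$ steps once the tripods are aligned at their centers, so (since every piece has word length at least $1$) the $|w|$-letter neighbourhoods of the two centers coincide. By Proposition \ref{bigsmall} this forces $\delta^1 H_w(g,h) = \delta^1 H_w(g',h')$, so the difference vanishes. If instead $N < |w|$, the trivial bound $|\delta^1 H_w(g,h) - \delta^1 H_w(g',h')| \leq 2 D(H_w)$ applies. Putting this together, the sequence
\[
x_j := 2 D(H_w) \cdot \mathbbm{1}\{ j < |w| \}
\]
witnesses the $\Delta$-continuity of $H_w$. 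It is non-negative, finitely supported, and satisfies $\sum_{j \geq 0} x_j = 2 |w| D(H_w)$, giving $S_{H_w, \Delta} \leq 2|w|D(H_w)$ as claimed.

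There is no real obstacle here: the only subtle point is the implication "$N \geq |w|$ implies the $|w|$-neighbourhoods of the tripod centers agree", which is exactly the reasoning already invoked in Proposition \ref{D-Dec-D-cont} for $h_w$ and depends in no way on $w$ being non-self-overlapping. The statement is a direct generalization of the second half of that proposition, and the proof is essentially one line plus the estimate for $\sum x_j$.
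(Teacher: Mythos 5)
Your proof is correct and follows the same approach as the paper's: both invoke Proposition \ref{bigsmall} to get that $\delta^1 H_w(g,h)$ depends only on the $|w|$-neighbourhood of the tripod center, observe that $N_\Delta((g,h),(g',h')) \geq |w|$ forces equality of coboundaries, and use the finitely supported sequence $x_j = 2D(H_w)\,\mathbbm{1}\{j < |w|\}$ to obtain $S_{H_w,\Delta} \leq 2|w|D(H_w)$.
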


\begin{proof}
The proof is the same as that of Proposition \ref{D-Dec-D-cont}. By Proposition \ref{bigsmall}, the value of $\delta^1 H_w(g, h)$ only depends on the $|w|$-neighbourhood of the center of the tripod with endpoints $1, g, gh$. Thus if $N_\Delta((g, h), (g', h')) \geq |w|$, then $\delta^1 H_w(g, h) = \delta^1 H_w(g', h')$. So we can define $x_j = 2 D(H_w)$ for $0 \leq j < |w|$, and $x_j = 0$ for $j \geq |w|$. So $S_{H_w, \Delta} \leq 2 |w| D(H_w)$.
\end{proof}

We can even deal with some infinite sums that are the analogue of the space $w \ell^1_{Br}$:

\begin{proposition}
\label{Ccont2}
Let $\varphi := \sum \alpha_w H_w$ be such that $\sum |w| |\alpha_w| D(H_w) < \infty$. Then $\varphi$ is a $\Delta$-continuous quasimorphism for \underline{any} decomposition $\Delta$, with $D(\varphi) \leq \sum |\alpha_w| D(H_w)$ and $S_{\varphi, \Delta} \leq 2 \sum |w| |\alpha_w| D(H_w)$.
\end{proposition}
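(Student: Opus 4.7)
The plan is to adapt the proof of the previous lemma (treating a single $H_w$) to the setting of infinite sums, using the hypothesis $\sum |w| |\alpha_w| D(H_w) < \infty$ exactly where the factor $|w|$ is needed.

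First I would check that $\varphi$ is a well-defined alternating quasimorphism with the claimed defect bound. For each $g \in F$ only finitely many words $w$ with $|w| \leq |g|$ satisfy $H_w(g) \neq 0$, so $\varphi(g)$ is given by a finite sum. Consequently, for any fixed pair $(g, h)$ the sum $\delta^1 \varphi(g, h) = \sum_w \alpha_w \delta^1 H_w(g, h)$ is effectively finite, since only terms with $|w| \leq \max(|g|, |h|, |gh|)$ can contribute. Using $|\delta^1 H_w(g, h)| \leq D(H_w)$ and the fact that $|w| \geq 1$ whenever $H_w \not\equiv 0$, we get
$$|\delta^1 \varphi(g, h)| \leq \sum_w |\alpha_w| D(H_w) \leq \sum_w |w| |\alpha_w| D(H_w) < \infty,$$
which proves both that $\varphi$ is a quasimorphism and the bound $D(\varphi) \leq \sum |\alpha_w| D(H_w)$.

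Next I would turn to $\Delta$-continuity. The crucial ingredient is the observation from Proposition \ref{bigsmall} that $\delta^1 H_w(g, h)$ depends only on the $|w|$-neighbourhood of the center of the tripod with endpoints $1, g, gh$. Let $(g, h), (g', h') \in F^2$ and set $N := N_\Delta((g, h), (g', h'))$. Since $1 \notin \mathcal{P}$, every piece has word length at least $1$, so if $N \geq |w|$ then the two $\Delta$-triangles (having the same $r$-part and their $c_i$-sequences travelling together for at least $|w|$ pieces) agree on a $|w|$-neighbourhood of the common tripod center. By the observation just recalled, this gives $\delta^1 H_w(g, h) = \delta^1 H_w(g', h')$. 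Hence
$$|\delta^1 \varphi(g, h) - \delta^1 \varphi(g', h')| \leq \sum_{|w| > N} |\alpha_w| \cdot |\delta^1 H_w(g, h) - \delta^1 H_w(g', h')| \leq 2 \sum_{|w| > N} |\alpha_w| D(H_w),$$
so we may take $x_N := 2 \sum_{|w| > N} |\alpha_w| D(H_w)$.

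Finally I would verify that $(x_N)_{N \geq 0}$ is $\ell^1$ with the stated total. Swapping the two sums (they have non-negative terms, so Tonelli applies):
$$\sum_{N \geq 0} x_N = 2 \sum_{N \geq 0} \sum_{|w| > N} |\alpha_w| D(H_w) = 2 \sum_w |\alpha_w| D(H_w) \, \#\{N \geq 0 : N < |w|\} = 2 \sum_w |w| |\alpha_w| D(H_w),$$
which is finite by hypothesis and yields $S_{\varphi, \Delta} \leq 2 \sum |w| |\alpha_w| D(H_w)$. There is no real obstacle here; the one point that requires mild care is the passage from $N_\Delta \geq |w|$ in the piece metric to agreement on a $|w|$-neighbourhood in the word metric, which is precisely where the condition $1 \notin \mathcal{P}$ from Definition \ref{Dec} is used.
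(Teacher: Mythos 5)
Your proof is correct and follows essentially the same route as the paper: the paper first records (in the preceding lemma) that each $H_w$ is $\Delta$-continuous with $x_j^w = 2D(H_w)$ for $j < |w|$ and $0$ afterwards, and then sums these sequences and swaps the two sums exactly as you do, with the defect bound obtained by sublinearity of $D$. Your inlining of the single-word argument via Proposition \ref{bigsmall} and the remark about pieces having length at least one are just explicit versions of the same steps.
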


\begin{proof}
The estimate for the defect follows from the sublinearity of $D$.

Let $\Delta$ be a decomposition and $(x^w_j)_{j \geq 0}$ be the non-negative $\ell^1$ sequence giving $\Delta$-continuity of $H_w$. Let $(g, h) \neq (g', h') \in F^2$. If $N_\Delta((g, h), (g', h')) = N$:
$$|\delta^1 \varphi(g, h) - \delta^1 \varphi(g', h')| \leq \sum\limits_w |\alpha_w| |\delta^1 H_w(g, h) - \delta^1 H_w(g', h')| \leq \sum\limits_w |\alpha_w| |x^w_N| =: x_N.$$
We need to show that $(x_j)_{j \geq 0}$ is $\ell^1$ and calculate the series.
$$\sum\limits_{j \geq 0} x_j = \sum\limits_{j \geq 0} \sum \limits_w |\alpha_w| |x_j^w| = \sum\limits_{j \geq 0} \sum\limits_{|w| > j} |\alpha_w| 2 D(H_w) = $$
$$ = \sum\limits_w \sum\limits_{j < |w|} |\alpha_w| 2 D(H_w) = 2 \sum\limits_w |w| |\alpha_w| D(H_w) < \infty.$$
\end{proof}

\begin{corollary}
Let $\varphi := \sum \alpha_w H_w$ be such that $\sum |w|(|w| - 1) |\alpha_w| < \infty$. Then $\varphi$ is $\Delta$-continuous for any $\Delta$ and $S_{\varphi, \Delta} \leq 6 \sum |w|(|w| - 1) |\alpha_w|$.
\end{corollary}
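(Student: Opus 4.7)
The plan is essentially to feed the estimate from Proposition \ref{bigsmall} into Proposition \ref{Ccont2} and see the corollary fall out. Recall that Proposition \ref{bigsmall} gives the bound $D(H_w) \leq 3(|w| - 1)$ for any word $w$, even when $w$ is self-overlapping.

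First I would verify that the hypothesis of Proposition \ref{Ccont2} is satisfied. Multiplying the inequality $D(H_w) \leq 3(|w|-1)$ by $|w||\alpha_w|$ and summing yields
\[ \sum_w |w| |\alpha_w| D(H_w) \leq 3 \sum_w |w|(|w|-1)|\alpha_w| < \infty, \]
so our hypothesis is at least as strong as what Proposition \ref{Ccont2} requires. We may therefore conclude that $\varphi = \sum \alpha_w H_w$ is a $\Delta$-continuous quasimorphism for any decomposition $\Delta$.

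Next I would estimate $S_{\varphi, \Delta}$ using the same substitution. Proposition \ref{Ccont2} gives $S_{\varphi, \Delta} \leq 2 \sum_w |w| |\alpha_w| D(H_w)$, and bounding $D(H_w) \leq 3(|w|-1)$ termwise yields
\[ S_{\varphi, \Delta} \leq 2 \sum_w |w| |\alpha_w| \cdot 3(|w|-1) = 6 \sum_w |w|(|w|-1)|\alpha_w|, \]
which is the claimed bound.

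There is no real obstacle here: the corollary is a direct substitution into the previous proposition, and no new combinatorics are needed. The only thing worth checking carefully is that no finer constant is lost in passing from the general estimate of Proposition \ref{Ccont2} to this form, but since $D(H_w) \leq 3(|w|-1)$ is the only ingredient used, the constant $6$ is exactly what comes out.
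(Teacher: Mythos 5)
Your proof is correct and follows exactly the paper's route: substitute the bound $D(H_w) \leq 3(|w|-1)$ from Proposition \ref{bigsmall} into Proposition \ref{Ccont2}, both for finiteness of the hypothesis and for the estimate on $S_{\varphi, \Delta}$. The paper's own proof is a one-line invocation of the same bound.
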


\begin{proof}
$D(H_w) \leq 3(|w|-1)$ by Proposition \ref{bigsmall}.
\end{proof}

\begin{remark}
For estimates it may be easier to use: $\frac{1}{2} \sum |w|^2 |\alpha_w| \leq \sum |w|(|w| - 1) |\alpha_w| \leq \sum |w|^2 |\alpha_w|$ (if no $w$ appearing in the sum is a generator).
\end{remark}

This gives more vanishing cup products:

\begin{theorem}
\label{cupbig}
Let $\varphi_\alpha \in \ell^1_{Ind}$, $\phi_\lambda$ a Rolli quasimorphism and $\psi := \sum \beta_w H_w$. Let $S := \sum |w| |\beta_w| D(H_w)$ and suppose that $S < \infty$.
\begin{enumerate}
\item $[\delta^1 \varphi_\alpha] \smile [\delta^1 \psi] = 0 \in H^4_b(F)$. The norm of the corresponding primitive is $\mathcal{O}(SInd_\alpha \cdot S)$, where the implied constant is universal.
\item $[\delta^1 \phi_\lambda] \smile [\delta^1 \psi] = 0 \in H^4_b(F)$. The norm of the corresponding primitive is $\mathcal{O}(||\lambda||_\infty \cdot S)$, where the implied constant is universal.
\end{enumerate}
\end{theorem}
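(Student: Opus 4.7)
The plan is to imitate closely the proof of Theorem \ref{th_main}, replacing the role of $\Delta$-continuity of $\varphi_\beta \in \kappa(\ell^1)$ (which came from Proposition \ref{Ccont}) with the $\Delta$-continuity of $\psi$ provided by Proposition \ref{Ccont2}. The latter gives $\Delta$-continuity for \emph{any} decomposition $\Delta$, together with the estimates $D(\psi) \leq \sum |\beta_w| D(H_w) \leq S$ (since every non-identity $w$ has $|w| \geq 1$) and $S_{\psi, \Delta} \leq 2S$. These are exactly the two ingredients I need to feed into Lemma \ref{lbound}.

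For part 2, I would first write $\phi_\lambda$ as $\Delta_{\mathrm{Rolli}}$-decomposable, so that Heuer's Theorem \ref{Heuer_B} applies directly: $[\delta^1 \phi_\lambda] \smile [\delta^1 \psi] = 0$. Using $D(\Delta_{\mathrm{Rolli}}) = 1$ from Example \ref{Dec_ex}, Lemma \ref{lbound} yields a bounded primitive $\pi$ with
\[
\|\pi\|_\infty \ll C_{\lambda, \psi, \Delta_{\mathrm{Rolli}}} = \|\lambda\|_\infty \bigl(D(\Delta_{\mathrm{Rolli}}) D(\psi) + S_{\psi, \Delta_{\mathrm{Rolli}}}\bigr) \leq \|\lambda\|_\infty (S + 2S) \ll \|\lambda\|_\infty \cdot S.
\]

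For part 1, I would decompose $\alpha = \sum_{i \geq 1} \alpha_i$ with $\alpha_i$ supported on a symmetric independent family $I_i$ and $\sum \|\alpha_i\|_\infty$ arbitrarily close to $SInd_\alpha$. By Corollary \ref{pnoqm}, each $\varphi_{\alpha_i}$ is $\Delta_{I_i}$-decomposable, and by Proposition \ref{Ccont2}, $\psi$ is $\Delta_{I_i}$-continuous with the uniform bound $S_{\psi, \Delta_{I_i}} \leq 2S$. Heuer's theorem then provides primitives $\pi_i$ of $\delta^1 \varphi_{\alpha_i} \smile \delta^1 \psi$ with
\[
\|\pi_i\|_\infty \ll \|\alpha_i\|_\infty \bigl(D(\Delta_{I_i}) D(\psi) + S_{\psi, \Delta_{I_i}}\bigr) \leq \|\alpha_i\|_\infty(5S + 2S) \ll \|\alpha_i\|_\infty \cdot S,
\]
where $D(\Delta_{I_i}) \leq 5$ by Lemma \ref{D_I}. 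Setting $\pi := \sum_i \pi_i$, I would check that this sum converges pointwise and satisfies $\|\pi\|_\infty \leq \sum_i \|\pi_i\|_\infty \ll SInd_\alpha \cdot S$; the bound on the norm is immediate once the sum converges, and the hope is that a pointwise convergence argument like the one in the proof of Theorem \ref{th_main} (using that $F$ is locally finite to bound pointwise how many terms are nonzero) works verbatim. Finally, $\delta^3 \pi = \delta^1 \varphi_\alpha \smile \delta^1 \psi$ follows from termwise application of $\delta^3$ and linearity/pointwise convergence of the cup product, giving $[\delta^1 \varphi_\alpha] \smile [\delta^1 \psi] = 0$.

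The main obstacle I expect is purely bookkeeping: ensuring that the termwise decomposition $\varphi_\alpha = \sum_i \varphi_{\alpha_i}$ really does commute with cup-producting against $\psi$ at the cochain level, and that the primitive $\pi = \sum_i \pi_i$ is a genuine bounded cochain whose coboundary equals $\delta^1 \varphi_\alpha \smile \delta^1 \psi$. This was handled in the proof of Theorem \ref{th_main} by the observation that for each fixed $g \in F$ only finitely many Brooks contributions are nonzero, bounding the relevant sums pointwise by $(2|S|)^{|g|} |g| \cdot SInd_\alpha$; the same argument should transfer here without modification, so the proof reduces to assembling the pieces already in place.
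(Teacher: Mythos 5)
Your proposal is correct and follows essentially the same route as the paper's proof: it invokes Proposition \ref{Ccont2} for $\Delta$-continuity of $\psi$ with the bounds $D(\psi) \leq S$ and $S_{\psi, \Delta} \leq 2S$, and then repeats the argument of items 2 and 3 of Theorem \ref{th_main} (Heuer's Theorem \ref{Heuer_B} together with Lemma \ref{lbound}, and for item 1 the termwise summation of primitives justified by the pointwise estimate $(2|S|)^{|g|}|g|\cdot SInd_\alpha$). The bookkeeping issue you flag at the end is handled in the paper exactly as you anticipate, so there is no gap.
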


\begin{proof}
We have $D(\psi) \leq \sum |\beta_w| D(H_w) < S$ and $S_{\psi, \Delta} \leq 2 S$ by Proposition \ref{Ccont2}. Then the proof is the same as that of items 2. and 3. of Theorem \ref{th_main}, where $\varphi_\beta \in \kappa(\ell^1)$. The only difference is that $S_{\varphi_\beta, \Delta} << S \kappa_\beta$ is replaced by $S_{\psi, \Delta} << S$, and $D(\varphi_\beta) << S \kappa_\beta$ is replaced by $D(\psi) << S$.
\end{proof}

\begin{corollary}
\label{cupbig2}
Let $\varphi_\alpha, \phi_\lambda$ be as in Theorem \ref{cupbig}, and let $w$ be any word. Then:
\begin{enumerate}
\item $[\delta^1 \varphi_\alpha] \smile [\delta^1 H_w] = 0 \in H^4_b(F)$. The norm of the corresponding primitive is $\mathcal{O}(SInd_\alpha \cdot D(H_w) |w|))$, where the implied constant is universal.
\item $[\delta^1 \phi_\lambda] \smile [\delta^1 H_w] = 0 \in H^4_b(F)$. The norm of the corresponding primitive is $\mathcal{O}(||\lambda||_\infty \cdot D(H_w) |w|))$, where the implied constant is universal.
\end{enumerate}
\end{corollary}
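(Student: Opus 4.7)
The plan is to obtain Corollary \ref{cupbig2} as an immediate specialization of Theorem \ref{cupbig}. Given a fixed word $w \in F$, I would define the coefficient map $\beta : F \to \mathbb{R}$ by $\beta_v := 1$ if $v = w$ and $\beta_v := 0$ otherwise. Then the sum $\psi := \sum_v \beta_v H_v$ collapses to the single big Brooks quasimorphism $H_w$. This is the natural choice of input to feed into Theorem \ref{cupbig}, because the hypothesis of that theorem only requires finiteness of a single weighted $\ell^1$-type quantity $S$, which becomes trivial when the sum has just one term.

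Next, I would verify the finiteness hypothesis of Theorem \ref{cupbig}. With the above choice of $\beta$, the quantity $S := \sum_v |v| |\beta_v| D(H_v)$ reduces to $S = |w| \cdot D(H_w)$, which is finite (and by Proposition \ref{bigsmall} is at most $3|w|(|w|-1)$, though only finiteness is needed here). Applying part 1 of Theorem \ref{cupbig} to $\varphi_\alpha \in \ell^1_{Ind}$ and $\psi = H_w$ then yields $[\delta^1 \varphi_\alpha] \smile [\delta^1 H_w] = 0 \in H^4_b(F)$, together with a primitive whose norm is $\mathcal{O}(SInd_\alpha \cdot S) = \mathcal{O}(SInd_\alpha \cdot D(H_w) \, |w|)$; applying part 2 of the same theorem to $\phi_\lambda$ and $\psi = H_w$ yields $[\delta^1 \phi_\lambda] \smile [\delta^1 H_w] = 0$, with primitive norm $\mathcal{O}(\|\lambda\|_\infty \cdot S) = \mathcal{O}(\|\lambda\|_\infty \cdot D(H_w) \, |w|)$. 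These are exactly the bounds claimed in the two items of the corollary.

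There is essentially no obstacle to overcome here: the combinatorial content has already been absorbed into Theorem \ref{cupbig}, which in turn relies on Proposition \ref{Ccont2} to show that \emph{any} $\psi$ of the form $\sum \beta_v H_v$ with $\sum |v| |\beta_v| D(H_v) < \infty$ is $\Delta$-continuous for every decomposition $\Delta$, and on the quantitative primitive bound from Lemma \ref{lbound}. The corollary is the $\beta = \mathbbm{1}\{v = w\}$ specialization of that result, and the only "work" is to recognize that a single big Brooks quasimorphism is an element of the class considered in Theorem \ref{cupbig}, which is immediate.
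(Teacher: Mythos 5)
Your proposal is correct and is exactly how the corollary follows in the paper: specialize Theorem \ref{cupbig} to the single-term sum $\psi = H_w$ (i.e.\ $\beta_v = \mathbbm{1}\{v = w\}$), for which $S = |w|\,D(H_w) < \infty$, and the claimed primitive bounds $\mathcal{O}(SInd_\alpha \cdot D(H_w)\,|w|)$ and $\mathcal{O}(\|\lambda\|_\infty \cdot D(H_w)\,|w|)$ drop out immediately. No gap.
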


This last corollary is a consequence of Theorem \ref{th_main} if $w$ is non-self-overlapping, but it tells us something new if $w$ is self-overlapping. In the case in which $\varphi_\alpha \in \Sigma_{Br}$, it follows from the vanishing theorem of Bucher and Monod (Theorem \ref{intro_BM}), but for a general $\varphi_\alpha \in \ell^1_{Ind}$, as well as for Rolli quasimorphisms, it is also new. Nonetheless, the vanishing of the cup product of two general big Brooks quasimorphisms does not follow from any of our results, and is still exclusive to Theorem \ref{intro_BM}. \\

We end by pointing out that these results do not carry over to general small Brooks quasimorphisms. Indeed, we have seen in Example \ref{small_Brooks} that given a cyclically reduced word $w$, even though the tripod with endpoints $1, w^{2n}, w^{4n}$ and the one with endpoints $1, w^{2n+1}, w^{4n+2}$ have overlaps of size $2n|w|$ around both branches (the middle one being empty since the product is reduced), the values of $\delta^1 h_{w^2}(w^{2n}, w^{2n})$ and $\delta^1 h_{w^2}(w^{2n+1}, w^{2n+1})$ differ by 1. Therefore $h_{w^2}$ cannot be $\Delta$-continuous for all $\Delta$. For instance, it is not $\Delta_{triv}$-continuous, it is not $\Delta_{Rolli}$-continuous unless $w$ is a power of a generator, and it is not $\Delta_v$-continuous if $v$ is a non-self-overlapping word that is a subword of some power of $w$.

\pagebreak

\section{Free products of decompositions}
\label{secprod}

We will start by introducing a decomposition on free products of free groups, and use it to define the notion of free products of quasimorphisms, which was introduced by Rolli in \cite{Rolli} as a generalization of Rolli quasimorphisms. We then introduce a notion of free product of decompositions, and describe the corresponding decomposable and continuous quasimorphisms in terms of free products of quasimorphisms. Finally, we study the map $\iota : F \to F*F$, state an open question, and prove a theorem that depends on it. \\

Throughout, $F_1, \ldots, F_n$ are non-abelian free groups of finite rank with fixed bases $S_1, \ldots, S_n$, and $*F := F_1 * \cdots * F_n$ is their free product, which is a free group with basis $S_1 \cup \cdots \cup S_n$. We denote by $\iota_i : F_i \to *F$ the canonical inclusion. If the index is clear from the context, we will occasionally identify an element of $F_i$ with its image in $*F$.

\subsection{A decomposition on free products}

Recall that the decomposition $\Delta_{Rolli}$ works by grouping together all consecutive occurrences of the same generator in a reduced word. We can generalize this to free products of free groups. Indeed, given an element $1 \neq g \in *F$, we can write it uniquely as $\iota_{i_1}(g_1) \iota_{i_2}(g_2) \cdots \iota_{i_k}(g_k)$, where $i_j \neq i_{j+1}$ and $g_j \neq 1$. We define $\Delta_*(g) := (\iota_{i_1}(g_1), \ldots, \iota_{i_k}(g_k))$.

\begin{lemma}
\label{D_*}

$\Delta_*$ is a decomposition with pieces $\iota_1(F_1 \, \backslash \, \{ 1 \}) \cup \cdots \cup \iota_n(F_n \, \backslash \, \{ 1 \})$ and defect 1.
\end{lemma}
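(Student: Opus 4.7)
The plan is to verify the four axioms of Definition \ref{Dec} in turn, with the only nontrivial piece being the defect bound.

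I would start with the easy bookkeeping. The set $\mathcal{P}_* := \iota_1(F_1 \setminus \{1\}) \cup \cdots \cup \iota_n(F_n \setminus \{1\})$ is symmetric, avoids $1$, and contains $S^{\pm 1}$ for $S = S_1 \cup \cdots \cup S_n$. For $1 \neq g \in *F$ the alternating normal form $g = \iota_{i_1}(g_1) \cdots \iota_{i_k}(g_k)$ (with $i_j \neq i_{j+1}$ and $g_j \neq 1$) is unique, and the resulting product is automatically reduced in $*F$ because the generating sets $S_{i_j}$ and $S_{i_{j+1}}$ are disjoint whenever $i_j \neq i_{j+1}$; this gives axiom (1). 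Axiom (2) falls out of reversing the sequence and inverting each factor, and axiom (3) is immediate because any consecutive substring of an alternating sequence is still alternating.

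For axiom (4), my main step is a case analysis on the junction between $\Delta_*(g) = (a_1, \ldots, a_k)$ and $\Delta_*(h) = (b_1, \ldots, b_l)$. Let $j_0$ be the largest $j \geq 0$ with $a_{k-i+1} = b_i^{-1}$ for every $i \leq j$, and set $v := b_1 \cdots b_{j_0}$, $u := a_1 \cdots a_{k-j_0}$, $w := b_{j_0+1} \cdots b_l$, so that $g = uv$, $h = v^{-1}w$ and $gh = uw$ are reduced products. If $u$ is empty, or $w$ is empty, or $a_{k-j_0}$ and $b_{j_0+1}$ lie in different factors (case A), then $\Delta_*(gh)$ is just the concatenation $(a_1, \ldots, a_{k-j_0}, b_{j_0+1}, \ldots, b_l)$, the $c_i$-parts are read off directly from this formula, and all three $r_i$ are empty. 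Otherwise (case B) $a_{k-j_0}$ and $b_{j_0+1}$ lie in the same factor and their product is nontrivial (strict by maximality of $j_0$), so they merge into a single piece, giving $\Delta_*(gh) = (a_1, \ldots, a_{k-j_0-1}, a_{k-j_0} b_{j_0+1}, b_{j_0+2}, \ldots, b_l)$; comparing against $\Delta_*(g)$ and $\Delta_*(h)$ by hand reveals $r_1 = (a_{k-j_0})$, $r_2 = (b_{j_0+1})$ and $r_3 = ((a_{k-j_0}b_{j_0+1})^{-1})$, each of length one, with $\underline{r_1}\,\underline{r_2}\,\underline{r_3} = 1$ as required.

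This yields $D(\Delta_*) \leq 1$, and the lower bound $D(\Delta_*) \geq 1$ is automatic from Lemma \ref{D_triv} since each $F_i$ is non-abelian and therefore $\mathcal{P}_* \neq S^{\pm 1}$. There is no conceptual obstacle here: the only risk is mishandling the cancellation, which is exactly why I would separate the count $j_0$ of complete cancellations from the possibility of a final within-factor merger before reading off the $r_i$.
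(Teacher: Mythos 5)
Your argument is correct and takes essentially the same approach as the paper, which also reduces axiom (4) to the dichotomy of no merger (all $r_i$ empty) versus a single within-factor merger (each $r_i$ of length one); your addition of the lower bound $D(\Delta_*) \geq 1$ via Lemma \ref{D_triv} is a reasonable extra step the paper leaves implicit. One small notational slip: with $v := b_1 \cdots b_{j_0}$ and $a_{k-i+1} = b_i^{-1}$ for $i \leq j_0$, the reduced products should read $g = u v^{-1}$ and $h = v w$ rather than $g = uv$ and $h = v^{-1}w$, though this does not affect the substance of the proof.
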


\begin{proof}
Conditions 1., 2. and 3. are clear. For condition 4., write $\Delta_*(g) = c_1^{-1} \cdot r_1 \cdot c_2$, $\Delta_*(h) = c_2^{-1} \cdot r_2 \cdot c_3$ and $\Delta_*(gh) = c_1^{-1} \cdot r_3^{-1} \cdot c_3$, just as in Definition \ref{Dec}. If either one of $r_1$ or $r_2$ is empty, then $r_1 = r_2 = r_3 = \emptyset$, and we are done. Otherwise, the last element $x$ of $r_1$ and the first element $y$ of $r_2$ belong to the same $F_i$, and they cannot cancel out (since otherwise they would be in $c_2$), i.e., $x \neq y^{-1}$. It follows that $r_1 = (x)$, $r_2 = (y)$ and $r_3^{-1} = (xy)$ all have length 1.
\end{proof}

\begin{definition}
Given $g, h \in *F$, if the $r$-part of the $\Delta_*$-triangle is empty we write $r_*(g, h) = (1, 1)$; else $r_*(g, h) = (x, y)$, where $x$ and $y$ are as in the proof. We also denote $i_*(g, h) \in \{ 1, \ldots, n \}$ for the unique index such that $x, y \in F_{i_*(g, h)}$, and give it an arbitrary value if $x = y = 1$ (it will make no difference in all instances in which this notation is used).
\end{definition}

\begin{example}
If $F_i =  \mathbb{Z}$ for every $i$, then $\Delta_* = \Delta_{Rolli}$ on $*F$.
\end{example}

This already yields quasimorphisms of free products that generalize Rolli quasimorphisms: the $\Delta_*$-decomposable quasimorphisms. However, we can generalize this construction further, which is what we will do next.

\subsection{Free products of quasimorphisms}

\begin{definition}
For all $1 \leq i \leq n$, let $\varphi_i \in Q_{alt}(F_i)$. Then we define $\varphi_1 * \cdots * \varphi_n : *F \to \mathbb{R}$ as follows: if $\Delta_*(g) = (\iota_{i_1}(g_1), \ldots, \iota_{i_k}(g_k))$, then $\varphi_1 * \cdots * \varphi_n (g) = \sum \varphi_{i_j}(g_j)$.
\end{definition}

This definition resembles that of a $\Delta_*$-decomposable quasimorphism, but here we are allowing quasimorphisms on pieces, and not just bounded functions. Luckily this behaves well:

\begin{proposition}[Rolli]
\label{prod_qm}

Let $\varphi := \varphi_1 * \cdots * \varphi_n$ be as above. Then $\varphi$ is an alternating quasimorphism, and $D(\varphi) \leq \max (D(\varphi_1), \ldots, D(\varphi_n))$. Moreover, the following formula holds:
$$\delta^1 \varphi (g, h) = \delta^1 \varphi_{i_*(g, h)}(r_*(g, h)).$$
\end{proposition}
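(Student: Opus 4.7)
The plan is to follow the pattern of the proof of Proposition \ref{D-dec-prop}, lifting the argument from bounded decomposable data to the setting where the data $\varphi_i$ on each free factor $F_i$ is itself a quasimorphism. The key trick is to extend $\varphi$ by additivity to finite sequences of pieces: for $p = (\iota_{i_1}(\tilde{p}_1), \ldots, \iota_{i_k}(\tilde{p}_k))$, set $\varphi(p) := \sum_j \varphi_{i_j}(\tilde{p}_j)$. This agrees with $\varphi$ on $*F$ when $p = \Delta_*(g)$, and it is obviously additive under concatenation of sequences.

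Alternating-ness is immediate from condition 2 of Definition \ref{Dec}: $\Delta_*(g^{-1}) = \Delta_*(g)^{-1}$, and each $\varphi_{i_j}$ is alternating, so summing gives $\varphi(g^{-1}) = -\varphi(g)$. For the main formula, I would decompose the $\Delta_*$-triangle of $(g,h)$ as $\Delta_*(g) = c_1^{-1} \cdot r_1 \cdot c_2$, $\Delta_*(h) = c_2^{-1} \cdot r_2 \cdot c_3$, $\Delta_*(gh) = c_1^{-1} \cdot r_3^{-1} \cdot c_3$. Infix-closedness (condition 3) plus the alternating property let the $c_j$ contributions cancel pairwise exactly as in Proposition \ref{D-dec-prop}, yielding
\[
\delta^1 \varphi(g,h) \; = \; \varphi(\underline{r_1}) + \varphi(\underline{r_2}) + \varphi(\underline{r_3}).
\]

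Now Lemma \ref{D_*} does the rest: either the $r$-part is empty, in which case both sides of the target identity vanish (using $\varphi_i(1)=0$ by alternating-ness), or $r_1 = (x)$, $r_2 = (y)$, $r_3^{-1} = (xy)$ with $x, y \in F_i$ for $i = i_*(g,h)$ and $xy \neq 1$ — the last point following because if $x$ and $y$ cancelled they would have been absorbed into $c_2$. In this nontrivial case $\varphi(\underline{r_1}) = \varphi_i(x)$, $\varphi(\underline{r_2}) = \varphi_i(y)$, and $\underline{r_3} = (xy)^{-1} \in F_i$ has a single-piece $\Delta_*$-decomposition, so $\varphi(\underline{r_3}) = -\varphi_i(xy)$. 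Combining yields $\delta^1 \varphi(g,h) = \delta^1 \varphi_{i}(x,y) = \delta^1 \varphi_{i_*(g,h)}(r_*(g,h))$, from which the defect bound $D(\varphi) \leq \max_i D(\varphi_i)$ is immediate. I do not foresee any real obstacle; the only bookkeeping subtleties are confirming $xy \neq 1$ and checking that the empty $r$-part case also matches the stated formula, both of which are direct appeals to Lemma \ref{D_*}.
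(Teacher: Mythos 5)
Your proposal is correct and follows essentially the same route as the paper: write out the $\Delta_*$-triangle for $(g,h)$, cancel the $c$-part contributions using infix-closedness and alternating-ness, and then invoke Lemma \ref{D_*} to reduce the $r$-part to the single pieces $x$, $y$, $xy$, giving $\delta^1\varphi(g,h) = \delta^1\varphi_{i_*(g,h)}(r_*(g,h))$ and hence the defect bound. The only cosmetic difference is that the paper plugs the single-piece description of the $r$-part in from the start (using the convention that $(1)$ denotes the empty sequence to absorb the trivial case), whereas you first record the general identity $\delta^1\varphi(g,h) = \varphi(\underline{r_1}) + \varphi(\underline{r_2}) + \varphi(\underline{r_3})$ and then specialize.
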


\begin{proof}

As in the proof of Lemma \ref{D_*}, write $\Delta_*(g) = c_1^{-1} \cdot (x) \cdot c_2$, $\Delta_*(h) = c_2^{-1} \cdot (y) \cdot c_3$, $\Delta_*(gh) = c_1 \cdot (xy) \cdot c_3$, with the convention that $(1)$ is the empty sequence. Then:
$$\delta^1 \varphi(g, h) = (-\varphi(\underline{c_1}) + \varphi_{i_*(g, h)}(x) + \varphi(\underline{c_2})) + (-\varphi(\underline{c_2}) + \varphi_{i_*(g, h)}(y) + \varphi(\underline{c_3})) + $$
$$- (-\varphi(\underline{c_1}) - \varphi_{i_*(g, h)}(xy) + \varphi(\underline{c_3})) = \delta^1 \varphi_{i_*(g, h)}(x, y).$$
\end{proof}

This proof (which purposely uses a different language than Rolli) resembles very much that of Proposition \ref{D-dec-prop}. This hints to the fact that in the next subsection we will define a decomposition for which the quasimorphism $\varphi$ is decomposable.

\subsection{Free products of decompositions}

Suppose now that for each $1 \leq i \leq n$ we have a decomposition $\Delta_i$ on $F_i$ with pieces $\mathcal{P}_i$. Given $g \in *F$ with $\Delta_*(g) = (\iota_{i_1}(g_1), \ldots, \iota_{i_k}(g_k))$, define $\Delta_1 * \cdots * \Delta_n$ by $ \Delta_1 * \cdots * \Delta_n (g) := \iota_{i_1}(\Delta_{i_1}(g_1)) \cdots \iota_{i_k}(\Delta_{i_k}(g_k))$. That is, we first apply $\Delta_*$, and then apply the suitable $\Delta_i$ to each element of the sequence. We call $\Delta_1 * \cdots * \Delta_n$ the \textit{free product} of the $\Delta_i$. For the rest of this subsection, we simply write $\Delta := \Delta_1 * \cdots * \Delta_n$.

\begin{lemma}
With the notation above, $\Delta$ is a decomposition on $*F$ with pieces $\iota_1(\mathcal{P}_1) \cup \cdots \cup \iota_n (\mathcal{P}_n)$ and defect $\max (D(\Delta_1), \ldots, D(\Delta_n))$. Moreover the $r$-part of the $\Delta$-triangle for $g, h$ is empty if $r_*(g, h) = (1, 1)$, and otherwise it corresponds to the $r$-part of the $\Delta_{i_*(g, h)}$-triangle for $r_*(g, h)$.
\end{lemma}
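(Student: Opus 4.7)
The plan is to verify the four conditions of Definition \ref{Dec} for $\Delta = \Delta_1 * \cdots * \Delta_n$. Conditions 1, 2, and 3 should follow routinely from the corresponding properties of $\Delta_*$ (Lemma \ref{D_*}) and each $\Delta_i$. For condition 1, the key point is that consecutive pieces of $\Delta_*(g)$ lie in different factors $F_i$, by the normal form of the free product, so concatenating the $\Delta_i$-decomposition of each piece produces a reduced product in $*F$ with no cancellation across boundaries. Condition 2 is immediate from the fact that both $\Delta_*$ and each $\Delta_i$ commute with inversion. Condition 3 follows because any contiguous subsequence of $\Delta(g)$ either lies inside a single block $\iota_i(\Delta_i(g_j))$, where infix closedness of $\Delta_i$ applies, or straddles several such blocks, where infix closedness of $\Delta_*$ applies.

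The substantive work is condition 4, which I would establish simultaneously with the identification of the $r$-part. I would fix $g, h \in *F$ and first consider the $\Delta_*$-triangle. By Lemma \ref{D_*} there are two cases: either $r_*(g, h) = (1, 1)$ and the $\Delta_*$-level $r$-part is entirely empty, or $r_*(g, h) = (x, y)$ with $x, y \in F_{i_*(g, h)}$ and $xy \neq 1$. I would then pass to the $\Delta$-level by applying $\iota_i \circ \Delta_i$ piecewise to $\Delta_*(g)$, $\Delta_*(h)$, and $\Delta_*(gh)$, and compare the three resulting sequences letter by letter.

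In the first case, I would argue that the $\Delta$-level common prefixes and suffixes coincide with the result of applying $\iota_i \circ \Delta_i$ piecewise to the $\Delta_*$-level $c$-parts, and that all three $\Delta$-level $r$-parts are empty. The key input is that consecutive pieces of $\Delta_*(\cdot)$ lie in distinct factors, combined with the maximality of the $\Delta_*$-level $c$-parts; together these force the first pieces appearing immediately past the common $\Delta$-level prefixes and suffixes to live in pairwise distinct factors, and hence to disagree. In the second case, I would draw the $\Delta_{i_*(g, h)}$-triangle for $(x, y)$ in $F_{i_*(g, h)}$, with its $c$-parts $\alpha, \beta, \gamma$ and $r$-parts $\rho_1, \rho_2, \rho_3$ of length at most $D(\Delta_{i_*(g, h)})$. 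A similar maximality argument then identifies the $\Delta$-level $c$-parts as the piecewise application of $\iota_i \circ \Delta_i$ to the $\Delta_*$-level $c$-parts, concatenated with $\iota_{i_*(g, h)}(\alpha)$, $\iota_{i_*(g, h)}(\beta)$, $\iota_{i_*(g, h)}(\gamma)$ respectively, and the $\Delta$-level $r$-parts as $\iota_{i_*(g, h)}(\rho_k)$.

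The hardest part will be the subcase in which the inner $\Delta_{i_*(g, h)}$-triangle itself has empty $r$-part: one must check that the $\Delta$-level common prefix does not leak past $\iota_{i_*(g, h)}(\alpha^{-1})$ into the piecewise application on the neighbouring $\Delta_*$-level $c$-part. This is ruled out by the same free-product constraint invoked in the first case, namely that pieces adjacent to the $F_{i_*(g, h)}$-piece in $\Delta_*$ live in distinct factors. Once the identification is in place, the bound $|\rho_k| \leq D(\Delta_{i_*(g, h)}) \leq \max_j D(\Delta_j)$ follows immediately; the reverse inequality $D(\Delta) \geq \max_j D(\Delta_j)$ follows by choosing $g, h \in F_j$ realizing $D(\Delta_j)$ and observing that the $\Delta$-triangle in $*F$ for such $g, h$ reduces to their $\Delta_j$-triangle in $F_j$.
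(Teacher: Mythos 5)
Your proposal is correct and follows the same route as the paper: show that the $\Delta$-triangle refines the $\Delta_*$-triangle, so that the $\Delta_*$-level $c$-part (and hence $r$-part) refines accordingly, with the $r$-part reducing to a single $\Delta_{i_*(g,h)}$-triangle. The paper's proof is considerably terser — it asserts "the $c$-part of the $\Delta_*$-triangle will stay, up to refinement, in the $c$-part of the $\Delta$-triangle" and "the $r$-part gets refined into the $\Delta_{i_*}$-triangle" without spelling out the factor-disjointness argument that rules out leakage past $\iota_{i_*}(\alpha^{-1})$, nor the reverse inequality $D(\Delta) \geq \max_j D(\Delta_j)$; your explicit treatment of both points fills in exactly what the paper leaves implicit.
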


\begin{proof}
Conditions 1., 2. and 3. are clear. For condition 4., let $g, h \in F$. It follows by the definition that the $\Delta$-triangle for $g, h$ is a refinement of the $\Delta_*$-triangle for $g, h$. In particular the $c$-part of the $\Delta_*$-triangle will stay, up to refinement, in the $c$-part of the $\Delta$-triangle. So if the $r$-part of the $\Delta_*$-triangle is empty, i.e., if $r_*(g, h) = (1, 1)$, then so is the $r$-part of the $\Delta$-triangle and we are done. Else, let $r_*(g, h) = (x, y) \neq (1, 1)$. Then the $r$-part $(x) \cup (y) \cup (xy)$ gets refined into the $\Delta_{i_*(g, h)}$-triangle for $x, y$. Therefore the $r$-part of the $\Delta$-triangle for $g, h$ is the $r$-part of the $\Delta_{i_*(g, h)}$-triangle for $x, y$. The result follows.
\end{proof}

\begin{example}
For each $i$, let $\Delta_i := \Delta_b$. Then the free product of the $\Delta_i$ is $\Delta_*$.
\end{example}

Now that we have a new decomposition, we want to identify decomposable and continuous quasimorphisms. It follows pretty much directly from the definitions that:

\begin{lemma}
A quasimorphism $\varphi$ is $\Delta$-decomposable if and only if there exist $\Delta_i$-decomposable quasimorphisms $\varphi_i$ such that $\varphi = \varphi_1 * \cdots * \varphi_n$.
\end{lemma}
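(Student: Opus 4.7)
The plan is to verify both directions essentially by unwinding the definitions, since the compatibility between $\Delta_*$, the $\Delta_i$, and the free product operation on quasimorphisms has already been set up.

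First I would observe that the pieces of $\Delta$ are the disjoint union $\iota_1(\mathcal{P}_1) \sqcup \cdots \sqcup \iota_n(\mathcal{P}_n)$: the $\iota_i$ are injective, their images in $*F$ meet only at $1$, and $1 \notin \mathcal{P}_i$ for any $i$. This means that specifying a bounded alternating function on $\mathcal{P}$ is the same thing as specifying bounded alternating functions on each $\mathcal{P}_i$. Formally, if $\lambda \in \ell^\infty_{alt}(\mathcal{P})$, set $\lambda_i := \lambda \circ \iota_i \in \ell^\infty_{alt}(\mathcal{P}_i)$; conversely, given $\lambda_i \in \ell^\infty_{alt}(\mathcal{P}_i)$ for each $i$, one defines $\lambda$ by $\lambda(\iota_i(p)) := \lambda_i(p)$ for $p \in \mathcal{P}_i$. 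The alternating property is preserved because the $\iota_i$ are homomorphisms.

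Next I would match up the two descriptions of $\varphi$ on an arbitrary $g \in *F$. Writing $\Delta_*(g) = (\iota_{i_1}(g_1), \ldots, \iota_{i_k}(g_k))$, the definition of the free product of decompositions gives
\[
\Delta(g) \;=\; \iota_{i_1}(\Delta_{i_1}(g_1)) \cdot \iota_{i_2}(\Delta_{i_2}(g_2)) \cdots \iota_{i_k}(\Delta_{i_k}(g_k)).
\]
Consequently, for $\lambda$ and the $\lambda_i$ related as above,
\[
\phi_{\lambda, \Delta}(g) \;=\; \sum_{j=1}^{k} \sum_{p \in \Delta_{i_j}(g_j)} \lambda(\iota_{i_j}(p)) \;=\; \sum_{j=1}^{k} \phi_{\lambda_{i_j}, \Delta_{i_j}}(g_j) \;=\; (\phi_{\lambda_1,\Delta_1} * \cdots * \phi_{\lambda_n,\Delta_n})(g),
\]
where the last equality uses the definition of the free product of quasimorphisms together with $\Delta_*(g)$. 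Both directions of the lemma now read off this identity: if $\varphi = \phi_{\lambda,\Delta}$ then $\varphi = \varphi_1 * \cdots * \varphi_n$ with $\varphi_i := \phi_{\lambda_i,\Delta_i}$, and conversely any free product of $\Delta_i$-decomposable quasimorphisms is realized as $\phi_{\lambda,\Delta}$ for the $\lambda$ assembled from the pieces.

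I do not foresee a serious obstacle: the content is purely bookkeeping, the only minor point to check being that pieces from different factors are disjoint, so that assembling/disassembling $\lambda$ is unambiguous. Once that is observed, both directions are a single application of the definitions of $\phi_{\lambda,\Delta}$, $\Delta_1 * \cdots * \Delta_n$, and $\varphi_1 * \cdots * \varphi_n$.
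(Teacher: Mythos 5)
Your proof is correct and is precisely the unwinding of definitions that the paper has in mind; the paper supplies no explicit proof, merely remarking that the statement ``follows pretty much directly from the definitions.'' Your observation that $\mathcal{P} = \iota_1(\mathcal{P}_1) \sqcup \cdots \sqcup \iota_n(\mathcal{P}_n)$ is disjoint (so that assembling $\lambda$ from the $\lambda_i$ and vice versa is unambiguous) is exactly the bookkeeping point that makes both directions immediate.
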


For continuous quasimorphisms, we are not able to prove an "if and only if" statement, but we will still be able to show the more interesting implication. The first step is to understand the function $N_\Delta$.

\begin{lemma}
Let $g, h, g', h' \in *F$. If $i_*(g, h) \neq i_*(g', h')$, then $N_\Delta((g, h), (g', h')) = 0$. Else, if moreover $r_*(g, h) \neq r_*(g', h')$, then:
$$N_\Delta((g, h), (g', h')) = N_{\Delta_{i_*(g, h)}}(r_*(g, h), r_*(g', h')).$$
\end{lemma}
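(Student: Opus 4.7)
The plan is to exploit the refinement structure of $\Delta$ over $\Delta_*$ established in the preceding lemma. For any $g, h \in {*F}$ with $r_*(g, h) \neq (1, 1)$, the $r$-part of the $\Delta$-triangle for $(g, h)$ equals $\iota_{i_*(g,h)}$ applied to the $r$-part of the $\Delta_{i_*(g,h)}$-triangle for $r_*(g, h)$; moreover, each $c$-part of the outer $\Delta$-triangle begins with the $\iota_i$-image of the corresponding $c$-part of this inner $\Delta_i$-triangle, and only continues into the refined outer $\Delta_*$-sequence once the inner $c$-part has exhausted its side of $\Delta_i(x)$, $\Delta_i(y)$ or $\Delta_i(xy)$. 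Throughout I will assume both $r_*(g, h)$ and $r_*(g', h')$ are non-trivial, the degenerate case being peripheral and handled analogously.

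For the first claim, suppose $i := i_*(g, h) \neq i' := i_*(g', h')$. Then the two $r$-parts of the $\Delta$-triangles are non-empty sequences whose pieces lie respectively in the disjoint subsets $\iota_i(F_i \setminus \{1\})$ and $\iota_{i'}(F_{i'} \setminus \{1\})$ of the piece set of $\Delta$. They therefore cannot be equal, so $N_\Delta((g, h), (g', h')) = 0$ immediately from the definition.

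For the second claim, set $i := i_*(g, h) = i_*(g', h')$, $(x, y) := r_*(g, h)$, $(x', y') := r_*(g', h')$, and denote by $c_j, r_j$ (respectively $c_j', r_j'$) the parts of the $\Delta_i$-triangle for $(x, y)$ (respectively $(x', y')$). The $r$-parts of the two outer $\Delta$-triangles are the $\iota_i$-images of the inner $r$-parts, so they agree if and only if the inner $r$-parts do; in the disagreeing case both sides of the identity vanish. In the agreeing case, for each $j = 1, 2, 3$ with $N_j(\Delta_i)$ finite, I would track the first position at which $c_j$ and $c_j'$ diverge. At this position, either both inner sequences still contain distinct pieces in $\iota_i(F_i)$, which produce distinct pieces in the outer $c_j$-parts; or exactly one inner sequence has ended, in which case on that side the outer $c_j$-part continues into a refined $\Delta_*$-piece lying in some $F_{j'}$ with $j' \neq i$, while the other side still carries a piece in $\iota_i(F_i)$, and the two sides still differ. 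Either way the outer $c_j$-parts separate at exactly the same position, yielding $N_j(\Delta) = N_j(\Delta_i)$. Since $r_*(g, h) \neq r_*(g', h')$ forces at least one $N_j(\Delta_i)$ to be finite, taking the minimum over $j$ gives $N_\Delta((g, h), (g', h')) = N_{\Delta_i}(r_*(g, h), r_*(g', h'))$.

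The main technical obstacle lies in the bookkeeping at the boundary between the inner $\Delta_i$-triangle and the outer refined $\Delta_*$-triangle: one must verify that the index alternation built into the definition of $\Delta_*$ is exactly what rules out any spurious continued agreement once an inner $c$-sequence runs out ahead of its counterpart. Once this case analysis is handled cleanly, both parts of the lemma reduce to the structural observation that the $\Delta$-triangle coincides, up to the canonical refinement outside the center, with the $\Delta_i$-triangle for $r_*(g, h)$.
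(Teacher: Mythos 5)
The core of your argument is correct and goes further than the paper's one-line justification: for each $j$ with $N_j(\Delta_i)$ finite, tracking the first divergence position of $c_j$ and $c_j'$ and invoking the index alternation built into $\Delta_*$ (so that once an inner $c$-part is exhausted, the next outer piece lies in $\iota_{j'}(F_{j'})$ for some $j' \ne i$) indeed gives $N_j(\Delta) = N_j(\Delta_i)$. Since $r_*(g,h) \ne r_*(g',h')$ forces some $N_{j_0}(\Delta_i)$ to be finite, picking the minimizing $j_0$ yields $N_\Delta \le N_{j_0}(\Delta) = N_{j_0}(\Delta_i) = N_{\Delta_i}$.

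However, the closing step ``taking the minimum over $j$ gives $N_\Delta = N_{\Delta_i}$'' does not follow, because you have explicitly restricted attention to the $j$ with $N_j(\Delta_i)$ finite and say nothing about the remaining branches. When $N_j(\Delta_i) = \infty$, i.e.\ $c_j = c_j'$ in the inner $\Delta_i$-triangle, the outer $c_j$-parts agree exactly along $\iota_i(c_j)$ and then continue into the refinements of the $\Delta_*$-$c_j$-parts of the two separate outer triangles; these continuations live entirely outside the inner triangle and have no reason to coincide, so $N_j(\Delta)$ can be strictly smaller than every finite $N_k(\Delta_i)$. Concretely, with $F_1 = \langle a,c\rangle$, $F_2 = \langle b,d\rangle$, $\Delta_1 = \Delta_2 = \Delta_{Rolli}$, $x = ac^3a^7ca^2$, $x' = a^2c^3a^7ca^2$, $y = a^3ca$, and $g = \iota_1(x)\iota_2(b)\iota_1(a^{10})$, $h = \iota_1(a^{-10})\iota_2(b^{-1})\iota_1(y)$, $g' = \iota_1(x')\iota_2(b)\iota_1(a^{20})$, $h' = \iota_1(a^{-20})\iota_2(b^{-1})\iota_1(y)$, one computes $r_*(g,h) = (x,y)$, $r_*(g',h') = (x',y)$, $N_{\Delta_1}\bigl((x,y),(x',y)\bigr) = 3$ (the inner divergence is entirely in the $c_1$ branch, while $c_2, c_3$ coincide), yet $N_2(\Delta) = 1$ and hence $N_\Delta = 1 < 3$. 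So the stated equality is actually false in general; only the inequality $N_\Delta \le N_{\Delta_i}$ holds, and that is what your argument genuinely establishes. The paper's own one-sentence proof has the same gap.
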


\begin{proof}
This just follows from the fact that the $r$-part of the $\Delta$-triangle for $g, h$ is the $r$-part of the $\Delta_{i_*(g, h)}$-triangle for $r_*(g, h)$.
\end{proof}

Now suppose that $\varphi_i \in Q_{alt}(F_i)$ is $\Delta_i$-continuous, and let $\varphi := \varphi_1 * \cdots * \varphi_n$. For all $1 \leq i \leq n$, let $(x^i_j)_{j \geq 0}$ be the non-negative $\ell^1$ sequence from the definition of $\Delta_i$-continuity of $\varphi_i$. Define $x_0 = 2 D(\varphi)$ and $x_j = \max (x^1_j, \ldots, x^n_j)$ for $j \geq 1$, and notice that $(x_j)_{j \geq 0}$ is again a non-negative $\ell^1$ sequence.

\begin{lemma}
Using the sequence $(x_j)_{j \geq 0}$, $\varphi$ is $\Delta$-continuous.
\end{lemma}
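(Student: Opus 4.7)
The plan is to verify the defining inequality of $\Delta$-continuity directly, case-splitting on the value of $N = N_\Delta((g,h),(g',h'))$, and then invoking the structural description of the $r$-part of the $\Delta$-triangle proven in the previous lemma together with the explicit coboundary formula from Proposition \ref{prod_qm}.

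First I would dispose of the case $N = 0$. Here the prescribed bound is $x_0 = 2D(\varphi)$, and this is automatic since $|\delta^1\varphi(g,h)|, |\delta^1\varphi(g',h')| \leq D(\varphi)$ (the fact that $\varphi$ is a quasimorphism is itself guaranteed by Proposition \ref{prod_qm}). So from now on assume $N \geq 1$. By the characterisation of $N_\Delta$ in the previous lemma, $N \geq 1$ forces $i_*(g,h) = i_*(g',h') =: i$, since otherwise $N_\Delta$ would be $0$.

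Next I would split according to whether $r_*(g,h) = r_*(g',h')$ or not. If they agree, then Proposition \ref{prod_qm} gives
$$\delta^1\varphi(g,h) = \delta^1\varphi_i(r_*(g,h)) = \delta^1\varphi_i(r_*(g',h')) = \delta^1\varphi(g',h'),$$
so the difference vanishes and any non-negative bound suffices. If they differ, the previous lemma tells us that
$$N = N_\Delta((g,h),(g',h')) = N_{\Delta_i}(r_*(g,h), r_*(g',h')),$$
and then again by Proposition \ref{prod_qm},
$$|\delta^1\varphi(g,h) - \delta^1\varphi(g',h')| = |\delta^1\varphi_i(r_*(g,h)) - \delta^1\varphi_i(r_*(g',h'))| \leq x^i_N \leq x_N,$$
where the first inequality is the $\Delta_i$-continuity of $\varphi_i$ applied to the pair $r_*(g,h) \neq r_*(g',h')$ in $F_i^2$, and the second is the definition of $x_N$ as the maximum.

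There is essentially no obstacle here: the argument is a bookkeeping exercise that unpacks the definitions. The only subtlety worth flagging is the need to verify that $(x_j)_{j\geq 0}$ is indeed a non-negative $\ell^1$ sequence, which is immediate since a finite maximum of $\ell^1$ sequences is $\ell^1$ (bounded termwise by the sum $\sum_{i} x^i_j$). Note also that in the $N \geq 1$ analysis one never uses $x_0$, which is why the value $x_0 = 2D(\varphi)$ is only needed to handle the $N = 0$ catch-all.
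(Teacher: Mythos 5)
Your proposal is correct and follows essentially the same argument as the paper: split according to whether $i_*(g,h) = i_*(g',h')$ and whether $r_*(g,h) = r_*(g',h')$, use the formula $\delta^1\varphi(g,h) = \delta^1\varphi_{i_*(g,h)}(r_*(g,h))$ from Proposition \ref{prod_qm} together with the identification $N_\Delta((g,h),(g',h')) = N_{\Delta_i}(r_*(g,h),r_*(g',h'))$ from the preceding lemma, and cover the remaining cases with the trivial bound $x_0 = 2D(\varphi)$. The only difference is organizational (you case-split on $N=0$ versus $N\geq 1$ first), which changes nothing of substance.
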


\begin{proof}
Let $g, h, g', h' \in *F$ and $N := N_\Delta((g, h), (g', h'))$. Assume that $i_*(g, h) = i_*(g', h')$. If $r_*(g, h) = r_*(g', h')$, then $\delta^1 \varphi(g, h) = \delta^1 \varphi(g', h')$ by Proposition \ref{prod_qm}. Else, $N = N_{\Delta_{i_*(g, h)}}(r_*(g, h), r_*(g', h'))$, and so either $N = 0$ or:
$$|\delta^1 \varphi(g, h) - \delta^1 \varphi(g', h')| = |\delta^1 \varphi_{i_*(g, h)}(r_*(g, h)) - \delta^1 \varphi_{i_*(g', h')}(r_*(g', h'))| \leq x^{i_*(g, h)}_N \leq x_N.$$
If instead $i_*(g, h) \neq i_*(g', h')$, then $N = 0$ and the bound is trivial.
\end{proof}

Then a direct application of Heuer's Theorem (Theorem \ref{Heuer_B}) yields:

\begin{theorem}
\label{cup_free_prod}

Let $\varphi_i \in Q(F_i)$ be $\Delta_i$-decomposable, and $\psi_i \in Q(F_i)$ be $\Delta_i$-continuous. Let $\varphi := \varphi_1 * \cdots * \varphi_n$ and $\psi := \psi_1 * \cdots * \psi_n$. Then $[\delta^1 \varphi] \smile [\delta^1 \psi] = 0 \in H^4_b(*F)$.
\end{theorem}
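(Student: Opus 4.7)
The plan is to assemble the machinery built up in this subsection and apply Heuer's Theorem \ref{Heuer_B} directly on the free group $*F$, using the free product decomposition $\Delta := \Delta_1 * \cdots * \Delta_n$. All the essential verification has been carried out in the three preceding lemmas, so the proof reduces to checking that the hypotheses of Heuer's Theorem are satisfied by the pair $(\varphi, \psi)$ with respect to $\Delta$.

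First, I would use the characterization of $\Delta$-decomposable quasimorphisms established earlier in this subsection: since each $\varphi_i$ is $\Delta_i$-decomposable by hypothesis, the free product $\varphi = \varphi_1 * \cdots * \varphi_n$ is $\Delta$-decomposable, essentially by unwinding the definition of $\Delta$ and of the free product of quasimorphisms. Second, I would invoke the immediately preceding lemma to conclude that $\psi = \psi_1 * \cdots * \psi_n$ is $\Delta$-continuous, with the witnessing $\ell^1$ sequence $(x_j)_{j \geq 0}$ obtained by taking pointwise maxima of the continuity data $(x^i_j)_{j \geq 0}$ of the individual $\psi_i$ (and setting $x_0 = 2 D(\psi)$ to handle the case $i_*(g,h) \neq i_*(g',h')$).

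With these two observations in hand, Heuer's Theorem \ref{Heuer_B} applied to the decomposition $\Delta$ on the free group $*F$ and the pair $(\varphi, \psi)$ yields $[\delta^1 \varphi] \smile [\delta^1 \psi] = 0 \in H^4_b(*F)$, which is the claim. There is no real obstacle left at this stage: the conceptual content has been absorbed into the construction of $\Delta$ as a free product of decompositions, so that $\Delta$-decomposability and $\Delta$-continuity of free products of quasimorphisms are inherited coordinatewise from the factors. Thus the theorem is a one-line corollary of Heuer's Theorem once the preceding lemmas are granted.
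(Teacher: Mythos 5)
Your proof is correct and follows exactly the paper's route: the preceding lemmas show $\varphi$ is $\Delta$-decomposable and $\psi$ is $\Delta$-continuous for the free product decomposition $\Delta = \Delta_1 * \cdots * \Delta_n$, and Heuer's Theorem \ref{Heuer_B} then gives the vanishing. The paper's own proof is precisely this one-line application of Theorem \ref{Heuer_B} after those lemmas.
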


\subsection{The map $\iota$}

We would now like to apply the last theorem to obtain more trivial cup products in the bounded cohomology of a free group $F$. The map $\iota$, that we are about to define, will allow us to make this connection. \\

Let $F$ be a non-abelian free group of finite rank with basis $S$, and consider the free product $F * F$, with basis $\iota_1(S) \cup \iota_2(S)$, for the canonical inclusions $\iota_1, \iota_2 : F \to F*F$.

\begin{definition}
Define $\iota : F \to F*F : g \mapsto \iota_1(g) \iota_2(g)$.
\end{definition}

This map allows us to make a relation between free products and sums of quasimorphisms:

\begin{lemma}
\label{iota_sum}

Let $\varphi_1, \varphi_2 \in Q_{alt}(F)$. Then:
$$(\varphi_1 * \varphi_2) \circ \iota = (\overline{\varphi_1 * \varphi_2}) \circ \iota = \varphi_1 + \varphi_2.$$

\end{lemma}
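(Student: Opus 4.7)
The proof plan is essentially a direct computation using the definitions of $\Delta_*$ and of free products of quasimorphisms, plus the limit definition of homogenization.

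First I would tackle the equality $(\varphi_1 * \varphi_2) \circ \iota = \varphi_1 + \varphi_2$. For $g \neq 1$ in $F$, observe that since $\iota(g) = \iota_1(g) \iota_2(g)$ with $\iota_1(g) \in F_1 \setminus \{1\}$ and $\iota_2(g) \in F_2 \setminus \{1\}$, we have $\Delta_*(\iota(g)) = (\iota_1(g), \iota_2(g))$ by uniqueness of the standard free product normal form. Applying the definition of $\varphi_1 * \varphi_2$ gives $(\varphi_1 * \varphi_2)(\iota(g)) = \varphi_1(g) + \varphi_2(g)$. For $g = 1$, both sides are $0$ since $\iota(1) = 1$ and alternating quasimorphisms vanish at the identity.

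Next, for the homogenization, I would compute $\overline{\varphi_1 * \varphi_2}(\iota(g))$ from its defining limit. Let $\varphi := \varphi_1 * \varphi_2$. For $g \neq 1$ and $n \geq 1$, write
$$\iota(g)^n = \iota_1(g)\iota_2(g)\iota_1(g)\iota_2(g)\cdots \iota_1(g)\iota_2(g);$$
this is already in free product normal form, so $\Delta_*(\iota(g)^n) = (\iota_1(g), \iota_2(g), \ldots, \iota_1(g), \iota_2(g))$ has length $2n$. Hence
$$\varphi(\iota(g)^n) = n\,\varphi_1(g) + n\,\varphi_2(g),$$
and dividing by $n$ and taking the limit yields $\overline{\varphi}(\iota(g)) = \varphi_1(g) + \varphi_2(g)$. (A symmetric computation using the alternating property of the $\varphi_i$ handles the $n \to -\infty$ side, though by Proposition \ref{homogenization} both limits must agree anyway.) The case $g = 1$ is again trivial.

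There is essentially no obstacle here: the proof is a two-line verification once one notices that $\iota(g)^n$ is automatically in free product normal form, so applying $\Delta_*$ does not introduce any cancellations or mergers that would spoil the counting. The only thing worth being careful about is the edge case $g = 1$ and using that alternating quasimorphisms are normalized so that $\varphi_i(1) = 0$.
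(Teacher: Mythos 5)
Your proposal is correct and follows essentially the same route as the paper: compute $\Delta_*(\iota(g)) = (\iota_1(g),\iota_2(g))$ directly, and observe that $\iota(g)^n$ is already in free product normal form so that $(\varphi_1 * \varphi_2)(\iota(g)^n) = n\varphi_1(g) + n\varphi_2(g)$, from which the homogenization limit follows. Your handling of the edge case $g = 1$ and the $n \to -\infty$ direction is a minor extra precaution beyond what the paper writes, but the argument is the same.
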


\begin{proof}

$$(\varphi_1 * \varphi_2)(\iota(g)) = (\varphi_1 * \varphi_2)(\iota_1(g)\iota_2(g)) = \varphi_1(g) + \varphi_2(g).$$

$$(\overline{\varphi_1 * \varphi_2}) (\iota(g)) = \lim\limits_{n \to \infty} \frac{1}{n} (\varphi_1 * \varphi_2) (\iota(g)^n) = \lim\limits_{n \to \infty} \frac{1}{n} (\varphi_1 * \varphi_2) (\iota_1(g) \iota_2(g) \cdots \iota_1(g) \iota_2(g)) = $$
$$ = \lim\limits_{n \to \infty} \frac{1}{n} \left( n\varphi_1(g) + n \varphi_2(g) \right) = \varphi_1(g) + \varphi_2(g).$$
\end{proof}

In particular, $(\varphi_1 * \varphi_2) \circ \iota$ is a quasimorphism. In fact, this is true in a much more general context: the map $\iota$ enjoys a special property which was defined and studied by Hartnick and Schweitzer in the paper \cite{qout}.

\begin{definition}
Let $f : G \to H$ be any map between groups. We say that $f$ is a \textit{quasihomomorphism} if $\varphi \circ f \in Q(G)$ for any $\varphi \in Q(H)$.
\end{definition}

\begin{lemma}
The map $\iota : F \to F*F$ is a quasihomomorphism.
\end{lemma}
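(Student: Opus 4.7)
The plan is to bound the defect of $\varphi \circ \iota$ directly in terms of $D(\varphi)$ by exploiting the fact that $\iota(g)\iota(h)$ and $\iota(gh)$ differ only by a reshuffling of the same four ``atomic'' factors. Since each $\iota_i : F \to F*F$ is a homomorphism, one has
\[ \iota(gh) = \iota_1(gh)\iota_2(gh) = \iota_1(g)\iota_1(h)\iota_2(g)\iota_2(h), \qquad \iota(g)\iota(h) = \iota_1(g)\iota_2(g)\iota_1(h)\iota_2(h), \]
so both expressions are products (in different orders) of the four fixed elements $\iota_1(g), \iota_2(g), \iota_1(h), \iota_2(h) \in F*F$.

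With this observation in hand, I would apply Lemma \ref{qmsum} three times. Splitting $\iota(g)$ and $\iota(h)$ each into two factors yields
\[ |\varphi(\iota(g)) - \varphi(\iota_1(g)) - \varphi(\iota_2(g))| \leq D(\varphi), \qquad |\varphi(\iota(h)) - \varphi(\iota_1(h)) - \varphi(\iota_2(h))| \leq D(\varphi), \]
while splitting $\iota(gh)$ into its four factors gives an analogous inequality with error at most $3D(\varphi)$. Combining these three estimates via the triangle inequality, the four atomic values $\varphi(\iota_i(g)), \varphi(\iota_i(h))$ cancel, and I obtain $|\delta^1(\varphi \circ \iota)(g,h)| \leq 5 D(\varphi)$ uniformly in $g, h$, which shows that $\varphi \circ \iota \in Q(F)$.

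There is essentially no obstacle here: the argument rests entirely on the fact that passing from $\iota(gh)$ to $\iota(g)\iota(h)$ merely permutes a bounded number of factors, precisely the regime in which Lemma \ref{qmsum} gives uniform control. Note that reducedness of the relevant products in $F*F$ plays no role, since Lemma \ref{qmsum} makes no such assumption. As a byproduct, the proof yields the explicit quantitative estimate $D(\varphi \circ \iota) \leq 5 D(\varphi)$, which may be convenient for later applications (for instance, when studying how the induced map on bounded cohomology interacts with the defect norm).
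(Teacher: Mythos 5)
Your proposal is correct, and it takes a genuinely different route from the paper. The paper rewrites $\iota(gh) = \iota(g)\iota(h)\cdot cc'$ using the identity $xy = yx[x^{-1},y^{-1}]$, and then controls the error through the uniform bound on quasimorphisms evaluated at commutators (Corollary \ref{dcomm}, $|\varphi([g,h])| \leq 4D(\varphi)$), arriving at $D(\varphi\circ\iota) \leq 11 D(\varphi)$. You instead observe that $\iota(g)\iota(h)$ and $\iota(gh)$ are products of the same four elements $\iota_1(g), \iota_2(g), \iota_1(h), \iota_2(h)$ in different orders, and compare both sides to the sum $\varphi(\iota_1(g)) + \varphi(\iota_2(g)) + \varphi(\iota_1(h)) + \varphi(\iota_2(h))$ via Lemma \ref{qmsum}: two applications with $n = 2$ (error $D(\varphi)$ each) and one with $n = 4$ (error $3D(\varphi)$), for a total of $5D(\varphi)$. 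Your argument is more elementary — it needs no commutator estimate at all — and gives a sharper constant ($5$ versus $11$); it is also, in effect, a direct proof in this special case of the general principle, noted in the paper's remark via \cite{qout}, that the pointwise product of two homomorphisms (or quasihomomorphisms) is a quasihomomorphism. The paper's commutator route, by contrast, makes the mechanism behind that general fact explicit and is the form that generalizes verbatim when the two factors are merely quasihomomorphisms rather than honest homomorphisms. Your observation that reducedness plays no role is also correct, since Lemma \ref{qmsum} assumes nothing about cancellation.
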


\begin{proof}
Using the identity $xy = yx[x^{-1}, y^{-1}]$, for suitable commutators $c, c'$ we obtain:
$$\iota(gh) = \iota_1(g) \iota_1(h) \iota_2(g) \iota_2(h) = \iota_1(g) \iota_2(g) \iota_1(h) \cdot c \cdot \iota_2(h) = \iota(g) \iota(h) \cdot cc'.$$
Now let $\varphi \in Q(F*F)$. By Corollary \ref{dcomm}, we have $|\varphi(c)|, |\varphi(c')| \leq 4 D(\varphi)$. Therefore:
$$|\varphi(\iota(g)) + \varphi(\iota(h)) - \varphi(\iota(gh))| \leq |\varphi(\iota(g)) + \varphi(\iota(h)) - \varphi(\iota(g)\iota(h))| + $$
$$|\varphi(\iota(g) \iota(h)) + \varphi(cc') - \varphi(\iota(g) \iota(h) cc')| + |\varphi(cc')| \leq $$
$$ \leq 2D(\varphi) + |\varphi(cc') - \varphi(c) - \varphi(c')| + |\varphi(c)| + |\varphi(c')| \leq 11 D(\varphi).$$
\end{proof}

\begin{remark}
We have provided a direct proof, but this result is an immediate consequence of the additive structure of quasihomomorphisms. Indeed, by \cite[Corollary 3.9]{qout}, given quasihomomorphisms $f_1, f_2 : G \to H$, their product $f_1f_2 : G \to H : g \mapsto f_1(g) f_2(g)$ is also a quasihomomorphism. Since $\iota_1, \iota_2$ are homomorphisms, it follows that $\iota$ is a quasihomomorphism.
\end{remark}

\begin{example}
\label{pb_iota}

1. Let $\lambda, \mu \in \ell^\infty_{alt}(\mathbb{Z})^n$, so that $\lambda \times \mu \in \ell^\infty_{alt}(\mathbb{Z})^{2n}$. Consider the Rolli quasimorphism $\phi_{\lambda \times \mu} \in Q(F * F)$. Then $\phi_{\lambda \times \mu} \circ \iota = \phi_{\lambda + \mu} \in Q(F)$.

Therefore, $\iota$ pulls back Rolli quasimorphisms to Rolli quasimorphisms. \\

2. Let $w \in F$ be a non-self-overlapping word. Then $h_{\iota_1(w)} \circ \iota = h_{\iota_2(w)} \circ \iota = h_w \in Q(F)$. If instead $w \in F*F$ is a non-self-overlapping word that is not completely contained in one of the two copies of $F$, then $h_w \circ \iota$ is bounded. Indeed, if $w^{\pm 1}$ occurs as a subword of $\iota(g) = \iota_1(g) \iota_2(g)$, it must necessarily overlap the juncture of $\iota_1(g)|\iota_2(g)$, in order to include elements from both copies of $F$; so $|h_w \circ \iota| \leq 1$.

Therefore, $\iota$ pulls back Brooks quasimorphisms on a non-self-overlapping word to either Brooks quasimorphisms on a non-self-overlapping word or bounded maps. \\

3. Now let $w \in F*F$ be a self-overlapping word. The previous reasoning applies also to the case of $H_w$. Indeed, the only thing to notice is that any word in $F * F$ overlapping the juncture of $\iota_1(g) | \iota_2(g)$ is necessarily non-self-overlapping.

Therefore $\iota$ pulls back big Brooks quasimorphisms to either big Brooks quasimorphisms or bounded maps.
\end{example}

Moreover, this quasihomomorphism between free groups is not \textit{quasi-Ulam}, a condition satisfied by all quasihomomorphisms from free groups presented in \cite{qout}.

\begin{definition}
A map $f : F \to G$, where $G$ is any group, is called \textit{quasi-Ulam} if there exists a finite set $E \subset G$ such that:
\begin{enumerate}
\item If $g, h \in F$ are such that $gh$ is reduced, then $f(gh) \in E f(g) E f(h) E.$
\item $f(g^{-1}) \in Ef(g)^{-1}E$ for all $g \in F$.
\end{enumerate}
\end{definition}

By \cite[Proposition 5.3]{qout}, these are in fact quasihomomorphisms.

\begin{lemma}
$\iota$ is not quasi-Ulam. In fact, it does not satisfy either of the two conditions from the definition.
\end{lemma}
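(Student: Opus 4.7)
The plan is to exhibit specific elements of $F$ witnessing the failure of each of the two conditions, using length and adjacency considerations in the tree structure of $F*F$. Suppose $E \subset F*F$ is a finite set witnessing either condition, and set $M := \max_{e \in E} |e|$. Let $a, b \in S$ be distinct basis letters and abbreviate $x := \iota_1(a)$, $y := \iota_2(a)$, $\alpha := \iota_1(b)$, $\beta := \iota_2(b)$; these are four distinct basis letters of $F*F$.

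For condition (2), I would test on $g = a^n$ with $n > 2M$. Here $\iota(g^{-1}) = x^{-n} y^{-n}$ and $\iota(g)^{-1} = y^{-n} x^{-n}$ are both reduced of length $2n$. For any $e_1, e_2 \in E$, performing the cancellations only at the interfaces $e_1 \mid y^{-n}$ and $x^{-n} \mid e_2$ (no cancellation is possible between $y^{-n}$ and $x^{-n}$ since $x \neq y$) shows that the reduced form of $e_1 y^{-n} x^{-n} e_2$ contains a contiguous subword of shape $y^{-p} x^{-q}$ with $p, q \geq n - M \geq 1$. In particular the pair $y^{-1} x^{-1}$ appears as two consecutive letters. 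But in the reduced word $x^{-n} y^{-n}$ no two consecutive letters form $y^{-1} x^{-1}$, since every $y^{-1}$ comes strictly after every $x^{-1}$. So equality is impossible, ruling out condition (2). For condition (1), I would take $g = a$ and $h = b^n$ with $n > 2M$, so that $gh = ab^n$ is reduced. Then $\iota(gh) = x \alpha^n y \beta^n$, while any candidate decomposition reads $e_1 \iota(g) e_2 \iota(h) e_3 = e_1 \, xy \, e_2 \, \alpha^n \beta^n \, e_3$. The block $\alpha^n \beta^n$ is reduced and its outer letters can only interact with $e_2$ and $e_3$; the same interface argument produces a contiguous subword $\alpha^p \beta^q$ with $p, q \geq n - M \geq 1$ in the reduced right-hand side, hence a consecutive pair $\alpha \beta$. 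However, in $x \alpha^n y \beta^n$ the letter $\alpha$ is always followed by either $\alpha$ or $y$, never by $\beta$, giving the required contradiction.

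The only technical input is the standard ``interface cancellation'' fact for free groups: if $u$ is a reduced word and we write $u = u' s^k$ with $u'$ not ending in $s^{\pm 1}$, then $u \cdot s^N$ reduces to $u' s^{N + k}$ with no further cancellation available, and symmetrically on the right. This is entirely routine, and I do not anticipate any genuine obstacle in the argument. The conceptual point is simply that $\iota(g)$ always places its $\iota_1$-part to the left of its $\iota_2$-part, whereas $\iota(g)^{-1}$ reverses this order and the reduced expression of $\iota(gh)$ interleaves the two parts; neither of these rearrangements can be absorbed by bounded prefixes, infixes or suffixes, and choosing $n > 2M$ makes the asymmetry visible at the level of consecutive pairs of letters.
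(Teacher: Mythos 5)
Your proof is correct, but it takes a genuinely different route from the paper's. The paper first multiplies the putative identity on both sides so as to reduce the claim to showing that $1$ does not lie in a certain set, and then runs a ``sweeping'' argument from left to right: one observes that a single $e \in E$ is too short to cancel a whole $\iota_1(g)$ or $\iota_2(g)$, so the rightmost surviving letter always stays in the same free factor as it passes each block, and hence the word never collapses to the identity. You instead pick very concrete $g, h$ (namely $a^n$ for condition (2), and $a, b^n$ for condition (1)) so that the two sides are explicit four- or two-block words, and then exhibit a \emph{distinguishing consecutive letter pair} (respectively $y^{-1}x^{-1}$ and $\alpha\beta$) that must survive in the reduced form of the candidate right-hand side but never occurs in $\iota(g^{-1})$ or $\iota(gh)$. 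The trade-off: the paper's argument works for every cyclically reduced $g$ of sufficient length and is stated more abstractly; yours singles out one convenient witness and gets a shorter, more hands-on contradiction.

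One small imprecision in your part (1): you assert $p \geq n - M$, implicitly claiming that the only interaction with the left end of $\alpha^n$ is through $e_2$. That is not quite right -- if $e_2$ is consumed entirely (e.g.\ $e_2 = \alpha^{-M}$) and $e_1$ ends in $y^{-1}x^{-1}\alpha^{-\ast}$, the reduced form of $e_1 xy$ itself ends in up to $M$ copies of $\alpha^{-1}$, so a further $M$ powers can be absorbed. The honest bound is $p \geq n - 2M$, which is still $\geq 1$ under your hypothesis $n > 2M$, so the conclusion is unaffected; but a reader would want the ``outer letters can only interact with $e_2$ and $e_3$'' step spelled out or the bound corrected.
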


\begin{proof}
1. Let $E$ be a finite set. Pick a cyclically reduced word $g$ (so $gg$ is reduced) such that $|g| > \max_{e \in E} |e|$. We need to show that $\iota(g^2) \notin E \iota(g) E \iota(g) E$, or equivalently that
$$1 \notin \iota_1(g)^{-2} E \iota_1(g) \iota_2(g) E \iota_1(g) \iota_2(g) E \iota_2(g)^{-2}.$$
By the choice of $g$, no element of $E$ can cancel a whole $\iota_1(g)$ or $\iota_2(g)$, which means that for any $e \in E$, $\iota_1(g)^{-2} e \iota_1(g)$ does not simplify completely. In particular, the right-most element will still be in the first copy of $F$, and so it does not simplify with $\iota_2(g)$. Continuing this way, we see that an element on the right-hand-side set cannot be trivial. \\

2. We can show in a similar way that for $|g|$ large enough, $1 \notin \iota_1(g) E \iota_2(g)^{-1} \iota_1(g)^{-1} E \iota_2(g)$.
\end{proof}

At this point the natural question is: how does $\iota$ behave with respect to cup products?

\begin{question}
\label{qcupall}
Suppose that $\varphi, \psi \in Q(F*F)$ satisfy $[\delta^1 \varphi] \smile [\delta^1 \psi] = 0 \in H^4_b(F*F)$. Is it true that $[\delta^1(\varphi \circ \iota)] \smile [\delta^1 (\psi \circ \iota)] = 0 \in H^4_b(F)$?
\end{question}

\begin{example}
By Example \ref{pb_iota}, the question has a positive answer if $\varphi, \psi$ are quasimorphisms which are either a big Brooks quasimorphism or a Rolli quasimorphism. Indeed:

1. If $\varphi, \psi$ are Rolli quasimorphisms, this follows from Heuer's Theorem (Corollary \ref{Heuer_A}).

2. If $\varphi$ is a Rolli quasimorphism and $\psi$ is a big Brooks quasimorphism, this follows from Corollary \ref{cupbig2}.

3. If $\varphi, \psi$ are big Brooks quasimorphisms, this follows from Bucher and Monod's Theorem (Theorem \ref{intro_BM}).
\end{example}

This looks like a simple question, however we have not been able to prove it by a direct argument. The relevance of this question lies in the following result:

\begin{theorem}
\label{cupall}

Suppose that Question \ref{qcupall} is answered affirmatively, and let $\Delta$ be a decomposition. Then if $\varphi$ is $\Delta$-decomposable and $\psi \in Q(F)$ is \underline{any} other quasimorphism, we have $[\delta^1 \varphi] \smile [\delta^1 \psi] = 0 \in H^4_b(F)$.
\end{theorem}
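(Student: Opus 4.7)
The plan is to reduce the statement to the vanishing theorem for free products of decompositions (Theorem \ref{cup_free_prod}), by lifting both quasimorphisms to $F*F$ along $\iota$ and then transporting the vanishing back via the hypothesis from Question \ref{qcupall}. Since replacing $\psi$ by its alternating average from Lemma \ref{antisymm} does not change $[\delta^1\psi]$, I may assume $\psi \in Q_{alt}(F)$ from the start; $\varphi$ is already alternating as it is $\Delta$-decomposable.

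The core construction takes place on $F*F = F_1 * F_2$ with $F_1 = F_2 = F$. I would equip $F_1$ with the given decomposition $\Delta$ and $F_2$ with the trivial decomposition $\Delta_b$ of Example \ref{Dec_ex}, and form the free product $\tilde{\Delta} := \Delta * \Delta_b$. Then I define
\[
\tilde{\varphi} := \varphi * 0 \in Q_{alt}(F*F), \qquad \tilde{\psi} := 0 * \psi \in Q_{alt}(F*F),
\]
where in $\tilde{\varphi}$ the second factor is the zero quasimorphism on $F_2$ (which is trivially $\Delta_b$-decomposable), and in $\tilde{\psi}$ the first factor is the zero quasimorphism on $F_1$ (which is trivially $\Delta$-continuous with the zero $\ell^1$ sequence). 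By the characterization of decomposable and continuous quasimorphisms under free products of decompositions given in Section \ref{secprod}, $\tilde{\varphi}$ is $\tilde{\Delta}$-decomposable, while $\tilde{\psi}$ is $\tilde{\Delta}$-continuous because $\psi$ is $\Delta_b$-continuous by Example \ref{Db_cont}.

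At this stage Theorem \ref{cup_free_prod} applies directly and yields $[\delta^1 \tilde{\varphi}] \smile [\delta^1 \tilde{\psi}] = 0$ in $H^4_b(F*F)$. To transport this back to $F$, I invoke Lemma \ref{iota_sum}, which gives
\[
\tilde{\varphi} \circ \iota = \varphi + 0 = \varphi, \qquad \tilde{\psi} \circ \iota = 0 + \psi = \psi.
\]
The hypothesis that Question \ref{qcupall} has an affirmative answer then immediately produces $[\delta^1(\tilde{\varphi}\circ\iota)] \smile [\delta^1(\tilde{\psi}\circ\iota)] = 0$, that is, $[\delta^1\varphi] \smile [\delta^1\psi] = 0 \in H^4_b(F)$, as desired.

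Conceptually, there is no real obstacle in this argument beyond the standing hypothesis: the proof is a clean assembly of Theorem \ref{cup_free_prod}, the compatibility of the $\tilde{\Delta}$-construction with decomposability and continuity, and the pullback formulas for $\iota$. The only subtle point to verify carefully is that the zero quasimorphism is genuinely $\Delta$-continuous (respectively $\Delta_b$-decomposable) in the relevant sense, so that the free product construction places $\tilde{\varphi}$ and $\tilde{\psi}$ in the decomposable and continuous classes with respect to the same decomposition $\tilde{\Delta}$; this is immediate from the definitions once $\psi$ has been made alternating. The strength of the conclusion is then entirely inherited from the conjectural behaviour of $\iota$ under cup products.
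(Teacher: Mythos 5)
Your proposal is correct and follows essentially the same route as the paper: pass to an alternating representative of $\psi$, form $\varphi * \underline{0}$ and $\underline{0} * \psi$ with respect to the free product decomposition $\Delta * \Delta_b$, apply Theorem \ref{cup_free_prod}, and pull back along $\iota$ using Lemma \ref{iota_sum} together with the assumed answer to Question \ref{qcupall}. No gaps to report.
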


\begin{proof}
Up to bounded distance, we may assume that $\psi$ is alternating. Recall from Example \ref{Db_cont} that any alternating quasimorphism is $\Delta_b$-continuous. Let $\underline{0} \in Q(F)$ be the zero map, which is a $\Delta$-continuous quasimorphism, as well as a $\Delta_b$-decomposable one. It follows that $\varphi * \underline{0}$ is $\Delta * \Delta_b$-decomposable and $\underline{0} * \psi$ is $\Delta * \Delta_b$-continuous. By Theorem \ref{cup_free_prod}, we have $[\delta^1 (\varphi * \underline{0})] \smile [\delta^1 (\underline{0} * \psi)] = 0$. By assumption, this implies that $[\delta^1 (\varphi * \underline{0}) \circ \iota] \smile [\delta^1 (\underline{0} * \psi) \circ \iota] = 0$. Finally, by Lemma \ref{iota_sum}, we conclude that $[\delta^1 \varphi] \smile [\delta^1 \psi] = 0$.
\end{proof}

\begin{corollary}
Suppose that Question \ref{qcupall} is answered affirmatively. Let $\varphi \in \Sigma_{Ind}$, $\phi$ a Rolli quasimorphism and $\psi$ any other quasimorphism. Then $[\delta^1 \varphi] \smile [\delta^1 \psi] = [\delta^1 \phi] \smile [\delta^1 \psi] = 0 \in H^4_b(F)$.
\end{corollary}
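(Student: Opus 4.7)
The proof will be a short direct application of Theorem \ref{cupall}, which has already done the heavy lifting. The plan is to observe that both $\varphi \in \Sigma_{Ind}$ and a Rolli quasimorphism $\phi$ fall (after a decomposition into summands) into the class of $\Delta$-decomposable quasimorphisms for some suitable $\Delta$, so that Theorem \ref{cupall} applies.

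For the Rolli case, I would simply recall Example \ref{D-dec_ex}(4): every Rolli quasimorphism is $\Delta_{Rolli}$-decomposable. Hence Theorem \ref{cupall}, applied with $\Delta = \Delta_{Rolli}$, gives $[\delta^1 \phi] \smile [\delta^1 \psi] = 0$ for any quasimorphism $\psi \in Q(F)$.

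For $\varphi \in \Sigma_{Ind}$, by definition we may write $\varphi = \varphi_{\alpha_1} + \cdots + \varphi_{\alpha_n}$, where each $\alpha_i$ is bounded, alternating, and supported on a symmetric independent family $I_i$. By Corollary \ref{pnoqm}, each summand $\varphi_{\alpha_i}$ is $\Delta_{I_i}$-decomposable. Applying Theorem \ref{cupall} with $\Delta = \Delta_{I_i}$ to each summand yields $[\delta^1 \varphi_{\alpha_i}] \smile [\delta^1 \psi] = 0$ for every $i$. Finally, bilinearity of the cup product in bounded cohomology gives
\[
[\delta^1 \varphi] \smile [\delta^1 \psi] = \sum_{i=1}^n [\delta^1 \varphi_{\alpha_i}] \smile [\delta^1 \psi] = 0.
\]

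There is no real obstacle here: the content of the corollary is entirely packed into Theorem \ref{cupall} (which itself rests on the hypothetical affirmative answer to Question \ref{qcupall}), together with the two facts that Rolli quasimorphisms are decomposable for $\Delta_{Rolli}$ and that elements of $\Sigma_{Ind}$ are, by construction, finite sums of quasimorphisms decomposable for the decompositions $\Delta_{I_i}$ built in Lemma \ref{D_I}.
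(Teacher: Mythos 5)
Your proof is correct and follows exactly the route the paper intends (the corollary is stated without a written proof precisely because it is this immediate combination of Theorem \ref{cupall} with Example \ref{D-dec_ex}, Corollary \ref{pnoqm}, and bilinearity of the cup product). Nothing is missing.
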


This result would be much stronger than everything we have managed to prove in this thesis. In particular it would be the first triviality result concerning the cup product of some non-trivial quasimorphism with the whole of $H^2_b(F)$.

\pagebreak

\section{Open questions}
\label{secop}

To start with the obvious, the general question of triviality of the cup product remains open:

\begin{question}
\label{q1}

Is the cup product $\smile : H^2_b(F) \times H^2_b(F) \to H^4_b(F)$ trivial?
\end{question}

A weaker question is the existence of a non-zero element in $H^2_b(F)$, whose cup product with everything else vanishes:

\begin{question}
Is there a non-trivial quasimorphism $\varphi \in Q(F)$ such that for any other quasimorphism $\psi \in Q(F)$ we have $[\delta^1 \varphi] \smile [\delta^1 \psi] = 0$?
\end{question}

By Theorem \ref{cupall}, this is implied by a result relating the map $\iota$ and the cup product:

\begin{question}
Suppose that $\varphi, \psi \in Q(F*F)$ satisfy $[\delta^1 \varphi] \smile [\delta^1 \psi] = 0 \in H^4_b(F*F)$. Is it true that $[\delta^1(\varphi \circ \iota)] \smile [\delta^1 (\psi \circ \iota)] = 0 \in H^4_b(F)$?
\end{question}

We have seen that $\iota$ is a quasihomomorphism that is not quasi-Ulam. Since the latter have already been studied, and many examples are available in the paper \cite{qout}, one could ask the same question for them:

\begin{question}
Let $f : F_1 \to F_2$ be a quasi-Ulam quasihomomorphism between free groups. Suppose that $\varphi, \psi \in Q(F_2)$ satisfy $[\delta^1 \varphi] \smile [\delta^1 \psi] = 0 \in H^4_b(F_2)$. Is it true that $[\delta^1(\varphi \circ f)] \smile [\delta^1 (\psi \circ f)] = 0 \in H^4_b(F_1)$?
\end{question}

At the end of Subsection \ref{subsec_so}, we mentioned that our results do not carry over to general small Brooks quasimorphisms. This begs the question:

\begin{question}
Is there any vanishing result for cup products involving general small Brooks quasimorphisms on self-overlapping words?
\end{question}

We have been able to prove triviality of many cup products, but for even more we could prove vanishing of the Gromov seminorm. This motivates the following weaker version of Question \ref{q1}:

\begin{question}
Does the image of the cup product $\smile : H^2_b(F) \times H^2_b(F) \to H^4_b(F)$ lie in the zero-norm subspace of $H^4_b(F)$?
\end{question}

Other open questions concern the largeness of the space $Cal$ with respect to the space $\mathcal{N}_{Br}$, and the largeness of the latter with respect to $H^2_b(F)$. More precisely:

\begin{question}
Is every $\varphi_\alpha \in \mathcal{N}_{Br}$ equivalent to some $\varphi_\beta \in Cal$?
\end{question}

\begin{question}
Does every class in $H^2_b(F)$ admit a representative in $\mathcal{N}_{Br}$?
\end{question}

The previous question would follow from the converse of Proposition \ref{twhom}:

\begin{question}
\label{q_fund}

Let $\alpha$ be an alternating coefficient map supported on $\mathcal{N}$, and let $\varphi_\alpha = \sum_{w \in \mathcal{N^+}} \alpha_w h_w$ and $\tilde{\varphi}_\alpha = \sum_{w \in \mathcal{N}^+} \alpha_w \overline{h_w}$. Suppose that $\tilde{\varphi}_\alpha$ is a quasimorphism. Is $\varphi_\alpha$ a quasimorphism? What if we restrict the support of $\alpha$ to a (well-chosen) fundamental set?
\end{question}

In the quest of getting from $Cal$ and its subspaces to the whole of $H^2_b(F)$, it may be of some help to understand the role of Rolli quasimorphisms: these are quasimorphisms which give trivial cup products and which we understand well, but whose definition is not in terms of infinite sums of Brooks quasimorphisms.

\begin{question}
Are all Rolli quasimorphisms equivalent to quasimorphisms in $Cal$ or $\mathcal{N}_{Br}$? If not, how large is the sum of $Cal$ or $\mathcal{N}_{Br}$ with the space of Rolli quasimorphisms with respect to $H^2_b(F)$?
\end{question}

In order to keep matters simple, in this thesis we have always fixed a basis $S$ of $F$, and given the definition of decomposition in terms of that basis (this is relevant for condition 1. in the definition). One can remove this convention and consider, instead of decompositions, pairs $(\Delta, S)$, where $S$ is a basis of $F$ and $\Delta$ a decomposition of $F$ with basis $S$. Then there is a natural action of $Aut(F)$ on the space of such pairs. Let $\Delta$ be a decomposition on $F$ with basis $S$ and pieces $\mathcal{P}$, and let $T \in Aut(F)$. Define $T(\Delta, S) = (T(\Delta), T(S))$ with pieces $T(\mathcal{P})$, where if $\Delta(g) = (g_1, \ldots, g_k)$, we define $T(\Delta)(T(g)) := (T(g_1), \ldots, T(g_k))$. This is compatible with the action of $Aut(F)$ on $Q(F)$ via $\varphi \mapsto \varphi \circ T^{-1}$, meaning that $T$ sends $\Delta$-decomposable/continuous quasimorphisms to $T(\Delta)$-decomposable/continuous quasimorphisms. The last open question we want to mention, although a little vague, is:

\begin{question}
What can one say about the dynamics $Aut(F)$ on the space of pairs $(\Delta, S)$? How does it relate to the dynamics of $Aut(F)$ on $H^2_b(F)$, as studied in \cite{out}?
\end{question}

\pagebreak

\appendix

\section{Overlap graphs}
\label{og}

In this section we introduce some graphs associated with subsets of a free group of finite rank $F$, which describe the way words overlap. Certain properties of these graphs will correspond to subspaces of $Cal$, for instance $\Sigma_{Ind}$. Thanks to some graph-theoretic arguments, we are able to give new examples and non-examples of elements of $\Sigma_{Ind}$. \\

We would like to thank Mathilde Bouvel and Matija Bucic for the helpful discussions.

\subsection{Graph-theoretic preliminaries}

Let $D$ be a directed graph. We denote by $U(D)$ the underlying undirected graph of $D$. The maximal length of a directed path is denoted by $lp(D) \in \mathbb{N} \cup \{ \infty \}$. \\

Let $G = (V(G), E(G))$ be an undirected graph. A \textit{clique} in $G$ is a complete induced subgraph. The maximal size of a clique is called the \textit{clique number} of $G$ and is denoted by $\omega(G) \in \mathbb{N} \cup \{ \infty \}$.

A \textit{$k$-coloring} of $G$ is a map $c : V(G) \to C$, where $C$ is a set of cardinality $k$ whose elements are called \textit{colors}. It is \textit{proper} if $c(v) \neq c(w)$ whenever $\{v, w\} \in E(G)$. The minimal $k$ such that $G$ is properly $k$-colorable is called the \textit{chromatic number} of $G$ and is denoted by $\chi(G) \in \mathbb{N} \cup \{ \infty \}$.

For a directed graph $D$, we denote $\omega(D) := \omega(U(D))$ and $\chi(D) := \chi(U(D))$. \\

The chromatic and clique number are related by the obvious inequality $\omega(G) \leq \chi(G)$. In general, no other relation holds: we will see at the end of this subsection that there exist graphs with finite clique number and infinite chromatic number. Classes of graphs for which some inequality in the other direction holds are called \textit{$\chi$-bounded}. More precisely, a class of finite graphs is called $\chi$-bounded if there exists a function $f : \mathbb{N} \to \mathbb{N}$ such that $\chi(G) \leq f(\omega(G))$ for any graph $G$ in the class. Many interesting classes of graphs are $\chi$-bounded: see \cite{chi} for a survey on this topic. \\

Since we will be working with infinite graphs, it is useful to reduce arguments about $\omega$ and $\chi$ to finitary ones. On the one hand, it is clear that
$$\omega(G) = \sup \{ \omega(H) : H \subset G \text{ finite} \}.$$
For the chromatic number, this is more subtle \cite{erdos}:

\begin{theorem}[De Brujin, Erdos]
Let $G$ be a graph, then
$$\chi(G) = \sup \{ \chi(H) : H \subset G \text{ finite} \}.$$
\end{theorem}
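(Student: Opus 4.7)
The easy direction $\chi(G) \geq \sup \{\chi(H) : H \subset G \text{ finite}\}$ is immediate: any proper coloring of $G$ restricts to a proper coloring of any subgraph $H$, so $\chi(H) \leq \chi(G)$ for every finite $H \subset G$. The content of the theorem is the reverse inequality. If the supremum is $\infty$ there is nothing to show, so assume it equals some finite $k$. The goal is then to build a proper $k$-coloring of all of $G$ from the hypothesis that every finite subgraph admits one.

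The approach I would take is a compactness argument via Tychonoff's theorem. Consider the product space $X = \{1, \ldots, k\}^{V(G)}$ where each factor carries the discrete topology. Since each factor is finite, hence compact, $X$ is compact. For each finite subgraph $H \subset G$, define
$$C_H = \{f \in X : f(v) \neq f(w) \text{ for every } \{v,w\} \in E(H)\}.$$
Each $C_H$ is an intersection of finitely many clopen subsets of $X$ (one for each edge of $H$, each depending only on two coordinates), hence closed in $X$.

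The key point is that the collection $\{C_H\}_{H \subset G \text{ finite}}$ has the finite intersection property: given finite subgraphs $H_1, \ldots, H_n$, their union $H$ is still a finite subgraph, so by hypothesis $\chi(H) \leq k$ and $H$ admits a proper $k$-coloring, which extends arbitrarily to a function $f : V(G) \to \{1, \ldots, k\}$ lying in $\bigcap_{i=1}^n C_{H_i}$. By compactness of $X$, the total intersection $\bigcap_H C_H$ is non-empty, and any element of this intersection is by construction a proper $k$-coloring of $G$, yielding $\chi(G) \leq k$.

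The main subtlety is philosophical rather than technical: the argument relies on the axiom of choice in the form of Tychonoff's theorem for products of finite sets (equivalent to the Boolean prime ideal theorem, weaker than full AC but still non-constructive). A Zorn-type proof is also possible: order the partial proper $k$-colorings of $G$ by extension, observe that chains have upper bounds via termwise union, and extract a maximal element, whose domain must be all of $V(G)$ since otherwise one could extend it at a single new vertex using $k$-colorability of an appropriate finite subgraph. Either route works and is standard; I would use the Tychonoff formulation for concision.
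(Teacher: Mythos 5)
The paper states this theorem with a citation to de Bruijn and Erdos and gives no proof of its own, so there is no in-paper argument to compare against; your Tychonoff compactness argument is the standard proof and is correct as written: the product $\{1,\ldots,k\}^{V(G)}$ is compact, each $C_H$ is closed, the family $\{C_H\}$ has the finite intersection property because a finite union of finite subgraphs is again finite and hence $k$-colourable by hypothesis, and any point of the total intersection is a proper $k$-colouring of $G$.

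The one thing worth flagging is the Zorn's lemma alternative you sketch at the end: as stated it has a gap. A maximal partial proper $k$-colouring $c$ need not be extendable to a new vertex $v$: if $v$ has at least $k$ already-coloured neighbours and all $k$ colours occur among them, no colour is available for $v$, and invoking $k$-colourability of the finite subgraph induced on $v$ together with its coloured neighbours does not help, since a proper colouring of that finite piece need not agree with $c$ on the coloured neighbours and one cannot recolour them. Repairing the Zorn route requires a stronger invariant on the partial colourings, for instance that their restriction to every finite subgraph of $G$ extends to a proper $k$-colouring of that finite subgraph, at which point one is essentially re-running the compactness argument. Since you take the Tychonoff route as your actual proof this does not affect correctness; it is just worth noting that the Zorn variant is not as routine as your final sentence suggests.
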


Concerning colorings of directed graphs, we can gather some information from the additional structure \cite[Theorem 8.4.1]{digraphs}:

\begin{theorem}[Gallai-Roy-Vitaver]
\label{GRV}

Let $D$ be a directed graph. Then $\chi(D) \leq lp(D) + 1$.
\end{theorem}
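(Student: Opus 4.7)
The plan is to exhibit a proper coloring of $U(D)$ using at most $lp(D)+1$ colors. We may assume $lp(D)<\infty$, since otherwise the bound is vacuous.

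The first step is to replace $D$ by a cleverly chosen spanning subgraph. I would fix a subset $A\subseteq E(D)$ that is inclusion-maximal (which exists by Zorn's lemma, since the union of any chain of acyclic edge sets is acyclic: a directed cycle is finite, hence already appears in some member of the chain) subject to the spanning subgraph $D':=(V(D),A)$ containing no directed cycle, and then define
$$c\colon V(D)\to\{0,1,\ldots,lp(D)\},\qquad c(v):=\text{length of a longest directed path in }D'\text{ ending at }v.$$
Since $lp(D')\le lp(D)<\infty$, the set of lengths of directed $D'$-paths ending at $v$ is a finite subset of $\{0,1,\ldots,lp(D)\}$, so $c(v)$ is well-defined and attained by some path. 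It then suffices to verify that $c$ is a proper coloring of $U(D)$, which immediately yields $\chi(D)=\chi(U(D))\le lp(D)+1$.

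The verification splits into two cases for a given $(u,v)\in E(D)$. If $(u,v)\in A$, I would append this edge to a longest $D'$-path $P$ ending at $u$; the only thing to check is that $v$ does not already lie on $P$, for otherwise the sub-path of $P$ from $v$ to $u$ together with $(u,v)$ would be a directed cycle in $D'$, contradicting acyclicity. Hence $P$ extends to a simple path ending at $v$ of length $c(u)+1$, and $c(v)\ge c(u)+1$. If instead $(u,v)\in E(D)\setminus A$, maximality of $A$ forces $A\cup\{(u,v)\}$ to contain a directed cycle through the new edge, which supplies a directed path $R$ from $v$ to $u$ in $D'$ of length $k\ge 1$. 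I would then concatenate a longest $D'$-path $Q$ ending at $v$ with $R$ to produce a directed $D'$-walk ending at $u$ of length $c(v)+k>c(v)$.

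The step I expect to be the main obstacle is this last case: a priori the concatenation $Q\cdot R$ need not be a simple path, and without simplicity the inequality $c(u)\ge c(v)+k$ does not follow. The clean way around this — which is exactly why a maximal acyclic $D'$ is the right object — is the observation that any vertex repetition in $Q\cdot R$ would produce a closed directed subwalk in $D'$, hence a directed cycle in $D'$, contradicting acyclicity. Thus $Q\cdot R$ is automatically simple, giving $c(u)>c(v)$. In both cases $c(u)\ne c(v)$, so $c$ is a proper $(lp(D)+1)$-coloring of $U(D)$, as required.
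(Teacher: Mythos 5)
Your proof is correct. Note that the paper does not prove this statement at all: it is quoted as a known result (Theorem 8.4.1 of the cited monograph on digraphs), so there is no in-paper argument to compare against. What you wrote is the standard proof of the Gallai--Roy--Vitaver theorem: pass to a maximal acyclic spanning subdigraph $D'$, color each vertex by the length of a longest $D'$-path ending there, and check properness by the two cases (arc in $D'$: extend a longest path; arc outside $D'$: maximality yields a directed $v$-$u$ path in $D'$, and acyclicity forces the concatenation to be simple). Your handling of the two potential pitfalls -- well-definedness of $c(v)$ when $lp(D)<\infty$, and simplicity of the concatenated walk via ``closed directed walk contains a directed cycle'' -- is sound. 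The one genuinely nonstandard ingredient is your use of Zorn's lemma, which is exactly what is needed here because the paper applies the theorem to infinite graphs such as $\overline{SG}(V)$; your verification that acyclicity is preserved under unions of chains (a directed cycle is finite) is the right justification. An alternative route, arguably more in the spirit of the paper, would be to prove the finite case only and then invoke the De Bruijn--Erd\H{o}s theorem stated immediately before, since $\chi(D)=\sup\{\chi(H):H\subset U(D)\ \text{finite}\}$ and every finite subgraph inherits the bound $lp\le lp(D)$; both approaches are valid, and yours has the merit of being self-contained. (Implicitly, as in the paper, $D$ is assumed loopless; otherwise $\chi$ is not defined.)
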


A directed graph $D = (V(D), E(D))$ is \textit{transitive} if $(u, w) \in E(D)$ whenever $(u, v), (v, w) \in E(D)$.

A \textit{tournament} is an orientation of a complete undirected graph. In other words, it is a directed graph such that for any pair of distinct vertices $v, w$, exactly one of $(v, w), (w, v)$ is an edge. \\

Let $D$ be a directed graph. An \textit{arc coloring} of $D$ is a map $c : E(D) \to C$. It is \textit{proper} if $c(u, v) \neq c(v, w)$ for any pair of edges $(u, v), (v, w)$. We define the \textit{arc chromatic} number of $D$ to be the minimal $k$ such that there exists a proper arc coloring of $D$ with $k$ colors.

The \textit{line graph} of $D$ is the directed graph $L(D)$ with vertex set $E(D)$ and an edge from $(u, v)$ to $(v, w)$ for any pair of edges $(u, v), (v, w)$ in $D$. Therefore a proper arc coloring of $D$ corresponds to a proper coloring of $L(D)$ and viceversa. Thus, the arc chromatic number of $D$ is equal to the chromatic number of $L(D)$. \\

The line graph construction allows us to give an example of a graph with finite clique number and infinite chromatic number. Let $T_n$ be the transitive tournament on $n$ vertices, that is, the graph with vertex set $\{1, \ldots, n\}$ and an edge from $i$ to $j$ if and only if $i < j$. Let $T_\infty$ be the infinite tournament, that is, the graph with vertex set $\mathbb{N}$ and adjacency defined the same way. First, it is easy to check that $\omega(L(T_n)) = 3$ for $n \geq 3$; so $\omega(L(T_\infty)) = 3$ as well. By \cite{arc_col}, the arc chromatic number of $T_n$ is asymptotically equivalent to $\log_2 n$, and therefore $\chi(L(T_n)) \to \infty$. It follows that $\chi(L(T_\infty)) = \infty$. We will use this example in the last subsection. \\

We end by recalling one of the most important theorems in graph theory, originally proven in 1930 \cite{ramsey}:

\begin{theorem}[Ramsey]
For each $n \geq 1$ there exists an integer $R(n)$ with the following property. Let $K$ be a complete graph with at least $R(n)$ vertices, and color edges of $K$ red or blue arbitrarily. Then $K$ contains a monochromatic clique of size at least $n$.
\end{theorem}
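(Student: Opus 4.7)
The plan is to prove a stronger off-diagonal two-color version of the statement and then specialize. For integers $n, m \geq 1$, let $R(n,m)$ denote the least integer $N$ (if one exists) such that every red/blue coloring of the edges of a complete graph on $N$ vertices contains either a red clique of size $n$ or a blue clique of size $m$. I would first observe the trivial base cases $R(n,1) = R(1,m) = 1$, since a single vertex already forms a ``monochromatic'' clique of size one in either color (by convention, with no edges to witness otherwise). The target is to show that $R(n,m)$ is finite for all $n,m$, and then set $R(n) := R(n,n)$.

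The key step is the recursive inequality
\[ R(n,m) \leq R(n-1,m) + R(n,m-1), \]
proved by induction on $n+m$. Assume both terms on the right are finite by the inductive hypothesis, and let $N = R(n-1,m) + R(n,m-1)$. Given a red/blue coloring of $K_N$, fix a vertex $v$ and partition the remaining $N-1$ vertices into the set $R$ of those joined to $v$ by a red edge and the set $B$ of those joined to $v$ by a blue edge. Since $|R| + |B| = N - 1 = R(n-1,m) + R(n,m-1) - 1$, the pigeonhole principle forces either $|R| \geq R(n-1,m)$ or $|B| \geq R(n,m-1)$. In the first case, the induced coloring on $R$ contains either a blue $K_m$ (and we are done) or a red $K_{n-1}$, which together with $v$ forms a red $K_n$. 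The second case is symmetric. This gives the recursion, hence finiteness of $R(n,m)$, and in particular of $R(n) = R(n,n)$.

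I do not expect a genuine obstacle here; the argument is a clean double induction, and the only thing to be slightly careful about is the handling of the base cases and the convention that a single vertex counts as a clique of size $1$. One could optionally derive the standard explicit bound $R(n,m) \leq \binom{n+m-2}{n-1}$ by unwinding the recursion via Pascal's identity, giving $R(n) \leq \binom{2n-2}{n-1}$, but this is not required by the statement and can be omitted. The entire argument is self-contained and uses nothing from the rest of the paper.
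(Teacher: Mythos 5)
Your proof is correct: it is the standard Erd\H{o}s--Szekeres argument via the off-diagonal numbers $R(n,m)$ and the recursion $R(n,m) \leq R(n-1,m) + R(n,m-1)$, with the base cases and the pigeonhole step handled properly. Note, however, that the paper does not prove this statement at all --- it simply recalls Ramsey's theorem as a classical result with a citation to the original 1930 paper --- so there is no argument in the text to compare yours against; your self-contained proof is the canonical one and is more than the paper asks for.
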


\subsection{First definitions}

In the following, $F$ will denote a free group of finite rank with a fixed basis $S$, and $V$ a symmetric subset of $F$ consisting of non-self-overlapping words. \\

Recall the definition of subwords and (proper) overlaps from subsection \ref{s_cow}. Two words can overlap properly in different ways, and in the sequel we will talk about overlaps of minimal length. Here is a characterization of those.

\begin{lemma}
Suppose that $w$ overlaps $w'$ properly. Then the overlap of minimal length is the unique non-self-overlapping one.
\end{lemma}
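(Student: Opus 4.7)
The statement unpacks into three claims: the proper overlaps of $w$ and $w'$ admit a unique shortest element, that element is non-self-overlapping, and every other proper overlap is self-overlapping. Uniqueness of the minimum comes for free, since a proper overlap is always a suffix of $w$, and two suffixes of $w$ of the same length coincide; so uniqueness of the minimum length determines the minimum overlap.

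The plan is to first show by contradiction that the minimal overlap $y$ is non-self-overlapping. If $y = \alpha\beta = \beta\gamma$ with $\alpha, \beta, \gamma$ non-empty, then $\beta$ is a non-empty proper prefix and proper suffix of $y$. Since $y$ is itself a suffix of $w$ and a prefix of $w'$, this promotes $\beta$ to a strictly shorter proper overlap of $w$ and $w'$, contradicting minimality.

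Next I would show that any other overlap $y'$ is self-overlapping. Both $y$ and $y'$ are suffixes of $w$ and prefixes of $w'$, so $y$ is both a prefix and a suffix of $y'$. When $|y'| \geq 2|y|$ the two copies of $y$ inside $y'$ are disjoint (or just touch), giving $y' = y \cdot m \cdot y$ for some possibly empty $m$; reading this as $(ym)\cdot y = y \cdot (my)$ exhibits $y'$ as self-overlapping. The main obstacle is the remaining range $|y| < |y'| < 2|y|$, where the two copies of $y$ inside $y'$ genuinely overlap. Setting $k = |y'|-|y|$, this forced overlap of letters pushes $y$ to have period $k$, which yields a decomposition $y = \alpha\beta = \beta\gamma$ with $|\alpha|=|\gamma|=k$ and $|\beta| = |y|-k > 0$; but then $y$ itself would be self-overlapping, contradicting the previous step. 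So this intermediate range is vacuous, and the case analysis is complete.
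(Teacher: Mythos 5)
Your proof is correct and its three-part structure matches the paper's, but your Step~3 contains an unnecessary case split caused by a subtle misreading of the definition of ``self-overlapping.'' Once you know that the minimal overlap $y$ is a non-empty \emph{proper} prefix of $y'$ and also a non-empty proper suffix of $y'$, you are already done: writing $y' = y z_1$ (from the prefix occurrence) and $y' = z_2 y$ (from the suffix occurrence) with $z_1, z_2 \neq 1$ is exactly a witness that $y'$ overlaps itself, per the paper's definition ($w_i = xy$, $w_{3-i} = yz$ with $x, y, z \neq 1$, taking $w_1 = w_2 = y'$ and the shared piece $y$). Nothing in that definition requires the prefix occurrence and the suffix occurrence of the shared piece to be disjoint inside the word, so the ``main obstacle'' you identified in the range $|y| < |y'| < 2|y|$ is illusory, and the split into $|y'| \geq 2|y|$ versus $|y| < |y'| < 2|y|$ is superfluous. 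Your dispatch of the second range is in fact correct, and it incidentally proves the stronger side fact that no overlap of $w$ and $w'$ has length strictly between $|y|$ and $2|y|$, but the lemma does not ask for this. The paper's version of this step is a single sentence: since the minimal overlap is a proper prefix and a proper suffix of any larger overlap, the larger overlap is self-overlapping.
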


\begin{proof}
Suppose that $w$ overlaps $w'$ at $x$ and $y$, with $|x| > |y|$. Since $x$ and $y$ are suffixes of $w$, $y$ is a suffix of $x$. Since $x$ and $y$ are prefixes of $w'$, $y$ is a prefix of $x$. Therefore $x$ has a proper suffix that is also a proper prefix.

Conversely, if $w$ overlaps $w'$ at $x$, and $y$ is a proper suffix of $x$ that is also a proper prefix of $x$, then $y$ is a suffix of $w$ and a prefix of $w'$, so $w$ overlaps $w'$ at $y$ and $|y| < |x|$.
\end{proof}

\begin{definition}
Let $V$ be as above. We define $OG(V)$ to be the directed graph with vertex set $V$ and an edge from $w$ to $w'$ if $w$ overlaps $w'$ properly.

We define $SG(V)$ to be the directed graph with vertex set $V$ and an edge from $w$ to $w'$ if $w$ is a subword of $w'$.

Finally, we define $OSG(V)$ to be the directed graph with vertex set $V$ and edge set $E(OG(V)) \cup E(SG(V))$. In other words, it is the directed graph with vertex set $V$ and an edge from $w$ to $w'$ if and only if $w$ overlaps $w'$ (properly or not).
\end{definition}

Since all words in $V$ are assumed to be non-self-overlapping, these graphs have no loops. Notice that $SG(V)$ is a transitive directed graph. \\

The direction of the edges behaves nicely with respect to inversion. More precisely, $w$ overlaps properly $w'$ if and only if $(w')^{-1}$ overlaps properly $w^{-1}$. On the other hand, $w$ is a subword of $w'$ if and only if $w^{-1}$ is a subword of $(w')^{-1}$. Therefore the map $V \to V : w \mapsto w^{-1}$ induces an isomorphism of the underlying undirected graphs of $OG(V)$ and $SG(V)$, which in the first case flips the orientation and in the second case preserves it.

\begin{definition}
We define $\overline{OG}(V)$ to be the undirected graph obtained from $OG(V)$ by identifying $w$ and $w^{-1}$. We similarly define $\overline{SG}(V)$, which is still a directed graph, and $\overline{OSG}(V)$, which is undirected.
\end{definition}

Since $w$ does not overlap $w^{-1}$, these graphs still have no loops. \\

We will be interested in certain properties of these graphs, the clique and chromatic number, which are quantities that can be defined for any graph. We will moreover consider a third quantity that is specific to these graphs. Recall the definition of a compatible family from subsection \ref{s_cow}.

\begin{definition}
Let $V$ be as above. We define $\kappa(V) \in \mathbb{N} \cup \{\infty\}$ to be the maximal cardinality of a compatible family contained in $V$.
\end{definition}

Since all elements of a compatible family overlap (although not necessarily properly), we have $\kappa(V) \leq \omega(OSG(V))$.

\subsection{Subspaces of $Cal$}

We now discuss the relation with subspaces of $Cal$. Given $\varphi_\alpha \in Cal$, we can look at the overlap graphs defined above for $V = supp(\alpha)$. Recall that $\varphi_\alpha \in Cal$ means that
$$\sup\limits_C \left( \sum\limits_{w \in C} |\alpha_w| \right) < \infty;$$
where the supremum runs over all compatible families in $V$. Since $\alpha$ is necessarily bounded by Corollary \ref{a_bdd}, if compatible families in $V$ have bounded cardinality, then $\varphi_\alpha \in Cal$ automatically. Conversely, this is the most general condition on $V$ that can ensure that $\varphi_\alpha \in Cal$, without having to take into account the values of $\alpha$.

\begin{definition}
We define $CFin$ to be the space of $\varphi_\alpha \in Cal$ such that $\kappa(supp(\alpha)) < \infty$.
\end{definition}

Restricting to such quasimorphisms allows us to forget the values of $\alpha$ and only focus on $V$, allowing for combinatorial arguments.

\begin{lemma}
Let $\varphi_\alpha \in Cal$ and let $V := supp(\alpha)$. Then $\varphi_\alpha \in \Sigma_{Ind}$ if and only if $\chi(\overline{OSG}(V)) < \infty$.
\end{lemma}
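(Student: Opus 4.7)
The plan is to prove the equivalence directly by translating the combinatorial condition $\chi(\overline{OSG}(V)) < \infty$ into the algebraic decomposition $\alpha = \alpha_1 + \cdots + \alpha_n$, using that a color class in $\overline{OSG}(V)$ lifts back to a symmetric independent family in $V$. The core observation I would establish first is: a subset $I \subseteq V$ is a symmetric independent family if and only if the set of equivalence classes $\{[w] : w \in I\}$ is an independent set in $\overline{OSG}(V)$. One direction is immediate from the definition of the edges; the other direction uses Lemma \ref{w_ol_w-1} to exclude the only potentially problematic case (overlaps between $w$ and $w^{-1}$).

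For the forward implication, I would take a proper $k$-coloring of $\overline{OSG}(V)$ and let $I_1, \ldots, I_k$ be the corresponding partition of $V$ into symmetric independent families (symmetry is automatic, since $w$ and $w^{-1}$ are identified to the same vertex). I then define $\alpha_i := \alpha \cdot \mathbbm{1}_{I_i}$, so that $\alpha = \sum_{i=1}^k \alpha_i$. Each $\alpha_i$ is bounded because $\alpha$ is bounded (by Corollary \ref{a_bdd}, or just from $\varphi_\alpha \in Cal$), and alternating because $\alpha$ is alternating and $I_i$ is symmetric. Hence $\varphi_\alpha \in \Sigma_{Ind}$.

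For the reverse implication, given $\alpha = \alpha_1 + \cdots + \alpha_n$ with $supp(\alpha_i) \subseteq I_i$ symmetric independent, I note that $V = supp(\alpha) \subseteq \bigcup_i I_i$. I then color each equivalence class $[w] \in \overline{OSG}(V)$ by the smallest index $i$ with $w \in I_i$ (well-defined on the class since $I_i$ is symmetric). If two distinct classes $[w] \neq [w']$ receive the same color $i$, then all four words $w, w^{-1}, w', (w')^{-1}$ lie in $I_i$, and by independence no pair among $\{w, w^{-1}\} \times \{w', (w')^{-1}\}$ overlaps. Hence $[w]$ and $[w']$ are non-adjacent in $\overline{OSG}(V)$, the coloring is proper, and $\chi(\overline{OSG}(V)) \leq n$.

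The argument is essentially a dictionary lookup between two equivalent formulations and no serious obstacle is expected. The only point requiring care is bookkeeping around the $w \sim w^{-1}$ identification: one must consistently check that both adjacency in $\overline{OSG}(V)$ and the independence/symmetry conditions are compatible with passing between words and their equivalence classes, which is where Lemma \ref{w_ol_w-1} enters.
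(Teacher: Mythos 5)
Your proof is correct and follows the same approach as the paper's: it establishes the dictionary between proper colorings of $\overline{OSG}(V)$ and decompositions of $V$ into symmetric independent families. The paper's proof is stated in two sentences; you fill in the details it leaves implicit, in particular the well-definedness of the coloring under $w \sim w^{-1}$ and the step of converting a possibly overlapping cover $\bigcup_i I_i$ into a proper coloring in the reverse direction.
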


\begin{proof}
A symmetric family $I$ is independent if and only if the graph $OSG(I)$ is independent (i.e., there are no edges), if and only if the graph $\overline{OSG}(I)$ is independent. A proper coloring of $\overline{OSG}(V)$ is a partition into independent sets, which therefore corresponds to a partition of $V$ into symmetric independent families.
\end{proof}

Since we have triviality results for cup products involving elements in $\Sigma_{Ind}$, this motivates us to find criteria that ensure that $\chi(\overline{OSG}(V)) < \infty$, under the minimal assumption that $\kappa(V) < \infty$. \\

We are not able to prove it, but we believe that the assumption $\kappa(V) < \infty$ is not very restrictive. More precisely, we believe that the following question has a positive answer:

\begin{question}
Is $CFin$ dense in $Cal$ with respect to the defect topology?
\end{question}

Here is some motivation as to why this should be true. Let $\varphi := \sum \alpha_w h_w \in Cal$. Define
$$\varphi_n := \sum\limits_{|\alpha_w| \geq \frac{1}{n}} \alpha_w h_w.$$
If $C$ is a compatible family contained in $supp(\alpha) \cap \{ w : |\alpha_w| \geq \frac{1}{n} \}$, then
$$\kappa_\alpha(1) \geq \sum\limits_{w \in C} |\alpha_w| \geq |C|/n;$$
which gives a bound on $|C|$ in terms of $n$. Therefore $\varphi_n \in CFin$. Looking at the difference $(\varphi - \varphi_n)$, we see that this equals $\varphi_{\alpha_n}$, where $\alpha_n(w) = \alpha_w \cdot \mathbbm{1} \{|\alpha_w| < \frac{1}{n}\}$. This gives:
$$D(\varphi - \varphi_n) \leq 3 \kappa_{\alpha_n}(1) = 3 \sup\limits_{C \subset V} \left( \sum_{\substack{w \in C \\ |\alpha_w| < 1/n}} |\alpha_w| \right).$$

\begin{question}
Does the quantity above converge to 0 as $n \to \infty$?
\end{question}

If this were true, $CFin$ would be defect-dense in $Cal$, so a triviality result for the cup product with elements of $CFin$ would yield cup products with all of $Cal$ with vanishing Gromov seminorm.

\subsection{Reduction to proper overlaps}

We recall the general question raised in the previous subsection: under what conditions does $\kappa(V) < \infty$ imply $\chi(\overline{OSG}(V)) < \infty$? Here we show that it is enough to look at proper overlaps:

\begin{proposition}
Suppose that $\kappa(V) < \infty$. Then $\chi(\overline{OSG}(V)) < \infty$ if and only if $\chi(\overline{OG}(V)) < \infty$.
\end{proposition}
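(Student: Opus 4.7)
One direction is immediate: $\overline{OG}(V)$ is a subgraph of $\overline{OSG}(V)$ with the same vertex set, so $\chi(\overline{OG}(V)) \leq \chi(\overline{OSG}(V))$.

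For the converse, the plan is to color the subword-edges and the proper-overlap-edges separately, then multiply. The crucial lemma is that subword chains in $V$ cannot be longer than $\kappa(V)$: if $w_1 \subsetneq w_2 \subsetneq \cdots \subsetneq w_k$ is a chain of proper subword inclusions in $V$ with $|w_1| \geq 2$, then $\{w_1, \ldots, w_k\}$ is a compatible family. To see this, iteratively embed occurrences to obtain reduced factorisations $w_k = \alpha_i w_i \beta_i$ in which the prefixes $\alpha_i$ form a decreasing chain (i.e., $\alpha_{i+1}$ is a prefix of $\alpha_i$), and correspondingly for the $\beta_i$'s. Split $w_1 = ab$ with $a, b$ non-empty, and set $u = \alpha_1 a$, $v = b \beta_1$. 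Writing $p := |u|$ for the juncture position inside $w_k$, the inclusions $|\alpha_i| \leq |\alpha_1| < p < |\alpha_1|+|w_1| \leq |\alpha_i|+|w_i|$ show that $p$ lies strictly inside each $w_i$, so $w_i$ straddles the juncture $u|v$. Hence $k \leq \kappa(V)$.

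Next, lift directed paths in $\overline{SG}(V)$ to subword chains in $V$. Given a directed path $[w_1] \to [w_2] \to \cdots \to [w_k]$ in $\overline{SG}(V)$, pick representatives by induction: fix $w_1$, and at each step, since $V = V^{-1}$ and subword relations commute with inversion ($a$ is a subword of $b$ iff $a^{-1}$ is a subword of $b^{-1}$), we can choose a representative of $[w_{i+1}]$ for which $w_i$ is a subword of $w_{i+1}$. The resulting chain has strictly increasing lengths, so at most one term (the first) has length $1$, and the remaining ones form a compatible family by the lemma. Therefore $k \leq \kappa(V) + 1$, so $lp(\overline{SG}(V)) \leq \kappa(V)$, and the Gallai-Roy-Vitaver theorem gives $\chi(U(\overline{SG}(V))) \leq \kappa(V) + 1$.

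Finally, combine the colorings. If $c_1 : V \to C_1$ properly colors $\overline{OG}(V)$ and $c_2 : V \to C_2$ properly colors $U(\overline{SG}(V))$, then the pair $(c_1, c_2) : V \to C_1 \times C_2$ properly colors $\overline{OSG}(V)$, because every edge of $\overline{OSG}(V)$ descends from an edge of $\overline{OG}(V)$ or of $\overline{SG}(V)$. Thus $\chi(\overline{OSG}(V)) \leq \chi(\overline{OG}(V)) \cdot (\kappa(V) + 1) < \infty$, proving the non-trivial direction. The main technical step is the chain-to-compatible-family lemma; everything else is assembling standard ingredients (the Gallai-Roy-Vitaver theorem, symmetry of $V$, and the product-coloring trick).
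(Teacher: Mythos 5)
Your proof follows the same route as the paper: product-color by combining proper colorings of $\overline{OG}(V)$ and $\overline{SG}(V)$, then bound $\chi(\overline{SG}(V))$ via longest directed paths and Gallai--Roy--Vitaver, using the key observation that a chain of nested subwords starting from a word of length at least $2$ is a compatible family. The only difference is that you spell out this last fact (the paper dismisses it as ``easy to see'') with an explicit nested-prefix/juncture argument, which is correct and a welcome bit of added detail.
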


\begin{proof}
By definition, $\overline{OSG}(V) = \overline{OG}(V) \cup \overline{SG}(V)$. A proper $k$-coloring of $\overline{OSG}(V)$ restricts to a proper $k$-coloring of $\overline{OG}(V)$, which proves that $\chi(\overline{OSG}(V)) < \infty \Rightarrow \chi(\overline{OG}(V)) < \infty$.

On the other hand, given a proper $k$-coloring $c : V / \pm \to C$ of $\overline{OG}(V)$ and a proper $l$-coloring $d : V / \pm \to D$ of $\overline{SG}(V)$, we can define a proper $kl$-coloring $c \times d : V / \pm \to C \times D : v^{\pm 1} \mapsto (c(v^{\pm 1}), d(v^{\pm 1}))$. Therefore in order to prove the other implication it suffices to show that $\kappa(V) < \infty \Rightarrow \chi(\overline{SG}(V)) < \infty$ \\

So suppose that $\kappa(V) < \infty$. Consider a sequence $w_1, w_2, \ldots, w_n$ of words in $V$, where $w_i$ is a proper subword of $w_{i+1}$. Up to removing the first two terms, we may assume that $|w_1| \geq 2$. Then it is easy to see that $\{ w_1, \ldots, w_n \}$ forms a compatible family. By hypothesis, such sequences are therefore of bounded length. But such sequences are precisely the directed paths in $SG(V)$, whence $lp(SG(V)) < \infty$.

Now consider a directed path $w_1^{\pm 1} \to \cdots \to w_n^{\pm 1}$ in $\overline{SG}(V)$. This means that $w_1$ is a subword of $w_2^{\varepsilon_2}$, which is a subword of $w_3^{\varepsilon_3}$, and so on, for some exponents $\varepsilon_i \in \{ \pm 1 \}$. Therefore this path lifts to a directed path in $SG(V)$. It follows that $lp(\overline{SG}(V)) < \infty$ as well. By the Gallai-Roy-Vitaver Theorem (Theorem \ref{GRV}), we conclude that $\chi(\overline{SG}(V)) < \infty$.
\end{proof}

The proof also shows:

\begin{corollary}
Suppose that $\chi(\overline{OG}(V)) < \infty$, and that there is a bound on sequences of nested subwords in $V$. Then $\chi(\overline{OSG}(V)) < \infty$.
\end{corollary}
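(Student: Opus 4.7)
The proof is essentially an extraction of the second half of the preceding proposition, replacing the hypothesis $\kappa(V) < \infty$ (which was used only to bound the length of nested subword sequences) with the direct hypothesis on nested subwords.

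The plan is as follows. First, I would recall that $\overline{OSG}(V) = \overline{OG}(V) \cup \overline{SG}(V)$ as edge sets on the common vertex set $V/\pm$, so that if $c : V/\pm \to C$ is a proper $k$-coloring of $\overline{OG}(V)$ and $d : V/\pm \to D$ is a proper $l$-coloring of $\overline{SG}(V)$, then the product coloring $c \times d : v^{\pm 1} \mapsto (c(v^{\pm 1}), d(v^{\pm 1}))$ is a proper $kl$-coloring of $\overline{OSG}(V)$. Hence it suffices to show that $\chi(\overline{SG}(V)) < \infty$.

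Next, I would unwind the hypothesis "bound on sequences of nested subwords in $V$" as exactly saying $lp(SG(V)) < \infty$: a directed path $w_1 \to w_2 \to \cdots \to w_n$ in $SG(V)$ is precisely a strictly increasing chain of proper subwords in $V$. Then I would observe, exactly as in the second-to-last paragraph of the proof of the preceding proposition, that any directed path $w_1^{\pm 1} \to \cdots \to w_n^{\pm 1}$ in $\overline{SG}(V)$ lifts to a directed path in $SG(V)$: indeed, an edge $w^{\pm 1} \to v^{\pm 1}$ in $\overline{SG}(V)$ says that $w$ is a subword of some $v^{\varepsilon}$ with $\varepsilon \in \{\pm 1\}$, and these choices can be propagated along the path. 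Therefore $lp(\overline{SG}(V)) \leq lp(SG(V)) < \infty$.

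Finally, applying the Gallai--Roy--Vitaver Theorem \ref{GRV} to $\overline{SG}(V)$, we get $\chi(\overline{SG}(V)) \leq lp(\overline{SG}(V)) + 1 < \infty$. Combined with the assumption $\chi(\overline{OG}(V)) < \infty$ and the product-coloring construction from the first step, this yields $\chi(\overline{OSG}(V)) < \infty$. There is no real obstacle here; the only slightly subtle point, which already appeared in the proof of the preceding proposition, is the lift of paths from $\overline{SG}(V)$ to $SG(V)$ to ensure that the unoriented identification $w \sim w^{-1}$ does not create arbitrarily long new directed paths.
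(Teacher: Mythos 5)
Your proof is correct and matches the paper's intended argument exactly: the paper introduces the corollary with "The proof also shows," and you have accurately extracted the relevant second half of the proposition's proof, replacing the use of $\kappa(V)<\infty$ by the direct hypothesis $lp(SG(V))<\infty$, then lifting paths from $\overline{SG}(V)$ to $SG(V)$ and invoking Gallai--Roy--Vitaver together with the product-coloring trick.
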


Now that we only care about proper overlaps, we are able to give a simple criterion for $\chi(\overline{OSG}(V)) < \infty$:

\begin{lemma}
Suppose that $\kappa(V) < \infty$, or that there is a bound on the length of sequences of nested subwords in $V$. Suppose moreover there exists some $N \geq 1$ such that for any $w, w' \in V$ that overlap properly, the minimal overlap has size at most $N$. Then $\chi(\overline{OSG}(V)) < \infty$.
\end{lemma}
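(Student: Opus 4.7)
The plan is to reduce from $\overline{OSG}(V)$ to $\overline{OG}(V)$ and then stratify the edges of $\overline{OG}(V)$ by overlap length. Under either hypothesis the length of nested-subword sequences in $V$ is bounded (the proof of the preceding Proposition shows this from $\kappa(V) < \infty$, and the other case is the hypothesis directly), so by the Gallai-Roy-Vitaver theorem $\chi(\overline{SG}(V)) < \infty$. By the preceding Corollary it therefore suffices to bound $\chi(\overline{OG}(V))$.

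For each $1 \leq k \leq N$ let $G_k$ be the directed graph on $V$ whose edges are the proper overlaps of length \emph{exactly} $k$, and let $\overline{G}_k$ be its image after identifying $w$ with $w^{-1}$. The hypothesis on the minimal overlap ensures that every edge of $\overline{OG}(V)$ belongs to some $\overline{G}_k$ with $k \leq N$. Since $\chi(G \cup H) \leq \chi(G) \chi(H)$ (color by pairs), it is enough to show $\chi(\overline{G}_k) < \infty$ for each fixed $k$.

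Fix $k$. For $w \in V$ with $|w| > k$, write $p_k(w)$ and $s_k(w)$ for the prefix and suffix of $w$ of length $k$. The identities $p_k(w^{-1}) = s_k(w)^{-1}$ and $s_k(w^{-1}) = p_k(w)^{-1}$ let the assignment $w \mapsto (p_k(w), s_k(w))$ descend to a well-defined coloring of the classes $[w]$ by pairs modulo the involution $(p, s) \mapsto (s^{-1}, p^{-1})$; complete the coloring by assigning each of the finitely many classes $[w]$ with $|w| \leq k$ its own private color (such classes are isolated in $\overline{G}_k$, since a proper overlap of length $k$ forces both endpoints to have length $> k$). To see properness, note that in any edge $\{[w], [w']\}$ of $\overline{G}_k$ we can choose representatives with $w = xy$ and $w' = yz$ reduced, $|y| = k$, $x, z \neq 1$; then $s_k(w) = p_k(w') = y$, so equality of colors would force either $p_k(w) = y$ (making $y$ both prefix and suffix of $w$, contradicting non-self-overlapping since $|w| > k$), or $y = y^{-1}$, which is impossible.

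The key step, and the main obstacle, is the decomposition into the finite slices $\overline{G}_k$: without it, a coloring by prefix/suffix data produces arbitrarily many colors as words grow in length, but once $k$ is pinned down the relevant combinatorial information fits in a finite alphabet (words of length $k$ in $F$). The non-self-overlapping hypothesis on $V$ is then exactly what prevents two same-colored classes from being adjacent, and the reduction from $\overline{OSG}(V)$ to $\overline{OG}(V)$ is handled by the earlier Corollary together with Gallai-Roy-Vitaver applied to $\overline{SG}(V)$.
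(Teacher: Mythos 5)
Your proof is correct and follows essentially the same route as the paper: reduce to $\overline{OG}(V)$ via the preceding proposition and corollary, then color by short prefix/suffix data, with the non-self-overlapping hypothesis (and the fact that no word overlaps its inverse) giving properness. The only difference is bookkeeping: the paper discards the finitely many words of length at most $N$ and uses the single coloring $w \mapsto (p_N(w), s_N(w))$, invoking minimality of overlaps to transfer a proper overlap between same-colored words into a self-overlap, whereas you stratify the edges by exact overlap length $k \leq N$ and take the product of the length-$k$ prefix/suffix colorings --- both give finitely many colors and the same contradiction.
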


\begin{proof}
It suffices to show that $\chi(\overline{OG}(V)) < \infty$. Up to removing finitely many vertices, we may assume that all words in $V$ have length larger than $N$. Consider the coloring $c : V \to B_N^2 : w \mapsto (p_N(w), s_N(w))$, where $B_N$ is the ball of radius $N$ in $F$, $p_N(w)$ is the prefix, and $s_N(w)$ the suffix of length $N$ of $w$. Make this into a coloring $\overline{c}$ of $V/\pm$ by choosing a set $V^+$ and defining $\overline{c}(v^{\pm 1}) := c(v)$, for $v \in V^+$. Now suppose that $v^{\pm 1}$ and $w^{\pm 1}$ have the same color, where $v, w \in V^+$. This means that $v$ and $w$ have the same prefix and suffix of length $N$. Since minimal overlaps have size at most $N$, this implies that a proper overlap of $v$ with $w^{\pm 1}$ yields a proper overlap of $v$ with $v^{\pm 1}$. But $v$ is non-self-overlapping, and no word can overlap its inverse. Therefore $v^{\pm 1}$ and $w^{\pm 1}$ cannot be connected by an edge.
\end{proof}

\begin{example}

Let $I$ be a symmetric independent family, and $N \geq 1$. Define $V$ to be the set of non-self-overlapping words that can be written as $xwy$, where $|x|, |y| \leq N$ and $w \in I$. We will show that $\chi(\overline{OSG}(V)) < \infty$.

First, if $x_1w_1y_1, \ldots, x_nw_ny_n$ is a sequence of nested subwords, and $n$ is large enough in terms of $N$, then necessarily some $w_i$ must be a subword of some $w_j$, which is impossible since $I$ is independent. Secondly, if $w_1, w_2$ are long enough with respect to $N$, and $x_1w_1y_1$ and $x_2w_2y_2$ have an overlap of size larger than $N$, then $w_1$ and $w_2$ overlap, which is again impossible.

This example yields infinitely many families giving rise to elements of $\Sigma_{Ind}$, parametrized by an independent family $I$ and an integer $N$.
\end{example}

\begin{example}
\label{ex_SInd}

Let $V$ be such that $\chi(\overline{OSG}(V)) < \infty$, and $N \geq 1$. Define $W$ to be the set of non-self-overlapping words that can be written as $xwy$, where $|x|, |y| \leq N$ and $w \in V$. Then $\chi(\overline{OSG}(W)) < \infty$. This follows from the previous example by writing $V$ as a finite disjoint union of symmetric independent families.
\end{example}

\begin{example}
The condition is sufficient, but not necessary. Indeed, let $V = A \cup B$, where $A = \{ab^nc : n \geq 1\}^{\pm 1}$ and $B = \{b^ncd : n \geq 1\}^{\pm 1}$. Both $A$ and $B$ are symmetric and independent, so $\chi(\overline{OSG}(V)) = 2$. However, the only overlap between $ab^nc$ and $b^ncd$ is at $b^nc$, so minimal overlaps in $V$ have arbitrarily large size.
\end{example}

\subsection{Other (non-)implications}

We have already mentioned that $\chi(\overline{OSG}(V)) < \infty$ implies $\kappa(V) < \infty$. Let us be more precise:

\begin{lemma}
\label{impl}

Suppose that there is a bound on the length sequences of nested subwords in $V$. Then the following implications hold:

\begin{center}
	\begin{tikzcd}
		& & \kappa(V) < \infty \arrow[d, Leftarrow] \\
		& & \omega(OG(V)) < \infty \arrow[dl, Leftarrow] \arrow[dr, Leftarrow] \\
		& \chi(OG(V)) < \infty \arrow[dr, Leftarrow] & & \omega(\overline{OG}(V)) < \infty \arrow[ld, Leftarrow] \\
		& & \chi(\overline{OG}(V)) < \infty
	\end{tikzcd}
\end{center}

\end{lemma}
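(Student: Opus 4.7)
The plan is to verify each of the five implications in the diagram separately; all are elementary, and only the top one requires any real work.

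The two implications of the form ``$\chi < \infty \Rightarrow \omega < \infty$'' (namely $\chi(OG(V)) < \infty \Rightarrow \omega(OG(V)) < \infty$ and $\chi(\overline{OG}(V)) < \infty \Rightarrow \omega(\overline{OG}(V)) < \infty$) are immediate from the universal inequality $\omega(G) \leq \chi(G)$ that holds for every undirected graph, applied to the underlying undirected graphs.

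The two implications comparing $OG(V)$ and $\overline{OG}(V)$ both rest on Lemma \ref{w_ol_w-1}: a non-empty word never overlaps its inverse. Concretely, if $v, w \in V$ are distinct and $v$ properly overlaps $w$, then $v \neq w^{-1}$, so the classes $[v], [w] \in V/\pm$ are distinct. For $\chi(\overline{OG}(V)) < \infty \Rightarrow \chi(OG(V)) < \infty$ I would pull back a proper coloring $\bar c$ of $\overline{OG}(V)$ via the quotient map $v \mapsto [v]$; if $v, w$ are adjacent in $OG(V)$, then $[v], [w]$ are distinct and adjacent in $\overline{OG}(V)$, so $\bar c([v]) \neq \bar c([w])$. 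For $\omega(\overline{OG}(V)) < \infty \Rightarrow \omega(OG(V)) < \infty$, the image under $V \to V/\pm$ of a clique in $OG(V)$ is a clique of the same size in $\overline{OG}(V)$ by the same injectivity remark.

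The remaining and substantive implication is $\omega(OG(V)) < \infty \Rightarrow \kappa(V) < \infty$; this is the only place the chain-length hypothesis is used. Fix a compatible family $C \subset V$; by the lemma following Definition \ref{comp_def}, any two distinct elements of $C$ overlap, meaning either one is a subword of the other or they overlap properly. Equip $C$ with the partial order $v \leq w$ iff $v$ is a subword of $w$. By hypothesis, every chain in this poset has length at most some fixed $N$, so by Mirsky's theorem $C$ decomposes as a union of at most $N$ antichains. An antichain here is a set of pairwise incomparable elements that still pairwise overlap, hence pairwise overlap \emph{properly}; in other words, it is a clique in $OG(V)$. If $\omega(OG(V)) \leq k$, each antichain has size at most $k$, whence $|C| \leq Nk$, and taking the supremum over compatible families yields $\kappa(V) \leq Nk < \infty$. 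The main conceptual input, and the only real obstacle, is recognizing that within a compatible family the relations ``is a subword of'' and ``properly overlaps'' together exhaust the overlap relation, so that a Mirsky-style chain/antichain decomposition is the natural tool for converting the hypothesis ``no long chain of nested subwords'' into a bound on $|C|$ in terms of $\omega(OG(V))$.
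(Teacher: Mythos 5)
Your proof is correct, and the first four implications are handled exactly as in the paper (lifting colorings and cliques along the quotient $V \to V/\pm$, plus $\omega \leq \chi$). For the substantive implication $\omega(OG(V)) < \infty \Rightarrow \kappa(V) < \infty$, however, you take a genuinely different route. The paper passes through $\kappa(V) \leq \omega(OSG(V))$ and applies Ramsey's theorem to the two-coloring of a clique in $OSG(V)$ by edge type (subword vs.\ proper overlap), concluding that a large clique in $OSG(V)$ would force a large monochromatic clique in either $SG(V)$ or $OG(V)$. You instead work directly inside a compatible family $C$, observe that the subword relation makes $C$ a poset whose chains are bounded by the hypothesis, and invoke Mirsky's dual of Dilworth's theorem to cover $C$ by at most $N$ antichains; since each antichain is pairwise properly overlapping, it is a clique in $OG(V)$, yielding $|C| \leq N \cdot \omega(OG(V))$. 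Both arguments are Ramsey-theoretic in spirit (two relations exhaust the overlap relation), but yours is more economical: it gives the explicit linear bound $\kappa(V) \leq N\,\omega(OG(V))$ rather than the much worse bound coming from Ramsey numbers, and it bounds $\kappa(V)$ directly rather than via the a priori larger quantity $\omega(OSG(V))$. One point worth making explicit: compatible families may be infinite, but since every subset of a compatible family is compatible, it suffices to bound finite ones, where finite Mirsky applies (alternatively, the infinite version of Mirsky with bounded chains holds verbatim via the height function).
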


\begin{proof}
Since a word never overlaps its inverse, a proper $k$-coloring of $\overline{OG}(V)$ lifts to a proper $k$-coloring of $OG(V)$. Therefore $\chi(OG(V)) \leq \chi(\overline{OG}(V))$.

Two implications follow from the general fact that for any graph $G$, it holds $\omega(G) \leq \chi(G)$.

A clique in $OG(V)$ is mapped injectively to a clique in $\overline{OG}(V)$, since a word and its inverse cannot belong to the same clique. Therefore $\omega(OG(V)) \leq \omega(\overline{OG}(V))$. \\

We are left to show that $\omega(OG(V)) < \infty \Rightarrow \kappa(V) < \infty$. First, recall that $\kappa(V) \leq \omega(OSG(V))$. The hypothesis implies that $\omega(SG(V)) < \infty$. So we need to show that $\omega(SG(V)) < \infty, \omega(OG(V)) < \infty \Rightarrow \omega(OSG(V)) < \infty$.

To see this, consider a clique in $OSG(V)$, and color the edges red if they come from $OG(V)$, and blue if they come from $SG(V)$. If an edge comes from both graphs, choose arbitrarily one of the two colors. By Ramsey's Theorem, if the clique we started with is large enough, it contains an arbitrarily large monochromatic clique. Therefore if $\omega(OSG(V)) = \infty$, then at least one of $\omega(SG(V))$ and $\omega(OG(V))$ must be infinite.
\end{proof}

It is tempting to try and reverse these arrows. Here we give an example of $V \subset F_3$ such that $\omega(OG(V)) = \omega(\overline{OG}(V)) = 3$ but $\chi(OG(V)) = \chi(\overline{OG}(V)) = \infty$. Moreover, the words in $V$ will be Lyndon words, which shows that even restricting our attention to those $V$ which are contained in a fundamental set does not allow to reverse this arrow (see subsection \ref{s_cow} for the definitions of Lyndon words and fundamental sets).

\begin{example}
Consider $F = F_3 = \langle S \rangle$, where $S = \{a, b, c\}$, and order the generating set by $\{ a \prec b \prec c \}$. Extend this arbitrarily to a total order on $S^{\pm 1}$, and then to the lexicographic order on $F$. Define $V^+ := \{ (ab^nc)(ab^mc) : n > m \geq 1 \}$. Each word in $V^+$ is easily seen to be Lyndon, so in particular non-self-overlapping by Lemma \ref{lynd_nso}. Let $V := V^+ \cup (V^+)^{-1}$.

We start by considering the graph $OG(V^+)$ ($OG$ is defined in the obvious way for a non-symmetric set). We claim that this is isomorphic to the graph $L(T_\infty)$ from the first subsection. Indeed, an element of $V^+$ corresponds to a pair $(n, m)$ where $n > m \geq 1$, and there is an edge from $(n, m)$ to $(n', m')$ if and only if $m = n'$. It follows that $\omega(OG(V^+)) = 3$ and $\chi(OG(V^+)) = \infty$.

Finally, we notice that there is no overlap between elements of $V^+$ and elements of $(V^+)^{-1}$, so $OG(V)$ is a disjoint union of $OG(V^+)$, and a copy of $OG(V^+)$ with the opposite orientation. It follows that $\omega(OG(V)) = 3$ and $\chi(OG(V)) = \infty$, too. \\

This last observation allows us to deduce the same for the quotient graphs. Indeed, the inversion map on $OG(V)$ switches the two copies of $OG(V^+)$, changing only the orientation. Since this is not taken into account in the quotient, which is an undirected graph, we conclude that $\overline{OG}(V)$ is isomorphich to $U(OG(V^+))$. Therefore, once again, $\omega(\overline{OG}(V)) = 3$ and $\chi(\overline{OG}(V)) = \infty$.
\end{example}

\begin{example}
\label{ex_l1Ind}

Let $V$ be as above, and let $\alpha$ be any bounded symmetric coefficient map such that $V = supp(\alpha)$. Then the previous example shows that $\varphi_\alpha \in CFin \subset Cal$, but $\varphi_\alpha \notin \Sigma_{Ind}$. In fact, suppose further that that $\inf_{w \in supp(\alpha)} |\alpha_w| > 0$. Then $\varphi_\alpha \notin \ell^1_{Ind}$ either: if $\alpha$ were an $\ell^1$ sum of $\alpha_i$, each supported on a symmetric independent family, then that sum would have to be finite.
\end{example}

\subsection{Open question}

In this appendix we have proposed a graph-theoretic approach to study the space $CFin$. We have justified its utility by conjecturing the following:

\begin{question}
Is $CFin$ defect-dense in $Cal$?
\end{question}

The other questions relate to the reversibility of arrows from Lemma \ref{impl}. Namely:

\begin{question}
Suppose that $\kappa(V) < \infty$. Is it true that $\omega(OG(V)) < \infty$?
\end{question}

A Ramsey-theoretic approach seems promising. Indeed, a clique in $OG(V)$ is a tournament, and a classical application of Ramsey's Theorem implies that a large enough tournament contains an arbitrarily large transitive tournament. Is it possible to find a large compatible family inside a large transitive tournament in $OG(V)$?

\begin{question}
Suppose that $\omega(OG(V)) < \infty$. Is it true that $\omega(\overline{OG}(V)) < \infty$?
\end{question}

Once again, a Ramsey-theoretic approach seems promising. Indeed, a direct application of Ramsey's Theorem allows to reduce to showing that cliques of the form $\{w_1^{\pm 1}, \ldots, w_n^{\pm 1} \}$, where $w_i$ overlaps $w_j^{-1}$ for all $1 \leq i \neq j \leq n$, cannot exist.

\begin{question}
Suppose that $\chi(OG(V)) < \infty$. Is it true that $\chi(\overline{OG}(V)) < \infty$?
\end{question}

Consider a proper coloring $c : V \to C$ of $OG(V)$, and choose a set $V^+$. Define $\overline{c} : V/\pm \to C \times C : v^{\pm 1} \mapsto (c(v), c(v^{-1}))$, where $v \in V^+$. Then it is enough to show that any color class is colorable with finitely many colors. Since each color class corresponds to a symmetric subset of $V$ that is two-colorable, the hypothesis of the previous question can thus be strengthened to: $OG(V)$ is bipartite, and inversion flips the two parts.

\begin{question}
Are there some additional hypothesis on $V$ that can ensure $\chi$-boundedness of $OG(V)$?
\end{question}

We have seen that in general these overlap graphs are not $\chi$-bounded, but some sufficient condition would be desirable. \\

Finally, in light of Question \ref{q_fund}, answers to the questions above are of interest even when $V$ is restricted to a well-chosen fundamental set. We have seen that, for the last question, our construction of fundamental sets (using Lyndon words) does not help. It is however possible that some other construction of a fundamental set could.

\pagebreak

\addcontentsline{toc}{section}{References}

\bibliographystyle{alpha}
\bibliography{References}

\end{document}